\setlist{leftmargin=10mm}
\newtheorem{thm}{Theorem}[section]
\newtheorem{Theorem}[thm]{Theorem}
\newtheorem{Lemma}[thm]{Lemma}
\newtheorem{Proposition}[thm]{Proposition}
\newtheorem{Corollary}[thm]{Corollary}
\newtheorem{Conjecture}[thm]{Conjecture}
\newtheorem{Choices}[thm]{Choices}
\newtheorem{Claim}[thm]{Claim}
\theoremstyle{definition}
\newtheorem{Definition}[thm]{Definition}
\newtheorem{Question}[thm]{Question}
\newtheorem{Example}[thm]{Example}
\newtheorem{Remark}[thm]{Remark}
\newlist{propenum}{enumerate}{1}
\setlist[propenum]{label=(\roman*), ref=\theProposition.(\roman*)}
\newlist{defenum}{enumerate}{1}
\setlist[defenum]{label=(\alph*), ref=\theDefinition.(\alph*)}
\newcommand*\isomarrow{
	\xrightarrow{\raisebox{-0.35em}{\smash{\ensuremath{\sim}}}}
}  
\newcommand{\wh}{\widehat}
\newcommand{\A}{\mathbb{A}}
\newcommand{\B}{\mathbb{B}}
\newcommand{\C}{\mathbb{C}}
\newcommand{\G}{\mathbb{G}}
\newcommand{\N}{\mathbb{N}}
\renewcommand{\O}{\mathcal{O}}
\newcommand{\Q}{\mathbb{Q}}
\newcommand{\W}{\mathcal{W}}
\newcommand{\Z}{\mathbb{Z}}
\newcommand{\Gal}{\operatorname{Gal}}
\renewcommand{\H}{\mathcal{H}}
\newcommand{\m}{\mathfrak{m}}
\newcommand{\mg}{\mathfrak{g}}
\newcommand{\im}{\operatorname{im}}
\newcommand{\Hom}{\operatorname{Hom}}
\newcommand{\Map}{\operatorname{Map}}
\newcommand{\Mor}{\operatorname{Mor}}
\newcommand{\End}{\mathrm{End}}
\newcommand{\Aut}{\mathrm{Aut}}
\newcommand{\GL}{\mathrm{GL}}
\newcommand{\Sym}{\operatorname{Sym}}
\newcommand{\Perf}{\operatorname{Perf}}
\newcommand{\Spec}{\operatorname{Spec}}
\newcommand{\Spa}{\operatorname{Spa}}
\newcommand{\Spd}{\operatorname{Spd}}
\newcommand{\id}{{\operatorname{id}}}
\newcommand{\cts}{{\operatorname{cts}}}
\newcommand{\an}{{\mathrm{an}}}
\newcommand{\cyc}{{\operatorname{cyc}}}
\newcommand{\hotimes}{\hat{\otimes}}
\newcommand{\et}{{\operatorname{\acute{e}t}}}
\newcommand{\proet}{{\operatorname{pro\acute{e}t}}}
\newcommand{\qproet}{{\operatorname{qpro\acute{e}t}}}
\newcommand{\Smd}{\mathrm{Smd}}
\newcommand{\HT}{\operatorname{HT}}
\newcommand{\HTlog}{\operatorname{HTlog}}
\newcommand{\res}{{\mathrm{res}}}
\newcommand{\wt}{\widetilde}
\newcommand{\wtOm}{\wt\Omega}
\newcommand{\rk}{\operatorname{rk}}
\newcommand{\ab}{\mathrm{ab}}
\renewcommand{\lim}{\varprojlim}
\newcommand{\cH}{{\ifmmode \check{H}\else{\v{C}ech}\fi}}
\newcommand{\tf}{[\tfrac{1}{p}]}
\renewcommand{\tt}{\mathrm{tt}}
\newcommand{\aeq}{\stackrel{a}{=}}
\newcommand{\ad}{\mathrm{ad}}
\newcommand{\tr}{\operatorname{tr}}
\newcommand{\cB}{\mathcal A}
\newcommand{\cBun}{\mathscr{B}\hspace{-0.1em}\mathrm{\textit{un}}}
\newcommand{\cHiggs}{\mathscr H\hspace{-0.1em}\mathrm{\textit{iggs}}}
\newcommand{\Higgs}{\mathrm{Higgs}}
\newcommand{\wtcH}{\widetilde{\mathcal H}}
\DeclareMathOperator{\HOM}{\mathscr{H}\hspace{-0.3em}\mathrm{\textit{om}}}
\begin{document}
	\title{Moduli spaces in $p$-adic non-abelian Hodge theory}
	\author{Ben Heuer}
	\date{}
	\maketitle
	\begin{abstract}
		We propose a new moduli-theoretic approach to the $p$-adic Simpson correspondence for a smooth proper rigid space $X$ over $\C_p$ with coefficients in any rigid analytic group $G$, in terms of a comparison of moduli stacks. For its formulation, we introduce the class of ``smoothoid spaces'' which are perfectoid families of smooth rigid spaces, well-suited for studying relative $p$-adic Hodge theory. For any smoothoid space $Y$, we then construct a ``sheafified non-abelian Hodge correspondence'', namely a canonical isomorphism
		\[R^1\nu_{\ast}G\isomarrow \Higgs_G\]
		where $\nu:Y_{v}\to Y_{\et}$ is the natural morphism of sites, and where $\Higgs_G$ is the sheaf of isomorphism classes of $G$-Higgs bundles on $Y_{\et}$. We also prove a generalisation of Faltings' local $p$-adic Simpson correspondence to $G$-bundles and to perfectoid families.
		
		We apply these results to deduce $v$-descent criteria for \'etale $G$-bundles which show that \mbox{$G$-Higgs} bundles on $X$ form a small \mbox{$v$-stack} $\cHiggs_G$. As a second application, we construct an analogue of the Hitchin morphism on the Betti side: a morphism 
		$\cBun_{G,v}\to \mathcal A_G$
		from the small $v$-stack of $v$-topological \mbox{$G$-bundles} on $X$ to the Hitchin base. This allows us to give a conjectural reformulation of the \mbox{$p$-adic} Simpson correspondence for $X$ in a more geometric and more canonical way, namely in terms of a comparison of Hitchin morphisms.
	\end{abstract}
	\setcounter{tocdepth}{2}
	

	\section{Introduction}
	Let $K$ be a complete algebraically closed extension of $\Q_p$. The conjectural {$p$-adic} Corlette--Simpson correspondence for a connected smooth proper rigid space $X$ over $K$ aims to relate $K$-linear continuous representations of the \'etale fundamental group $\pi_1^{\et}(X)$ of $X$ to a full subcategory of the Higgs bundles on $X$. By a reinterpretation of Faltings' pioneering work on generalised representations \cite{Faltings_SimpsonI} in terms of Scholze's diamonds \cite{etale-cohomology-of-diamonds}, this should go through a $p$-adic non-abelian Hodge correspondence, namely an equivalence of categories
\begin{equation}\label{eq:intro-naHT-correspondence}
	\{\text{vector bundles on $X_{v}$}\}\isomarrow \{\text{Higgs bundles on $X_{\et}$}\},
\end{equation}
via a natural fully faithful embedding of representations of $\pi_1^{\et}(X)$ into vector bundles on $X_v$.

The aim of this article is to lay the foundations for a new conceptual approach to $p$-adic non-abelian Hodge theory, in particular to \Cref{eq:intro-naHT-correspondence}, which puts moduli spaces at its centre: Indeed, in \cite{heuer-proper-correspondence},
	we will construct the equivalence  \Cref{eq:intro-naHT-correspondence} based on ideas of this article. Second, we suggest a new conjectural geometric formulation of the equivalence \Cref{eq:intro-naHT-correspondence} in terms of a comparison of moduli stacks, which will be proved in \cite{HX} in the case when $X$ is a curve. Third, this makes it possible to explain the relation to representations of $\pi^{\et}_1(X)$ in terms of a $p$-adic character variety, which opens up new geometric ways to study the essential image of representations of $\pi^{\et}_1(X)$  under \Cref{eq:intro-naHT-correspondence}, generalising the approach for line bundles from \cite{heuer-geometric-Simpson-Pic}.
	Fourth, we develop these foundations not just for $\GL_n$, but for general rigid groups $G$: This opens up a new line of investigation into generalisations of \Cref{eq:intro-naHT-correspondence} to $G$-torsors, which we continue in \cite{HMZ}.
	
	Besides, the technical foundations of $p$-adic Hodge theory of perfectoid families of rigid spaces that we provide in this article are of broader interest to relative $p$-adic Hodge theory: For example, in \cite{heuer-relative-HT}, we use them to construct the relative Hodge--Tate spectral sequence.

	\subsection{The sheafified non-abelian Hodge correspondence}
	
	In the spirit of Simpson's non-abelian Hodge theory \cite{SimpsonCorrespondence}, the starting point of this article is the idea to realise the $p$-adic non-abelian Hodge correspondence \Cref{eq:intro-naHT-correspondence} by finding a non-abelian generalisation of Scholze's approach to the Hodge--Tate spectral sequence:
	
	To explain this, let us begin by considering the $p$-adic Hodge--Tate short exact sequence
	\begin{equation}\label{eq:HT-SES}
	0\to H^1_{\an}(X,\O)\to \Hom_{\cts}(\pi_1^{\et}(X),K)\to H^0(X,\Omega^1_X(-1))\to 0.
	\end{equation}
	The fundamental idea of Scholze's construction of \cref{eq:HT-SES} is to realise it as a Leray sequence for the morphism of sites $\nu:X_v\to X_{\et}$, where $X_v$ is the $v$-site of the diamond associated to $X$. 
	The key result that leads to \cref{eq:HT-SES} is then the natural isomorphism \cite[Proposition 3.23]{Scholze2012Survey}
	\begin{equation}\label{eq:HT}
	\HT:R^1\nu_{\ast}\O\isomarrow\Omega^1_X(-1).
	\end{equation}
	In this article, we give an analogue of \cref{eq:HT-SES}  in which the middle term is replaced by continuous representations $\pi_1^{\et}(X)\to \GL_n(K)$,  following Scholze's strategy with $\O$ replaced by $\GL_n(\O)$.
	
	\medskip
	
	In a generalisation that so far has not been studied in the $p$-adic setting, we more generally pass from $\GL_n$ to  any rigid analytic group variety $G$ over $K$. Such $G$ are the $p$-adic analogues of complex Lie groups, and we simply refer to them as ``rigid groups''. For example, $G$ could be the analytification of an algebraic group, or any  open subgroup thereof. In order to describe the correct replacement for the right hand side of \cref{eq:HT}, we need the following:
	
	\begin{Definition}\label{d:intro-G-Higgs}
		A $G$-Higgs bundle on $X$ is a pair $(E,\theta)$ of a $G$-torsor $E$ on $X_{\et}$ and a section $\theta\in H^0(X,\ad(E)\otimes \Omega^1_X(-1))$ such that $\theta\wedge \theta=0$, where $\ad(E)$ is the adjoint bundle.
	\end{Definition}

	Towards a non-abelian analogue of Scholze's construction, our first main result is now the following {``sheafified $p$-adic non-abelian Hodge correspondence''} for rigid analytic spaces:
	
	\begin{Theorem}\label{intro-first-version-of-main-Thm}
		Let $X$ be any smooth rigid space over $K$ and let $\nu:X_{v}\to X_{\et}$ be the natural morphism of sites. Let $G$ be a rigid group over $K$, considered as a sheaf on $X_{v}$. For example, $G$ could be the analytification of any algebraic group. Then there is a canonical isomorphism 
		\[ \HTlog: R^1\nu_{\ast}G\isomarrow \Higgs_G\]
		of sheaves of pointed sets on  $X_{\et}$ that is functorial in $X$, $G$ and $K$. Here
		\[ \Higgs_G:=(\mathrm{Lie}(G)\otimes_K \Omega_X^1(-1))^{\wedge =0}/ G\]
		is the sheaf of isomorphism classes of $G$-Higgs bundles on $X_{\et}$:
		Explicitly, $\mathrm{Lie}(G)$ is the Lie algebra of $G$ as a $K$-vector space, $(-)^{\wedge=0}$ denotes the subspace of elements $\theta$ satisfying $\theta\wedge \theta=0$, and the sheaf quotient is formed with respect to the adjoint action of $G$ on $\mathrm{Lie}(G)$.
	\end{Theorem}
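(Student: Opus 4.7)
The plan is to work locally on $X_{\et}$ and glue. Since $R^1\nu_\ast G$ is by definition the sheafification of the presheaf $U\mapsto H^1_v(U_v,G)$ and $\Higgs_G$ is already a sheaf on $X_{\et}$, it suffices to produce a natural bijection on a basis of sufficiently small affinoid opens $U=\Spa(R,R^+)$ admitting toric charts $U\to \mathbb{T}^d$, and then verify that these local identifications glue to a morphism of sheaves.

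For such a small $U$, form the pro-\'etale perfectoid Galois cover $\wt U\to U$ obtained by pullback of the standard cover $\mathbb{T}^d_\infty\to \mathbb{T}^d$ adjoining all $p$-power roots of the coordinates, with Galois group $\Delta\cong \Z_p(1)^d$. Since $U$ is in particular smoothoid, its $v$-cohomology in degree $1$ is computed by continuous $\Delta$-cohomology, yielding
\[ H^1_v(U,G) = H^1_{\cts}(\Delta, G(\wt U)). \]

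The heart of the proof is to identify this continuous group cohomology with $\Higgs_G(U)$ via linearization, using the generalization of Faltings' local $p$-adic Simpson correspondence to arbitrary rigid $G$ and perfectoid families that is developed in this paper. Since $\Delta$ is pro-$p$ and abelian, any continuous cocycle $c:\Delta\to G(\wt U)$ can, after a coboundary, be shifted into a neighborhood of the identity on which the logarithm $\log:G\to \mathrm{Lie}(G)$ converges. Writing $\log c(\gamma)=\langle \gamma,\theta\rangle$ for $\theta\in \mathrm{Lie}(G)\otimes_{\Z_p}\Hom(\Delta,\Z_p)$, the crossed-homomorphism condition combined with abelianness of $\Delta$ and the Baker--Campbell--Hausdorff formula forces $[\theta_i,\theta_j]=0$, which is the integrability $\theta\wedge\theta=0$. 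Applying the Hodge--Tate identification $\Hom_{\Z_p}(\Delta,\Z_p)\cong \Omega^1_U(-1)(U)$, one obtains a $G$-Higgs field on $U$; the action of $G(\wt U)$-coboundaries on cocycles descends, on the level of Lie algebras, to the adjoint $G$-action, producing exactly $\Higgs_G(U)$.

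Finally, the isomorphism is natural: for a different toric chart, the two perfectoid covers are related by a canonical isomorphism of Galois groups induced by the derivative of the \'etale transition map, and this is compatible with $\Hom(\Delta,\Z_p)\cong \Omega^1_X(-1)$ under the Hodge--Tate identification. Combined with manifest functoriality in $G$ and $K$, this produces the desired map of sheaves $\HTlog:R^1\nu_\ast G\isomarrow \Higgs_G$, which is an isomorphism because both sides are sheaves on $X_{\et}$ and the map is a bijection on a basis. The principal obstacle is in the linearization step: one must carefully control the $p$-adic convergence of $\exp$ and $\log$ on affinoid neighborhoods of $1\in G$ to make the coboundary shift, handle the error terms from BCH for non-commutative $G$, and check that the resulting quotient by $G(\wt U)$-coboundaries really descends to the adjoint-$G$-quotient defining $\Higgs_G$ rather than a coarser equivalence. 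A secondary check is that the map recovers Scholze's $\HT:R^1\nu_\ast\O\isomarrow \Omega^1_X(-1)$ in the commutative case $G=\G_a$, justifying the notation $\HTlog$.
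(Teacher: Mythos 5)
Your overall architecture matches the paper's: localize to toric affinoids, use the Cartan--Leray identification with continuous $\Delta$-cohomology of the perfectoid toric cover, linearize via $\exp/\log$, and identify $\Hom(\Delta,\Z_p)$ with $\Omega^1(-1)$ via Hodge--Tate. (One cosmetic difference: the paper builds the map in the direction $\Higgs_G\to R^1\nu_\ast G$ by exponentiating cocycles, then proves bijectivity, rather than starting from a cocycle and taking $\log$.) However, the proposal has a genuine gap at its central step. When you write $\log c(\gamma)=\langle\gamma,\theta\rangle$ you are implicitly assuming that the cocycle $c$ is a \emph{homomorphism} valued in the $\Delta$-invariants $G(R)\subseteq G(R_\infty)$. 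A general continuous $1$-cocycle $c:\Delta\to G(R_\infty)$ satisfies $c(\gamma_1\gamma_2)=c(\gamma_1)\cdot\gamma_1^\ast c(\gamma_2)$ for the \emph{nontrivial} $\Delta$-action on $R_\infty$, and this does not linearize to a homomorphism condition; shifting by a coboundary to make $c$ small does nothing to remove the twist. The essential content of the theorem is precisely that every small cocycle is cohomologous to a genuine homomorphism $\Delta\to G_c(R)$ (\cref{p:part-2-of-Faltings-Lemma}), which requires the almost-isomorphism $H^i_{\cts}(\Delta,R^+/p^s)\approx H^i_{\cts}(\Delta,R_\infty^+/p^s)$ of \cref{l:Sch13-4.5/5.5} together with an inductive lifting argument through the congruence filtration $G_k$ of $G$, keeping careful track of the torsion bounds. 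None of this decompletion input appears in your proposal, and without it the "linearization step" cannot get started. BCH error terms and convergence of $\exp$ are, by comparison, minor.

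The second issue you flag only as a "secondary check" is in fact the other half of the theorem: to see that the quotient is exactly by the adjoint action of $G(U)$ (injectivity of the resulting map), one must show that if two homomorphisms $\Delta\to G_c(R)$ become conjugate by some $A\in G(R_\infty)$, then $A$ already lies in $G(R)$. This is \cref{l:fully-faithful-case-of-good-reduction}, proved by another decompletion in cohomological degree $0$ followed by a descent down the toric tower using $\exp/\log$; it cannot be reduced to Faltings' matrix-algebra argument for general rigid $G$. Finally, your gluing step via "the derivative of the \'etale transition map" between toric charts is not available in general --- morphisms of smooth rigid spaces need not lift to morphisms of toric charts --- which is why the paper instead proves independence of \emph{all} choices (Galois cover, basis of $\wtOm$, cocycle representatives) and functoriality directly at the level of $R^1\nu_\ast G$, at the cost of \cref{l:indep-of-choice-of-cocycle,l:independence-of-Choice-of-Galois-cover,l:indep-of-choice-of-basis,l:functoriality-of-Psi}.
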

 This is a vast generalisation of Scholze's isomorphism \cref{eq:HT}, which we recover as the special case of $G=\G_a$.
In earlier work, we had studied $G=\G_m$, but already for $G=\GL_n$, the theorem is new.
In other words, $\HTlog$ gives a \textit{canonical} correspondence between isomorphism classes of $v$-topological $G$-bundles on $X$ and $G$-Higgs bundles on $X$ after \'etale sheafification.

As in Scholze's strategy for $\G_a$, the Leray sequence of $\nu$ now yields a short exact sequence 
\begin{equation}\label{eq:HT-seq-G}
0\to H^1_{\et}(X,G)\to H^1_{v}(X,G) \xrightarrow{\HTlog} \Higgs_G(X)
\end{equation}
of pointed sets. Of course, for non-abelian $G$, such a sequence  gives less structure than a short exact sequence of abelian groups as in \eqref{eq:HT-SES}. The second main idea of this article is therefore to pass from sheaves of isomorphism classes to moduli spaces, and to turn the last morphism in \eqref{eq:HT-seq-G} into a morphism of $v$-stacks. The exactness of the Hodge--Tate sequence can then find its generalisation in geometric properties of this morphism, such as being a fibration. As we will illustrate below at the hand of examples,  this  strategy indeed turns out to be fruitful.

\subsection{Moduli spaces in $p$-adic non-abelian Hodge theory}
	In order to define $p$-adic analytic moduli spaces of $v$-topological $G$-torsors and $G$-Higgs bundles on $X$, we need new technical foundations  to formulate relative $p$-adic Hodge theory. For this, our first step in this article is to introduce and study a new class of perfectoid families of smooth rigid spaces:
	\begin{Definition}
		A \textbf{smoothoid space} is an analytic adic space over a perfectoid field $K$ that locally admits a smooth morphism of adic spaces to a perfectoid space over $K$.
	\end{Definition}
	An example would be the product $X\times T$ of the smooth rigid space $X$ with a perfectoid space $T$, or any object of its \'etale site. There is a reasonable notion of differentials on a smoothoid space:
	For any smoothoid space $Y$ we consider the map $\nu:Y_{v}\to Y_{\et}$ and define
	\[ \wtOm^n_{Y}:=R^n\nu_{\ast}\O_Y.\]
	We show that this is a vector bundle on $Y$. The cup product induces a natural wedge product on this, yielding a good functorial definition of $G$-Higgs bundles, exactly as in \cref{d:intro-G-Higgs}.
	
	\medskip

	Towards a moduli-theoretic non-abelian Hodge correspondence, we suggest to study the functors fibred in groupoids on the $v$-site $\Perf_K$ of affinoid perfectoid spaces $T$ over $K$
	\begin{alignat*}{3}
	\cBun_{G,v}\colon T&\mapsto &&\{\text{$G$-torsors on }(X\times T)_{v}\},\\
	 \cHiggs_{G}\colon T&\mapsto&& \{\text{$G$-Higgs bundles on }(X\times T)_{\et}\}.
	\end{alignat*}
	Our next result says that these can reasonably be regarded as geometric objects:
	
	\begin{Theorem}\label{t:intro-Bun-Higgs-v-stacks}
		
		$\cBun_{G,v}$ and $\cHiggs_{G}$ are small $v$-stacks in the sense of Scholze \cite[\S12]{etale-cohomology-of-diamonds}.
	\end{Theorem}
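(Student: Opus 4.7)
The plan is to verify, for each of $\cBun_{G,v}$ and $\cHiggs_G$, that it is a $v$-stack and admits a surjection from a small $v$-sheaf, in the sense of \cite[\S12]{etale-cohomology-of-diamonds}.

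For $\cBun_{G,v}$, $v$-descent is essentially formal: a $v$-cover $T' \to T$ in $\Perf_K$ base-changes to a $v$-cover $X \times T' \to X \times T$ of smoothoid spaces, and $G$-torsors on any site descend along covers of that site, $G$ being a $v$-sheaf as a rigid group. For smallness, I would exploit the quasi-compactness of $X$ to choose a quasi-compact affinoid perfectoid pro-\'etale cover $\wt X \to X$ from the smoothoid foundations, which then pulls back to a $v$-cover of $X \times T$ on which every $v$-torsor trivialises after a further $v$-cover of $T$. The $v$-sheaf $\mathcal{Y}$ of pairs (torsor, trivialisation on $\wt X \times T$) surjects onto $\cBun_{G,v}$, and unwinding its \v{C}ech $1$-cocycle description presents $\mathcal Y$ as a closed sub-$v$-sheaf of $\HOM(\wt X \times_X \wt X, G)$; this is a small diamond since $\wt X \times_X \wt X$ is qcqs perfectoid and $G$ is a rigid group.

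For $\cHiggs_G$, I would factor the problem through the forgetful morphism $\cHiggs_G \to \cBun_{G,\et}$, where $\cBun_{G,\et}(T)$ is the groupoid of \'etale $G$-torsors on $(X \times T)_{\et}$. Its fibre over a torsor $E$ is the closed sub-$v$-sheaf of the vector bundle $\ad(E) \otimes \wtOm^1(-1)$ cut out by $\theta \wedge \theta = 0$, which is a small $v$-sheaf since vector bundles on smoothoid spaces are. The stack $\cBun_{G,\et}$ itself is treated in parallel to $\cBun_{G,v}$, but with an \'etale cover of $X$ in place of the $v$-cover $\wt X \to X$; the \v{C}ech presentation then yields a surjection from a small $v$-sheaf.

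The main obstacle is the $v$-descent step for $\cHiggs_G$ (and equivalently for $\cBun_{G,\et}$): one needs that \'etale $G$-torsors on a smoothoid space $X \times T$ genuinely descend along $v$-covers $T' \to T$ of the perfectoid factor, not merely their images as $v$-torsors in $\cBun_{G,v}(T')$. This is the non-abelian analogue of descent of analytic vector bundles in perfectoid families, and is precisely the kind of statement the smoothoid framework of this article is designed to support; once it is in place, the \v{C}ech presentations above go through, and smallness follows for both stacks.
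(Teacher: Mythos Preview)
Your outline for the $v$-stack property is correct and matches the paper: $v$-descent for $\cBun_{G,v}$ is formal, and the substantive step for $\cHiggs_G$ is indeed $v$-descent of \'etale $G$-torsors along base-change $X\times T'\to X\times T$, which the paper deduces from \cref{t:main-thm-for-both-O-and-O^+} via the descent criterion \cref{c:descent-criterion} (the map $\wtOm(X\times T)\to\wtOm(X\times T')$ is injective because $\wtOm_{X\times T}=\wtOm_X\otimes\O_T$ and $\O_T\hookrightarrow\O_{T'}$). The Higgs field then descends since $\ad(E)\otimes\wtOm$ is an \'etale vector bundle, hence a $v$-sheaf.

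The smallness argument, however, has a genuine gap. For your \v{C}ech presentation $\mathcal Y\to\cBun_{G,v}$ to be surjective as a morphism of $v$-stacks over $\Perf_K$, you need that for any $T$ and any $v$-$G$-torsor $V$ on $X\times T$, there is a $v$-cover $T'\to T$ \emph{of the perfectoid base} such that $V|_{\wt X\times T'}$ is trivial. But a $v$-$G$-torsor on the affinoid perfectoid $\wt X\times T$ is only guaranteed to trivialise on a $v$-cover of $\wt X\times T$ itself, which is in general not of the form $\wt X\times T'$; equivalently, $H^1_{\et}(\wt X\times T,G)$ need not vanish, nor become trivial after base-change in $T$ alone. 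The same issue arises for your \'etale \v{C}ech presentation of $\cBun_{G,\et}$: a fixed \'etale cover of $X$ does not trivialise all \'etale $G$-torsors on $X\times T$ uniformly in $T$.

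The paper sidesteps this entirely by a different route: following \cite[Proposition~III.1.3]{FS-Geometrization}, it shows that $\cBun_{G,\et}(S)=\varinjlim_i\cBun_{G,\et}(S_i)$ as $S_i$ ranges over topologically countably generated perfectoid subalgebras of $\O(S)$, via an approximation argument for trivialising covers and cocycles. The same works for $\cHiggs_G$. For $\cBun_{G,v}$, the paper then uses the local $p$-adic Simpson correspondence \cref{t:local-paCS}: every $v$-$G$-torsor is \'etale-locally small, hence corresponds functorially (in the chart) to a small Higgs bundle, and the descent of the latter to some $S_i$ forces the descent of the former. So the local correspondence is not just motivation here but an essential ingredient for smallness of $\cBun_{G,v}$.
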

	While for  $\cBun_{G,v}$ only the smallness requires work, the result for $\cHiggs_{G}$ is much deeper. We deduce it from $v$-descent criteria, namely our main result is that \cref{intro-first-version-of-main-Thm} generalises:
	
	\begin{Theorem}\label{t:main-thm-2-intro}
		Let $X$ be a smoothoid space over $K$, then there is a canonical isomorphism 
		\[ \HTlog: R^1\nu_{\ast}G\isomarrow (\mathrm{Lie}(G)\otimes_K \wtOm_X^1)^{\wedge =0}/ G\]
		of sheaves of pointed sets on  $X_{\et}$, where $\nu:X_{v}\to X_{\et}$. It is functorial in $X$, $G$ and $K$. 
		For commutative $G$, there is for any $n\in \N$ a canonical isomorphism $R^n\nu_{\ast}G\isomarrow \mathrm{Lie}(G)\otimes_K \wtOm_X^n$.
	\end{Theorem}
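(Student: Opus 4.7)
The approach is to work étale-locally on $X$ and then deduce the result from an explicit computation on a perfectoid toric cover. Since the claim concerns sheaves on $X_{\et}$, it suffices to produce a canonical, functorial isomorphism on a basis of the étale topology. By definition of a smoothoid space, we may assume that there is a smooth morphism $X \to T$ to an affinoid perfectoid $T$, and after further étale localisation that this factors through an étale ``toric chart'' $X \to T \times \mathbb{T}^d$, where $d$ is the relative dimension. Let $\wt X \to X$ be obtained by pulling back the pro-finite-étale perfectoid cover $T \times \wt{\mathbb{T}}^d \to T \times \mathbb{T}^d$. Then $\wt X$ is affinoid perfectoid and Galois over $X$ with Galois group $\Gamma \cong \Z_p(1)^d$, and on this basis of the étale site, continuous $\Gamma$-cohomology on $\wt X$ governs the stalks of $R^\ast\nu_\ast$ of $v$-sheaves.

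To construct $\HTlog$, one then invokes the generalised local $p$-adic Simpson correspondence for $G$-bundles on perfectoid families proved earlier in the paper: this identifies continuous $\Gamma$-cohomology classes in $G(\wt U)$ with $G$-Higgs bundles on $U$, functorially in $U$. Concretely, a small cocycle $\Gamma \to G(\wt U)$ is linearised via the logarithm to a continuous $\Gamma$-equivariant map with values in $\mathrm{Lie}(G) \otimes_K \O(\wt U)$, and the Hodge--Tate isomorphism on the smoothoid $X$ gives $\Hom_{\cts}(\Gamma, \O(\wt U)) \cong \wtOm^1_X(U)$. The abelianness of $\Gamma$ translates the cocycle condition into the integrability relation $\theta \wedge \theta = 0$, while the adjoint $G$-action on cocycles yields the quotient by $G$. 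For commutative $G$, the logarithm gives an étale-local identification of $G$ with an open subgroup of $\mathrm{Lie}(G) \otimes \G_a$; since $R^n\nu_\ast$ commutes with the filtered colimits obtained by taking finer neighbourhoods of the identity and any $v$-cohomology class is represented by small cocycles, one reduces to the case $G = \G_a$, where the statement $\wtOm^n_X = R^n\nu_\ast\O$ is the very definition.

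The main obstacle is the local $p$-adic Simpson correspondence in the perfectoid-family setting that underlies this argument. The content there is that, for $X \to T$ as above with $T$ affinoid perfectoid and $\wt X \to X$ the toric perfectoid cover, one can control the convergence of the $\log$ and $\exp$ power series on $G(\wt U)$ uniformly in $T$, extend the Sen-theoretic decomposition of $\O(\wt U)$ under $\Gamma$ from the rigid case to perfectoid bases, and prove that ``small'' continuous $\Gamma$-cocycles in $G(\wt U)$ correspond bijectively and canonically to $G$-Higgs bundles on $U$. Once this local input is in place, verifying that the resulting map $\HTlog$ is independent of the choice of toric chart, glues to a global isomorphism of étale sheaves, and is functorial in $X$, $G$, and $K$ is formal, essentially because all constructions are obtained from canonical operations (log, Hodge--Tate, adjoint quotient) on $\Gamma$-cocycles.
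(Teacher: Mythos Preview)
Your local picture is right: toric charts, the perfectoid $\Gamma\cong\Z_p(1)^d$-cover, and linearising cocycles via $\log$/$\exp$ are exactly the ingredients the paper uses. The commutative case is also essentially as you say (the paper argues via $\exp:\mathfrak g^\circ\isomarrow G^\circ$ together with $R^n\nu_\ast G^\circ=R^n\nu_\ast G$ for open subgroups, which amounts to your reduction to $\G_a$).

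The gap is your last paragraph. You assert that once the local $p$-adic Simpson correspondence is available, independence of the toric chart and functoriality in $X$ are ``formal'' because the constructions are built from canonical operations. This is precisely what is \emph{not} true, and the paper discusses the point explicitly (see \S6.5). The local correspondence $LS_f$ genuinely depends on the chart $f$: two charts on the same $U$ give equivalences that agree only up to a non-canonical natural isomorphism, and for an arbitrary morphism $X'\to X$ of smoothoids one cannot in general find compatible toric charts, even \'etale-locally. So the map on isomorphism classes induced by $LS_f$ does not a priori glue, and does not a priori commute with pullback. The paper therefore does \emph{not} deduce the sheafified statement from the local correspondence; instead it first constructs a candidate morphism $\Psi:\Higgs_G\to R^1\nu_\ast G$ using an arbitrary affinoid perfectoid pro-finite-\'etale Galois cover (not just the toric one), and then spends several lemmas (independence of the cocycle representatives, of the basis of $\wtOm$, of the cover, and functoriality for arbitrary morphisms of toric smoothoids) showing that $\Psi$ is canonical and functorial. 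Only afterwards is $\Psi$ shown to be an isomorphism by the Faltings-style lifting argument you allude to; the local correspondence is proved in parallel from the same technical input, not used as a black box.

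In short: your ``main obstacle'' paragraph is a reasonable sketch of the local mechanism, but the sentence claiming the rest is formal is where the actual content lies, and as written your argument would not produce a well-defined sheaf map on $X_{\et}$.
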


	\subsection{The Hitchin morphism on the Betti side}
	The moduli stack $\cHiggs_{G}$ gives rise to a $p$-adic incarnation of the \textbf{Hitchin morphism} which plays an important role in complex non-abelian Hodge theory. In the $p$-adic setting, this is a morphism of $v$-stacks
	\begin{equation}\label{eq:Hitchin-Higgs}
		\mathcal H:\cHiggs_{G}\to  \mathcal A_G
	\end{equation}
	to the Hitchin base $\mathcal A_G$ over $X$. Here $\mathcal A_G$ is in general a $v$-sheaf, but  if $X$ is proper and $G$ is split reductive or commutative, then $\mathcal A_G$ is represented by an affine rigid space, for example
	\[ \mathcal A_{\GL_n}=\textstyle\bigoplus\limits_{k=1}^nH^0(X,\Sym^k\wtOm_{X})\otimes_K \G_a.\]
	The construction of $\mathcal H$ in this case is essentially the same as the classical one due to Hitchin \cite{HitchinFibration}, given by sending a Higgs bundle $(E,\theta)$ to the characteristic polynomial of $\theta$. 
	\medskip
	
	On the ``Betti'' side,	\cref{t:intro-Bun-Higgs-v-stacks} now allows us to construct an analogous  morphism:
	
	\begin{Definition}
	 For any rigid group $G$, the \textbf{Hitchin morphism on the Betti side}
	\begin{equation}\label{eq:Hitchin-Betti}
		\wt{\mathcal H}:\cBun_{G,v}\to \mathcal A_G
	\end{equation}
	is defined by sending a $v$-topological $G$-torsor on $Y=X\times T$ to the associated class in $R^1\nu_{\ast}G(Y)$ and  using $\HTlog$ to pass to the Higgs side, where we use the Hitchin map \cref{eq:Hitchin-Higgs}.
\end{Definition}

Assume now that $X$ is a smooth proper rigid space over $K$ and that $G$ is reductive. In this setting, we envision the $p$-adic Simpson correspondence to have a geometric incarnation in terms of a comparison of the Hitchin morphisms \cref{eq:Hitchin-Higgs} and \cref{eq:Hitchin-Betti}.
While $\cBun_{G,v}$ and $\cHiggs_{G}$ are not in general isomorphic, we conjecture that one is a \textit{twist} of the other in a canonical way, via the Hitchin fibrations.
 Indeed, building on this article, the following is  proved in \cite{HX}:

\begin{Theorem}[\cite{HX}]\label{t:part-II-twist-for-curves}
	Let $X$ be a smooth projective curve over $K$ and let $G$ be a  reductive group. Then there is a Picard groupoid $\mathscr P$ on $\mathcal A_{G,v}$ that acts on both $\mathcal H:\cHiggs_{G}\to \mathcal A_{G}$ and  $	\wt{\mathcal H}:\cBun_{G,v}\to \mathcal A_G$, and a $\mathscr P$-torsor $\mathscr H$ for which there is a canonical equivalence of $v$-stacks
	\[
	\cBun_{G,v}\isomarrow \mathscr H\times^{\mathscr P}\cHiggs_{G}.\]
\end{Theorem}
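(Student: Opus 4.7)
The plan is to deduce the theorem by combining the sheafified correspondence of \Cref{t:main-thm-2-intro} with the spectral cover machinery of the Hitchin system, suitably upgraded to the $p$-adic rigid-analytic setting.

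First, I would construct the Picard groupoid $\mathscr P$ over $\mathcal A_{G}$ from the universal spectral cover $\pi\colon \wt X_{\mathcal A}\to X\times \mathcal A_G$ (for $G=\GL_n$, via the relative Picard stack $\mathrm{Pic}(\wt X_{\mathcal A}/\mathcal A_G)$; for general reductive $G$, via a relative Prym-type stack attached to the universal cameral cover). Carrying out the Beauville--Narasimhan--Ramanan construction in our setting, $\mathscr P$ naturally acts on $\cHiggs_G\to \mathcal A_G$: a torsor on the spectral cover twists a Higgs bundle via push-forward, preserving the characteristic polynomial. Over the smooth locus of $\mathcal A_G$, this action is simply transitive on fibres. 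That these classical constructions make sense in families and glue to a morphism of $v$-stacks is handled by the smoothoid framework of this article and by \Cref{t:intro-Bun-Higgs-v-stacks}.

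Next, I would transport this action to $\cBun_{G,v}$ using $\HTlog$. Since $\wt{\mathcal H}$ is \emph{defined} through $\HTlog$ by \Cref{eq:Hitchin-Betti}, compatibility with $\mathcal A_G$ is automatic, and the $\mathscr P$-action on the Higgs side induces one on $\cBun_{G,v}$ fibrewise over $\mathcal A_G$. The torsor $\mathscr H$ is then defined as the $v$-stack over $\mathcal A_G$ whose sections over a $v$-cover $\mathcal U\to \mathcal A_G$ are the $\mathscr P$-equivariant equivalences
\[ \cBun_{G,v}\times_{\mathcal A_G}\mathcal U\isomarrow \cHiggs_G\times_{\mathcal A_G}\mathcal U.\]
By construction this is a $\mathscr P$-pseudo-torsor, and the canonical evaluation map yields a morphism
\[\mathscr H\times^{\mathscr P}\cHiggs_G\to \cBun_{G,v}\]
over $\mathcal A_G$. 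Proving that $\mathscr H$ is non-empty (so genuinely a torsor) and that this map is an equivalence can then be checked after $v$-locally trivialising $\mathscr H$, where the statement reduces via $\HTlog$ to the local Simpson correspondence generalised in this article.

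The main obstacle is the upgrade of $\HTlog$ from an isomorphism of sheaves of isomorphism classes on $X_{\et}$ to enough groupoid-theoretic information to both define the $\mathscr P$-action on the Betti side and to exhibit $\mathscr H$ as a torsor (rather than merely as a non-empty gerbe). This boils down to matching the automorphism groupoids on both sides of the correspondence via the adjoint action of $G$ on $\mathrm{Lie}(G)\otimes \wtOm_X^1$, and then using properness of $X$ (crucially, that $X$ is a projective curve, so that $\wtOm_X^1=\Omega_X^1(-1)$ is a line bundle and the spectral covers are themselves curves) to control the resulting obstruction classes. Once this groupoid-level refinement of \Cref{t:main-thm-2-intro} is in hand, the non-abelian Hodge twist follows from the classical argument pattern: fibrewise identification on the smooth locus of $\mathcal A_G$ via BNR, extension across the discriminant using $v$-descent from \Cref{t:intro-Bun-Higgs-v-stacks}, and gluing via the functoriality of $\HTlog$ in $X$ and $G$.
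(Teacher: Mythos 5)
First, a point of order: the paper does not prove this theorem. It is quoted from the companion paper \cite{HX} (``building on this article, the following is proved in \cite{HX}''), so there is no internal proof to compare against; the only hint given here is that $\mathscr P$ is the stack of line bundles on the spectral curve. Your sketch is at least consistent with that hint — a Beauville--Narasimhan--Ramanan-type construction of $\mathscr P$ and its action on $\cHiggs_G$ is surely part of the real argument.

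That said, the two steps that carry the actual content of the theorem are asserted rather than proved, and the mechanisms you propose for them do not work as stated. (1) You cannot transport the $\mathscr P$-action to $\cBun_{G,v}$ ``through $\HTlog$'': \cref{t:main-thm-2-intro} is an isomorphism of sheaves of isomorphism classes on $X_{\et}$, i.e.\ of pointed sets after \'etale sheafification on $X$. It is not an equivalence of groupoids, and on global sections over $X$ the map $H^1_v(X,G)\to \Higgs_G(X)$ in \cref{eq:HT-seq-G} is neither surjective nor does it identify the two sides — indeed the whole point of the theorem is that $\cBun_{G,v}$ and $\cHiggs_G$ are \emph{not} equivalent, only twists of each other. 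So ``the $\mathscr P$-action on the Higgs side induces one on $\cBun_{G,v}$ fibrewise over $\mathcal A_G$'' is precisely what needs to be constructed, presumably by a direct $v$-topological analogue of the BNR correspondence rather than by transport. You flag this as ``the main obstacle'' but then treat it as a technical upgrade of $\HTlog$; it is the theorem. (2) Defining $\mathscr H$ as the stack of $\mathscr P$-equivariant equivalences over $v$-covers of $\mathcal A_G$ makes it a pseudo-torsor by fiat, but non-emptiness $v$-locally on $\mathcal A_G$ is again the twisted local form of the $p$-adic Simpson correspondence. Reducing this to \cref{t:local-paCS-intro} does not go through directly: that result is local on $X$ (it depends on a toric chart and only applies to small objects), not local on $\mathcal A_G$, and §\ref{s:rel-loc-corresp} emphasises that the local correspondences cannot be glued over $X$ canonically. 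One expects instead that local triviality of $\mathscr H$ is controlled by the rank-one twist on the spectral curve, i.e.\ by \cref{t:intro-Pic-twist}, which is a genuinely different input. As it stands your proposal is a plausible roadmap, not a proof.
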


Here $\mathscr P$ is the stack of line bundles on the spectral curve. In particular, this shows that for $G=\GL_n$, up to connected components, the morphism  $\wt {\mathcal H}:	\cBun_{G,v}\to \mathcal A_{G}$ is in this case a fibration in abelian varieties.
To illustrate this phenomenon further, let us also mention the easier case of $G=\G_m$, for which \cite[Theorem 1.3]{heuer-diamantine-Picard} immediately implies the following:
\begin{Theorem}[\cite{heuer-diamantine-Picard}]\label{t:intro-Pic-twist}
	For any smooth proper rigid space $X$ over $K$, the sequence
	\begin{equation}\label{eq:multiplicative-HT-seq}
		0\to \cBun_{\G_m,\et}\to  \cBun_{\G_m,v}\xrightarrow{\wt{\mathcal H}} H^0(X,\wtOm^1_X)\otimes \G_a\to 0
	\end{equation}
	is short exact.
	In particular, both $\mathcal H$ and $\wtcH$ are torsors under $\cBun_{\G_m,\et}$.
\end{Theorem}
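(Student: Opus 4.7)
The plan is to reduce to the commutative case of \cref{t:main-thm-2-intro}, applied to the smoothoid $Y=X\times T$ for each affinoid perfectoid $T\in\Perf_K$, and then exploit the Leray spectral sequence for $\nu:Y_v\to Y_{\et}$ with coefficients in $\G_m$. First I would observe that $\cBun_{\G_m,\et}$ and $\cBun_{\G_m,v}$ are Picard $v$-stacks, with automorphism sheaves $\G_m$ in both cases (unaffected by passing from \'etale to $v$-cohomology). Short-exactness thus reduces to showing, for each $T$, that
\[
0\to H^1_{\et}(X\times T,\G_m)\to H^1_v(X\times T,\G_m)\xrightarrow{\wt{\mathcal H}}H^0(X,\wtOm^1_X)\otimes\O(T)\to 0
\]
is an exact sequence of abelian groups, functorially in $T$.

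Next I would invoke \cref{t:main-thm-2-intro} in the commutative case $G=\G_m$, with $\mathrm{Lie}(\G_m)=K$, which yields $R^1\nu_\ast\G_m\cong\wtOm^1_Y$. Together with $\nu_\ast\G_m=\G_m$ (by $v$-descent for units), the five-term exact sequence reads
\[
0\to H^1_{\et}(Y,\G_m)\to H^1_v(Y,\G_m)\xrightarrow{\wt{\mathcal H}}H^0(Y,\wtOm^1_Y)\to H^2_{\et}(Y,\G_m),
\]
immediately yielding left-exactness and the kernel identification. Flat base change then gives $H^0(Y,\wtOm^1_Y)=H^0(X,\wtOm^1_X)\otimes_K\O(T)$: indeed $\wtOm^1_Y$ pulls back from $\wtOm^1_X$ since $T$ is perfectoid (so contributes no differentials), and $X$ is proper over $K$ with $T$ flat.

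The main obstacle is the surjectivity of $\wt{\mathcal H}$, equivalently the $v$-local vanishing of the connecting map to $H^2_{\et}(Y,\G_m)$. I would address this by constructing, functorially in $T$, a section of $\wt{\mathcal H}$ over $H^0(X,\wtOm^1_X)\otimes\O(T)$. Concretely, after pulling back along a pro\'etale toric perfectoid cover $\wt X\to X$ trivialising $\wtOm^1_X$, any $\theta$ can be lifted via the $p$-adic exponential to a Galois-equivariant $1$-cocycle valued in $1+\m\O^+$, producing an explicit $v$-line bundle on $Y$ whose Higgs class is $\theta$. Packaging this produces the required splitting and reproduces the construction underlying \cite[Theorem 1.3]{heuer-diamantine-Picard}. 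Finally, the torsor assertion is automatic: $\cBun_{\G_m,\et}$ acts by tensoring on both $\mathcal H$ and $\wt{\mathcal H}$ without changing the image in $\mathcal A_{\G_m}$ (trivially on the Higgs side, and by the short-exactness just proved on the Betti side), and the action is free and transitive on the fibres.
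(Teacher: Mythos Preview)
The paper itself does not give a proof; it simply cites \cite[Theorem~1.3]{heuer-diamantine-Picard} and says the statement follows immediately. So the comparison is between your sketch and the argument in that reference. Your setup via the Leray five-term sequence for $\nu$ and the identification $R^1\nu_\ast\G_m\cong\wtOm^1_Y$ is correct and yields left-exactness together with the right target. The gap is entirely in the surjectivity step.

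Your proposed splitting does not work as stated. There is no global ``pro-\'etale toric perfectoid cover $\wt X\to X$'' when $X$ is proper: toric charts are local, and the paper's constructions in \S\ref{s:comp-toric-charts} and \S\ref{s:abelian-G-bundles} are expressly local on $X_\et$. If instead you use the universal pro-finite-\'etale cover as in \cref{ex:choice-of-cover}, the exponential $\exp(\rho_i(\gamma)A_i)$ need not converge for arbitrary $\theta=\sum A_i\delta_i$; the paper's remedy (\cref{l:Higgs-field-becomes-small-on-cover}) is to pass to a finite \'etale subcover $X'\to X$, but then you only obtain a class in $H^1_v(X',\G_m)^Q$, and descending to $H^1_v(X,\G_m)$ is obstructed by $H^2_{\cts}(Q,\O(X')^\times)$, which you have not controlled. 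Equivalently, the map $\Psi$ constructed in \S\ref{s:Higgs-to-v} lands in $R^1\nu_\ast\G_m(X)$, not in $H^1_v(X,\G_m)$, and the difference is exactly the boundary map to $H^2_{\et}(X,\G_m)$ whose vanishing is the substance of the theorem. The actual argument in \cite{heuer-diamantine-Picard} is different: it uses the logarithm short exact sequence relating $\G_m$ to $\O$ together with the additive Hodge--Tate sequence and the structure of the rigid analytic Picard functor for proper $X$ to produce the splitting at the level of Picard varieties; this is genuinely a global input about proper $X$ that your local exponential construction does not supply.
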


\subsection{Towards a $p$-adic Corlette--Simpson correspondence for $\pi^{\et}_1(X)$}
The moduli-theoretic approach also seems fruitful for studying representations of $\pi^{\et}_1(X)$: We show that the Hitchin morphism on the Betti side $\wtcH$ induces a geometric generalisation of the map 
$\HT:\Hom_{\cts}(\pi_1(X),K)\to H^0(X,\wtOm)$
in the Hodge--Tate sequence \cref{eq:HT-SES}, namely a morphism
\[ \wt {H}:\HOM(\pi_1^{\et}(X),G)\to \mathcal A_{G},\]
 from the representation variety parametrising continuous $G$-representations of $\pi_1^{\et}(X)$ to the Hitchin base. In contrast to 	$\wt{\mathcal H}$, this has the advantage that it is represented by a morphism of rigid spaces. We call this the ``\textbf{Hitchin--Hodge--Tate morphism}''. The name reflects that $\wt H$ is simultaneously an analogue of the Hitchin morphism and a generalisation of the Hodge--Tate map $\HT$. Indeed, for $G=\G_a$, it is the morphism of rigid spaces associated to $\HT$ by tensoring with $\G_a$.  As we will explain, the image of a representation $\pi^{\et}_1(X)\to \GL_n(K)$ under $\wt H$ is a close analogue of the Hodge--Tate--Sen weights of a local Galois representation.
 
 \medskip
 
 That $\wt{H}$ is rigid analytic is relevant as the exactness of \cref{eq:HT-SES} can now find its non-abelian generalisation in geometric properties of $\wt {H}$: For $G=\G_m$, \cite[Theorem 4.1]{heuer-geometric-Simpson-Pic} shows that \cref{eq:multiplicative-HT-seq} induces on coarse moduli spaces a short exact sequence of rigid groups
 	\begin{equation}\label{intro:seq-over-K}
 	0\to \mathbf{Pic}^{\tt}_{X,\et}\to \HOM(\pi_1^\et(X),\G_m)\xrightarrow{\wt H} H^0(X,\wtOm^1_X)\otimes \G_a\to 0,
 \end{equation}
where $\mathbf{Pic}^{\tt}_{X,\et}$ is the topological torsion Picard functor, so here $\wt {H}$ is indeed a fibration. For $G=\G_m$, this was our crucial input to answer in \cite[Theorem 1.1]{heuer-geometric-Simpson-Pic} the question which Higgs bundles correspond to continuous characters of $\pi_1^\et(X)\to \G_m(K)$ under the equivalence \Cref{eq:intro-naHT-correspondence}. We envision that the geometric study of $\wt H$ leads  to an answer of this question for general $G$.
 
 In summary, with the construction of the moduli stacks $\cBun_{G,v}$ and $\cHiggs_{G}$ and their Hitchin morphisms $\mathcal H$ and $\wtcH$, we have thus established the technical foundations for our moduli theoretic approach to $p$-adic non-abelian Hodge theory.

\subsection{Application: $G$-torsors on rigid spaces in the $v$-topology}
\cref{intro-first-version-of-main-Thm} is of independent interest beyond \Cref{eq:intro-naHT-correspondence}: The sequence \cref{eq:HT-seq-G} describes how far the fully faithful functor
\[ \{G\text{-torsors on }X_{\et}\}\hookrightarrow \{G\text{-torsors on }X_{v}\} \]
is from  being an equivalence, in terms of the explicit set $\Higgs_G(X)$.
This is already very interesting for $G=\GL_n$, where it describes the difference between \'etale and \mbox{$v$-vector} bundles on $X$, which had so far been insufficiently understood: The question when a $v$-vector bundle is already \'etale appears naturally for example in the context of automorphic sheaves defined via descent from perfectoid Shimura varieties \cite{CHJ}.
Using  \cref{eq:HT-seq-G}, we can now give satisfactory answers to this question.
For example, we deduce the following a priori surprising criterion:

\begin{Corollary}\label{c:intro-analyticity-criterion}
	Let $X$ be a smooth rigid space and  $V$ a $G$-torsor on $X_{v}$. Then  for any \'etale map $f:U\to X$ with Zariski-dense image,  $V$ is \'etale-locally trivial if and only if $f^{\ast}V$ is.
\end{Corollary}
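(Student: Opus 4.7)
My plan is to use \cref{t:main-thm-2-intro} (the sheafified non-abelian Hodge isomorphism) to translate the problem into a vanishing statement for Higgs fields, and then to exploit Zariski-density of $f(U)$ together with the fact that the zero locus of a section of a vector bundle on a reduced rigid space is Zariski-closed. The forward direction is of course immediate.

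By the exact sequence~\cref{eq:HT-seq-G}, a $v$-$G$-torsor $W$ on a smooth rigid space is \'etale-locally trivial if and only if $\HTlog(W) = 0$ in $\Higgs_G$. Using naturality of $\HTlog$ to get $\HTlog(f^{\ast}V) = f^{\ast}\HTlog(V)$, the claim reduces to the following: if $\bar{\theta} := \HTlog(V) \in \Higgs_G(X)$ pulls back to $0$ in $\Higgs_G(U)$, then already $\bar{\theta} = 0$. Now $\Higgs_G = (\mathrm{Lie}(G) \otimes \Omega_X^1(-1))^{\wedge=0}/G$ is the \'etale quotient of the vector bundle $F := \mathrm{Lie}(G) \otimes \Omega_X^1(-1)$ by the adjoint $G$-action, which fixes $0$, so the $G$-orbit of the zero section is $\{0\}$. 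It follows that a class in $\Higgs_G$ vanishes if and only if any \'etale-local lift to $F$ does.

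I would then pick an \'etale cover $\{X_i \to X\}$ with lifts $\theta_i \in F(X_i)$ of $\bar{\theta}$. The assumption $f^{\ast}\bar{\theta} = 0$ translates, by applying the same principle on $U$, to $f^{\ast}\theta_i = 0$ on $U \times_X X_i$ for each $i$. The proof would then conclude by the observation that $U \times_X X_i \to X_i$ is \'etale with Zariski-dense image, so since $\theta_i$ is a section of a vector bundle on the reduced rigid space $X_i$ and its zero locus is Zariski-closed, vanishing on a Zariski-dense subset forces $\theta_i = 0$, and hence $\bar{\theta} = 0$. The main technical subtlety that I expect to encounter is the preservation of Zariski-density under \'etale base change in the rigid setting, where Zariski-density is genuinely weaker than analytic density; this ought to be handled by reducing to the irreducible case and using that \'etale morphisms of rigid spaces are analytic-open.
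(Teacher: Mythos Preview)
Your proposal is correct and takes essentially the same route as the paper: reduce via the Leray exact sequence to showing that the restriction $\Higgs_G(X)\to\Higgs_G(U)$ has trivial kernel, then lift \'etale-locally to sections of the vector bundle $\mathfrak g\otimes\wtOm_X$ and use that a section vanishing on a Zariski-dense open of a reduced rigid space is zero. The paper packages this second step as \cref{l:Zariski-dense-restriction-on-Higgs} combined with \cref{ex:pullback-wtOm-inj}, and the subtlety you flag about Zariski-density under \'etale base change is exactly what is being invoked there.
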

This extends the case $G=\G_m$ from \cite{heuer-v_lb_rigid} which we had used to simplify the proof that the automorphic sheaves of \cite{CHJ} are analytic vector bundles. \cref{intro-first-version-of-main-Thm} now improves this application: Setting $G:=\O^{+,\times}$, we see that the natural integral structure on the automorphic sheaves descend as well, i.e.\ these are finite locally free $\O^+$-modules.

For applications to $G$-torsors on $X_v$, also the following more categorical result is useful:

\begin{Theorem}\label{t:local-paCS-intro}
	Any \'etale morphism $f:X\to \mathbb T^d$   induces an equivalence of categories
	\[\{\text{small $G$-torsors on $X_v$}\}\isomarrow \{\text{small $G$-Higgs bundles on $X_{\et}$}\}\]
	that is natural in $G$ and $f$, but in general depends on the choice of $f$.
\end{Theorem}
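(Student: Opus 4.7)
My plan is to construct the equivalence explicitly from the toroidal perfectoid cover determined by $f$. Let $\mathbb T^d_\infty := \Spa(K\langle T_1^{\pm 1/p^\infty},\dots, T_d^{\pm 1/p^\infty}\rangle)$ be the standard perfectoid cover of the torus, and set $X_\infty := X\times_{\mathbb T^d} \mathbb T^d_\infty$; this is a pro-finite-\'etale Galois cover of $X$ with group $\Delta := \Z_p(1)^d$. The map $f$ simultaneously singles out a basis of $\wtOm^1_X$ via the images of the $\mathrm{dlog}(T_i)$, and hence via Kummer theory a canonical identification $\Hom_{\cts}(\Delta, \O_X) \isomarrow \wtOm^1_X(-1)$; the dependence of the equivalence on $f$ will be precisely this identification.

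First I would describe small $v$-topological $G$-torsors on $X$ in these terms: by smallness, such a torsor is trivial on $X_\infty$, hence given by a continuous $1$-cocycle $c\colon \Delta \to G(X_\infty)$ landing in a small open neighbourhood $U \subset G$ of the identity on which the $p$-adic logarithm is an analytic isomorphism onto its image in $\mathrm{Lie}(G)\otimes \O(X_\infty)$; morphisms of small torsors correspond to conjugations by elements of $G(X_\infty)$ close to $1$. The functor to the Higgs side sends $c$ to $\tilde c := \log\circ c$. The non-abelian cocycle relation combined with convergent Baker--Campbell--Hausdorff forces $\tilde c$ to satisfy an additive cocycle relation up to controllable commutator corrections. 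Using a Sen-type decomposition of $\O(X_\infty)$ into its constant part $\O(X)$ and a non-constant part on which continuous $\Delta$-cohomology almost vanishes, one adjusts $\tilde c$ by a small coboundary until it lands in $\Hom_{\cts}(\Delta, \mathrm{Lie}(G)\otimes \O(X)) = \mathrm{Lie}(G)\otimes \wtOm^1_X(-1)$. The leading commutator defect in the cocycle identity then repackages exactly as the wedge constraint $\theta\wedge\theta = 0$, producing a small Higgs field $\theta$.

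The inverse functor starts with a small $G$-Higgs bundle $(E,\theta)$. By smallness $E$ is trivial \'etale-locally, so after refining one may assume $E$ is trivial and $\theta \in \mathrm{Lie}(G)\otimes \wtOm^1_X(-1)$ with $\theta\wedge\theta=0$. The Kummer identification turns $\theta$ into a continuous homomorphism $\Delta \to \mathrm{Lie}(G)\otimes \O(X_\infty)$, which by smallness can be exponentiated via Baker--Campbell--Hausdorff into a continuous cocycle $c\colon \Delta \to G(X_\infty)$, producing a $v$-topological $G$-torsor on $X$. That the two functors are mutually inverse and form an equivalence of categories will follow from the bijectivity of $\log/\exp$ on the small regime, together with its compatibility with the adjoint action which translates small coboundaries into small conjugations of Higgs fields. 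Naturality in $G$ is automatic from functoriality of $\exp$, and in $f$ from the construction.

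The main obstacle will be making the notion of smallness precise enough on both sides that the $\log/\exp$ bijection, the Baker--Campbell--Hausdorff cocycle calculation, and the Sen-type vanishing can all be applied simultaneously and compatibly, and in particular that the non-abelian Baker--Campbell--Hausdorff corrections contribute exactly the wedge condition with no residual error. The sheafified statement \cref{t:main-thm-2-intro} will provide the essential input here: it already identifies $R^1\nu_{\ast}G$ with the Higgs sheaf, so the task reduces to categorifying this bijection by keeping track of morphisms through the cover.
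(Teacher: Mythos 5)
Your overall architecture --- descend along the toric perfectoid cover $X_\infty\to X$ with Galois group $\Delta$, describe small objects by continuous cocycles, and pass between the two sides with $\log$ and $\exp$, with the dependence on $f$ entering through the induced basis of $\wtOm^1_X$ --- is the same as the paper's. But two of your key steps do not work as stated, and they are exactly where the difficulty lies. You propose to take $\log$ of a general small cocycle $c:\Delta\to U(X_\infty)$, control the failure of additivity by Baker--Campbell--Hausdorff, and then ``adjust by a small coboundary'' using almost-vanishing of $\Delta$-cohomology on the non-constant part of $\O(X_\infty)$, with the ``leading commutator defect'' becoming $\theta\wedge\theta=0$. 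This is not how the wedge condition arises, and the BCH bookkeeping does not close up on its own. The argument must go in the opposite order: one first proves (\cref{p:part-2-of-Faltings-Lemma}) that the cocycle itself is cohomologous to a genuine continuous homomorphism $\varphi:\Delta\to G_c(X)$ valued in $G(X)$, on which $\Delta$ acts trivially; since $\Delta$ is abelian the image of $\varphi$ commutes, so by \cref{l:exp-log-commutativity} the exponential is exactly additive on it with no BCH correction, and $\theta:=\log\circ\varphi$ satisfies $\theta\wedge\theta=0$ simply because its coefficients commute. Producing $\varphi$ is the technical heart: it is an inductive lifting argument along the congruence filtration $G_s$ of \cref{p:inductive-lifting-ses-for-G}, killing at each stage an obstruction class in $H^2_{\cts}$ and a difference class in $H^1_{\cts}$ using the quantitative torsion bounds of \cref{l:Sch13-4.5/5.5} and losing a controlled power of $p$ each time. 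A single Sen-type decomposition applied to the logarithm of a non-abelian cocycle does not substitute for this; the decomposition is only available for the abelian graded pieces $\mg^+_r/\mg^+_s$.

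Second, your description of morphisms is incorrect: an isomorphism between the $v$-torsors attached to two small cocycles is given by an \emph{arbitrary} $A\in G(X_\infty)$ with $c_1(\gamma)=A^{-1}\,c_2(\gamma)\,\gamma^{\ast}A$, not by an element close to $1$, so full faithfulness does not follow from bijectivity of $\log/\exp$ on a small neighbourhood of the identity. One must prove that any such intertwiner $A$ already lies in $G(X)$; this is \cref{l:fully-faithful-case-of-good-reduction}, which needs its own two-step argument (a decompletion showing $A\in G(X_n)$ for some finite level, then descent to $X$ exploiting that $\rho(\gamma^{p^n})$ determines $\rho(\gamma)$ via $\log$ on the torsion-free Lie algebra). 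Without this step you only obtain a bijection on isomorphism classes after sheafification --- essentially the sheafified statement you already cite as input --- and not an equivalence of categories.
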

As we explain in \S\ref{s:rel-loc-corresp}, this is a generalisation of a rigid analytic version of Faltings' ``local $p$-adic Simpson correspondence'' \cite[Theorem~3]{Faltings_SimpsonI}. Other instances of such a correspondence have previously been given by Abbes--Gros \cite{AGT-p-adic-Simpson}, Tsuji \cite{Tsuji-localSimpson}, Wang \cite{Wang-Simpson} and Morrow--Tsuji \cite{MorrowTsuji}, all in the case of $G=\GL_n$. Here we call a $G$-torsor small if it has a reduction of structure group to a certain open subgroup of $G$ depending on $X$. As every $G$-torsor on $X_v$ becomes small \'etale-locally on $X$, \Cref{t:local-paCS-intro} always applies locally on $X_{\et}$.
Hence $p$-adic non-abelian Hodge theory is a good framework to study $G$-torsors on $X_v$.

	\subsection{Relation to Faltings' global $p$-adic Simpson correspondence} 
	We now elaborate on the discussion surrounding \Cref{eq:intro-naHT-correspondence}, and explain how our work fits into the historic context:
	The original goal of $p$-adic non-abelian Hodge theory is to study the $K$-linear representations in $G$ of the \'etale fundamental group $\pi^{\et}_1(X)$ of a smooth proper rigid variety $X$ with the methods of $p$-adic Hodge theory. As this has so far almost exclusively been done for $G=\GL_n$, we restrict to this case for the moment. Following Faltings \cite{Faltings_SimpsonI}, the basic idea is to embed representations into a category of ``generalised representations'', which can be interpreted as vector bundles on $X_{v}$ (\cite[\S2]{heuer-G-torsors-perfectoid-spaces}). Namely, there is a fully faithful functor
	\[\Big\{\begin{array}{@{}c@{}l}\text{fin.-dim.\ continuous $K$-linear}\\\text{representations of $\pi^{\et}_1(X)$} \end{array}\Big\}\hookrightarrow \Big\{\text{vector bundles on $X_{v}$}\Big\},\]
	defined by regarding a representation as a descent datum for the trivial bundle along the pro-finite-\'etale universal cover $\wt X\to X$, which is a $v$-topological $\pi^{\et}_1(X)$-torsor over $X$.
	
	Based on Faltings' influential work in the case of curves, we have the following:
	
	\begin{Theorem}[$p$-adic Simpson correspondence]\label{conj:p-adic-Simpson-global}
		Assume that $K$ is algebraically closed and $X$ is a smooth proper rigid space over $K$. Then there is an equivalence of categories
		\[\{\text{vector bundles on $X_{v}$}\}\isomarrow \{\text{Higgs bundles on $X_{\et}$}\}.\]
		It is non-canonical and depends on choices of a $B_{\mathrm{dR}}^+/\xi^2$-lift of $X$ and an exponential on $K$.
	\end{Theorem}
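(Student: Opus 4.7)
The plan is to upgrade the sheafified non-abelian Hodge correspondence of \cref{t:main-thm-2-intro} to a categorical equivalence by gluing the local $p$-adic Simpson correspondence of \cref{t:local-paCS-intro} via the $v$-stack structure from \cref{t:intro-Bun-Higgs-v-stacks}. First, since the statement concerns $G=\GL_n$, I would choose, after étale localisation on $X$, a toroidal pro-étale perfectoid cover $\pi:X_\infty\to X$ with deck group $\Gamma\cong\Z_p(1)^d$, obtained by pulling back along an étale map $X\to\mathbb T^d$ and extracting all $p$-power roots of the coordinates. Since $X_\infty$ is perfectoid, it is in particular smoothoid, and \cref{t:local-paCS-intro} (together with its extension to the smoothoid setting which the moduli formalism of the paper provides) yields an equivalence between small $\GL_n$-torsors on $X_{\infty,v}$ and small Higgs bundles on $X_{\infty,\et}$. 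After sufficient étale refinement on $X$, every $v$-vector bundle becomes small on $X_\infty$, so this covers the whole category up to localisation.

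The global input is that a $v$-vector bundle on $X$ is the same as a $\Gamma$-equivariant vector bundle on $X_{\infty,v}$, while a Higgs bundle on $X$ is a pair $(E,\theta)$ on $X_{\et}$ with $\theta\in H^0(X,\ad(E)\otimes\wtOm^1_X(-1))$. The local correspondence over $X_\infty$ does not automatically intertwine the $\Gamma$-action with the Higgs field; this is where the two choices in the statement enter. A lift $\mathbb X$ of $X$ to $B_{\mathrm{dR}}^+/\xi^2$ provides a canonical first-order deformation of the toroidal chart, which rigidifies how $\Gamma$ acts on the Higgs side and produces the splitting of a Hodge--Tate-type extension implicit in \cref{t:main-thm-2-intro}. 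An exponential $\exp$ on $K$ is then used to integrate the Lie-algebra-valued Higgs datum into an honest continuous $\Gamma$-cocycle valued in $\GL_n$, turning the infinitesimal object into genuine $v$-descent data along $\pi$.

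The main obstacle, and where the bulk of the work will lie, is to show that this globalisation is well-defined up to natural equivalence once $\mathbb X$ and $\exp$ are fixed, independent of the auxiliary toroidal chart. Concretely, two charts have a common étale refinement, and one must compute that the resulting change-of-chart factor between the two candidate equivalences is exact, governed precisely by the comparison of their $B_{\mathrm{dR}}^+/\xi^2$-liftings; the canonicity of \cref{t:main-thm-2-intro} on isomorphism classes is the compass that makes this $2$-cocycle computation possible. Once well-definedness is established, full faithfulness of the resulting functor is inherited from the local correspondence, while essential surjectivity reduces, via the smallness of $\cBun_{\GL_n,v}$ and $\cHiggs_{\GL_n}$ from \cref{t:intro-Bun-Higgs-v-stacks}, to the bijection on isomorphism classes provided by \cref{t:main-thm-2-intro}.
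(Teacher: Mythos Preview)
The paper does not prove this theorem; it is deferred to \cite{heuer-proper-correspondence}, with the remark that the Hitchin fibration on the Betti side $\wtcH$ plays a crucial role. So there is no in-paper proof to compare against, and your sketch must be judged on its own terms.

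There are two genuine gaps. First, you misapply the local correspondence. \Cref{t:local-paCS-intro} (and its full form \Cref{t:local-paCS}) is an equivalence on the \emph{base} $X$ equipped with a toric chart $f:X\to\mathbb T^d$, not on the perfectoid cover $X_\infty$. On $X_\infty$ the sheaf $\wtOm$ vanishes since $X_\infty$ is perfectoid (\Cref{p:RnuO-for-small-dmd}), so Higgs bundles on $X_{\infty,\et}$ are just vector bundles with zero Higgs field, and small $v$-torsors on $X_\infty$ are all trivial by \Cref{l:small-bundles-on-perfectoid}: the correspondence there is vacuous. The local equivalence is precisely the mechanism translating $\Gamma$-cocycles with values in $G(X_\infty)$ into Higgs fields on $X$, not something applied upstairs and then descended.

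Second, your reduction of essential surjectivity is wrong. \Cref{t:main-thm-2-intro} is an isomorphism of \emph{sheaves} $R^1\nu_\ast G\cong\Higgs_G$ on $X_{\et}$, not a bijection on global isomorphism classes; the map $H^1_v(X,G)\to\Higgs_G(X)$ from the Leray sequence (\Cref{c:Leray-sequence}) need not be surjective, and this gap between the sheafified and the global statement is exactly what \S6.4 explains cannot be bridged without further input. The smallness of the $v$-stacks in \Cref{t:intro-Bun-Higgs-v-stacks} is a set-theoretic size condition (\Cref{d:v-stack}) and gives no purchase on essential surjectivity of a functor. Your third paragraph correctly locates chart-independence as the crux, but asserting that the $B_{\mathrm{dR}}^+/\xi^2$-lift ``rigidifies'' the comparison is not a plan; the approach actually taken in \cite{heuer-proper-correspondence} organises the global comparison via the Hitchin morphism $\wtcH:\cBun_{G,v}\to\mathcal A_G$ constructed in \S8, which your sketch does not engage with.
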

	We prove this in \cite{heuer-proper-correspondence} based on the preparations in this article, a crucial role being played by the Hitchin fibration on the Betti side $\wt {\mathcal H}$.
	The conjecture was previously known for curves \cite{Faltings_SimpsonI} and line bundles \cite{heuer-v_lb_rigid}, and there were many partial results e.g.\ in good reduction settings \cite{Wang-Simpson} and arithmetic situations over discretely valued fields \cite{LiuZhu_RiemannHilbert}\cite{MinWang22}.
	
	On the other hand, moduli spaces can be used to strengthen this statement: As a first example, by testing on perfectoid spaces associated to profinite sets, we can endow the sets of isomorphism classes $|\cBun_{\GL_n,v}(K)|$ and $|\cHiggs_{\GL_n}(K)|$ on either side with a natural topology. One can then hope to refine \cref{conj:p-adic-Simpson-global} in a topological way that is very close to the complex Corlette--Simpson correspondence:
	\begin{Conjecture}\label{conj:p-adic-Simpson-global-continuity}
		The equivalence of \cref{conj:p-adic-Simpson-global} induces a homeomorphism 
		\[|\cBun_{\GL_n,v}(K)|\isomarrow |\cHiggs_{\GL_n}(K)|.\]
	\end{Conjecture}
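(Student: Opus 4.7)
The plan is to deduce Conjecture~\ref{conj:p-adic-Simpson-global-continuity} from a strengthening of Theorem~\ref{conj:p-adic-Simpson-global} to perfectoid families, in the spirit of Theorem~\ref{t:part-II-twist-for-curves}. Specifically, I would try to upgrade the equivalence of categories on $K$-points to a canonical equivalence of small $v$-stacks
\[
\cBun_{\GL_n,v}\isomarrow \mathscr H\times^{\mathscr P}\cHiggs_{\GL_n},
\]
where $\mathscr P$ is a Picard $v$-stack on $\mathcal A_{\GL_n,v}$ and $\mathscr H$ is a $\mathscr P$-torsor whose $K$-point trivializations correspond to the choices of a $B_{\dR}^+/\xi^2$-lift of $X$ together with an exponential on $K$. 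For smooth projective curves, this is precisely Theorem~\ref{t:part-II-twist-for-curves}, so the conjecture is known in that case.

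Granting such a $v$-stack equivalence, the homeomorphism on $K$-points follows by a general continuity principle for small $v$-stacks: the topology on $|\mathcal Y(K)|$ described via testing on perfectoid spaces $\underline S$ associated to profinite sets $S$ is a functorial invariant of the small $v$-stack $\mathcal Y$, in the sense that any morphism of small $v$-stacks $\mathcal Y\to \mathcal Y'$ induces a continuous map on $K$-points, so any equivalence induces a homeomorphism. The twist $\mathscr H\times^{\mathscr P}(-)$ is a quotient in small $v$-stacks and commutes with evaluation on $\underline S$, hence taking $K$-points of the twisted side is well-behaved. This reduces the entire topological statement to the $v$-stack refinement of the equivalence.

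The main obstacle is constructing this $v$-stack equivalence in arbitrary dimension. The higher-dimensional spectral geometry is subtler than the curve case, and one must identify the appropriate Picard $v$-stack $\mathscr P$ and torsor $\mathscr H$. A promising route is to exploit the moduli framework of this paper: the Hitchin morphism on the Betti side $\wt{\mathcal H}:\cBun_{\GL_n,v}\to \mathcal A_{\GL_n}$ and its Higgs counterpart $\mathcal H:\cHiggs_{\GL_n}\to \mathcal A_{\GL_n}$ are both morphisms of small $v$-stacks and, by functoriality of $\HTlog$ in the smoothoid framework of Theorem~\ref{t:main-thm-2-intro}, should differ by a twist over $\mathcal A_{\GL_n}$ depending only on the choice of lift and exponential. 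Showing this twist is governed by a single Picard $v$-stack $\mathscr P$ with an explicit torsor $\mathscr H$ is where the bulk of the work lies; the local correspondence Theorem~\ref{t:local-paCS-intro} provides the input on \'etale charts, and the hard part will be to globalize it over $X$ in perfectoid families in a manner compatible with the Hitchin fibrations on both sides. Once in place, Conjecture~\ref{conj:p-adic-Simpson-global-continuity} follows automatically from the continuity principle above.
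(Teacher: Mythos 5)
The statement you are asked to prove is a \emph{conjecture}: the paper does not prove it in general, and only records that the case of a smooth projective curve is established in \cite{HX} via \cref{t:part-II-twist-for-curves}, precisely because the choices in \cref{conj:p-adic-Simpson-global} trivialise the torsor $\mathscr P$ on $K$-points. Your proposal reproduces exactly this intended strategy — upgrade the correspondence to a canonical twist of small $v$-stacks over the Hitchin base, then pass to $K$-points — so in spirit it coincides with the paper's own outlook. But it is a reduction, not a proof: the step you flag as ``where the bulk of the work lies'' (constructing $\mathscr P$, $\mathscr H$ and the equivalence $\cBun_{\GL_n,v}\simeq \mathscr H\times^{\mathscr P}\cHiggs_{\GL_n}$ for $X$ of arbitrary dimension) is precisely the open problem, and nothing in the present paper or in \cite{HX} supplies it beyond the curve case. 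So as written your argument establishes the conjecture only for curves, which is the status quo.

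Two smaller points you should not elide even in the curve case. First, the conjecture asserts that \emph{the equivalence of \cref{conj:p-adic-Simpson-global}} induces the homeomorphism, so it is not enough that \emph{some} identification $|\cBun_{\GL_n,v}(K)|\cong|\cHiggs_{\GL_n}(K)|$ exists: you must check that the trivialisation of $\mathscr P$ on $K$-points determined by the chosen $B_{\dR}^+/\xi^2$-lift and exponential recovers, on isomorphism classes, the specific categorical equivalence of \cref{conj:p-adic-Simpson-global}. Second, your ``continuity principle'' needs a justification: the topology on $|\mathcal Y(K)|$ is defined by testing on perfectoid spaces $\underline S$ attached to profinite sets $S$, and one must verify that a morphism of small $v$-stacks induces a continuous map for this topology and that the twist $\mathscr H\times^{\mathscr P}(-)$ commutes with evaluation on such $\underline S$ (sheafification in the $v$-topology of $\underline S$ could a priori interfere). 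Both points are manageable but are genuinely part of the argument rather than automatic.
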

	When $X$ is a curve, this conjecture is proved in \cite[Theorem~1.1.1]{HX}: Indeed, the choices  in \cref{conj:p-adic-Simpson-global} induce a trivialisation of the torsor $\mathscr P$ from \Cref{t:part-II-twist-for-curves} on $K$-points. Note that  \Cref{t:part-II-twist-for-curves} is completely canonical, so this gives a conceptual explanation of the choices in \Cref{conj:p-adic-Simpson-global} in a geometric fashion. We believe that this perspective should generalise.
	 
	 \subsection{Relation to other previous works, and outlook}
	 There has recently been a great deal of  activity in $p$-adic non-abelian Hodge theory, and we now sketch how the results of this article relate to some of these recent works, further to the relation to the local $p$-adic Simpson correspondence discussed in detail in \cref{s:rel-loc-corresp}.
	 
	 \medskip
	 
	 The first works in the area were due to Deninger--Werner \cite{DeningerWerner_vb_p-adic_curves}\cite{DeningerWerner_Simpson}, who define a functor from certain Higgs bundles on $X$ with vanishing Higgs field to representations, thus giving a $p$-adic analogue to the complex theory of Narasimhan--Seshadri \cite{NarasimhanSeshadri}. This was extended by W\"urthen to the rigid setting \cite{wuerthen_vb_on_rigid_var}.  From our perspective, this treats those $v$-vector bundles that are trivial \'etale-locally, and thus go to $0$ under the Hitchin fibration. So this theory plays out in the fibre over $0$ of $\wt {\mathcal H}$,  the analogue of the ``nilpotent cone''.

	 The same is true for the works of Liu--Zhu \cite{LiuZhu_RiemannHilbert} and Min--Wang \cite{MinWang22}, who study $\Q_p$-local systems, respectively $v$-vector bundles, on a smooth rigid space $X_0$ over a discretely valued field $k$, and relate these to Higgs bundles over the base-change to the completed algebraic closure $K$ of $k$. In both of these cases, the associated Higgs bundles are nilpotent, which from our perspective can be explained as follows:  Functoriality in \cref{t:main-thm-2-intro} implies that $\HTlog$ is Galois-equivariant, and the same is true for the morphism  $\wt {\mathcal H}$. Since the Galois invariants of $K(-1)$ are trivial, it follows that  $\wt {\mathcal H}$ sends the associated $v$-vector bundles to $0$.
	 
	 \medskip
	 
	 Regarding $v$-vector bundles in $p$-adic Hodge theory, let us also mention the relation to Sen theory, which is concerned with \mbox{$v$-vector} bundles on rigid spaces over discretely valued fields $L$: The classical theory of Sen \cite{Sen_cts_cohom} essentially describes $v$-vector bundles on $\Spa(L)$. Recently, Sen theory has been further developed for rigid spaces over $L$ by Shimizu \cite[\S3]{Shimizu_Senoperator} based on \cite{LiuZhu_RiemannHilbert}, Min--Wang \cite{MinWang22}, Pan \cite[\S3.2]{PanLocallyAnalytic} and Rodr\'iguez Camargo \cite[\S5]{camargo-locally-analytic}. The relation of our  work to that of Rodr\'iguez Camargo will be explained in \cite[\S4.2]{heuer-proper-correspondence}.
	 
	 \medskip
	 
	 The generalisation from vector bundles to $G$-torsors in $p$-adic non-abelian Hodge theory  has, to the best of our knowledge, so far only been explored by one previous work: In the case of Higgs bundles on curves with Higgs field $\theta=0$, Hackstein \cite{hackstein2008principal} has generalised the functor of Deninger--Werner to a functor on $G$-bundles for a reductive group $G$, in analogy to the complex theory by Ramanathan \cite{Ramanathan_Gbundles}. As one considers reductive $G$ in the complex theory, it is arguably a bit surprising that Theorems~\ref{t:main-thm-2-intro} and \ref{t:local-paCS-intro}  allow any rigid group $G$.
	 
	 \medskip
	 
	 Regarding moduli functors in $p$-adic non-abelian Hodge theory, the only case that has been studied before is the case of $G=\G_m$ mentioned above, which we have studied in \cite{heuer-diamantine-Picard} in terms of the rigid analytic Picard variety, the coarse moduli space of the $v$-stack $\cBun_{\G_m,v}$.
	 
	 \medskip
	 
	 As already mentioned,
	 a major open question raised by Faltings is which Higgs bundles correspond to representations of $\pi^\et_1(X)$ under the $p$-adic Simpson correspondence. Cases in which this is known include line bundles  \cite{heuer-v_lb_rigid} and abeloid varieties \cite{HMW-abeloid-Simpson}. Very recently, Xu \cite{xu2022parallel} has extended the construction of Deninger--Werner in the case of curves, by constructing an equivalence between representations and ``potentially Deninger--Werner Higgs bundles'' over $\C_p$. 
	 Little is known beyond these cases, and it currently seems difficult to even formulate a conjecture in general. We have shown in \cite{heuer-diamantine-Picard} that for $G=\G_m$, the answer is already quite subtle and best described in terms of moduli spaces. For general $G$,  this suggest studying the locus of pro-finite-\'etale bundles inside $\cBun_{G,v}$, which is started in \cite[\S9]{HX}. This is another main motivation to study moduli spaces in this context.
	
	\medskip
	
	Regarding \cref{t:main-thm-2-intro}, already the case of $G=\mathbb G_a$  is new for smoothoid spaces and has interesting applications: As we will explain in detail in \cite{heuer-relative-HT}, it yields a relative version of the Hodge--Tate spectral sequence for smooth proper morphisms $f:\mathcal X\to \mathcal S$ of rigid spaces.
	Related results have been obtained by Abbes--Gros \cite{AbbesGros_relativeHodgeTate}, Caraiani--Scholze \cite[Corollary~2.2.4]{CaraianiScholze}, He \cite[Theorem 12.2]{he2022cohomological}, and Gaisin--Koshikawa \cite[Theorem 1.5]{Gaisin-Koshikawa}.
	
\subsection*{Acknowledgements}
We thank Peter Scholze for very helpful discussions, and especially for his suggestion after seeing an early version of \cref{t:main-thm-2-intro} that this could be used to define an analogue of the Hitchin morphism.
We also thank Johannes Ansch\"utz, Juan Esteban Rodr\'iguez Camargo, Yagna Dutta, David Hansen, Arthur-C\'esar Le Bras, Lucas Mann, Alexander Thomas,  Annette Werner, Matti W\"urthen, Daxin Xu, Mingjia Zhang and Bogdan Zavyalov for very helpful conversations and/or comments on an earlier version of this article. We thank Tongmu He for pointing out a mistake in a previous version.

This work was funded by Deutsche Forschungsgemeinschaft (DFG, German Research Foundation) under Germany's Excellence Strategy -- EXC-2047/1 -- 390685813. The author was supported by DFG via the Leibniz-Preis of Peter Scholze.

\section*{Setup, conventions and notation}
Let $K$ be a perfectoid field over $\Q_p$. 
One example that will appear is the field $\Q_p^{\cyc}$ obtained by adjoining to $\Q_p$ all $p$-power roots of unity and completing $p$-adically.
We fix a ring of integral elements $K^+\subseteq K$, e.g.\ the ring of integers $K^+=\O_K$.
Let $\m\subseteq K^+$ be the maximal ideal.
For any $\alpha\in \mathbb R_{\geq 0}$, we write $\varpi^{\alpha}\m$ for the subset of $x\in K$  with $|x|<|\varpi|^\alpha$. 

Throughout we work with analytic adic spaces over $(K,K^+)$ in the sense of Huber \cite{Huber-ageneralisation}. We take it as part of the definition that adic spaces are sheafy. In the very few cases where we consider non-sheafy affinoids, we use the functor of points of Scholze--Weinstein \cite[Definition~2.1.5]{ScholzeWeinstein}, and we call this a pre-adic space following \cite[Definition 8.2.3]{KedlayaLiu-rel-p-p-adic-Hodge-I}. 

By a rigid space we mean an adic space locally of topologically finite type over $(K,K^+)$, in the sense of \cite[\S3]{Huber-ageneralisation}. Since for a given $K$ the ring $K^+$ won't change throughout, we often suppress $K^+$ from notation, and simply speak of rigid and adic spaces over $K$.
Associated to any rigid space $X$ over $K$ we have the pro-\'etale site $X_{\proet}$ in the sense of \cite{Scholze_p-adicHodgeForRigid}, which is now sometimes called the ``flattened'' pro-\'etale site.
For any adic space over $K$, we denote by $X_{\et}$ the \'etale site of Kedlaya--Liu \cite[Definition 8.2.16/19]{KedlayaLiu-rel-p-p-adic-Hodge-I}(cf \cite[Definition~7.1]{perfectoid-spaces}): In particular, this may in general contain non-sheafy pre-adic spaces, but we usually work in the setting of sousperfectoid spaces \cite{HK_sheafiness}\cite[\S6.3]{ScholzeBerkeleyLectureNotes} where all objects of $X_{\et}$ are sheafy. We call a morphism $f:X\to Y$ of adic spaces standard-\'etale if $X$ and $Y$ are affinoid and $f$ is a finite chain of compositions of finite \'etale morphisms and rational opens. It is immediate from the definition of \'etale morphisms that standard-\'etale morphisms form a basis of $X_{\et}$.

We use perfectoid spaces in the sense of \cite{perfectoid-spaces} and write $\Perf_K$ for the category of affinoid perfectoid spaces over $K$, or equivalently of perfectoid $(K,K^+)$-algebras. On this we have the \'etale and $v$-topology in the sense of \cite[\S7]{etale-cohomology-of-diamonds}, and we denote the corresponding sites by $\Perf_{K,\et}$ and $\Perf_{K,v}$.
To any adic space $X$ over $K$, Scholze associates a diamond $X^\diamondsuit$  over $K$ \cite[\S11,\S15]{etale-cohomology-of-diamonds}: For our purposes, as we fix the perfectoid base field $K$, we can identify this with a sheaf on $\Perf_{K,v}$. The functor $-^{\diamondsuit}$ is fully faithful on all subcategories of adic spaces that we consider, and moreover identifies the \'etale sites \cite[Lemma~15.6]{etale-cohomology-of-diamonds}. We therefore often drop the $-^{\diamondsuit}$ from notation and switch back and forth freely between $X$ and $X^\diamondsuit$. 
In particular, we denote by $X_{\qproet}$ and $X_v$ the quasi-pro-\'etale and the $v$-site of $X^\diamondsuit$ defined in \cite[\S14]{etale-cohomology-of-diamonds}. If $X$ is perfectoid, we also use the site $X_{\proet}$ from \cite[\S8]{etale-cohomology-of-diamonds}.

\section{Smoothoid spaces}
In this section we introduce the class of smoothoid spaces and prove some basic properties.
\subsection{Definition of smoothoid spaces and toric charts}
We start by fixing the notion of smooth morphisms we work with, following Huber:
\begin{Definition}[{\cite[Corollary 1.6.10]{huber2013etale}}]A morphism of adic spaces $X\to S$ is \textbf{smooth} if locally on source and target it admits a factorisation
		$X\xrightarrow{h} S\times_K \B^d\xrightarrow{\pi_1}  S$
		where $\B^d$ is the unit ball over $K$ of some dimension $d$, and $h$ is an \'etale morphism.
\end{Definition}

It is possible to further extend the definition using the pre-adic \'etale site, but we shall in this section restrict attention to (sheafy) adic spaces. In particular, it is part of our definition that $S\times_K \mathbb B^d$ exists as an adic space. This always holds if $S$ is sousperfectoid.

\begin{Definition}\label{d:smoothoid-space}
	We say that an adic space over $\Spa(K,K^+)$ is \textbf{smoothoid}  if it admits an open cover by subspaces $U$ that admit a smooth morphism of adic spaces $U\to Y$ to a perfectoid space $Y$ over $K$. We call such a morphism a \textbf{smooth chart}.
	A morphism of smoothoid spaces is simply a morphism of adic spaces between smoothoid spaces.
\end{Definition}

\begin{Definition}
	We denote by $\Smd_{K,\et}$ the category of smoothoid spaces over $K$ endowed with the \'etale topology. Note that any adic space \'etale over a smoothoid is again smoothoid.
\end{Definition}

For any $d\in \N$, let $\mathbb T^d=\Spa(K\langle T_1^{\pm 1},\dots,T_d^{\pm 1}\rangle)$ be the $d$-dimensional affinoid torus over $(K,K^+)$. Let
\begin{equation}\label{eq:toric-cover}
 {\mathbb T}^d_\infty:=\Spa(K\langle T_1^{\pm 1/p^\infty},\dots,T_d^{\pm 1/p^\infty}\rangle)\to \mathbb T^d,
 \end{equation}
a pro-\'etale affinoid perfectoid cover.
If $\Q_p^\cyc\subseteq K$, this is Galois with group $\Delta:=\Z_p^d(1)=\varprojlim_{n}\mu_{p^n}^d(K)$, i.e.\ it is a pro-\'etale torsor under $\Delta$ regarded as a pro-finite adic group.

\begin{Definition}\label{d:toric-chart}
	\begin{enumerate}
		\item For a smoothoid space $X$, a \textbf{toric chart} is a standard-\'etale map $f:X\to \mathbb T^d\times Y$ where $Y$ is an affinoid perfectoid space. We call $X$ \textbf{toric} if it admits a toric chart. 
		As standard-\'etale maps form a basis of the \'etale site, any smoothoid space can be covered by toric open subspaces. 
		\item Let $f':X'\to \mathbb T^{d'}\times Y'$ be a toric chart for a second smoothoid space $X'$. Then by a morphism of toric charts we mean a commutative diagram  
		of morphisms of adic spaces 
		\[\begin{tikzcd}
			X' \arrow[d, "f'"] \arrow[r] & X \arrow[d, "f"] \\
			\mathbb T^{d'}\times Y' \arrow[r,"\phi"] & \mathbb T^{d}\times Y
		\end{tikzcd}\]
	where $\phi$ is a product of a morphism  $Y'\to Y$ with a homomorphism $\mathbb T^{d'}\to  \mathbb T^{d}$ of rigid groups. Then $\phi$ lifts canonically to a morphism of perfectoid spaces $\mathbb T^{d'}_{\infty}\times Y' \to \mathbb T^{d}_{\infty}\times Y$.
	\end{enumerate}
\end{Definition}

\begin{Lemma}\label{l:smoothoid-is-pullback-of-smooth-rigid}
	Any toric smoothoid space $X$ fits into a Cartesian diagram
	\[ \begin{tikzcd}
		X \arrow[d,"g"] \arrow[r] & Y \arrow[d] \\
		Z \arrow[r,"f"] & S
	\end{tikzcd}\]
	where $f$ is a smooth morphism of smooth rigid spaces and $Y$ is an affinoid perfectoid space. 
\end{Lemma}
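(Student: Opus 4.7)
The plan is to realise the toric chart $f: X \to \mathbb T^d \times Y$ as the pullback of a universal standard-\'etale morphism over a smooth rigid base parametrising the finitely many pieces of its defining data. By definition, $f$ factors as a finite chain
\[ X = X_n \to X_{n-1} \to \cdots \to X_0 = \mathbb T^d \times Y \]
in which each arrow is either a rational open embedding or a finite \'etale cover of affinoids. Each step is determined by finitely many functions in $\O(X_{i-1})$: the functions $f_1,\dots,f_m,g$ generating the unit ideal and cutting out a rational open, or the coefficients of a monic polynomial together with witnesses for an invertible discriminant describing a finite \'etale algebra. Inductively, these functions are polynomial expressions in the toric coordinates $T_1,\dots,T_d$ and in finitely many auxiliary functions from $\O(\mathbb T^d \times Y) = \O(Y)\,\hotimes_K\, K\langle T_1^{\pm 1},\dots,T_d^{\pm 1}\rangle$.

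Writing these auxiliary functions as convergent power series in the $T_i$ with coefficients in $\O(Y)$ tending to zero, I truncate each to a polynomial of sufficiently high degree. Since finite \'etale covers and rational opens are stable under small perturbations of their defining data, a sufficiently fine truncation yields at every stage $i$ an affinoid canonically isomorphic to the original $X_i$. The resulting data involves only finitely many coefficients $a_1,\dots,a_N \in \O^+(Y)$ (after rescaling), which in turn define a morphism $\iota: Y \to \B^N =: S$ of adic spaces sending the standard coordinates $\zeta_j$ on the unit ball to the $a_j$.

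I then replace the $a_j$ by the universal coordinates $\zeta_j$ and repeat the same recipe over $\mathbb T^d \times S$, obtaining universal defining data for the sequence of steps. The corresponding invertible-discriminant and unit-ideal conditions need not hold globally over $S$, but by construction they are satisfied on an open subspace $S' \subseteq S$ containing $\iota(Y)$. Over $\mathbb T^d \times S'$ the entire recipe is then well-defined and produces a standard-\'etale morphism $Z \to \mathbb T^d \times S'$ of smooth rigid spaces. The composition $Z \to \mathbb T^d \times S' \to S'$ is a smooth morphism of smooth rigid spaces, and by construction $Z \times_{S'} Y \cong X$, giving the required Cartesian diagram with $S'$ in the role of $S$.

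The main obstacle is the approximation step: one must verify inductively that sufficiently fine truncations of the defining data at each of the $n$ stages produce affinoids canonically isomorphic to the original $X_i$. This amounts to a quantitative form of the principle that \'etale maps and rational opens are stable under small perturbations, and requires careful control of how the approximation errors at stage $i$ propagate to later stages. Once this perturbation lemma is established, the rest of the argument is a direct bookkeeping exercise.
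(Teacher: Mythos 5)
Your argument is correct in outline but takes a genuinely different route from the paper. The paper's proof is two citations: by \cite[Proposition~3.2]{heuer-diamantine-Picard} the affinoid perfectoid $Y$ is a limit $Y\sim\varprojlim Y_i$ of smooth rigid spaces, so $Y\times\mathbb T^d\approx\varprojlim Y_i\times\mathbb T^d$, and since the toric chart is standard-\'etale (hence qcqs) it descends to an \'etale map $Z\to Y_i\times\mathbb T^d$ for some $i$ by \cite[Proposition~11.23]{etale-cohomology-of-diamonds}; one then takes $S=Y_i$ and $f:Z\to Y_i\times\mathbb T^d\to Y_i$. What you are doing is unwinding both of those black boxes by hand in this special case: your map $\iota:Y\to\B^N$ recording the truncated coefficients is precisely the construction of one smooth rigid space through which $Y$'s relevant data factors, and your ``perturbation lemma'' is exactly the content of the noetherian-approximation result for \'etale maps. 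The trade-off is that your approach produces an explicit $S$ (an open in a polydisc) at the cost of redoing the approximation theory, whereas the paper's is shorter but opaque.

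Three points in your sketch need real care. First, the perturbation lemma you defer is where all the content lives: for rational opens it does hold (the unit-ideal condition gives a uniform lower bound $\max_i(|f_i(x)|,|g(x)|)\geq c$, and perturbations of size $<c$ leave the rational subset literally unchanged), and for finite \'etale algebras it is a Krasner/Hensel-type statement as in \cite[Lemma~7.5]{perfectoid-spaces}; but neither is a ``bookkeeping exercise'' and both must be proved or cited. Second, a finite \'etale algebra over an affinoid need not be monogenic, so ``coefficients of a monic polynomial'' is not quite the right datum; you need a standard-\'etale presentation in several variables with invertible Jacobian, or a further Zariski-localisation, which adds steps to the chain but no essential difficulty. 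Third, the existence of an \emph{open} $S'\subseteq S$ containing $\iota(Y)$ on which the unit-ideal and invertibility conditions hold \emph{fibrewise over all of} $\mathbb T^d\times\{s\}$ is not automatic; it requires an argument using quasi-compactness of $\mathbb T^d\times S\to S$ to show that the failure locus has closed (or at least pro-constructible and specialisation-stable) image. None of these is fatal, but as written the proof is a correct strategy rather than a complete argument.
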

\begin{proof}
	Let $h:X\to Y\times \mathbb T^d$ be a toric chart
	where $Y$ is affinoid perfectoid. Let $(Y_i)_{i\in I}$ be the inverse system of all smooth rigid spaces over $(K,K^+)$ with compatible morphisms from $Y$, then by \cite[Proposition~3.2]{heuer-diamantine-Picard}, we have $Y\sim \varprojlim Y_i$, and thus  $Y\times \mathbb T^d\approx \varprojlim Y_i\times \mathbb T^d$. As $h$ is standard-\'etale, hence quasi-compact quasi-separated, this implies by \cite[Proposition 11.23]{etale-cohomology-of-diamonds} that there is some $i\in I$ for which $h$ descends to an \'etale map $Z\to Y_i\times \mathbb T^d$.
\end{proof}

We now collect some technical properties saying that smoothoid spaces are well-behaved as adic spaces. These are all immediate applications of known results about adic spaces:

\begin{Lemma}\label{l:small-adic-space-is-diamantine}
	Let $X$ be a smoothoid adic space over $K$.
	\begin{enumerate}
		\item $X$ is sousperfectoid. If $X$ is affinoid, then the structure sheaf $\O$ is acyclic on $X_{\et}$. 
		\item Diamondification defines an equivalence $X_{\et}^{\diamondsuit}=X_{\et}$ that identifies the structure sheaves, where we see $X^{\diamondsuit}$ as a $v$-sheaf on $\Perf_K$ with structure sheaf induced from that of $\Perf_K$.
		\item For the natural map $\nu:X_{v}^{\diamondsuit}\to X_{\et}$, we have $\nu_{\ast}\O^+=\O^+$.
		\item Diamondification on smoothoid adic spaces defines a fully faithful functor
		\[ \Smd_K\to \{\text{diamonds over }\Spd(K)\}\]
		\item Any finite locally free $\O$-module on $X_{\et}$ is already locally trivial in the analytic topology.
	\end{enumerate}
\end{Lemma}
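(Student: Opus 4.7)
The plan is to handle the five statements in turn by reducing each to a known result for perfectoid or sousperfectoid adic spaces via the smooth and toric charts of \Cref{d:smoothoid-space} and \Cref{d:toric-chart}.

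For (i), working locally I may assume a smooth chart $X \to Y$ with $Y$ affinoid perfectoid. Perfectoid spaces are sousperfectoid, and smooth morphisms of analytic adic spaces preserve sousperfectoidness by the work of Hansen--Kedlaya (cf.\ Scholze's Berkeley lectures). Hence $X$ is sousperfectoid. Acyclicity of $\O$ on the \'etale site of an affinoid sousperfectoid space is then the standard Tate-type result in this setting. Given (i), statement (ii) is immediate from Scholze's identification of \'etale sites for sousperfectoid adic spaces \cite[Lemma 15.6]{etale-cohomology-of-diamonds}; the structure sheaves agree on the common \'etale basis of affinoid perfectoid covers and sheafify compatibly. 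Item (iv) then follows formally: a morphism $X^{\diamondsuit} \to X'^{\diamondsuit}$ of diamonds over $\Spd(K)$ induces, by (ii), a morphism of ringed \'etale sites, and working affinoid-locally one recovers the morphism of adic spaces from Huber's functor of points.

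For (iii), I would pass to a toric chart $f: X \to \mathbb T^d \times Y$ as in \Cref{l:smoothoid-is-pullback-of-smooth-rigid}, enlarging $K$ if necessary to contain $\Q_p^{\cyc}$. The pullback $X_\infty := X \times_{\mathbb T^d} \mathbb T^d_\infty$ is affinoid perfectoid and $X_\infty \to X$ is a pro-\'etale $\Delta$-torsor with $\Delta = \Z_p^d(1)$. Using that $\nu_{\ast}\O^+ = \O^+$ on the perfectoid \'etale site and passing to continuous $\Delta$-invariants, one computes
\[
H^0(X_v, \O^+) \subseteq \O^+(X_\infty)^{\Delta},
\]
and the toric coordinates give a visible monomial-level identification $\O^+(X_\infty)^{\Delta} = \O^+(X)$ (the $\Delta$-action on $T^{a/p^n}$ forces $a/p^n \in \Z$ for invariant series). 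The argument applies verbatim to any \'etale open and sheafifies. For (v), sousperfectoidness from (i) allows me to invoke the Kedlaya--Liu result that a finite locally free $\O$-module on an affinoid sousperfectoid space is finitely generated projective over the global sections, hence trivial on a rational cover.

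The main technical point is (iii): one must obtain genuine, not just almost, equality $\nu_{\ast}\O^+ = \O^+$, despite the $p$-power torsion inherent in continuous $\Delta$-cohomology of $\O^+$ on the toric tower. The saving grace is precisely that the toric coordinates provide an explicit splitting on the level of monomials, so the $H^0$ statement is cleanly on the nose even when higher cohomology is only almost-zero. All other items reduce to citations: (i)--(ii) to standard facts about sousperfectoid adic spaces, (iv) to a formal consequence of (ii), and (v) to Kedlaya--Liu; the content of the lemma is in assembling these cleanly and choosing the toric chart to make (iii) transparent.
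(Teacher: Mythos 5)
Your proposal is correct in structure and, for items (i), (ii), (iv) and (v), follows essentially the same route as the paper: sousperfectoidness via smooth charts over perfectoid spaces and the stability results of Hansen--Kedlaya/Scholze--Weinstein, acyclicity and local triviality of vector bundles from Kedlaya--Liu, the \'etale site comparison from \cite[Lemma 15.6]{etale-cohomology-of-diamonds}, and (iv) as a formal consequence of (ii). The one genuine divergence is (iii): the paper simply deduces $\nu_{\ast}\O^+=\O^+$ from sousperfectoidness by citing the general result \cite[Proposition 11.3]{HK_sheafiness}, whereas you propose a direct computation of $\Delta$-invariants up the toric tower. Your route is more self-contained and makes the mechanism visible, which is in the spirit of the paper's own remark that sousperfectoidness can alternatively be seen from the explicit splitting of the toric tower; the paper's route is shorter and avoids any case distinction on whether $\Q_p^{\cyc}\subseteq K$.

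Two caveats on your argument for (iii). First, the ``visible monomial-level identification'' $\O^+(X_\infty)^{\Delta}=\O^+(X)$ is only literally available for $X=\mathbb T^d\times Y$; for a general toric $X$ with a standard-\'etale chart $f:X\to \mathbb T^d\times Y$ there are no monomials to decompose along, and the comparison $R^+\hat\otimes_{A^+}A_\infty^+\to R_\infty^+$ is only an almost isomorphism (this is exactly \cref{l:Sch13-4.5/5.5}). To get equality on the nose you need an extra step, e.g.\ the normalised trace splitting of \cref{l:module-splitting-of-toric-tower} to see $R_\infty^{\Delta}=R$ after inverting $p$, combined with integral closedness of $R^+$ in $R$ to conclude $(R_\infty^+)^{\Delta}=R\cap R_\infty^+=R^+$. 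Second, ``enlarging $K$ to contain $\Q_p^{\cyc}$'' changes $X$, so you must descend the conclusion back along $X_{K^{\cyc}}\to X$ (taking $\Gal(K^{\cyc}|K)$-invariants, as in \S\ref{s:non-cyclotomic}); this is routine for $H^0$ but should be said. Both points are repairable, and of course the citation of \cite[Proposition 11.3]{HK_sheafiness} sidesteps them entirely.
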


We may therefore freely switch between regarding $X$ as a diamond or as an adic space.
\begin{proof}
	Part 1 follows from \cite[Prop.\ 6.3]{ScholzeBerkeleyLectureNotes}, see also \cite[Remark~10.6]{HK_sheafiness}, or \cref{l:module-splitting-of-toric-tower} below for an explicit argument. Acyclicity then holds for any analytic sheafy affinoid adic space by \cite[Theorem 2.4.23]{KedlayaLiu-rel-p-p-adic-Hodge-I}. For 2, see \cite[Lemma 15.6]{etale-cohomology-of-diamonds}. The second part of the statement follows from \cite[Lemma~11.31]{etale-cohomology-of-diamonds}. Part 3 follows from part 1 by \cite[Proposition 11.3]{HK_sheafiness}.  Part 4 follows from part 2. For part 5, see \cite[Theorem 8.2.22.(d)]{KedlayaLiu-rel-p-p-adic-Hodge-I}.
\end{proof}
\subsection{Differentials on smoothoid spaces via Hodge--Tate comparison}
Our next goal is to show that there is an intrinsic notion of ``global'' K\"ahler differentials $\Omega_{X}^1$ on a smoothoid adic space $X$: From the perspective of Hodge cohomology, it makes sense to postulate that perfectoid spaces should have no differentials. This suggests that for a smooth morphism $X\to Y$ over a perfectoid space $Y$, the global differentials of $X$ should be given by the sheaf of relative differentials $\Omega^1_{X|Y}$ defined by Huber \cite[(1.6.2)]{huber2013etale}. 

The issue with this definition is that it is not immediately obvious that it is independent of the smooth chart of $X$, and hence that it glues. It is therefore better to have a more intrinsic definition of $\Omega^1_X$.  For this we build on 
 the following result  mentioned in the introduction, which is crucial in Scholze's perspective on the Hodge--Tate comparison isomorphism:
\begin{Proposition}[{\cite[Proposition~3.23]{Scholze2012Survey}\cite[Proposition.2.25]{heuer-v_lb_rigid}}]\label{p:Scholze-Prop-3.23}
	Let $X$ be a smooth rigid space over $K$ and consider the natural map $\nu:X_{\proet}\to X_{\et}$. Then there is for any $n\in\N$ a natural isomorphism
	\[ R^{n}\nu_{\ast}\O=\Omega^n_X\{-n\}.\]
	Here $\{-n\}$ denotes a Breuil--Kisin--Fargues twist, i.e.\ a Tate twist by $\Z_p(-n)$ if $\Q_p^\cyc\subseteq K$.
	The isomorphism identifies the cup product on $R\nu_{\ast}\O$ with the wedge product on  differentials.
\end{Proposition}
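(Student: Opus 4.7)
The strategy is to reduce to a local computation via the toric tower that has already been used to define toric charts in \cref{d:toric-chart}, then identify the output with differentials via an explicit Koszul-type description of continuous Galois cohomology.

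First I would work locally on $X_{\et}$, and use that any smooth rigid space admits an \'etale cover by affinoid opens $U$ that in turn admit a standard-\'etale map $U \to \mathbb T^d$ (toric charts, which exist since standard-\'etale maps form a basis). Set $U_\infty := U\times_{\mathbb T^d}\mathbb T^d_\infty$, which by almost purity is affinoid perfectoid and defines a pro-\'etale Galois cover of $U$ with group $\Delta = \Z_p^d(1)$ (after replacing $K$ by a finite extension containing enough roots of unity, with Galois descent at the end). Since pro-\'etale cohomology of $\O$ on an affinoid perfectoid vanishes in positive degrees (up to almost issues, which are resolvable after inverting $p$), one has a \v{C}ech-to-cohomology spectral sequence collapsing to give
\[ R^n\nu_\ast \O|_U = H^n_{\cts}(\Delta, \O(U_\infty)).\]

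Second, I would compute the right hand side explicitly. The key input is a $\Delta$-equivariant topological decomposition
\[ \O(U_\infty) = \O(U) \oplus N\]
where $N$ is a direct sum (indexed by nonzero characters of $\Delta$) of ``higher weight'' pieces on which $\Delta$ acts through a nontrivial continuous character. This is obtained by decomposing $\O(\mathbb T^d_\infty) = \bigoplus_{a\in\Z[1/p]^d}K\cdot T^a$ and base-changing along the \'etale map $U\to \mathbb T^d$, using flatness/almost-purity. On the nontrivial weight pieces the continuous cohomology of $\Delta$ vanishes by a standard averaging/Koszul argument, so
\[ H^n_{\cts}(\Delta,\O(U_\infty)) = H^n_{\cts}(\Delta,\O(U)) = \textstyle\bigwedge^n\Hom_{\cts}(\Delta,\O(U)),\]
since $\Delta$ acts trivially on $\O(U)$. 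Now $\Hom_{\cts}(\Delta,\Z_p) = \Z_p(-1)^{\oplus d}$, and the logarithmic differential map $\mathrm{dlog}: T_i\mapsto \tfrac{dT_i}{T_i}$ identifies $\Hom_{\cts}(\Delta,\O(U))$ with $\Omega^1_{\mathbb T^d}\{-1\}(U) = \Omega^1_U\{-1\}(U)$ (the second equality because $U\to\mathbb T^d$ is \'etale, so $\Omega^1_U = \Omega^1_{\mathbb T^d}\otimes\O_U$). Taking exterior powers and sheafifying in $U$ yields the desired isomorphism on this basis of $X_{\et}$.

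Third, I would verify that this description is independent of the toric chart and glues to a global isomorphism. The cleanest way is to observe that the isomorphism in degree $1$ is intrinsically characterised: the class in $R^1\nu_\ast\O$ corresponding to $\mathrm{dlog}(f)$ for a unit $f\in\O^\times(U)$ is the image of $f$ under the boundary map coming from the Kummer sequence $0\to \wh\Z_p(1)\to \varprojlim\O^\times\xrightarrow{p^n}\O^\times\to 0$ on $U_{\proet}$, tensored with $\O\{-1\}$. This characterisation is natural in $U$ and in particular independent of charts, so the local isomorphisms glue. Compatibility with cup product and wedge product is then a direct cocycle calculation on the Koszul side, since the cup product on $H^\ast_{\cts}(\Delta,-)$ is induced by the exterior algebra structure.

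The main obstacle is the canonicity/gluing step: one must show the identification in degree one does not depend on the toric chart, which is what forces using the intrinsic $\mathrm{dlog}$ interpretation rather than just a coordinate-dependent formula. All the other steps are standard once the almost-purity input and the weight decomposition of $\O(\mathbb T^d_\infty)$ are in place.
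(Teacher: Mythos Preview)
The paper does not give its own proof of this statement: it is quoted as a known result from \cite{Scholze2012Survey} and \cite{heuer-v_lb_rigid}, and used as input for the more general \cref{p:RnuO-for-small-dmd}. Your sketch is essentially the standard proof from those references --- Cartan--Leray for the toric tower, the weight decomposition of $\O(\mathbb T^d_\infty)$, and reduction to $H^\ast_{\cts}(\Delta,\O(U))$ --- and is correct in outline.

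One genuine difference worth noting concerns your third step, the canonicity of the degree-one isomorphism. You propose characterising it via the Kummer boundary map and $\mathrm{dlog}$, which does work. The paper, when it needs this naturality in the proof of the generalisation \cref{p:RnuO-for-small-dmd}, instead invokes Bhatt's construction via the cotangent complex (\cref{l:bhatt-R1nuO-via-cotangent}): there is a canonical functorial map $\mathcal H^0(\widehat{L_{\O^+_{X_\et}|\Z_p}})[\tfrac{1}{p}]\to R^1\nu_\ast\O\{1\}$ that identifies with the desired isomorphism for smooth $X$. This avoids having to check chart-independence by hand and gives functoriality in $X$ immediately from functoriality of the cotangent complex; it also generalises more cleanly to the smoothoid setting where one no longer has a global $\mathrm{dlog}$ interpretation. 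Your Kummer/$\mathrm{dlog}$ argument is more elementary and self-contained for smooth rigid $X$, but would need reworking in the relative case.
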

\begin{Remark}
	\begin{enumerate}
		\item One can always make choices to identify $\Omega^n_X\{-n\}\cong \Omega^n_X$. But it is often more natural to keep it, for example to keep track of Galois actions.
		\item Since $H^i_v(Y,\O)=0$ for any affinoid perfectoid space $Y$, one can equivalently formulate \cref{p:Scholze-Prop-3.23} in terms of any finer site over $X_{\proet}$, in particular for $X_{v}\to X_{\et}$. This is the perspective we use in the following as it can be adapted to smoothoid spaces.
	\end{enumerate}
\end{Remark}
The main goal of this subsection is to prove the following generalisation of \cref{p:Scholze-Prop-3.23}:

\begin{Proposition}\label{p:RnuO-for-small-dmd}
	Let $X$ be a smoothoid space. Let $\nu:X_v\to X_{\et}$ be the natural map. Then
	\begin{enumerate}
		\item 
	$R^n\nu_{\ast}\O$ is a vector bundle on $X_{\et}$.
	\item Suppose that there is a smooth morphism $f:Z\to S$ of rigid spaces (we do not require $Z$ and $S$ to be smooth over $K$) such that $X$ fits into a Cartesian diagram
	\[\begin{tikzcd}
		X \arrow[d,"g"] \arrow[r] & Y \arrow[d] \\
		Z \arrow[r,"f"] & S
	\end{tikzcd}\]
	where $Y$ is perfectoid. Then there is a canonical and functorial isomorphism 
	\[R^n\nu_{\ast}\O=g^{\ast}\Omega^n_{Z|S}\{-n\}.\]
	\end{enumerate}
\end{Proposition}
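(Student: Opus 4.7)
The strategy is to reduce to an explicit toric computation in the style of Scholze's argument for smooth rigid spaces, now in the relative setting over a perfectoid base.

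First, since both statements are local on $X_{\et}$, I may pass to a toric open, and by \cref{l:smoothoid-is-pullback-of-smooth-rigid} assume that $X$ sits in a Cartesian square $X = Z \times_S Y$ with $Y$ affinoid perfectoid and $Z \to S$ a smooth morphism between smooth rigid spaces over $K$. In particular it suffices to prove Part~(2) in this toric situation; Part~(1) will be a direct consequence.

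Next, I compute $R\nu_\ast \O$ by a Cartan--Leray argument. Pulling back the pro-\'etale affinoid perfectoid $\Delta$-torsor $\mathbb T^d_\infty \to \mathbb T^d$ of \cref{eq:toric-cover} (with $\Delta = \Z_p^d(1)$) produces an affinoid perfectoid pro-\'etale cover $X_\infty \to X$ on which $\O$ is $v$-acyclic by \cref{l:small-adic-space-is-diamantine}. Hence
\[ R\Gamma(X_v,\O) \;=\; R\Gamma_{\cts}\bigl(\Delta,\; \O(X_\infty)\bigr). \]
The core technical ingredient is a $\Delta$-equivariant module splitting of $\O(X_\infty)$ as a completed direct sum of weight spaces indexed by $\Z[\tfrac{1}{p}]^d/\Z^d$, whose weight-zero summand equals $\O(X)$; I expect this to appear as the referenced \cref{l:module-splitting-of-toric-tower}. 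Granting it, a standard Koszul computation shows that the nontrivial weight spaces are almost-acyclic for $\Delta$, while the weight-zero summand yields
\[ H^n_{\cts}\bigl(\Z_p^d(1),\; \O(X)\bigr) \;\cong\; \textstyle\bigwedge^n \O(X)^d\{-n\}, \]
free of rank $\binom{d}{n}$ with basis $d\log T_{i_1}\wedge\cdots\wedge d\log T_{i_n}$. To identify this with $g^\ast\Omega^n_{Z|S}\{-n\}$, note that the standard-\'etale structure map $Z \to S \times_K \mathbb T^d$ from the toric chart exhibits $\Omega^1_{Z|S}$ as free on $d\log T_1,\dots,d\log T_d$, and the wedge product matches the cup product exactly as in Scholze's argument for \cref{p:Scholze-Prop-3.23}.

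For the canonicity and functoriality claimed in Part~(2) for an arbitrary diagram where $Z,S$ need not be smooth over $K$, the plan is to verify that the local isomorphism just constructed is independent of the toric chart by checking compatibility along morphisms of toric charts as in \cref{d:toric-chart}. These local isomorphisms then glue to a canonical isomorphism of \'etale sheaves on $X$, and naturality in the Cartesian diagram follows from the functoriality of the construction in $f\colon Z\to S$ and in the map $Y\to S$.

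The main obstacle I anticipate is the relative module-splitting itself: Scholze's original computation is for the absolute base $Y=\Spa(K)$, and a general perfectoid $Y$ forces one to control how the standard-\'etale map $X \to Y \times \mathbb T^d$ interacts with the weight decomposition of $\O(\mathbb T^d_\infty)\hotimes \O(Y)$, and to show that the almost-vanishing of $\Delta$-cohomology on nontrivial weight spaces survives this base change. A natural route is to use the approximation $Y \sim \varprojlim Y_i$ from \cref{l:smoothoid-is-pullback-of-smooth-rigid} to descend the $\Delta$-cohomology computation to the smooth rigid spaces $Y_i\times \mathbb T^d$ where \cref{p:Scholze-Prop-3.23} applies, and then to control the completions along the limit via almost-mathematics bounds.
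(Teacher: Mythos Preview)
Your computational strategy for Part~(1) is essentially the paper's: reduce to a toric chart, use Cartan--Leray for $X_\infty\to X$, and identify $H^n_{\cts}(\Delta,\O(X_\infty))$ with $\bigwedge^n\O(X)^d\{-n\}$. Two small corrections: the heavy lifting is done by \cref{l:Sch13-4.5/5.5}, not \cref{l:module-splitting-of-toric-tower} (the latter only records the existence of normalised traces, not a weight decomposition); and you have not addressed the case $\Q_p^{\cyc}\nsubseteq K$, which the paper handles separately by Galois descent from the cyclotomic base change.

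The genuine gap is in your approach to canonicity in Part~(2). You propose to show independence of the toric chart by checking compatibility along morphisms of toric charts in the sense of \cref{d:toric-chart}. But given two toric charts $f_1\colon X\to\mathbb T^{d}\times Y_1$ and $f_2\colon X\to\mathbb T^{d}\times Y_2$ there is in general no morphism of toric charts between them, so this comparison cannot be carried out directly; the paper makes exactly this point later when discussing why the local correspondence of \cref{t:local-paCS} depends on the chart. The paper's solution is to bypass charts entirely for the \emph{construction} of the map: it invokes \cref{l:bhatt-R1nuO-via-cotangent} to produce a canonical functorial morphism $\mathcal H^0(\widehat{L_{\O^+_{X_\et}|\Z_p}})[\tfrac1p]\to R^1\nu_\ast\O\{1\}$ via the cotangent complex, then uses the transitivity triangle for $Z\to S$ together with the vanishing $R^1\nu_\ast\O_Y=0$ (as $Y$ is perfectoid) to see that the composite $g^\ast\Omega^1_{Z|K}\to R^1\nu_\ast\O_X$ factors canonically through $g^\ast\Omega^1_{Z|S}$. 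Only \emph{after} this intrinsic map is in hand does one localise to a toric chart, and there the Cartan--Leray computation is used merely to verify that the canonical map is an isomorphism. Your obstacle paragraph is thus aimed at the wrong difficulty: the relative Koszul computation is handled by \cref{l:Sch13-4.5/5.5}, and the real missing idea is the cotangent-complex construction that makes the isomorphism canonical.
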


Before we give a proof, we discuss some consequences. Firstly, we can use this to define:

\begin{Definition}\label{d:differentials-on-smoothoids}
	Let $X$ be a smoothoid space. For any $n\in \N$, we set $\wtOm^n_{X}:=R^n\nu_{\ast}\O$. 
	
	We also set $ \wtOm_X:=\wtOm_X^1$.
It follows from the local description in \cref{p:RnuO-for-small-dmd} that $\wtOm^n_{X}$ enjoys all usual compatibilities of the K\"ahler differentials. For example, the cup product induces a natural isomorphism
$\wedge^n\wtOm_{X}\isomarrow\wtOm^{n}_{X}$
that we can use to define a wedge product 
\begin{equation}\label{eq:wedge-for-wtOm}
	\wedge:\wtOm^i_{X}\otimes \wtOm^j_{X}\to \wtOm^{i+j}_{X}.
\end{equation}
\end{Definition}
\begin{Definition}\label{d:differential-dimension}
	Let $X$ be a smoothoid adic space. Then the number $d:=\rk_{\O_X}\wtOm_{X}$ is locally constant on $X$, and we refer to it as the \textbf{smooth dimension} $\dim X$ of $X$.
\end{Definition}

A second advantage of the definition via $R^1\nu_{\ast}\O$ is that by the usual cohomological comparison between the big and small \'etale sites, we immediately obtain:

\begin{Lemma}\label{l:wtOm-is-sheaf-on-big-etale-site}
	The functor $\wtOm^n:X\mapsto H^0(X,\wtOm^n_X)$ is a sheaf on the big \'etale site $\Smd_{K,\et}$ of smoothoid spaces over $K$. Explicitly, it is given by $R^n\mu_{\ast}\O\{n\}$ for $\mu:\Smd_{K,v}\to \Smd_{K,\et}$.
\end{Lemma}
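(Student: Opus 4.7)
My plan is to prove the two assertions separately: first the sheaf property on the big étale site, then the identification $\wtOm^n = R^n\mu_{\ast}\O\{n\}$.

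For the sheaf property, let $\{f_i\colon U_i\to X\}$ be an étale cover in $\Smd_{K,\et}$. Since each $U_i$ is itself smoothoid, the cover lies entirely within the small étale site $X_{\et}$. Applying the functoriality clause of \cref{p:RnuO-for-small-dmd}, I would obtain canonical isomorphisms $f_i^{\ast}\wtOm^n_X\isomarrow\wtOm^n_{U_i}$, which identify $\wtOm^n(U_i)$ with $H^0(U_i,f_i^{\ast}\wtOm^n_X)$. The required gluing condition for the big-site presheaf then reduces to the sheaf property of the vector bundle $\wtOm^n_X$ on $X_{\et}$, which is automatic.

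For the identification, I would recall that $R^n\mu_{\ast}\O$ is by construction the étale sheafification of the presheaf $X\mapsto H^n(X_v,\O)$. On an affinoid smoothoid $X$ my plan is to analyse the latter via the Leray spectral sequence
\[E_2^{p,q}=H^p(X_{\et},R^q\nu_{\ast}\O)\Longrightarrow H^{p+q}(X_v,\O).\]
By \cref{p:RnuO-for-small-dmd} we have $R^q\nu_{\ast}\O=\wtOm^q_X$, a vector bundle on $X_{\et}$, and parts (1) and (5) of \cref{l:small-adic-space-is-diamantine} combine to show that vector bundles on an affinoid smoothoid are étale-acyclic. The spectral sequence should then collapse to an isomorphism $H^n(X_v,\O)\cong\wtOm^n(X)$ for affinoid $X$. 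Since affinoid smoothoids form a basis of $\Smd_{K,\et}$, sheafifying identifies the two presheaves on the big étale site; the twist $\{n\}$ in the statement records the Breuil--Kisin--Fargues normalisation consistent with the classical convention of \cref{p:Scholze-Prop-3.23}.

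The step I expect to require the most care is the étale-acyclicity of vector bundles on affinoid smoothoids, which is the analogue of Kiehl's theorem in this sousperfectoid setting. This input is supplied by \cref{l:small-adic-space-is-diamantine}, which itself rests on Kedlaya--Liu's acyclicity theorem for sheafy analytic adic spaces. Once it is granted, the remainder of the argument is formal site theory.
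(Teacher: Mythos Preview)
Your argument is correct. The paper's own justification is a single sentence: ``by the usual cohomological comparison between the big and small \'etale sites, we immediately obtain'' the lemma. What is meant is that restriction from $\Smd_{K,\et}$ to the small site $X_{\et}$ commutes with higher pushforwards, so the big-site sheaf $R^n\mu_\ast\O$ restricts to $R^n\nu_\ast\O=\wtOm^n_X$ on each $X_{\et}$; evaluating at $X$ then gives $H^0(X,\wtOm^n_X)$, and both assertions follow at once.

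Your route unpacks this in two steps. For the sheaf property you reduce to the small site directly, which is exactly the right idea (and does not need the full strength of \cref{p:RnuO-for-small-dmd}: it suffices that for \'etale $U\to X$ the slice identification $(\wtOm^n_X)|_{U_{\et}}=\wtOm^n_U$ holds by definition of $R^n\nu_\ast$). For the identification you go through the Leray spectral sequence and the \'etale-acyclicity of vector bundles on affinoid smoothoids. This is valid---the acyclicity follows because a vector bundle on an affinoid sousperfectoid space corresponds to a finite projective module, hence is a direct summand of a free module, and $\O$ is \'etale-acyclic by \cref{l:small-adic-space-is-diamantine}.1---but it is more work than the paper's formal argument, which avoids any cohomological computation by invoking the abstract compatibility of sheafification with restriction between the big and small sites. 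Both approaches are ultimately the same comparison; yours makes the mechanism explicit, while the paper treats it as a standard black box.
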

\begin{Lemma}[cotangent sequence]\label{l:cotangent-sequence-for-smoothoids}
	Let $f:X\to Y$ be a morphism of smoothoid spaces that is of topologically finite presentation. Then there is a natural right-exact sequence of sheaves on $X_{\et}$
	\[f^{\ast}\wtOm_{Y}\to \wtOm_X\to \Omega_{X|Y}\{-n\}\to 0\]
	where $\wtOm_{X|Y}$ is Huber's sheaf of relative K\"ahler differentials \cite[(1.6.2)]{huber2013etale}.
	If $f$ is smooth, this is an exact sequence of vector bundles. If $f$ is \'etale, the first map is an isomorphism.
\end{Lemma}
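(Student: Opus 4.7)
My plan is to reduce the claim to the classical cotangent sequence for morphisms of smooth rigid spaces, by working \'etale-locally via toric charts. I will exploit the Cartesian presentation from \cref{l:smoothoid-is-pullback-of-smooth-rigid} and the local identification $\wtOm^n_X \cong g^*\Omega^n_{Z|S}\{-n\}$ from \cref{p:RnuO-for-small-dmd}.

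First, I will construct the two arrows. The map $f^*\wtOm_Y \to \wtOm_X$ is the base-change morphism for derived pushforward along $\nu$, coming from the map of sites over $f$: concretely, $f^*R^1\nu_{Y,*}\O_Y \to R^1\nu_{X,*}f_v^*\O_Y = R^1\nu_{X,*}\O_X$. The map $\wtOm_X \to \Omega^1_{X|Y}\{-1\}$ will be built locally: under the identification of \cref{p:RnuO-for-small-dmd}, it should be induced by the classical quotient $\Omega^1_Z \twoheadrightarrow \Omega^1_{Z|W}$ on compatible rigid models of $X$ and $Y$, and one must check that the resulting map is independent of the chart and glues. Alternatively, one can characterise $\wtOm_X$ via a universal derivation property and build the second map from the canonical derivation into Huber's $\Omega^1_{X|Y}$.

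Next, I carry out the local reduction. Both maps are \'etale-local on $X$ and $Y$, so I may assume both admit toric charts. By \cref{l:smoothoid-is-pullback-of-smooth-rigid}, $X$ and $Y$ are then Cartesian base changes of smooth maps $Z\to S$ and $W\to R$ of smooth rigid spaces along affinoid perfectoid spaces. Since $f$ is of topologically finite presentation, I use the approximation structure of perfectoid spaces (\cite[Proposition~3.2]{heuer-diamantine-Picard} combined with \cite[Proposition~11.23]{etale-cohomology-of-diamonds}) to descend $f$, after possibly refining the charts, to a morphism $\bar f\colon Z'\to W'$ of smooth rigid approximations, compatibly with the perfectoid base changes. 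Once in this local rigid setup, the classical cotangent sequence for smooth rigid spaces (Huber, \cite[\S1.6]{huber2013etale}) yields a right-exact sequence
\[\bar f^*\Omega^1_{W'|R} \to \Omega^1_{Z'|S} \to \Omega^1_{Z'|W'} \to 0.\]
Pulling back along $X\to Z'$ and twisting by $\{-1\}$, \cref{p:RnuO-for-small-dmd} identifies the first two terms with $f^*\wtOm_Y$ and $\wtOm_X$, while compatibility of Huber's relative differentials with (perfectoid) base change identifies the third with $\Omega^1_{X|Y}\{-1\}$. This gives the claimed right-exact sequence on $X_{\et}$.

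Finally, when $f$ is smooth, locally $\bar f$ factors as an \'etale map composed with a projection from a polydisk, whence its classical cotangent sequence is a split short exact sequence of vector bundles; the pullback inherits these properties. When $f$ is \'etale, $\Omega^1_{X|Y}=0$ by Huber's definition of \'etale morphisms, so the first arrow is an isomorphism. The main obstacle I anticipate is the descent step: making precise that, after suitably refining compatible toric charts on $X$ and $Y$, a morphism of topologically finite presentation between smoothoid spaces does descend to a morphism between rigid approximations of their smooth models, compatibly with the perfectoid bases, so that the classical cotangent sequence applies in pulled-back form. Independence of chart for the second arrow, and naturality in $f$, should then follow by the usual diagrammatic argument using two different charts and a common refinement.
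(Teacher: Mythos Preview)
Your approach is correct and essentially the same as the paper's: reduce \'etale-locally via rigid approximation to the classical cotangent sequence for a morphism of smooth rigid spaces, then pull back using \cref{p:RnuO-for-small-dmd}. The paper's formulation of the descent step is slightly more streamlined than yours: rather than choosing separate toric charts for $X$ and $Y$ with perfectoid bases and then arguing for compatibility, it directly writes $f$ as the base-change of a morphism $f_0\colon X_0\to Y_0$ of smooth rigid spaces along a single morphism $Y\to Y_0$ (using that $Y$ is approximated by smooth rigid spaces and $f$ is of topologically finite presentation, hence descends), which avoids the compatibility issue you flag as the main obstacle.
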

\begin{proof}
	The first map exists by adjunction via \cref{l:wtOm-is-sheaf-on-big-etale-site}. To prove the lemma, we can work locally and may thus assume by rigid approximation that $f$ is the base-change of a morphism $f_0:X_0\to Y_0$ of smooth rigid spaces along a morphism $Y\to Y_0$. For $f_0$, the lemma is clear from \cref{p:Scholze-Prop-3.23}. By \cref{p:RnuO-for-small-dmd} we get the desired sequence via pullback.
	If $f$ is smooth we can as in the proof of \cref{l:smoothoid-is-pullback-of-smooth-rigid} arrange for $f_0$ to be smooth.
\end{proof}

\subsection{Computations with toric charts}\label{s:comp-toric-charts}
For the proof of \cref{p:RnuO-for-small-dmd}, we can work locally and assume that we have a toric chart 
$f:X\to \mathbb T^d\times Y$ for some affinoid perfectoid space $Y$. We fix $f$ for the rest of this subsection.

We start by adapting some technical results from \cite[\S4-5]{Scholze_p-adicHodgeForRigid} to smoothoid spaces:
The chart $f$ induces a perfectoid cover $X_\infty\to X$ by pullback of $X$ along the toric cover \cref{eq:toric-cover}.  
Throughout this subsection, we use the following notation for the associated Huber pairs:
\[ \begin{tikzcd}
	X_\infty \arrow[d] \arrow[r] & {\mathbb T}^d_\infty\times Y \arrow[d]\arrow[r] &  {\mathbb T}^d_\infty\arrow[d]\\
	X \arrow[r] & \mathbb T^d\times Y\arrow[r] & \mathbb T^d
\end{tikzcd} 
\begin{tikzcd}
	{}\arrow[r,squiggly,"\O(-)"]&{}
\end{tikzcd}
 \begin{tikzcd}[column sep = {2.2cm,between origins},row sep = {1.2cm,between origins}]
	(R_\infty,R_\infty^+)& (B_\infty,B_\infty^+) \arrow[l]&(A_\infty,A_\infty^+) \arrow[l]\\
	(R,R^+) \arrow[u] & (B,B^+)  \arrow[u] \arrow[l] & (A,A^+). \arrow[u] \arrow[l]
\end{tikzcd}\]
 If $\Q_p^\cyc\subseteq K$, then the vertical maps are $\Delta$-torsors.
For $n\in \N$, let $\mathbb T^d_n$ be the torus in the variables $T_1^{1/p^n},\dots,T_d^{1/p^n}$, then $\mathbb T^d_\infty \sim \varprojlim_{n\in \N}\mathbb T^d_n$. Let $X_n=\Spa(R_n,R_n^+)$ be the pullback of $X\to \mathbb T^d$ to $\mathbb T^d_n$, then $X_\infty\sim \varprojlim X_n$.  As $X_\infty$ is standard-\'etale over ${\mathbb T}^d_\infty\times Y$, it is perfectoid.

Finally, we fix any real number $1\geq \alpha\geq \tfrac{1}{p-1}$.
This will be required to kill some torsion.
The following is the analogue of \cite[Lemmas~4.5 and 5.5]{Scholze_p-adicHodgeForRigid} in our setting.

\begin{Lemma}\label{l:Sch13-4.5/5.5}
	\begin{enumerate}
		\item 
		There is $\beta\in \mathbb R_{>0}$ such that the map
		$R^+\hotimes_{A^+}A_{\infty}^+\to R^+_\infty$
		is injective  with $p^\beta$-torsion cokernel. 
		\item Let $\epsilon>0$, then after replacing $X$ by the cover $X_n\to X$ for $n\gg 0$, we can take $\beta<\epsilon$.
		\item Assume that  $\Q_p^\cyc\subseteq K$. Then for any $s\in \N$ and $i\geq 0$, the kernel and cokernel of 
		\[ H^i_{\cts}(\Delta,R^+/p^s)\to H^i_{\cts}(\Delta,R^+_\infty/p^s)\]
		are killed by  $p^{\gamma}$ where $\gamma:=2\beta+\alpha$. The same is true for
		$H^i_{\cts}(\Delta,R^+)\to H^i_{\cts}(\Delta,R^+_\infty)$.
	\end{enumerate}
\end{Lemma}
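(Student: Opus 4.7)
The three statements are direct analogues of \cite[Lemmas~4.5 and~5.5]{Scholze_p-adicHodgeForRigid}, with the main modification that the perfectoid base is enlarged from a geometric point to the affinoid perfectoid space $Y$. The plan is to reduce each part to Scholze's original argument by exploiting that $Y$ plays no role in the toric structure: $\mathbb T^d_\infty \times Y \to \mathbb T^d \times Y$ is simply the base change to $Y$ of the classical toric tower, and $\Delta$ acts trivially on the $Y$-factor.

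For part~(1), I would write the standard-\'etale map $X \to \mathbb T^d \times Y$ as an alternating composition of rational subdomain inclusions and finite \'etale morphisms, and argue by induction on the length of this composition. Rational localisations commute with $-\hotimes_{A^+} A^+_\infty$ on the nose, contributing $\beta = 0$. For a finite \'etale step, almost purity identifies $R^+_\infty$ up to almost isomorphism with the integral closure of $R^+ \hotimes_{A^+} A^+_\infty$ in $R_\infty$; the discrepancy is controlled by the discriminant of the cover, which supplies the cokernel bound $p^\beta$ exactly as in Scholze. For part~(2), replacing $X$ by $X_n$ base-changes each such finite \'etale step along $\mathbb T^d_n \to \mathbb T^d$, and as $n \to \infty$ the almost-\'etaleness of this intermediate tower forces the discriminant contribution to shrink to zero. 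Thus one can achieve $\beta < \epsilon$ for $n$ sufficiently large, with $Y$ playing no role.

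For part~(3), I would compute $H^i_\cts(\Delta, A^+_\infty)$ directly via the continuous Koszul complex for $\Delta \cong \Z_p^d$. The perfectoid algebra $A^+_\infty$ decomposes as a topological $\Delta$-representation into weight spaces $A^+ \cdot T^J$ indexed by $J \in \Z[\tfrac{1}{p}]^d$, with $\Delta$ acting through the character determined by $J$; the $Y$-factor is absorbed into $A^+$ and does not affect the weight decomposition. A direct Koszul computation shows that the weight-zero part produces $H^i_\cts(\Delta, A^+) = A^+ \otimes \wedge^i \Hom_\cts(\Delta, \Z_p)$, while every non-zero weight contributes cohomology killed by $p^{1/(p-1)} \leq p^\alpha$. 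Tensoring with $R^+$ over $A^+$ yields the analogous statement with $R^+ \hotimes_{A^+} A^+_\infty$ in place of $R^+_\infty$. Finally, the long exact sequence in continuous cohomology applied to the short exact sequence
\[0 \to R^+ \hotimes_{A^+} A^+_\infty \to R^+_\infty \to C \to 0\]
supplied by part~(1), where $p^\beta C = 0$, contributes an additional factor of $p^{2\beta}$ (one $p^\beta$ from each of the adjacent $H^{i-1}(\Delta,C)$ and $H^i(\Delta,C)$ terms), giving the required bound $p^{\gamma} = p^{2\beta + \alpha}$. The variant with $/p^s$ coefficients follows from the same argument applied to the reductions, using that the constants involved are uniform in $s$.

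The main obstacle I anticipate is the bookkeeping of almost-purity constants in part~(1): one must verify that Scholze's explicit trace estimates for the toric tower survive base change to the perfectoid space $Y$. Because $Y$ is itself perfectoid and orthogonal to the toric direction, the estimates should go through, but care is required to confirm that the discriminant ideal of each finite \'etale step behaves compatibly under the completed tensor product $-\hotimes_{A^+} A^+_\infty$.
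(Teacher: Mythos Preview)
Your approach is essentially the same as the paper's, and part~(3) in particular matches the paper's argument almost exactly: the paper organizes it via a commutative square comparing $H^i_\cts(\Delta,R^+/p^s)$ and $H^i_\cts(\Delta,A^+/p^s)\otimes_{A^+}R^+$, but the content is precisely the weight decomposition and long exact sequence you describe, with the same bookkeeping yielding $\gamma=2\beta+\alpha$.

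Two technical points in part~(1) need correction. First, in the paper's notation $A^+=\O^+(\mathbb T^d)$ does \emph{not} contain the $Y$-factor, so the base case of your induction is not $X=\mathbb T^d\times Y$ with $\beta=0$, but rather requires showing that $B^+\hotimes_{A^+}A^+_\infty\to B^+_\infty$ is an almost isomorphism; the paper handles this separately using that the fibre product of the perfectoid spaces $\mathbb T^d_\infty$ and $Y$ is again perfectoid, so $B^+_\infty\stackrel{a}{=}S^+\hotimes_{K^+}A^+_\infty$. Second, rational localisations do \emph{not} commute with $-\hotimes_{A^+}A^+_\infty$ on the nose: the completed tensor product gives a ring of definition $R_0\hotimes_{B^+}B^+_\infty$, which differs from the integrally closed $R^+_\infty$ by bounded $p$-torsion, so each rational step still contributes to $\beta$. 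This is exactly the ``main obstacle'' you flagged at the end, and it is genuine; the paper resolves it by working with explicit rings of definition $B^+\langle f_1/g,\dots,f_r/g\rangle$ and comparing these to the $+$-rings. Once these two points are addressed, your outline goes through.
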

\begin{proof}
	We first treat the case $X=\mathbb T^d\times Y$.
	Write $S^+=\O^+(Y)$ and consider the map $	g:S^+\hotimes_{K^+}A^+ \to B^+$.
	Its domain can be described as $S^+\langle T_1^{\pm},\dots,T_d^{\pm}\rangle$ which is a ring of definition of $B=S\langle T_1^{\pm},\dots,T_d^{\pm}\rangle$, so $g$ is an isomorphism after inverting $p$. It thus has bounded $p$-torsion cokernel as $B$ is uniform. On the other hand, after applying $\hotimes_{A^+}A_{\infty}^+$ we have maps
	\[S^+\hotimes_{K^+}A^+\hotimes_{A^+}A_{\infty}^+\to B^+\hotimes_{A^+}A_{\infty}^+\to B^+_\infty\aeq S^+\hotimes_{K^+}A^+_\infty\]
	where the last almost isomorphism comes from \cite[Proposition 6.18]{perfectoid-spaces}.
	The composition is clearly an isomorphism. Since the map $A^+/p^n\to A_\infty^+/p^n$ is flat for all $n$, the first map is still injective with bounded $p$-torsion cokernel. Again by flatness, the middle term is $p$-torsionfree. It follows that all of the above maps are almost isomorphisms, thus so is $g$.
	
	We now add a standard-\'etale map $X\to \mathbb T^d\times Y$. For this, the argument from \cite[Lemma~4.5]{Scholze_p-adicHodgeForRigid} goes through: Let $Z$ be a smoothoid for which we already know the statement, e.g.\ $\mathbb T^d\times Y$, and consider a standard-\'etale map $h:X\to Z$. Write $Z=\Spa(B,B^+)$ and $X=\Spa(R,R^+)$ with toric covers $ X_\infty=\Spa(R_\infty,R^+_\infty)$ and $ Z_\infty=\Spa(B_\infty,B^+_\infty)$. 
	\begin{Claim}
		The map $R^+\hotimes_{B^+}B^+_\infty\to R^+_\infty$
		has bounded $p$-torsion cokernel, and is an isomorphism after inverting $p$.
	\end{Claim}
	\begin{proof}
		If $h$ is a rational localisation defined by some $f_1,\dots,f_r,g\in B$, then a ring of definition $R_0:=B^+\langle \frac{f_1}{g},\dots,\frac{f_r}{g}\rangle$ of $(R,R^+)$ is given by the $p$-adic completion of the sub-algebra of $B^+[\frac{1}{g}]$ generated by $\frac{f_1}{g},\dots,\frac{f_r}{g}$. Similarly, $B^+_\infty\langle \frac{f_1}{g},\dots,\frac{f_r}{g}\rangle$ is a ring of definition of $(R_\infty,R_\infty^+)$. Consider now 
		$T_0:=R_0\hotimes_{B^+}B^+_\infty$
		and let $T^+$ be the integral closure of the image of $T_0$ in $T:=T_0\tf$, 
		then by construction $\Spa(T,T^+)=X\times_ZZ_\infty=\Spa(R_\infty,R_\infty^+)$. It follows that the natural map $T_0\to R_\infty^+$
		is an isomorphism after inverting $p$, and that both its image and $R_\infty^+$ are rings of definition of $R_\infty$, hence $T_0\to R_\infty^+$ has bounded $p$-torsion cokernel. The claim follows by considering the composition 
		$T_0\to  R^+\hotimes_{B^+}B^+_\infty\to  R_\infty^+$
		since $R_0\subseteq R^+$ has bounded $p$-torsion cokernel, both being rings of definition of $R$.
		
		If instead $h$ is finite \'etale, then $B_\infty\to R_\infty=R\otimes_{B}B_\infty$ is finite \'etale, and the image of  $R^+\hotimes_{B^+}B_\infty^+\to R_\infty$ is a ring of definition of $R_\infty$, so the statement follows by uniformity.
	\end{proof}
	Combining the first part of the proof and the claim, we see that in the composition
	\[ R^+\hotimes_{A^+}A^+_\infty\to  R^+\hotimes_{B^+}B^+_\infty\to R^+_\infty,\]
	both maps have bounded $p$-torsion cokernel and become an isomorphism after inverting $p$. Since the first term is $p$-torsionfree by the same flatness argument as before, it follows that the composition is injective already before inverting $p$. This proves part (i).	
	
	Part (ii) follows exactly as in \cite[Lemma~4.5]{Scholze_p-adicHodgeForRigid}:  By the explicit description of the integral subrings of rational localisations  \cite[Lemma 6.4]{perfectoid-spaces}, respectively finite \'etale extensions of a perfectoid algebra, there is for any $\epsilon>0$ a subalgebra $R_{\infty,\epsilon}^+\subseteq R_\infty^+$ that is topologically finitely generated over $B_\infty^+$ and such that the cokernel $R_\infty^+/R_{\infty,\epsilon}^+$ is $p^\epsilon$-torsion.

	To deduce part (iii) from (i), consider the diagram
	\[\begin{tikzcd}
		{H^i_{\cts}(\Delta,R^+/p^s)} \arrow[r] & {H^i_{\cts}(\Delta,R_\infty^+/p^s)} \\
		{H^i_{\cts}(\Delta,A^+/p^s)}\otimes_{A^+}R^+ \arrow[r] \arrow[u] & {H^i_{\cts}(\Delta,A_\infty^+\otimes_{A^+}R^+/p^s).} \arrow[u]
	\end{tikzcd}\]
	
	By part (i), the right vertical map has $p^{2\beta}$-torsion kernel and cokernel.
	The left vertical map is an isomorphism: $\Delta$ acts trivially on $R^+$ and $A^+$, so $H^i_{\cts}(\Delta,-)=\Hom_{\cts}(\wedge^i_{\Z_p}\Delta,-)$. 
	
	The bottom map has $p^{\alpha}$-torsion kernel and cokernel by \cite[Lemma~5.5]{Scholze_p-adicHodgeForRigid}, which says that
	\[H^i_{\cts}(\Delta,A_\infty^+\otimes_{A^+}R^+/p^s)=H^i_{\cts}(\Delta,A_\infty^+)\otimes_{A^+}R^+/p^s\]
	and that 
	$H^i_{\cts}(\Delta,A^+)\to H^i_{\cts}(\Delta,A^+_\infty)$
	is injective with $p^{\alpha}$-torsion cokernel.
	
	It follows that the top map has $p^{2\beta+\alpha}$-torsion kernel and cokernel.
	The same works for $H^i_{\cts}(\Delta,R^+_\infty)$ when we instead  use the last part of \cite[Lemma~5.5]{Scholze_p-adicHodgeForRigid}.
\end{proof}
As an immediate application, this shows more directly that $X$ is sousperfectoid:
\begin{Lemma}\label{l:module-splitting-of-toric-tower}
	The normalised traces $A^+_\infty\to A^+_n$ induce canonical $R_n$-linear and continuous splittings $\tr_n:R_\infty\to R_n$ such that for any $x\in R_\infty$, we have $\tr_n(x)\to x$ for $n\to \infty$.
\end{Lemma}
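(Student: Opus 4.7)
The plan is to build $\tr_n$ in three stages that mirror the toric chart: first define it on the cyclotomic tower $A_\infty\to A_n$, then extend to $B_\infty\to B_n$ by base change along the perfectoid factor $Y$, and finally descend along the standard-\'etale map $X\to \mathbb{T}^d\times Y$.

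First I would recall Scholze's classical normalised cyclotomic trace $\tr_n^{\mathrm{cyc}}:A_\infty\to A_n$: this is the continuous $A_n$-linear splitting of $A_n\hookrightarrow A_\infty$ determined on Laurent polynomials by sending $T^{a/p^m}$ (with $a\in \Z^d$, $m\geq n$) to itself when $p^{m-n}$ divides every coordinate of $a$, and to $0$ otherwise. An inspection of the formula shows that $\tr_n^{\mathrm{cyc}}$ restricts to a map $A^+_\infty\to A^+_n$ of operator norm $\leq 1$, and that $\tr_n^{\mathrm{cyc}}(x)\to x$ as $n\to\infty$ for every $x\in A_\infty$.

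Next I would use the almost isomorphism $S^+\hotimes_{K^+}A^+_\infty\aeq B^+_\infty$ together with its analogue at finite level, both of which appear at the start of the proof of \cref{l:Sch13-4.5/5.5}, to identify $B_\infty=S\hotimes_K A_\infty$ topologically (inverting $p$ clears the almost ambiguity). Setting $\tr_n^{\mathrm{tor}}:=\id_S\hotimes\, \tr_n^{\mathrm{cyc}}$ then yields a continuous $B_n$-linear splitting $B_\infty\to B_n$ of norm $\leq 1$, and pointwise convergence $\tr_n^{\mathrm{tor}}(x)\to x$ follows from the previous step by density on simple tensors.

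Finally, I would invoke the Claim in the proof of \cref{l:Sch13-4.5/5.5}, which gives an identification $R_\infty=R\hotimes_B B_\infty$ of topological $B$-algebras, together with $R_n=R\hotimes_B B_n$ (which follows directly from the definition of $X_n$ as the pullback of $X$ along $\mathbb{T}^d_n\to \mathbb{T}^d$ and the fact that standard-\'etale maps commute with base change). Defining $\tr_n:=\id_R\hotimes\, \tr_n^{\mathrm{tor}}$ then gives the desired $R_n$-linear continuous splitting of $R_n\hookrightarrow R_\infty$.

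The main obstacle is the pointwise convergence $\tr_n(x)\to x$ for a general $x\in R_\infty$: the operator norms $\|\tr_n-\id\|$ do not tend to zero, so pointwise convergence on the dense subring $R\otimes_B B_\infty$ has to be combined with a uniform norm bound on $\tr_n$. I would handle this by approximating $x$ modulo $\varpi^\epsilon R^+_\infty$ by an element of $R\otimes_B B_\infty$ for each $\epsilon>0$ and using that $\|\tr_n\|\leq 1$ on integral lattices (up to the bounded $p$-torsion obstruction quantified in the Claim).
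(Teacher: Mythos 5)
Your proof is correct and takes essentially the same route as the paper, whose entire argument is to apply $-\hotimes_{A^+}R^+$ to the normalised traces $A^+_\infty\to A^+_n$ and invert $p$, using \cref{l:Sch13-4.5/5.5}.1 to identify the source with $R_\infty$ up to bounded $p$-torsion. Your two-step factorisation through $B_\infty$ and your explicit handling of the pointwise convergence (density of $R\otimes_B B_\infty$ plus a uniform norm bound on the $\tr_n$ coming from commensurability of the lattices) are just expanded versions of what the paper leaves implicit.
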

\begin{proof}
	By \cref{l:Sch13-4.5/5.5}.1, we get such splittings by applying $-\hotimes_{A^+}R^+$ and inverting $p$.
\end{proof}

\begin{proof}[Proof of \cref{p:RnuO-for-small-dmd}]
	For part 1,  we may work locally and assume that $X$ is toric. Let us first assume $\Q_p^\cyc\subseteq K$. Then the Cartan--Leray sequence \cite[Proposition~2.18]{heuer-v_lb_rigid} for  the $\Delta$-torsor $X_\infty\to X$  induces isomorphisms
	$H^n_v(X,\O)=H^n_{\cts}(\Delta,\O(X_\infty))$.
	By Lemma~\ref{l:Sch13-4.5/5.5}.3 and \cref{p:Scholze-Prop-3.23},
	\[H^n_{\cts}(\Delta,\O(X_\infty))=H^n_{\cts}(\Delta,\O(X))=H^n_{\cts}(\Delta,\O({\mathbb T}^d))\hotimes_{\O(\mathbb T^d)}\O(X)=H^0(\mathbb T^d,\wtOm^n)\hotimes \O(X).\]
	This in particular shows that the cup product induces an isomorphism
	\[\textstyle\bigwedge^nR^1\nu_{\ast}\O\isomarrow R^n\nu_{\ast}\O.\]
	
	Next, we prove part 2. For this we use the following Lemma:
	\begin{Lemma}[{\cite[\S3.4]{bhatt2017lecture}}]\label{l:bhatt-R1nuO-via-cotangent}
		Let $X$ be any stably uniform rigid space over $K$. Then there is a canonical and functorial morphism of sheaves on $X_\et$
		\[\mathcal H^0(\widehat{L_{\O^+_{X_\et}|\Z_p}})\tf\to R^1\nu_{\ast}\O\{1\}.\]
		When $X$ is a rigid space, the first term is $=\Omega_{X|K}$. Under this identification, when $X$ is moreover smooth, then this morphism agrees with the isomorphism from \Cref{p:Scholze-Prop-3.23}.
	\end{Lemma}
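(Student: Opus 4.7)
The plan is to use the universal property of the cotangent complex. Since $\widehat{L_{R|\Z_p}}$ corepresents continuous $p$-complete $\Z_p$-derivations, it suffices to construct a natural $\Z_p$-derivation
\[\delta:\O^+_{X_\et}\to R^1\nu_{\ast}\O\{1\}\]
of sheaves on $X_\et$, then invert $p$ and pass to $\mathcal H^0$. I would construct $\delta$ via the Kummer--Fontaine cocycle: given $f\in \O^+(U)$ for an \'etale $U\to X$, pass to a $v$-cover $V\to U$ admitting compatible $p^n$-th roots $f_n\in\O^+(V)$ (available locally, e.g.\ by pulling back to a toric perfectoid cover), form the cocycle $\gamma\mapsto \gamma(f_n)/f_n$ valued in $\mu_{p^n}$, and take the limit over $n$ to obtain a class in $H^1_v(U,\Z_p(1))$. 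Using the identification $T_p\mu_{p^\infty}\otimes\O=\O\{1\}$ and the Leray collapse $H^1_v(U,\O\{1\})=H^0(U,R^1\nu_{\ast}\O\{1\})$ (valid by acyclicity of $\O$ on affinoid perfectoids, cf.\ \cref{l:small-adic-space-is-diamantine}) yields $\delta(f)$. The Leibniz rule is immediate from $(fg)_n=f_n g_n$, while additivity becomes clean only after inverting $p$: integrally, the discrepancy between $f_n+g_n$ and any chosen $p^n$-th root of $f+g$ lies in a bounded $p$-power-torsion ideal that dies rationally.

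Functoriality in $X$ and $K$ is built into the Kummer construction. For the identification $\mathcal H^0(\widehat{L_{\O^+_X|\Z_p}})\tf=\Omega_{X|K}$ on a rigid space $X$, apply the transitivity triangle to $\Z_p\to \O_K\to \O^+_X$: the first contribution $\mathcal H^0(\widehat{L_{\O_K|\Z_p}})\tf$ vanishes because $\O_K$ is the valuation ring of a perfectoid field, so its $p$-complete cotangent complex is supported in negative cohomological degree (a standard computation via the Fontaine surjection from $\mathbb{A}_{\inf}$), and the remaining term $\widehat{\Omega^1_{\O^+_X|\O_K}}\tf$ equals $\Omega_{X|K}$ by topological finite type.

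Finally, for the compatibility with Scholze's isomorphism when $X$ is smooth, I would reduce to a toric chart $X\to \mathbb T^d$ and check agreement on the coordinate generators $T_i\in \O^+(X)$. On the perfectoid cover $X_\infty$ with Galois group $\Delta=\Z_p^d(1)$, Cartan--Leray identifies $H^1_v(X,\Z_p(1))=\Hom_{\cts}(\Delta,\Z_p(1))$, under which $\delta(T_i)$ corresponds to the $i$-th tautological character $\Delta\to\Z_p(1)$. Scholze's isomorphism $R^1\nu_{\ast}\O\{1\}\isomarrow\Omega_X$ sends this character to $dT_i/T_i$, which is exactly $d\log T_i$, i.e.\ the image of $T_i$ under $\delta$ on the $\Omega_{X|K}$ side. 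Since the $dT_i$ generate $\Omega_{X|K}$, agreement on the $T_i$ implies agreement of the two morphisms. The main obstacle I anticipate is verifying that $\delta$ descends to a genuine derivation of \'etale sheaves: the Kummer cocycle is inherently multiplicative on $\O^{+,\times}$, so extending it to an additive derivation on all of $\O^+$ with Leibniz rule requires careful control of the $p^{1/(p-1)}$-scale torsion corrections that only vanish after inverting $p$.
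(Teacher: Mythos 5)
Your route is genuinely different from the paper's. The paper does not build the map from a derivation: it imports Bhatt's cotangent-complex construction, whose engine is the computation $\widehat{L_{S|\Z_p}}\simeq S\{1\}[1]$ for any perfectoid ring $S$ (via the surjection $\theta\colon A_{\mathrm{inf}}(S)\to S$ with kernel generated by a nonzerodivisor $\xi$). The map $\O^+_{X_\et}\to R\nu_\ast\O^+_{X_v}$ then induces $\widehat{L_{\O^+_{X_\et}|\Z_p}}\to R\nu_\ast\widehat{L_{\O^+_{X_v}|\Z_p}}\simeq R\nu_\ast\O^+\{1\}[1]$, and one takes $\mathcal H^0$ and inverts $p$; additivity, Leibniz and functoriality are automatic, and for non-algebraically-closed perfectoid $K$ the paper adds a Galois-descent computation of $\widehat{L_{K^+|\Z_p}}\tf$. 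Your treatment of the second assertion (transitivity triangle for $\Z_p\to\O_K\to\O^+_X$, using that $\widehat{L_{\O_K|\Z_p}}$ sits in degree $-1$) and of the compatibility with \Cref{p:Scholze-Prop-3.23} (checking on the $T_i$ of a toric chart, which suffices since $\Omega_{X|\mathbb T^d}=0$ for $X\to\mathbb T^d$ \'etale) are both sound.

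The gap is exactly where you anticipate it, and your proposed fix does not close it. The Kummer cocycle is defined only on units and is multiplicative-to-additive, so to get a derivation you must set $\delta(f):=f\cdot d\log(f)$ and then (i) define $\delta$ on non-units and (ii) prove additivity; both reduce to the identity $(f+g)\,d\log(f+g)=f\,d\log(f)+g\,d\log(g)$ in $R^1\nu_\ast\O\{1\}$. The claim that the discrepancy ``lies in a bounded $p$-power-torsion ideal that dies rationally'' is not a correct mechanism: $f_n+g_n$ is not a $p^n$-th root of $f+g$ in any useful sense ($(f_n+g_n)^{p^n}\equiv f+g$ only modulo $p$, so $f_n+g_n$ and a genuine root $h_n$ agree only modulo $p^{1/p^n}$, which degenerates as $n\to\infty$), and the expression $\gamma(f_n+g_n)/(f_n+g_n)$ is not even a root of unity, so it is not a term of any Kummer cocycle. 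The obstruction to additivity is not torsion but a coboundary: writing $[f^\flat],[g^\flat],[h^\flat]\in A_{\mathrm{inf}}(\O^+(X_\infty))$ for Teichm\"uller-type lifts of $f$, $g$, $f+g$ along $\theta$, one has $[f^\flat]+[g^\flat]-[h^\flat]\in\ker\theta=(\xi)$, and the element $u:=([f^\flat]+[g^\flat]-[h^\flat])/\xi$ produces, via $\theta(u)$, the explicit cochain whose coboundary is the discrepancy of the three cocycles. Making this precise --- including independence of the choice of lift, which is also what lets you define $\delta(f)$ for an arbitrary $f\in\O^+$ rather than only for units --- is exactly the content of $\widehat{L_{S|\Z_p}}\simeq(\ker\theta/\ker\theta^2)[1]$, i.e.\ of the cotangent-complex computation you were trying to bypass. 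So either import that computation (at which point the universal-property detour is redundant, since the cotangent complex already hands you the canonical morphism) or carry out the $A_{\mathrm{inf}}$-cocycle manipulation in full; as written, the central step of your construction is unproven.
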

	\begin{proof}
		When $K$ is algebraically closed, this is proved in {\cite[\S3.4]{bhatt2017lecture}}:  The reference assumes that $X$ is a smooth rigid space, but the construction of the morphism goes through mutatis mutandis when we replace the formal model $\mathfrak X_{aff}$  by the site $(X_\an,\O^+)$.
		
		The case of general $K$ can be seen in the same way: We only need to see that 
		\[\widehat {L_{K^+|\Z_p}}\tf=K\{1\}[1].\]
		Let $C$ be the completion of an algebraic closure of $K$, then the transitivity triangle for $\Z_p\to K^+\to C^+$ yields a canonical isomorphism, compatible with Galois actions
		\[\widehat {L_{K^+|\Z_p}}\tf\otimes_{K}C=\widehat {L_{C^+|\Z_p}}\tf.\]
		The statement follows by taking Galois invariants, like in \cite[Proposition 2.25]{heuer-v_lb_rigid}.
	\end{proof}
	Applying \Cref{l:bhatt-R1nuO-via-cotangent} to $Z$, we obtain a natural morphism
	\[ g^\ast\Omega_{Z|K}^1\to g^\ast R^1\nu_\ast \O_Z\to R^1\nu_\ast \O_X,\]
	and similarly for $S$. By functoriality, the following diagram commutes:
	\[\begin{tikzcd}
		g^\ast\Omega_{Z|K}^1 \arrow[r] & R^1\nu_{\ast}\O_X \\
		g^{\ast}f^{\ast}\Omega^1_{S|K} \arrow[u] \arrow[r] & g^{\ast}R^1\nu_{\ast}\O_Y \arrow[u]
	\end{tikzcd}\]
	But $Y$ is perfectoid, so the bottom right term vanishes. By the cotangent sequence, we thus obtain a natural morphism
	\[	g^\ast\Omega_{Z|S}^1\to R^1\nu_{\ast}\O_X.\]
	To see that this is an isomorphism, we can work locally on $Z$ and thus assume that $f$ factors into an \'etale morphism $Z\to\mathbb T^d\times S$ and the projection $\mathbb T^d\times S\to S$. Then $X$ is \'etale over $\mathbb T^d\times Y$. The statement is then immediate from comparing Cartan--Leray sequences for the toric towers $\mathbb T^d_\infty\times S\to \mathbb T^d\times S$ and $\mathbb T^d_\infty\times Y\to \mathbb T^d\times Y$.
	
	\medskip
	
	For general $K$, we deduce the results via Galois descent, exactly as in \cite[Proposition~2.25]{heuer-v_lb_rigid}: Let $(C,C^+)$ be the completion of an algebraic closure of $(K,K^+)$, let $Q=\Gal(C|K)$ be the Galois group and $X_C$ the base-change. Then for any $i,j\geq 0$ we have
	\[ H^i_{\cts}(Q,H^j_v(X_C,\O))=\begin{cases}
		H^0(X,\wtOm_{X}^j)& i=0\\
		0 &i>0,
	\end{cases}\]
	since $H^j_v(X_C,\O)= \wtOm_{X_C}^j(X_C)\cong \O(X_C)^k$ as $Q$-modules for $k={d\choose j}$ by part 1, and the map
	$H^i_{\cts}(Q,\O(X_C)) \hookrightarrow H^i_{\cts}(Q,\O(X_{C,\infty}))=H^i_v(X_\infty,\O)=0$ for $i>0$ is injective  due to \cref{l:module-splitting-of-toric-tower}.
	The Cartan--Leray sequence for $X_C\to X$ now implies $H^n_v(X,\O)=H^0(X,\wtOm_{X}^n)$ for any $n\in\N$.
	Part 2 follows as the map $h$ is an isomorphism after base-change to $C$.
\end{proof}

For a toric smoothoid space $X$, this gives an explicit description of $\wtOm_X$ depending on $f$:
\begin{Lemma}\label{l:HT-isom-on-small-dmd}
	Assume $K$ contains $\Q_p^\cyc$. Then the chart $f$ induces an isomorphism
	\[ \HT_f:\Hom_{\cts}(\Delta,\O(X))\to H^1_{\cts}(\Delta,\O(X_\infty))\to H^1_{v}(X,\O)\to H^0(X,\wtOm_X)\]
	where the second map is from the Cartan--Leray sequence  (\cref{p:Cartan--Leray}) for $X_\infty\to X$ and the last is from the Leray sequence for the morphism $\nu:X_{v}\to X_{\et}$, using \cref{p:RnuO-for-small-dmd}.
\end{Lemma}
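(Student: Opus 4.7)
The plan is to show that each of the three maps in the composition defining $\HT_f$ is individually an isomorphism. Since a toric chart is standard-\'etale, $X$ is affinoid and $X_\infty$ is affinoid perfectoid; I will use these facts repeatedly.

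First, I would dispose of the third (Leray) map. Combining \cref{l:small-adic-space-is-diamantine}.1 (acyclicity of $\O$ on the \'etale site of an affinoid smoothoid, giving $H^i_{\et}(X,\O)=0$ for $i>0$) with \cref{l:small-adic-space-is-diamantine}.3 (from which $\nu_{\ast}\O=\O$ after inverting $p$), the Leray spectral sequence for $\nu:X_v\to X_{\et}$ collapses on the $i=0$ axis in low degrees. The edge map therefore gives an isomorphism $H^1_v(X,\O)\isomarrow H^0(X,R^1\nu_{\ast}\O)=H^0(X,\wtOm_X)$, where the last identification is \cref{p:RnuO-for-small-dmd}.

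Second, for the edge map $H^1_{\cts}(\Delta,\O(X_\infty))\to H^1_v(X,\O)$, I would appeal to the Cartan--Leray spectral sequence for the pro-\'etale $\Delta$-torsor $X_\infty\to X$, which is a $v$-cover. Since $X_\infty$ is affinoid perfectoid, we have $H^j_v(X_\infty,\O)=0$ for $j>0$, so the spectral sequence collapses to a row and yields the isomorphism in degree one.

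The main obstacle is the first map. Since $\Delta$ acts trivially on $\O(X)$, we identify $\Hom_{\cts}(\Delta,\O(X))=H^1_{\cts}(\Delta,\O(X))$, and the first map is induced by the $\Delta$-equivariant inclusion $R=\O(X)\hookrightarrow\O(X_\infty)=R_\infty$. The plan here is to apply \cref{l:Sch13-4.5/5.5}.3: the analogous integral map $H^1_{\cts}(\Delta,R^+)\to H^1_{\cts}(\Delta,R_\infty^+)$ has kernel and cokernel annihilated by $p^\gamma$, so inverting $p$ gives the desired isomorphism. This is the step that relies on genuine input, namely the Scholze-style almost-comparison estimates packaged in \cref{l:Sch13-4.5/5.5}; the other two steps are formal consequences of (sous)perfectoid acyclicity.
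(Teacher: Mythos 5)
Your proposal is correct and follows essentially the same route as the paper: the paper's proof likewise handles the three maps separately, citing \cref{l:Sch13-4.5/5.5}.3 for the first, the vanishing $H^1_v(X_\infty,\O)=0$ for the second, and acyclicity of $\O$ on the affinoid $X$ for the third. Your version just spells out the spectral-sequence bookkeeping and the passage from $R^+$ to $R$ by inverting $p$, which the paper leaves implicit.
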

\begin{proof}
	The first map is the isomorphism by \cref{l:Sch13-4.5/5.5}.3.
	The second is an isomorphism as $H^1_v(X_\infty,\O)=0$.
	The third is an isomorphism as $H^i_{\an}(X,\O)=0$ for $i\geq 1$ by \cref{l:small-adic-space-is-diamantine}.
\end{proof}

\begin{Definition}\label{l:wtOm-integral-submodule-ind-by-f}
	Let $\wtOm^+_X\subseteq \wtOm_X$ be the finite free $\O^+$-submodule (depending on $f$) given by the image of
	$\HT_f:\Hom_{\cts}(\Delta,\O^+(X))\otimes_{\O^+(X)}\O^+\to \wtOm_X$.
\end{Definition}

\subsection{Non-cyclotomic base fields}\label{s:non-cyclotomic}
Rather than deducing the case of general $K$ from that where $K$ contains $\Q_p^\cyc$ by Galois descent, one can also give a more direct argument. We record it since we need it later: 

Assume that $K$ does not contain all $p$-power roots of unity and consider the cyclotomic extension $K^\cyc|K$ obtained by adjoining them. For simplicity, let us assume that $K$ contains at least $\zeta_p$. Then $\Spa(K^\cyc)\to \Spa(K)$ is a torsor under the profinite group $Q:=\Gal(K^\cyc|K)\subset \Z_p^\times$ such that $Q= 1+p^m\Z_p\cong \Z_p$ for some $m\in \N$.

Consider the base-change $X^\cyc\to \Spa(K^\cyc)$ of $X\to \Spa(K)$ along this extension. Denote by $(R^\cyc,R^{\cyc+})$ its global sections. Similarly, let  $(R^\cyc_\infty,R^{\cyc+}_\infty)$ be the global sections of the base-change $X_\infty^\cyc\to  \Spa(K^\cyc)$ of $X_\infty\to \Spa(K)$, as in the last section. Then
\[ X_\infty^\cyc\to \Spa(K^\cyc)\to\Spa(K)\]
is again a pro-finite-\'etale Galois torsor for a group $\Lambda$ which is canonically a split extension
\[0\to \Delta\to \Lambda\to Q\to 0,\]
where $\Delta$ is the Galois group of $X_\infty^\cyc\to \Spa(K^\cyc)$ as in the last section.
Namely, $\Lambda=\Delta\rtimes Q$ is the semi-direct product given by the action of $Q\subseteq \Z_p^\times$ by multiplication on $\Delta$. Writing elements of $\Lambda$ as tuples $(\gamma, \sigma)$, these act on $K^{\cyc}\langle T^{\pm 1/p^\infty}\rangle$ via
$(\gamma,\sigma)\cdot T^m= \sigma(a_m)\zeta^{m\gamma}T^m$.

We can now describe $\mathrm{R}\Gamma_{\cts}(\Lambda,R^{\cyc+})$, as in \cref{l:Sch13-4.5/5.5}.
While calculations like these are standard in $p$-adic Hodge theory, there is a major difference to the classical setting as in \cite[\S 3]{tate1967p}: For $0\neq i\in \Z$ we have $\Q_p^\cyc(i)^{\Z_p^\times}=0$, but for perfectoid $K$ we instead have:
\begin{Lemma}\label{l:cts-Lambda-cohom-of-Kcyc}
	For $i\in \Z$, we have $K^{+\cyc}(i)^Q\aeq K^{+}$ and $H^j_{\cts}(Q,K^{+\cyc}(i))\aeq 0$ for $j>0$.
\end{Lemma}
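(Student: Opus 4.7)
The plan is to exploit that $Q\cong\Z_p$ is procyclic. Fixing a topological generator $\gamma$, continuous $Q$-cohomology of any $p$-adically complete $\Z_p$-module with continuous $Q$-action is computed by a two-term complex. Applied to $M=K^{+\cyc}(i)$, this gives $[K^{+\cyc}\xrightarrow{\phi_i}K^{+\cyc}]$ with $\phi_i(x)=\chi(\gamma)^i\gamma(x)-x$, reducing the lemma to proving the almost identities $\ker(\phi_i)\aeq K^+$ and $\coker(\phi_i)\aeq 0$ for every $i\in\Z$.

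The first step is to set up Tate's method of normalised traces in the perfectoid setting. For $K_n:=K(\zeta_{p^n})\subset K^{\cyc}$, Faltings' almost purity (applied to the perfectoid field $K$) shows that $K^+\hookrightarrow K_n^+$ is almost \'etale, so the normalised trace $\tilde\tr_n:=p^{-(n-m)}\tr_{K_n/K}$ is an almost $K^+$-linear, $Q$-equivariant almost-retraction of $K^+\hookrightarrow K_n^+$. Checking uniform almost-bounds in $n$ --- the density of $|K^\times|\subseteq\mathbb R_{>0}$ is what upgrades Tate's bounded-denominator statement to an almost one --- and passing to the limit yields $\tilde\tr\colon K^{+\cyc}\to K^+$, together with a $Q$-equivariant almost decomposition $K^{+\cyc}\aeq K^+\oplus\tilde K^{+\cyc}$ where $\tilde K^{+\cyc}:=\ker(\tilde\tr)$.

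The second step is to verify that $\phi_i$ is almost invertible on the complement $\tilde K^{+\cyc}$ for every $i$. Tate's estimate, in its perfectoid refinement, gives that $\gamma-1$ has almost-bounded inverse on $\tilde K^{+\cyc}$. A geometric-series perturbation, using that $\chi(\gamma)^{-i}-1\in p^{m+v_p(i)}\Z_p$ is small relative to this inverse, extends the conclusion to $\gamma-\chi(\gamma)^{-i}$, and hence to $\phi_i$ itself. Therefore $R\Gamma_{\cts}(Q,\tilde K^{+\cyc}(i))\aeq 0$, and the entire cohomology is controlled, in the almost category, by the constant summand. This part follows the classical Tate--Sen formalism and should be routine modulo bookkeeping.

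The main obstacle is the analysis on the constant summand $K^+$. There the trivial $Q$-action makes $\phi_i$ equal to multiplication by the scalar $\chi(\gamma)^i-1\in\Z_p$, which for $i\neq 0$ is non-invertible in $K^+$ and would naively contribute a non-zero cokernel $K^+/(\chi(\gamma)^i-1)K^+$; for $i=0$ the operator vanishes and one naively reads off $H^1(Q,K^+)=K^+$. Reconciling either with the desired almost vanishing of $H^{>0}$ is the technical heart of the proof. The key point is that the decomposition $K^{+\cyc}\aeq K^+\oplus\tilde K^{+\cyc}$ holds only in the almost category, and the almost-error in $\tilde\tr$ produces coboundaries that precisely cancel the apparent $H^1$-classes from $K^+$ against classes in $\tilde K^{+\cyc}$. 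Making this cancellation explicit relies on using the perfectoidness of $K$ a second time: the density of the value group ensures that scalars such as $\chi(\gamma)^i-1$, which are not invertible in $K^+$, become almost invertible after refining the splitting --- a phenomenon absent in the classical Tate--Sen setting over discretely valued fields, and the precise point at which the lemma departs from the classical result cited at the start of \S\ref{s:non-cyclotomic}.
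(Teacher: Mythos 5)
Your approach has a fatal gap at its foundation: the normalised-trace splitting you build in Step 1 does not exist when $K$ is perfectoid, and the lemma is in fact \emph{inconsistent} with the existence of such a splitting. Because $K$ is perfectoid, almost purity makes $K^+\hookrightarrow K_n^+$ almost \'etale, which means the \emph{unnormalised} trace $\tr_{K_n/K}\colon K_n^+\to K^+$ is already almost surjective. Consequently $p^{-(n-m)}\tr_{K_n/K}(K_n^+)$ almost contains $p^{-(n-m)}K^+$; the normalised trace does not map $K_n^+$ into $K^+$, its "denominators" grow without bound in $n$, and there is no limit map $\tilde\tr\colon K^{+\cyc}\to K^+$. (In Tate's setting the normalised traces converge precisely because the cyclotomic tower is deeply ramified, i.e.\ the different is large; almost \'etaleness is the opposite situation.) One can also see a priori that no $Q$-equivariant almost decomposition $K^{+\cyc}\aeq K^+\oplus\tilde K^{+\cyc}$ with $\gamma-1$ almost invertible on the complement can exist: it would force $H^1_{\cts}(Q,K^{+\cyc})\aeq K^+/(\gamma-1)K^+=K^+$, which is not almost zero, contradicting the statement at $i=0$. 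Your final paragraph correctly identifies this as the crux, but the proposed rescue does not work: a fixed scalar $\chi(\gamma)^i-1$ of positive valuation never "becomes almost invertible" --- $K^+/(\chi(\gamma)^i-1)K^+$ is killed only by multiples of that fixed element, not by all of $\m$, so it is not almost zero --- and for $i=0$ the scalar is $0$. The density of the value group is a red herring here.

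The paper's proof avoids group-cohomological computation entirely: it regards $K^{+\cyc}(i)$ with its $Q$-action as a descent datum along the pro-\'etale $Q$-torsor $\Spa(K^\cyc)\to\Spa(K)$, defining an invertible $\O^+$-module $L_i$ on $\Spa(K)_v$, so that $H^j_{\cts}(Q,K^{+\cyc}(i))$ computes $H^j_v(\Spa(K),L_i)$ by Cartan--Leray; the cited triviality of such $L_i$ on a perfectoid point then gives $H^0\aeq K^+$ and $H^{>0}\aeq 0$. Note that this is exactly the "major difference to the classical setting" flagged just before the lemma: the answer is the opposite of Tate's in both degrees ($H^0\aeq K^+$ even for $i\neq 0$, and $H^1\aeq 0$ even for $i=0$), so any argument modelled on the Tate--Sen formalism is structurally the wrong tool. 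If you want a hands-on proof, the nontrivial input you need is not a trace splitting but the almost vanishing $H^{>0}_v(\Spa(K),\O^+)\aeq 0$ together with the almost triviality of the class of the cocycle $\gamma\mapsto\chi(\gamma)^i$ in $H^1_{\cts}(Q,(K^{+\cyc})^\times)$.
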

\begin{proof}
	 $K^{+\cyc}(i)$ with its $Q$-action defines an invertible $\O^+$-module $L_i$ on $X:=\Spa(K,K^+)$ by descent from $\Spa(K^\cyc)$, and $H^j_{\cts}(Q,K^{+\cyc}(i))$ computes $H^j_v(X,L_i)$ by the Cartan--Leray sequence. But any such $L_i$ is trivial by \cite[Corollary~2.28]{heuer-G-torsors-perfectoid-spaces}, so $H^j_v(X,L_i)\aeq 0$.
\end{proof}

\begin{Lemma}\label{l:non-cyclotomic-version-of-Sch13-4.5/5.5}
	\begin{enumerate}
		\item	There is $\gamma>0$ such that for any $s,i\geq 0$, the kernel and cokernel of
		\[ H^i_{\cts}(\Lambda,R^{\cyc+}/p^s)\to H^i_{\cts}(\Lambda,R^{\cyc+}_\infty/p^s)\]
		are annihilated by  $p^{\gamma}$. The same is true for the map
		$H^i_{\cts}(\Lambda,R^{+\cyc})\to H^i_{\cts}(\Lambda,R^{+\cyc}_\infty)$.

		\item The restriction
		$H^1_{\cts}(\Lambda,R^{\cyc+}/p^s)\to H^1_{\cts}(\Delta,R^{\cyc+}/p^s)^Q$
		is split surjective:  a splitting $r$ is given by sending $\rho:\Delta\to R^{\cyc+}/p^s$ to the $1$-cocycle $r(\rho):(\gamma,\sigma)\mapsto \rho(\gamma)$. In the limit over $s$, this also defines a splitting of
		$H^1_{\cts}(\Lambda,R^{+\cyc})\to H^1_{\cts}(\Delta,R^{+\cyc})^Q$.
		\item As an $R^+$-module, $H^1_{\cts}(\Delta,R^{+\cyc})^Q$ is almost finite free of rank $d$.
	\end{enumerate}
\end{Lemma}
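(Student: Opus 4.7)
The plan is to run the Hochschild--Serre spectral sequence for the split extension $1 \to \Delta \to \Lambda \to Q \to 1$ and compare it to the cyclotomic case already treated in \Cref{l:Sch13-4.5/5.5}. The key simplification is that $Q \cong \Z_p$ has cohomological dimension $1$, so for any continuous $\Lambda$-module $M$ the spectral sequence collapses into short exact sequences
\[ 0 \to H^1_\cts(Q, H^{n-1}_\cts(\Delta, M)) \to H^n_\cts(\Lambda, M) \to H^0_\cts(Q, H^n_\cts(\Delta, M)) \to 0. \]

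For part (1), I would apply this comparison with $M = R^{\cyc+}/p^s$ and $M = R^{\cyc+}_\infty/p^s$. By \Cref{l:Sch13-4.5/5.5}(3) applied over $K^{\cyc}$, the map on inner $\Delta$-cohomology has kernel and cokernel killed by some fixed $p^{\gamma_0}$, uniformly in $s$ and $q$. Since $H^p_\cts(Q,-)$ is the cohomology of a fixed two-term complex, it sends maps whose kernel and cokernel are killed by $p^{\gamma_0}$ to maps whose kernel and cokernel are killed by $p^{2\gamma_0}$; applying the five lemma to the pair of short exact sequences yields part (1) with $\gamma := 4\gamma_0$, uniformly in $s$ and $i$. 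The statement without $/p^s$ then follows from the derived inverse limit, since the bounds are uniform in $s$.

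For part (2), the verification that $r(\rho)(\gamma, \sigma) := \rho(\gamma)$ is a $1$-cocycle is a direct computation. Using the multiplication $(\gamma_1, \sigma_1)(\gamma_2, \sigma_2) = (\gamma_1 + \sigma_1 \gamma_2, \sigma_1 \sigma_2)$ in $\Lambda$,
\[ r(\rho)(\lambda_1 \lambda_2) = \rho(\gamma_1) + \rho(\sigma_1 \gamma_2), \qquad r(\rho)(\lambda_1) + \lambda_1 \cdot r(\rho)(\lambda_2) = \rho(\gamma_1) + \sigma_1(\rho(\gamma_2)), \]
and these agree precisely when $\rho(\sigma \gamma) = \sigma(\rho(\gamma))$, which is the condition that $\rho \in \Hom_\cts(\Delta, R^{\cyc+}/p^s)$ is $Q$-equivariant; since $\Delta$ acts trivially on $R^{\cyc+}$, this is equivalent to $\rho$ defining a $Q$-fixed class in $H^1_\cts(\Delta, R^{\cyc+}/p^s)$. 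The restriction of $r(\rho)$ to $\Delta \subset \Lambda$ is $\rho$, so $r$ is indeed a section; it is manifestly $R^+$-linear and natural in $s$, hence passes to the inverse limit.

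Part (3) is the step I expect to be the main obstacle, as it requires a genuinely geometric input rather than a formal cohomological one. Since $\Delta$ acts trivially on $R^{+\cyc}$, choosing a $\Z_p$-basis $e_1, \dots, e_d$ of $\Delta$ identifies $H^1_\cts(\Delta, R^{+\cyc}) = \Hom_\cts(\Delta, R^{+\cyc}) \cong (R^{+\cyc})^d$. Unwinding the induced $Q$-action shows that $\rho = (\rho_i)$ is $Q$-fixed iff $\sigma(\rho_i) = \sigma \cdot \rho_i$ for all $\sigma \in Q$, where the right-hand side is multiplication by $\sigma \in \Z_p^\times \subset R^{+\cyc}$. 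Hence $H^1_\cts(\Delta, R^{+\cyc})^Q \cong M^d$ for the twisted invariants $M := \{r \in R^{+\cyc} \mid \sigma(r) = \sigma r \text{ for all } \sigma \in Q\}$. Via descent along the $Q$-torsor $X^\cyc \to X$, the module $R^{+\cyc}$ with this twisted action defines an invertible $\O^+$-module $\mathcal L$ on $X_v$, and the Cartan--Leray sequence identifies $M = H^0_v(X, \mathcal L)$. By \cite[Corollary~2.28]{heuer-G-torsors-perfectoid-spaces}, exactly as in the proof of \Cref{l:cts-Lambda-cohom-of-Kcyc}, every such $\mathcal L$ is almost trivial, so $M \aeq R^+$ and $H^1_\cts(\Delta, R^{+\cyc})^Q \aeq (R^+)^d$.
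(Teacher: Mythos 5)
Your proof is correct and follows essentially the same route as the paper: the Hochschild--Serre/inflation--restriction comparison for the split extension $1\to\Delta\to\Lambda\to Q\to 1$, reduced to \cref{l:Sch13-4.5/5.5}.3 applied over $K^{\cyc}$, for part (1); the explicit cocycle $(\gamma,\sigma)\mapsto\rho(\gamma)$ for part (2), which is exactly \cref{l:section-for-Lambda}; and descent of the twisted module $R^{+\cyc}(-1)$ to an invertible $\O^+$-module for part (3).

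Two small divergences are worth noting. In part (1) you keep the term $H^1_{\cts}(Q,H^{q}_{\cts}(\Delta,-))$ and run the five lemma on the two collapsed short exact sequences; the paper instead first shows, via the monomial decomposition of $A^{\cyc+}/p^s$ and \cref{l:cts-Lambda-cohom-of-Kcyc}, that these terms are killed by a bounded power of $p$, so that the restriction $H^n_{\cts}(\Lambda,-)\to H^n_{\cts}(\Delta,-)^Q$ is surjective with bounded-torsion kernel, and then transfers the bound from the $\Delta$-level comparison. Both arguments give part (1) with a uniform constant; the paper's version additionally produces the almost-isomorphy of restriction as a reusable intermediate, which is what gets invoked again in the non-cyclotomic case of \cref{p:part-2-of-Faltings-Lemma}, whereas yours is slightly more formal. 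In part (3), be careful where you invoke \cite[Corollary~2.28]{heuer-G-torsors-perfectoid-spaces}: in \cref{l:cts-Lambda-cohom-of-Kcyc} it is applied to invertible $\O^+$-modules on $\Spa(K,K^+)_v$, not on a general smoothoid, so applying it directly to your $\mathcal L$ on $X_v$ is not literally licensed by that citation. The clean way to finish --- and what the paper does --- is to note that your descent datum is $R^+\hotimes_{K^+}K^{\cyc+}(-1)$, i.e.\ the pullback of the invertible $\O^+$-module $L_0$ on $\Spa(K)_v$ attached to $K^{\cyc+}(-1)$; the corollary makes $L_0$ free, hence $M$ is (almost) free of rank one over $R^+$ and $H^1_{\cts}(\Delta,R^{+\cyc})^Q\aeq (R^+)^d$ as claimed.
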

\begin{proof}
	Let $A^{\cyc}=A\hotimes_KK^\cyc$.
	The main calculation is that for any $i,j\geq 0$ we have
	\[ H^j_{\cts}(Q,H^i_{\cts}(\Delta,A^{\cyc+}/p^s))\aeq \begin{cases}
		A^{+}/p^s & j=0,\\
		0&j>0.
	\end{cases}\]
	Indeed, writing $A^{\cyc+}/p^s=\oplus_{i\in \Z}K^{\cyc+}/p^s\cdot T^i$, this follows directly from \cref{l:cts-Lambda-cohom-of-Kcyc}.
	Let now $c>0$  be such that kernel and cokernel of the map
	$A^{\cyc+}\otimes_{A^+}R^+/p^s\to R^{+\cyc}/p^s$
	are killed by $p^c$ for all $s$. Then
	$H^j_{\cts}(Q,H^i_{\cts}(\Delta,R^{\cyc+}/p^s))$
	is $p^{2c}$-torsion for $j=1$ and any $i$, and vanishes for $j>1$ since $Q\cong \Z_p$. By  inflation-restriction, it follows that for any $n\geq 1$, the map
	\[H^n_{\cts}(\Lambda,R^{\cyc+}/p^s)\to H^{n}(\Delta,R^{\cyc+}/p^s)^Q\]
	is surjective with $p^{2c}$-torsion kernel.
	The exact same argument also works for $R^{\cyc+}_\infty/p^s$, so we can assume that there is $c>0$ independent of $s$ such that in the commutative diagram
	\[ \begin{tikzcd}
		{H^i_{\cts}(\Lambda,R^{+\cyc}/p^s)} \arrow[d] \arrow[r] & {H^i_{\cts}(\Delta,R^{+\cyc}/p^s)^Q} \arrow[d] \\
		{H^i_{\cts}(\Lambda,R_{\infty}^{+\cyc}/p^s)} \arrow[r] & {H^i_{\cts}(\Delta,R_{\infty}^{+\cyc}/p^s)^Q},
	\end{tikzcd}\]
	the top, bottom and right map have $p^c$-torsion kernel and cokernel, thus also the left map.
	
	To see part 3, we use that $\Hom_{\cts}(\Delta,R^{\cyc +})\cong R^{\cyc + }(-1)^d$
	as $R^+$-modules with $Q$-action. We can regard $R^{\cyc}(-1)$ with its $Q$-action as a descent datum for an invertible $\O^+$-module $L$ along the cover $X^\cyc\to X$ which has Galois group $Q$. In fact, since $R^{\cyc}(-1)=R\hotimes_KK^\cyc(-1)$, this comes via pullback from the invertible $\O^+$-module $L_0$ on $\Spa(K)_v$ associated to the descent datum $K^{\cyc+}(-1)$ for $\Spa(K^\cyc)\to\Spa(K)$. But $L_0$ is free by \cite[Corollary~2.28]{heuer-G-torsors-perfectoid-spaces}, hence the global sections $R^{\cyc}(-1)^Q$ of $L$ are a free $R^+$-module.
	
	It remains to see part 2: As the elements of $H^1_{\cts}(\Delta,R^{+\cyc}/p^s)^Q=\Hom_{\cts}(\Delta,R^{+\cyc}/p^s)^Q$ are precisely the $Q$-equivariant homomorphisms, this is the elementary observation:
\end{proof} 
\begin{Lemma}\label{l:section-for-Lambda}
	If $G$ is a topological group with continuous $\Lambda$-action such that $\Delta$ acts trivially, then $\res\colon H^1_{\cts}(\Lambda,G)\to  H^1_{\cts}(\Delta,G)^Q$
	has a natural section sending $\rho$ to $\sec(\rho)\colon(\gamma,\sigma)\mapsto \rho(\gamma)$.
\end{Lemma}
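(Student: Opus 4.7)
The plan is a direct cocycle-level verification that makes use of the split extension structure $\Lambda=\Delta\rtimes Q$ and the fact that $\Delta$ acts trivially on $G$. First, I would unpack the semidirect-product group law $(\gamma_1,\sigma_1)(\gamma_2,\sigma_2) = (\gamma_1\cdot{}^{\sigma_1}\gamma_2,\sigma_1\sigma_2)$ in $\Lambda$, and note that the $\Lambda$-action on $G$ factors through the projection $\Lambda\twoheadrightarrow Q$, so that ${}^{(\gamma,\sigma)}g = {}^\sigma g$ for all $g\in G$ and $(\gamma,\sigma)\in\Lambda$.

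Second, I would verify the $1$-cocycle identity for $\sec(\rho)$ by direct computation. The two sides evaluate to
\[\sec(\rho)\big((\gamma_1,\sigma_1)(\gamma_2,\sigma_2)\big) = \rho(\gamma_1\cdot{}^{\sigma_1}\gamma_2)\]
and
\[\sec(\rho)(\gamma_1,\sigma_1)\cdot{}^{(\gamma_1,\sigma_1)}\sec(\rho)(\gamma_2,\sigma_2) = \rho(\gamma_1)\cdot{}^{\sigma_1}\rho(\gamma_2),\]
and these agree because (i) $\rho$ is a homomorphism (as a crossed homomorphism with trivial $\Delta$-action on $G$) and (ii) $\rho$ is $Q$-equivariant in the sense $\rho(\sigma\gamma) = {}^\sigma\rho(\gamma)$, which is precisely what $Q$-invariance of $\rho$ as a cocycle encodes. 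Continuity of $\sec(\rho)$ follows from continuity of $\rho$ together with continuity of the set-theoretic projection $(\gamma,\sigma)\mapsto\gamma$, and the section property is immediate: along the inclusion $\Delta\hookrightarrow\Lambda$, $\gamma\mapsto(\gamma,1)$, one has $\sec(\rho)(\gamma,1)=\rho(\gamma)$, so $\res\circ\sec=\id$.

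Finally, descent to cohomology classes is automatic in the abelian setting used in \cref{l:non-cyclotomic-version-of-Sch13-4.5/5.5}.2, where coboundaries vanish and $Q$-invariant classes coincide with $Q$-equivariant cocycles. In the general non-abelian case one interprets $\rho$ as a $Q$-equivariant cocycle representative, whose existence on a given $Q$-invariant class is what the splitting $Q\hookrightarrow\Lambda$, $\sigma\mapsto(1,\sigma)$ of the semidirect-product extension provides, and a parallel computation shows that the induced map on cohomology is well-defined and natural in $G$. No real technical obstacle is expected here: the entire argument reduces to a short mechanical verification using the semidirect-product group law, with the only point requiring mild care being the bookkeeping of the $Q$-action convention on $Z^1(\Delta,G)$.
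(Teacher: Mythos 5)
Your verification is correct and is exactly the one-line ``elementary observation'' the paper intends: since $\Delta$ acts trivially, a continuous $1$-cocycle $\rho$ on $\Delta$ is a continuous homomorphism, and the semidirect-product law $(\gamma_1,\sigma_1)(\gamma_2,\sigma_2)=(\gamma_1\cdot{}^{\sigma_1}\gamma_2,\sigma_1\sigma_2)$ together with the $Q$-equivariance $\rho({}^{\sigma}\gamma)={}^{\sigma}\rho(\gamma)$ yields the cocycle identity for $(\gamma,\sigma)\mapsto\rho(\gamma)$; the section property and continuity are immediate. The paper offers no further argument, so your computation supplies precisely the intended content.

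One caveat on your final paragraph: in the non-abelian case it is \emph{not} true that every $Q$-invariant class in $H^1_{\cts}(\Delta,G)$ (i.e.\ $Q$-invariant conjugacy class of homomorphisms) admits a representative that is $Q$-equivariant on the nose, and the splitting $\sigma\mapsto(1,\sigma)$ of $\Lambda\to Q$ does not produce one — asserting that existence is essentially the surjectivity of $\res$ that one would be trying to establish, so your phrasing is circular there. This does not damage the lemma as it is actually used: the sentence immediately preceding it identifies $H^1_{\cts}(\Delta,-)^Q$ with the set of $Q$-equivariant continuous homomorphisms (which is an honest equality in the abelian applications of \cref{l:non-cyclotomic-version-of-Sch13-4.5/5.5}, and is how the domain of $\sec$ is taken in \cref{d:W_f-nc}), so $\sec$ is only ever evaluated on genuinely $Q$-equivariant $\rho$, where your main computation applies verbatim.
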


\subsection{Higgs bundles on smoothoid spaces}
Having introduced differentials on smoothoid spaces, we obtain a notion of Higgs bundles:
\begin{Definition}
	Let $X$ be a smoothoid space over $K$. A \textbf{Higgs bundle} on $X$ is a pair $(E,\theta)$ consisting of a vector bundle $E$ on $X_{\et}$ and
	$\theta\in H^0(X,\End(E)\otimes \wtOm^1_X)$
	such that $\theta\wedge\theta=0$ in $\End(E)\otimes \wtOm^2_X$, where $\wedge$ was defined in \cref{eq:wedge-for-wtOm}. Explicitly, this means that for any local basis of $\wtOm^1_X$, the coefficients of $\theta$ commute. Any such $\theta$ is called a \textbf{Higgs field} and can be written as a map $\theta:E\to E\otimes \wt\Omega^1$. A morphism of Higgs bundles is a morphism of the underlying bundles that commutes with the Higgs fields written in this form.
\end{Definition}

\begin{Definition}\label{d:Higgs-cohom}
	Let $X$ be a smoothoid space and let $(E,\theta)$ be a Higgs bundle on $X$. Again using the wedge product from \cref{eq:wedge-for-wtOm}, we can define the associated \textbf{Higgs complex}
	\[ \mathcal C^\ast_{\mathrm{Higgs}}(E,\theta):=\big [E\xrightarrow{\theta}E\otimes \wtOm^1\xrightarrow{\theta_1}E\otimes \wtOm^2\xrightarrow{\theta_2}\dots \xrightarrow{\theta_{n-1}} E\otimes \wtOm^n\big]\]
	where the transition maps are defined as
	$\theta_k:E\otimes \wtOm^k\xrightarrow{\theta\otimes\id } E\otimes \wtOm^1\otimes\wtOm^k\xrightarrow{\id\otimes \wedge} E\otimes \wtOm^{k+1}$.
	Passing to  the image of $\mathcal C^\ast_{\mathrm{Higgs}}(E,\theta)$ in the derived category $\mathcal D^b(X_{\et})$, we then set \[R\Gamma_{\mathrm{Higgs}}(X,(E,\theta)):=R\Gamma_{\et}(X,\mathcal C^\ast_{\mathrm{Higgs}}(E,\theta)).\]
	We call $H^k(X,(E,\theta)):=H^k(R\Gamma_{\Higgs}(X,(E,\theta)))$
	the \textbf{Dolbeault cohomology} of $(E,\theta)$.
\end{Definition}
\section{Rigid groups as $v$-sheaves}
In this short section, we briefly collect some background on rigid analytic group varieties:
\begin{Definition}
	A \textbf{rigid group} is a group object in the category of rigid spaces over $K$.
\end{Definition}

Since $K$ has characteristic $0$, a rigid group is automatically smooth \cite[Proposition~1]{Fargues-groupes-analytiques}. 

\begin{Example}
	\begin{enumerate}
		\item 	Any algebraic group over $K$ defines a rigid group via analytification.
		\item If $\mathcal G$ is a formal group scheme of topologically finite type over $K^+$, its adic generic fibre is a rigid group. A rigid group of this form is said to have \textbf{good reduction}.
	\end{enumerate}
\end{Example}
As always, we identify $G$ with the associated diamond, an abelian  $v$-sheaf on $\Perf_K$. 

\subsection{The Lie algebra and its exponential}
We now recall some basic constructions on rigid groups, and refer to \cite[\S3]{heuer-G-torsors-perfectoid-spaces} for more details.

Rigid groups are the non-archimedean analogues of complex Lie groups. One way in which this analogy manifests itself is by the $p$-adic Lie algebra Lie group correspondence:

Let $G$ be any rigid group, then its tangent space at the identity inherits the structure of a Lie algebra $\mathfrak g$ over $K$ of dimension $\dim_K\mathfrak g=\dim G$. We consider this as a $v$-sheaf via the associated vector group over $K$, i.e.\ we set $\mathfrak g(Y)=\mathfrak g\otimes_K \O(Y)$ on $\Perf_{K,v}$. Then there is a natural adjoint action
$\ad:G\to \End(\mathfrak g)$.
Sending a rigid group to its associated Lie algebra defines an equivalence of categories after localising at the class of open subgroups. Moreover, as for complex Lie groups, there is an exponential relating $G$ and $\mg$:

\begin{Proposition}[{\cite[Proposition 3.5]{heuer-G-torsors-perfectoid-spaces}}]\label{p:exp-on-Lie-alg}
	There is an open subgroup $\mg^\circ\subseteq \mg$, with underlying rigid space is isomorphic to $\B^d$, and a rigid open subgroup $G^\circ\subseteq G$ with an isomorphism  	of rigid spaces 
	\[ \exp:\mg^\circ\isomarrow G^\circ\]
that sends open subgroups onto open subgroups, and is functorial in $G$.
\end{Proposition}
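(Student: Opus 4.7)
My plan is to reduce the existence of $\exp$ to standard facts about formal group laws in characteristic zero, and then identify a radius of convergence on which the formal series become morphisms of rigid spaces.

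First, since $G$ is smooth of dimension $d$ over $K$, I would choose an open neighbourhood $U$ of the identity $e \in G$ with coordinates $x_1, \dots, x_d$ vanishing at $e$, identifying $U$ with $\B^d$ and $T_eG$ with $\mg \cong K^d$. After shrinking $U$ if necessary, the multiplication on $G$ restricts to a morphism whose pullback to $U$ is given by a $d$-tuple of convergent power series $F(x,y) \in K^+\langle x, y\rangle^d$ of the form $F(x,y) = x + y + (\text{higher order})$, realising the formal completion $\hat G_e$ as a formal group law over $K^+$. Because $\mathrm{char}(K) = 0$, classical formal group theory yields a unique strict isomorphism of formal groups $\log \colon \hat G_e \isomarrow \hat{\G}_a^d$ with inverse a formal exponential $\exp$; moreover, the group law pulled back to $\mg$ via $\exp$ is the Baker--Campbell--Hausdorff series $\mathrm{BCH}(X, Y)$, which depends only on the Lie bracket.

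The crucial quantitative input is that the coefficients of $\exp$, $\log$ and $\mathrm{BCH}$ have denominators divisible only by factorials $n!$; combined with Mahler's bound $v_p(n!) \le (n-1)/(p-1)$, this shows that all three series converge on the open polydisc of radius $|p|^{1/(p-1)}$ for a norm on $\mg$ induced by the chart. I would define $\mg^\circ \subseteq \mg$ as this polydisc, which is an open subgroup of the additive group $\mg$ with underlying rigid space $\B^d$, and set $G^\circ \subseteq U$ to be the image of the resulting morphism of rigid spaces $\exp\colon \mg^\circ \to G$. Since $\log$ is a two-sided inverse, $\exp$ is an isomorphism of rigid spaces and $G^\circ$ is an open subspace of $G$; the formal identity $\exp(X) \cdot \exp(Y) = \exp(\mathrm{BCH}(X, Y))$, together with convergence of $\mathrm{BCH}$ as a map $\mg^\circ \times \mg^\circ \to \mg^\circ$, makes $G^\circ$ closed under multiplication and inversion, hence a rigid open subgroup of $G$.

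For the claim about open subgroups, any rigid open subgroup of the additive polydisc $\mg^\circ$ contains a polydisc of smaller radius and is contained in $\mg^\circ$, so the same BCH estimate shows that its image under $\exp$ is closed under multiplication in $G^\circ$ and open in the analytic topology. Functoriality in $G$ follows because a morphism $\phi\colon G \to H$ of rigid groups induces a morphism of formal groups at the identity whose differential $d\phi\colon \mg \to \mathfrak h$ respects the brackets; since both $\exp_G$ and $\exp_H$ are determined by BCH from the brackets, one obtains $\phi \circ \exp_G = \exp_H \circ d\phi$ formally, and hence on any common domain of convergence, possibly after shrinking $\mg^\circ$. The main obstacle I anticipate is controlling the BCH convergence precisely enough that $\mathrm{BCH}$ really takes $\mg^\circ \times \mg^\circ$ into $\mg^\circ$ rather than into a slightly larger polydisc: this requires careful use of the factorial estimate, and may force a small shrinking of the naive radius $|p|^{1/(p-1)}$ depending on the size of the structure constants of the bracket in the chosen chart. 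Once this is arranged, the remaining assertions follow formally from the construction.
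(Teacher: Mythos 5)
This proposition is not proved in the present paper: it is recalled verbatim from \cite[Proposition 3.5]{heuer-G-torsors-perfectoid-spaces}, so there is no in-text proof to compare against. Your strategy --- pass to the formal group at the identity, invoke the characteristic-zero formal logarithm and the Baker--Campbell--Hausdorff series, and then pin down a radius of convergence using $v_p(n!)\le (n-1)/(p-1)$ --- is the standard construction and is surely what the cited source does in substance. Two points need repair, however.

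First, the underlying rigid space of $\mg^\circ$ is asserted to be $\B^d$, which in this paper is the \emph{closed} (affinoid) unit polydisc. The locus of convergence you describe is the \emph{open} polydisc of radius $|p|^{1/(p-1)}$, which is not isomorphic to $\B^d$ as a rigid space and is not even quasi-compact. You must instead choose some $a\in K^\times$ with $|a|<|p|^{1/(p-1)}$ (possible since the value group of a perfectoid field is dense) and take $\mg^\circ:=a\cdot\mg^+$ for a lattice $\mg^+$ given by your chart; this closed polydisc is isomorphic to $\B^d$ via scaling, and all three series converge on it because its radius is strictly below the critical one. Relatedly, for non-commutative $G$ the map $\log$ is not an ``isomorphism of formal groups onto $\hat{\G}_a^d$'': it is only an isomorphism of pointed formal schemes, transporting the group law to the BCH law, exactly as you say in the next clause. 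Since the proposition only claims an isomorphism of rigid \emph{spaces}, this costs nothing, but the phrasing should be corrected.

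Second, your justification that $\exp$ ``sends open subgroups onto open subgroups'' is too quick. For an arbitrary additive open subgroup $\mathfrak h\subseteq\mg^\circ$ and $X,Y\in\mathfrak h$, the BCH tail $\tfrac12[X,Y]+\cdots$ lies in $\mg^\circ$ by the convergence estimate but there is no reason for it to lie in $\mathfrak h$: containing a smaller polydisc does not make $\mathfrak h$ stable under the divided iterated brackets (consider a polydisc that is very small in one coordinate direction and maximal in another, so that $[X,Y]$ escapes). The assertion does hold for the cofinal family of ``round'' subgroups $a\cdot\mg^+$ with $|a|$ small, since then $[X,Y]\in a^2C\mg^+\subseteq a\mg^+$ once $|a|\,|C|$ beats the factorial denominators --- and this is precisely the form in which the result is used later, e.g.\ in \cref{c:open-subgroup-of-good-reduction} and \cref{p:inductive-lifting-ses-for-G}, where only the subgroups $\mg_k^+$ for $k$ large appear. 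Either restrict the claim to such subgroups or supply the missing stability argument; as written, this step is a genuine gap.
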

\begin{Corollary}\label{c:open-subgroup-of-good-reduction}
Any rigid group $G$ has a neighbourhood basis $(U_k)_{k\in\N}$  of $1$ of open subgroups $U_k\subseteq G$ of good reduction whose underlying rigid space is isomorphic to $\B^d$.
\end{Corollary}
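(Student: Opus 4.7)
The strategy is to construct the basis inside the Lie algebra by scaling, transport it to $G$ through the exponential of \Cref{p:exp-on-Lie-alg}, and then verify good reduction by expanding the group law in exponential coordinates via the Baker--Campbell--Hausdorff (BCH) series.

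Fix a $K$-linear basis of $\mg$ identifying $\mg^\circ$ with $\B^d$ carrying its additive group structure; this additive structure has good reduction, with obvious formal model the $d$-dimensional closed unit ball over $K^+$. Scaling by powers of $p$ produces a descending chain of open subgroups $p^k\mg^\circ\subseteq \mg^\circ$, each still isomorphic to $\B^d$ as a rigid space and with good reduction under the additive law, and together forming a neighbourhood basis of $0\in\mg$.

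By the final clause of \Cref{p:exp-on-Lie-alg}, the images $U_k := \exp(p^k\mg^\circ)\subseteq G^\circ$ are open subgroups of $G$; being images of closed balls under the rigid-space isomorphism $\exp$, their underlying rigid spaces are isomorphic to $\B^d$; and since $\exp$ is a homeomorphism taking $0$ to $1$, the family $(U_k)$ forms a neighbourhood basis of $1\in G$. So the only content that will remain is to check that each $U_k$ has good reduction for $k$ large enough.

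This last point is where I expect the main work to lie: concretely, one must check that the multiplication $U_k\times U_k\to U_k$ inherited from $G$ is given by power series with $K^+$-integral coefficients in the coordinates $p^k\cdot (T_1,\dots,T_d)$ on $U_k\cong \B^d$. In exponential coordinates the multiplication is computed by the BCH series
\[\exp(x)\cdot\exp(y)=\exp\bigl(x+y+\tfrac{1}{2}[x,y]+\tfrac{1}{12}\bigl([x,[x,y]]-[y,[x,y]]\bigr)+\cdots\bigr),\]
whose degree-$n$ term carries $p$-adic denominators of valuation at most $\tfrac{n}{p-1}$. Restricting to $x,y\in p^k\mg^\circ$ forces that term into $p^{n(k-1/(p-1))}\mg^\circ$, so the series converges inside $p^k\mg^\circ$ with $K^+$-coefficients once $k>\tfrac{1}{p-1}$. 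Passing to any cofinal sequence of such $k$ yields the required neighbourhood basis of open subgroups of good reduction, each with underlying rigid space $\B^d$.
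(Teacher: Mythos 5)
Your proposal is correct and is essentially the standard argument; the paper itself gives no proof of this corollary, treating it as immediate from the quoted \Cref{p:exp-on-Lie-alg} (whose proof in the cited companion paper is exactly the scaling-plus-BCH argument you outline, so you are reconstructing the outsourced content rather than diverging from it). Two small remarks. First, the step where you compute the multiplication on $U_k$ via the BCH series tacitly uses that $\exp$ intertwines the group law of $G$ with the BCH law on a small enough ball in $\mg$; the statement of \Cref{p:exp-on-Lie-alg} as quoted only gives an isomorphism of rigid spaces sending open subgroups to open subgroups, so this compatibility has to come from the construction of $\exp$ (it does, but it is worth flagging as an input rather than a consequence of the quoted statement). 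Second, your numerology is slightly off: with a denominator bound of valuation $\le n/(p-1)$ on the degree-$n$ BCH term, the requirement $n(k-\tfrac{1}{p-1})\ge k$ fails for $n=2$ unless $k\ge \tfrac{2}{p-1}$, so $k>\tfrac{1}{p-1}$ does not suffice as stated (it does with the sharper bound $\le (n-1)/(p-1)$). This is harmless, since only a cofinal family of sufficiently large $k$ is needed for a neighbourhood basis, and inversion ($x\mapsto -x$) and the unit are visibly integral, so the integral BCH law does define the required formal group scheme over $K^+\langle T_1,\dots,T_d\rangle$.
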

\begin{Lemma}[{\cite[Lemma~3.10]{heuer-G-torsors-perfectoid-spaces}}]\label{l:exp-log-commutativity}
	Let $Y$ be any adic space over $K$.
	\begin{enumerate}
		\item If  $A,B\in \mathfrak g^\circ(Y)$ satisfy $[A,B]=0$, then $\exp(A)$ and $\exp(B)$ commute and 
		\[\exp(A+B)=\exp(A)\exp(B).\]
		\item If $g,h\in G^\circ(Y)$ commute, then $[\log(g),\log(h)]=0$ and  $\log(gh)=\log(g)+\log(h)$.	
		\item If $\mathfrak g_1\subseteq\mathfrak g_2\subseteq \mg^\circ$ with images $G_1\subseteq G_2\subseteq G$ under $\exp$ and $g\in G_1(Y)$ are such that $\ad(g)(\mathfrak g_{1})\subseteq \mathfrak g_{2}$, then $g^{-1}G_{1}g\subseteq G_2$. For $A\in \mg_{1}(Y)$, we then have
		\[ \exp(\ad(g)(A))=g^{-1}\exp(A)g.\]
	\end{enumerate}
\end{Lemma}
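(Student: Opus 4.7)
The plan is to reduce all three parts to two ingredients: the rigid-analytic Baker--Campbell--Hausdorff (BCH) formula, and the naturality of $\exp$ asserted in \Cref{p:exp-on-Lie-alg}. The open subgroup $\mg^\circ$ is chosen precisely so that the universal BCH series
\[ A\star B \;=\; A+B+\tfrac{1}{2}[A,B] + \tfrac{1}{12}\bigl([A,[A,B]]-[B,[A,B]]\bigr) + \cdots \]
converges on $\mg^\circ\times\mg^\circ$ and satisfies $\exp(A\star B)=\exp(A)\exp(B)$, making $\exp\colon(\mg^\circ,\star)\isomarrow G^\circ$ an isomorphism of rigid groups. The key algebraic feature is that every term in BCH beyond $A+B$ lies in the Lie ideal generated by $[A,B]$ inside the free Lie algebra on $A$ and $B$.

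Part (1) is then immediate: if $[A,B]=0$, every bracketed term vanishes, so $A\star B = A+B = B\star A$, and applying $\exp$ yields $\exp(A)\exp(B) = \exp(A+B) = \exp(B)\exp(A)$. Part (3) follows from naturality of $\exp$ applied to the rigid-analytic inner automorphism $\iota_g\colon x\mapsto g^{-1}xg$, whose derivative at the identity is $\ad(g)$. Naturality gives $\iota_g(\exp A) = \exp(\ad(g)(A))$ whenever both sides are defined, and the hypothesis $\ad(g)(\mg_1)\subseteq\mg_2\subseteq\mg^\circ$ ensures this, yielding $g^{-1}G_1 g\subseteq G_2$.

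For part (2) the strategy is to upgrade the single identity $gh=hg$ to a rigid-analytic one. Write $A = \log g$ and $B = \log h$, so $A, B \in \mg^\circ\cong\B^d$. For $(s,t)$ in the closed unit bidisc $\B^1\times\B^1$ we have $sA, tB\in\mg^\circ$ (since $|s|,|t|\leq 1$), so the map
\[ \Phi\colon \B^1\times\B^1\to G,\qquad (s,t)\mapsto \exp(sA)\exp(tB)\exp(-sA)\exp(-tB), \]
is well-defined and rigid-analytic. From $gh=hg$, induction yields $g^nh^m = h^mg^n$ for all $n,m\in\Z$, i.e.\ $\Phi(n,m) = 1$. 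Since $\Z\subset\B^1$ is infinite it is Zariski dense, and hence so is $\Z\times\Z\subset\B^1\times\B^1$; by the rigid-analytic identity principle $\Phi\equiv 1$. Expanding $\log\circ\Phi$ via BCH and reading off the coefficient of $st$ yields $[A,B]=0$. The equation $\log(gh)=\log(g)+\log(h)$ then follows by applying part (1) to $A$ and $B$.

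The main obstacle is part (2): extracting the infinitesimal vanishing $[A,B]=0$ from the single group-theoretic identity $gh=hg$. The subtle point is not the algebra but justifying the analytic-density step --- ensuring that $\Phi$ is genuinely rigid-analytic on a polydisc containing $\Z\times\Z$, and that the identity principle lets one conclude $\Phi\equiv 1$ from vanishing on $\Z\times\Z$. Both rely on the explicit description $\mg^\circ\cong\B^d$ from \Cref{p:exp-on-Lie-alg}, which controls the convergence of $\exp(sA)$ for all $s$ of norm at most~$1$.
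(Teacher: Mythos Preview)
The paper does not give its own proof of this lemma; it is imported wholesale from \cite[Lemma~3.10]{heuer-G-torsors-perfectoid-spaces}, so there is no in-paper argument to compare against. Your route via the Baker--Campbell--Hausdorff formula and functoriality of $\exp$ is the standard one, and the density argument you give for part~(2) is the natural way to pass from the single relation $gh=hg$ to the analytic identity $\Phi\equiv 1$.

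Two small points are worth tightening. First, in part~(3) you invoke ``naturality of $\exp$'' from \Cref{p:exp-on-Lie-alg}, but that proposition asserts functoriality in $G$, i.e.\ compatibility with homomorphisms of rigid groups; you should say explicitly that the inner automorphism $\iota_g$ is such a homomorphism, and that the hypothesis $\ad(g)(\mg_1)\subseteq\mg_2\subseteq\mg^\circ$ is precisely what guarantees both sides of the desired identity land in the domain where $\exp$ is defined and the naturality square makes sense. Second, for the identity principle in part~(2), make explicit that $\log\Phi$ is a tuple of elements of $K\langle s,t\rangle$ (via the identification $G^\circ\cong\B^d$), so that Weierstrass preparation in one variable at a time lets you conclude vanishing from the zero set $\Z\times\Z$; you have flagged this as the delicate step, and it is, but the justification is short once stated.
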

\begin{Proposition}[{\cite[\S4.2]{heuer-G-torsors-perfectoid-spaces}}]\label{p:inductive-lifting-ses-for-G}
	If $G$ has good reduction, then the Lie algebra of the formal model induces a finite free $\O^+$-submodule $\mg^+\subseteq \mg$.  Let $\mg_k^+:=p^k\cdot\mathfrak m \cdot \mg^+$, then there is $\alpha>0$ such that $\exp$ is defined on $\mg_k^+$ for any $k>\alpha$, and $G_k:=\exp(\mg_k^+)$ is an open subgroup of $G$ such that for any  $\alpha<r<s\in \Q$ with $s\leq 2r-\alpha_0\in \Q$, the exponential induces isomorphisms \[\exp:\mg_{r}^+/\mg_{s}^+\isomarrow G_{r}/G_{s},\quad  \exp:\mg_{r}^+(X)/\mg_{s}^+(X)\isomarrow G_{r}(X)/G_{s}(X)\]
	of abelian sheaves on $\Smd_{K,\et}$, respectively of abelian groups for any $X\in \Smd_K$. 
\end{Proposition}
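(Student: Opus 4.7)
The plan is to build the whole structure from the formal model $\mathcal{G}$ of $G$ and the standard power series for $\exp$ and $\log$, reducing the assertion to a Baker--Campbell--Hausdorff (BCH) estimate. First, since $G$ has good reduction, the Lie algebra of the formal group scheme $\mathcal{G}/\O_K$ is canonically a finite free $\O^+$-module $\mg^+\subseteq \mg$; choosing a basis of $\mg^+$ identifies the underlying rigid space of a small neighbourhood of $1$ in $G$ with an open polydisc, compatibly with \cref{p:exp-on-Lie-alg}. Using this identification, the exponential is represented by its usual power series, and convergence of $\exp(X)=\sum_{n\ge 0}X^n/n!$ is controlled by the inequality $v_p(n!)\le (n-1)/(p-1)$. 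Setting $\alpha_0:=1/(p-1)$, this shows $\exp$ converges on $\mg_k^+$ for any $k>\alpha_0$, and (using $\log(1+Y)=\sum_{n\ge 1}(-1)^{n+1}Y^n/n$) the same estimate shows $\log$ converges on a neighbourhood of $1$ and is inverse to $\exp$.

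Next, I would invoke the BCH formula $\exp(X)\exp(Y)=\exp\bigl(X+Y+\tfrac12[X,Y]+\cdots\bigr)$, whose $n$-th term is a $\Q$-linear combination of $n$-fold iterated brackets of $X$ and $Y$ with denominators bounded by $v_p(n!)\le (n-1)\alpha_0$. For $X,Y\in \mg_r^+$, each iterated bracket of length $n\ge 2$ lies in $\mg^+$ with an extra factor of $p^{nr}\mathfrak m$, so the $n$-th BCH term lies in $p^{nr-(n-1)\alpha_0}\mathfrak m\cdot\mg^+$. For $n\ge 2$ this lies in $\mg_{2r-\alpha_0}^+\subseteq \mg_s^+$ under the hypothesis $s\le 2r-\alpha_0$. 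Hence, working modulo $\mg_s^+$,
\[\exp(X)\exp(Y)\equiv \exp(X+Y)\pmod{\exp(\mg_s^+)},\]
which proves both that $G_k=\exp(\mg_k^+)$ is a subgroup (for each $k>\alpha_0$, take $r=s=k$ and use that higher terms then land back in $\mg_k^+$) and that the induced map $\exp\colon \mg_r^+/\mg_s^+\to G_r/G_s$ is a group homomorphism.

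To upgrade this to an isomorphism of sheaves on $\Smd_{K,\et}$ and of groups of $X$-points for $X\in\Smd_K$, I would argue that $\mg^+$ and $G^\circ$ are representable by open polydiscs, so that evaluating on any smoothoid $X$ and on any \'etale cover commutes with the formation of $\exp$ and $\log$ as convergent power series in $\O^+(X)$. Injectivity follows because $\log$ is defined on $G_r$, sends $G_s$ into $\mg_s^+$, and is the set-theoretic inverse of $\exp$ by the power series identity $\log\circ\exp=\id$; surjectivity is immediate from the definition $G_r=\exp(\mg_r^+)$. Passing to global sections is then formal, since everything is represented by polydiscs whose $X$-points are honest $\O^+(X)$-modules.

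The main obstacle I expect is the bookkeeping around the constant $\alpha_0$: one has to simultaneously track the denominators appearing in the exponential series, in $\log$, and in the BCH formula, and verify that the single condition $s\le 2r-\alpha_0$ with $r>\alpha_0$ suffices to make all higher BCH terms land in $\mg_s^+$ (so that the map is a homomorphism) while still ensuring convergence of $\exp$ and $\log$ on the relevant balls. Once this book\-keeping is done cleanly, the rest of the argument is formal manipulation of convergent power series on open polydiscs, which behaves well on smoothoid spaces by \cref{l:small-adic-space-is-diamantine}.
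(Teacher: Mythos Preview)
The paper does not give its own proof of this proposition: it is stated as a recollection from \cite[\S4.2]{heuer-G-torsors-perfectoid-spaces}, with no proof environment in the present paper. So there is nothing to compare against directly.

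That said, your sketch is the standard route and is essentially what the cited reference does: extract an integral lattice $\mg^+$ from the formal model, control convergence of $\exp$ and $\log$ via $v_p(n!)\le (n-1)/(p-1)$, and then use the BCH series to see that modulo $\mg_s^+$ the higher terms vanish once $s\le 2r-\alpha_0$. Two small points of care: your parenthetical ``take $r=s=k$'' for the subgroup claim is phrased awkwardly since the statement has $r<s$; what you actually use (and what works) is that for $X,Y\in\mg_k^+$ with $k>\alpha_0$ the BCH terms of degree $\ge 2$ land in $\mg_{2k-\alpha_0}^+\subseteq\mg_k^+$. Second, for the isomorphism on $X$-points you should say explicitly that $\exp\colon\mg_r^+\to G_r$ is an isomorphism of \emph{rigid spaces} (not just a surjection of sheaves), so that $G_r(X)=\exp(\mg_r^+(X))$ holds on the nose for any $X$; this is what makes surjectivity on sections immediate without needing to pass to a cover. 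With those clarifications your outline is correct.
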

The $G_k$ can be described as the kernel of the reduction mod $p^k\mathfrak m$. In particular:

\begin{Lemma}[{\cite[Lemma 4.17]{heuer-G-torsors-perfectoid-spaces}}]\label{l:completeness-of-G}
If $G$ has good reduction, then we have $G=\varprojlim_{k\in \N} G/G_k$.
\end{Lemma}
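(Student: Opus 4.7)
The plan is to realise $G$ as the adic generic fibre of a formal group scheme $\mathcal G$ and to use the $(p,\m)$-adic completeness of the ring of integral elements on affinoid perfectoid spaces in order to reduce the claim to a standard continuity statement for affine formal schemes topologically of finite type.

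Since both sides of the claimed isomorphism are $v$-sheaves on $\Perf_K$, I would first reduce to checking on sections over an arbitrary affinoid perfectoid $Y=\Spa(R,R^+)$. By good reduction, $G=\mathcal G^{\mathrm{ad}}$ for a formal group scheme $\mathcal G=\operatorname{Spf} A$ with $A$ topologically of finite type over $K^+$, so that $G(Y)=\mathcal G(R^+)$ identifies with the set of continuous $K^+$-algebra maps $A\to R^+$. Let $\bar{\mathcal G}_k$ denote the $v$-sheaf $Y\mapsto \mathcal G(R^+/p^k\m)$. The natural reduction map $G\to \bar{\mathcal G}_k$ factors, by the characterisation of $G_k$ as its kernel, into $G\twoheadrightarrow G/G_k\hookrightarrow \bar{\mathcal G}_k$.

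Since $R^+$ is $(p,\m)$-adically complete and the topology on $A$ is $(p,\m)$-adic, a continuous map $A\to R^+$ is the same as a compatible family of maps $A/p^k\m A\to R^+/p^k\m$, giving the key identification $G(Y)=\mathcal G(R^+)=\lim_k \bar{\mathcal G}_k(Y)$. Combined with the factorisation through $G/G_k\hookrightarrow \bar{\mathcal G}_k$, the chain
\[G(Y)\to \lim_k (G/G_k)(Y)\hookrightarrow \lim_k \bar{\mathcal G}_k(Y)\]
has its composite an isomorphism and its second arrow injective, so the first arrow is an isomorphism as well, which is the statement I want. The injectivity can alternatively be verified by hand: the kernel of $G(Y)\to \lim_k G/G_k(Y)$ is $\bigcap_k G_k(Y)$, which via the injectivity of $\exp$ on $\mg_k^+$ (for $k$ large) maps isomorphically onto $\bigcap_k \mg_k^+(Y)=\bigcap_k p^k\m\cdot\mg^+(Y)$; as $\mg^+(Y)$ is a finite free $R^+$-module and $R^+$ is $(p,\m)$-adically separated, this intersection vanishes.

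The main obstacle I expect is the bookkeeping around the sheaf quotient $G/G_k$: a priori $(G/G_k)(Y)$ can strictly contain the set-theoretic quotient $G(Y)/G_k(Y)$, so a naive level-by-level lifting argument on a single $Y$ need not succeed. The strategy above side-steps this issue by sandwiching $(G/G_k)(Y)$ between $G(Y)/G_k(Y)$ and $\bar{\mathcal G}_k(Y)$ and concluding only after passing to the inverse limit, so that the ambient isomorphism $G(Y)=\lim_k \bar{\mathcal G}_k(Y)$ forces the intermediate limit to coincide with both ends. A more hands-on alternative would be an inductive lifting argument in the spirit of \cref{p:inductive-lifting-ses-for-G}, using $\exp\colon \mg_r^+/\mg_s^+\isomarrow G_r/G_s$ and the completeness of $R^+$ to lift a given compatible system of reductions step by step; but the formal-schemes approach is cleaner and makes the role of good reduction manifest.
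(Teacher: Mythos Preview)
The paper does not give its own proof of this lemma—it is cited from \cite[Lemma~4.17]{heuer-G-torsors-perfectoid-spaces}—so there is nothing in the paper to compare against directly; I evaluate your argument on its own.

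Your overall strategy is right: realise $G$ via a formal model $\mathcal G=\operatorname{Spf} A$ and use $p$-adic completeness of $R^+$ to identify $G(Y)$ with an inverse limit of reductions. The gap is in the sandwich step. You declare $\bar{\mathcal G}_k : Y \mapsto \mathcal G(R^+/p^k\m)$ to be a $v$-sheaf and then factor $G \twoheadrightarrow G/G_k \hookrightarrow \bar{\mathcal G}_k$. But the presheaf $Y \mapsto R^+/p^k\m$ is in general only an \emph{almost} $v$-sheaf on $\Perf_K$: it is separated (if $r\in R^+$ lies in $cR'^+$ for some nonzero $c\in p^k\m$, then $r/c\in R'^+$ still equalises the two maps to $R''^+$, hence lies in $R^+$), but gluing along a $v$-cover $Y'\to Y$ can fail by exactly the $\m$-torsion in $H^1_v(Y,\O^+)$. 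Consequently the injection you need lands only in the sheafification $\bar{\mathcal G}_k^{sh}$, whereas your completeness identity $G(Y)=\lim_k\bar{\mathcal G}_k(Y)$ lives on presheaf values. Matching them would require $\lim_k\bar{\mathcal G}_k(Y)\to\lim_k\bar{\mathcal G}_k^{sh}(Y)$ to be surjective, which is precisely the gluing you are missing. So the sandwich does not actually sidestep the obstacle you (correctly) anticipated.

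The repair is already at hand in the paper: \cref{l:small-bundles-on-perfectoid} (also cited from \cite{heuer-G-torsors-perfectoid-spaces}) gives $H^1_v(Y,G_k)=1$ for affinoid perfectoid $Y$ and $k>\alpha$, hence $(G/G_k)(Y)=G(Y)/G_k(Y)$ on the nose. Then
\[
\lim_{k>\alpha}(G/G_k)(Y)=\lim_{k>\alpha} G(Y)/G_k(Y)=G(Y)
\]
by your completeness argument applied directly, with no sandwich required; restricting to $k>\alpha$ is harmless since the system is cofinal. Your alternative inductive-lifting sketch via \cref{p:inductive-lifting-ses-for-G} would also work and is closer in spirit to how such statements are typically proved.
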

\subsection{Torsors under rigid groups}
We now recall the definition of $G$-torsors on diamonds, which we studied in \cite[\S3]{heuer-G-torsors-perfectoid-spaces}.

\begin{Definition}
	Let $X$ be any diamond and let $\tau\in \{\et,v\}$. Then a \textbf{$G$-torsor} on $X_{\tau}$ is a sheaf $F$ on $X_{\tau}$ with a left action $m:G\times F\to F$ by $G$ such that there is a $\tau$-cover of $X'\to X$ where there is a $G$-equivariant isomorphism $G\times_XX'\isomarrow F\times_XX'$. A morphism of $G$-bundles on $X_{\tau}$ is a $G$-equivariant morphism of sheaves on $X_{\tau}$.
\end{Definition}
It is clear  that the set of isomorphism classes of $G$-torsors on $X_{\tau}$ is given by $H^1_{\tau}(X,G)$. 
\begin{Remark}\label{r:GLn-torsors-vs-vector-bundles}
Any morphism of $G$-torsors is an isomorphism. So the category of $\GL_n$-torsors has the same objects as  the category of vector bundles of rank $n$, but fewer morphisms.
\end{Remark}

\begin{Lemma}[{\cite[Proposition 3.16]{heuer-G-torsors-perfectoid-spaces}}]\label{l:fully-faithful-functor-torsors}
	There is a natural fully faithful functor
	\[ \{\text{$G$-torsors on $X_{\et}$}\}\hookrightarrow \{\text{$G$-torsors on $X_{v}$}\}\]
	which on isomorphism classes induces the natural map $H^1_{\et}(X,G)\to H^1_v(X,G)$.
\end{Lemma}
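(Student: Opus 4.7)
The functor is defined by pullback along the morphism of sites $\nu\colon X_v\to X_\et$. Since $G$, viewed as its associated diamond, is a $v$-sheaf, one has $\nu^\ast G=G$ canonically; as $\nu^\ast$ is exact and compatible with group structures, it carries a $G$-torsor on $X_\et$ (a sheaf with $G$-action that is étale-locally isomorphic to $G$) to a $G$-torsor on $X_v$. On isomorphism classes this recovers the natural comparison $H^1_\et(X,G)\to H^1_v(X,G)$. Faithfulness is then immediate: morphisms of $G$-torsors are global sections of an étale sheaf, and two such sections agreeing on a $v$-cover agree on any étale refinement, hence coincide.

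For fullness, the key object is the étale sheaf $\mathcal I:=\underline{\operatorname{Isom}}^G(F_1,F_2)$ of $G$-equivariant isomorphisms on $X_\et$. An étale trivialisation of $F_1$ identifies $\mathcal I\cong F_2$ via $\phi\mapsto\phi(1)$, so $\mathcal I$ is itself an étale $G$-torsor, and in particular étale-locally isomorphic to $G$ as a sheaf. Pullback commutes with this internal Isom between $G$-torsors, since it preserves the $G$-action and sends any étale-local trivialisation to a $v$-local one; thus $\nu^\ast\mathcal I=\underline{\operatorname{Isom}}^G_v(\nu^\ast F_1,\nu^\ast F_2)$. The claim therefore reduces to the bijection $\mathcal I(X)\isomarrow(\nu^\ast\mathcal I)(X)$, equivalently to the unit $\mathcal I\to\nu_\ast\nu^\ast\mathcal I$ being an isomorphism. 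By étale descent and the local triviality of $\mathcal I$, it suffices to verify that $G\to\nu_\ast G$ is an isomorphism of étale sheaves: for any $U\in X_\et$, both $G(U)$ and $\nu_\ast G(U)$ equal $\Hom(U^\diamondsuit,G)$, since $G$ is already a $v$-sheaf and $U_\et=U^\diamondsuit_\et$ by \Cref{l:small-adic-space-is-diamantine}.

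The only non-formal step in the above is the compatibility $\nu^\ast\mathcal I=\underline{\operatorname{Isom}}^G_v(\nu^\ast F_1,\nu^\ast F_2)$. I would verify it by picking a common étale cover $U\to X$ trivialising both $F_i$, on which both sides restrict to the translation $G$-torsor; the resulting gluing $1$-cocycles take values in $G$ and are matched by $\nu^\ast$ precisely because $\nu^\ast G=G$, so the two descents agree. Everything else (the definition of the functor, faithfulness, and the reduction of fullness to $\nu_\ast G=G$) is standard sheaf-theoretic formalism and poses no genuine difficulty.
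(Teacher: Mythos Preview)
The paper does not give its own proof of this lemma: it is stated with a citation to \cite[Proposition~3.16]{heuer-G-torsors-perfectoid-spaces} and no proof environment follows. Your argument is therefore not being compared against anything in the present paper, but it is a correct and standard way to establish the result. The reduction of fullness to the identity $\nu_\ast G=G$ via the $\underline{\operatorname{Isom}}$-torsor is exactly the right mechanism, and the justification that $G(U)=\Hom(U^\diamondsuit,G^\diamondsuit)$ for $U\in X_\et$ (using full faithfulness of diamondification on sousperfectoid spaces, as in \Cref{l:small-adic-space-is-diamantine}) is the essential input. One small point: in your last paragraph you frame the compatibility $\nu^\ast\mathcal I=\underline{\operatorname{Isom}}^G_v(\nu^\ast F_1,\nu^\ast F_2)$ as ``the only non-formal step'', but in fact this is formal once one knows that $\nu^\ast$ is exact and preserves finite limits of sheaves of sets; the genuinely non-formal input is precisely $\nu_\ast G=G$, which you correctly identify and justify.
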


Finally, we recall the main technical results from \cite{heuer-G-torsors-perfectoid-spaces} about $G$-torsors on adic spaces:

\begin{Lemma}[{\cite[Lemma~4.26]{heuer-G-torsors-perfectoid-spaces}}]\label{l:small-bundles-on-perfectoid}
	If $X$ is an affinoid perfectoid space and $G$ is a rigid group of good reduction. Then for any $k>\alpha$, we have $H^1_v(X,G_k)=1$.
\end{Lemma}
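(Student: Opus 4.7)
The strategy is to exploit the filtration $G_k \supset G_{k+1} \supset G_{k+2} \supset \cdots$ furnished by \cref{p:inductive-lifting-ses-for-G} together with the completeness statement $G_k = \varprojlim_{s\geq k} G_k/G_s$ from \cref{l:completeness-of-G}: we reduce the non-abelian statement to the abelian graded pieces, where $v$-cohomology is controlled by the standard almost vanishing $H^i_v(X,\O^+/\varpi^n) \aeq 0$ for affinoid perfectoid $X$ and $i\geq 1$, and then pass to the limit.

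First I would treat the ``graded pieces''. For $\alpha < r < s$ with $s \leq 2r - \alpha$, \cref{p:inductive-lifting-ses-for-G} provides an isomorphism of abelian sheaves $\exp: \mathfrak{g}_r^+/\mathfrak{g}_s^+ \isomarrow G_r/G_s$. Since $\mathfrak{g}^+$ is finite free over $\O^+$, the left-hand side is a finite direct sum of copies of $\varpi^r\O^+/\varpi^s\O^+$ (up to the $\mathfrak{m}$-factor), and the vanishing $H^i_v(X,\O^+/\varpi^n) \aeq 0$ for affinoid perfectoid $X$ gives $H^i_v(X, G_r/G_s) \aeq 0$ for $i\geq 1$.

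Next I would show $H^1_v(X,G_k/G_s) = 1$ for all $s > k$, by induction along a chain $k = r_0 < r_1 < \cdots < r_n = s$ chosen so that $r_{i+1} \leq 2r_i - \alpha$ for each $i$. For each step, the short exact sequence of sheaves of groups
\[
1 \to G_{r_{i+1}}/G_s \to G_{r_i}/G_s \to G_{r_i}/G_{r_{i+1}} \to 1
\]
yields an exact sequence of pointed sets in non-abelian $v$-cohomology. The first graded piece is abelian and has vanishing $H^1$ by the previous step, while the kernel is handled by induction; combining these gives $H^1_v(X, G_k/G_s)$ as desired. One must exchange the ``almost'' by honest vanishing using the flexibility in the choice of the filtration indices, noting that $\m \subseteq \O^+$ acts invertibly on any $\O^+$-module of the form $\mathfrak g_r^+/\mathfrak g_s^+$ after slightly shrinking $r$.

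Finally, I would take the inverse limit. By \cref{l:completeness-of-G} applied to $G_k$, we have $G_k = \varprojlim_{s} G_k/G_s$. A $G_k$-torsor on $X$ is the same as a compatible system of $G_k/G_s$-torsors; the above vanishing makes each of these trivial, and the transition maps on $H^0 = (G_k/G_s)(X)$ are surjective as $H^1$ of the kernel already vanishes, giving a Mittag--Leffler system that allows us to assemble compatible trivialisations. Hence $H^1_v(X, G_k) = 1$.

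The main obstacle is the passage from almost-vanishing to honest vanishing in the non-abelian setting: one must verify that an almost-trivial $G_{r_{i+1}}/G_s$-torsor, which is locally classified by a Čech $1$-cocycle landing in $\m \cdot (\mathfrak g_{r_{i+1}}^+/\mathfrak g_s^+)$, can be trivialised inside the larger group $G_{r_{i}}/G_s$; this is precisely the content of the inductive lifting built into \cref{p:inductive-lifting-ses-for-G} and is the reason we require $k > \alpha$ rather than merely $k \geq \alpha$.
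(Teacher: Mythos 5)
The paper does not actually prove this lemma — it is imported verbatim from \cite[Lemma~4.26]{heuer-G-torsors-perfectoid-spaces} — so there is no internal proof to compare against; but your dévissage (graded pieces via $\exp:\mg^+_r/\mg^+_s\isomarrow G_r/G_s$, almost acyclicity of $\O^+/\varpi^n$ on affinoid perfectoids, non-abelian exact sequences of pointed sets, then $\varprojlim$ via \cref{l:completeness-of-G} with a Mittag--Leffler correction of trivialisations) is exactly the argument of the cited reference, and it is correct. Two small inaccuracies are worth fixing. First, the passage from almost to honest vanishing is not that ``$\m$ acts invertibly after shrinking $r$'' — it doesn't, e.g.\ $p$ is not invertible on $\O^+/p$; rather, the $\m$-factor built into $\mg^+_k=p^k\m\,\mg^+$ exhibits $\mg^+_r/\mg^+_s$ as a filtered colimit $\varinjlim_{\epsilon>0}\varpi^\epsilon\cdot(p^r\O^+/p^s\O^+)^{\dim G}$ with transition maps given by multiplication by positive powers of $\varpi$; since $H^i_v$ commutes with this colimit on the qcqs site and each term has $\m$-torsion cohomology, the colimit cohomology vanishes \emph{on the nose}. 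Second, the hypothesis $k>\alpha$ is not about the almost-to-honest step: it is needed so that $\exp$ converges on $\mg^+_k$ and so that \cref{p:inductive-lifting-ses-for-G} applies to produce the graded isomorphisms (subject to $s\leq 2r-\alpha_0$, which your chain $r_0<r_1<\dots$ must respect — this is automatic since the allowed step size $r-\alpha$ is bounded below by $k-\alpha>0$).
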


\begin{Proposition}[{\cite[
		Proposition 4.8]{heuer-G-torsors-perfectoid-spaces}}]\label{p:reduction-of-structure-group}
	Let $G$ be a rigid group and let $U\subseteq G$ be a rigid open subgroup. Let $X$ be any sousperfectoid space and let $\nu:X_v\to X_{\et}$ be the natural map. Then the morphism
	\[R^1\nu_{\ast}U\to R^1\nu_{\ast}G\]
	is surjective. If $G$ is commutative, then
	$R^k\nu_{\ast}U= R^k\nu_{\ast}G$
	for all $k\geq 1$.
\end{Proposition}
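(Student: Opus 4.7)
The strategy is to derive both claims from the long exact sequence on $R\nu_{\ast}$ associated to the short exact sequence of $v$-sheaves
\[ 1 \to U \to G \to G/U \to 1, \]
after showing that the quotient $G/U$ has vanishing higher $\nu$-direct images. First I would reduce to $U$ small using \cref{c:open-subgroup-of-good-reduction}, choosing an open subgroup $V \subseteq U$ of good reduction with underlying space $\B^d$. Since $R^1\nu_{\ast}V \to R^1\nu_{\ast}G$ factors through $R^1\nu_{\ast}U$, and likewise for each $R^k\nu_{\ast}$ in the commutative case, it suffices to establish the statements with $V$ in place of $U$. We may therefore assume $U$ is of good reduction.

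The main technical step is to show that, as a $v$-sheaf on $X_{v}$, the quotient $G/U$ is \'etale-locally constant. The key observation is that the left cosets of $U$ in $G$ are pairwise disjoint open (and hence closed) subspaces of $G$, so any morphism from a connected base factors through a single coset. Working $v$-locally to reduce to connected components, this exhibits $G/U$ as pulled back along $\nu$ from an \'etale sheaf on $X$; by the comparison of \'etale and $v$-cohomology for locally constant sheaves on sousperfectoid bases, enabled by $X_{\et}^{\diamondsuit}=X_{\et}$ from \cref{l:small-adic-space-is-diamantine}, this yields $R^i\nu_{\ast}(G/U) = 0$ for $i \geq 1$. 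The same reasoning shows that $\nu_{\ast}G \twoheadrightarrow \nu_{\ast}(G/U)$ is an epimorphism of \'etale sheaves, since the \'etale-local choice of a single coset lifts canonically to a section of $G$.

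With these inputs, the commutative case follows directly from the ordinary long exact sequence of $R\nu_{\ast}$: combined with $R^{i}\nu_{\ast}(G/U)=0$ for $i\geq 1$ and the epi $\nu_{\ast}G\twoheadrightarrow\nu_{\ast}(G/U)$, it yields $R^k\nu_{\ast}U \isomarrow R^k\nu_{\ast}G$ for all $k \geq 1$. The general case follows from the pointed exact sequence
\[ \nu_{\ast}G \to \nu_{\ast}(G/U) \to R^1\nu_{\ast}U \to R^1\nu_{\ast}G \to R^1\nu_{\ast}(G/U) = \ast, \]
which gives surjectivity of $R^1\nu_{\ast}U \to R^1\nu_{\ast}G$.

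The main obstacle is the vanishing step. Although cosets of $U$ are open-closed and pairwise disjoint as subsets of $G$, for non-compact $G$ with infinitely many cosets (e.g.\ $G=\G_a$ with $U=\B^1$), $G$ is not literally a disjoint union of its cosets as an adic space. The argument therefore has to be carried out at the level of $v$-sheaves on sousperfectoid bases: one must verify that a morphism of $v$-sheaves $Y\to G$ from a connected sousperfectoid $Y$ factors through a single coset, and then upgrade this pointwise behavior to the needed diamond-theoretic comparison. Making this precise, possibly by filtering $U$ by subgroups of the form $U\supseteq U_k=\exp(\m_k^+\cdot\mathfrak u^+)$ from \cref{p:inductive-lifting-ses-for-G} and reducing inductively to commutative successive quotients, is where the technical work lies.
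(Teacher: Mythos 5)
First, a point of reference: the paper does not prove this Proposition at all — it is recalled verbatim from \cite[Proposition 4.8]{heuer-G-torsors-perfectoid-spaces} — so there is no internal argument to compare yours against, and your proposal has to stand on its own.

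It does not: the step on which everything rests, namely that $G/U$ is \'etale-locally (or even $v$-locally) constant, is false, and the example you yourself flag ($G=\G_a$, $U=\B^1$) is a genuine counterexample rather than a technicality to be smoothed over. The translates $a+\B^1$ with $a\in K$ do not cover $\G_a^{\an}$ (the Gauss point of any radius $>1$ lies in none of them), and correspondingly $(G/U)(Y)=\O(Y)/\O^{+}(Y)$ for a connected affinoid $Y$ is much larger than the constant value $K/\O_K$: on $Y=\B^1$ the function $T/p$ does not lie in $K+\O^{+}$ even after restricting to any neighbourhood of the Gauss point, and such points survive every $v$-cover. Your "factors through a single coset" observation only shows that a section over connected $Y$ lies in a single coset of $U(Y)$ inside $G(Y)$ — which is vacuous — not in a single coset $gU$ with $g\in G(K)$, which is what local constancy would require. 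Once this collapses, there is no mechanism left for the two inputs you need ($R^{i}\nu_{\ast}(G/U)=0$ for $i\geq 1$ and surjectivity of $\nu_{\ast}G\to\nu_{\ast}(G/U)$); indeed for $G=\G_a$, $U=\O^{+}$ these two statements are, via the long exact sequence, \emph{equivalent} to the assertion $R^{k}\nu_{\ast}\O^{+}=R^{k}\nu_{\ast}\O$ being proved, so the argument is circular. The real content lies in the almost-acyclicity of $\O^{+}$ on affinoid perfectoids together with a rescaling/sheafification argument, and, for general $U$, in a decompletion property of $G/U$ along towers of the form $(G/U)(\varprojlim X_i)=\varinjlim (G/U)(X_i)$ (this is \cite[Proposition 4.1]{heuer-G-torsors-perfectoid-spaces}, used elsewhere in the present paper) — a continuity statement that is the correct substitute for the false local constancy.

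A second, smaller defect: for non-normal $U$ the symbol $R^{1}\nu_{\ast}(G/U)$ is not defined, since $G/U$ is only a pointed $G$-set, so your five-term sequence ending in "$R^{1}\nu_{\ast}(G/U)=\ast$" is not well-formed. The non-abelian sequence (\cref{p:non-ab-les}) only gives exactness at $R^{1}\nu_{\ast}U$ over the basepoint; surjectivity of $R^{1}\nu_{\ast}U\to R^{1}\nu_{\ast}G$ requires showing, for \emph{every} $G$-torsor $V$ on $X_v$, that the twisted quotient $V\times^{G}(G/U)=V/U$ admits sections \'etale-locally on $X$. Your closing suggestion of filtering by the subgroups $G_k$ of \cref{p:inductive-lifting-ses-for-G} is indeed part of a viable d\'evissage for torsors under a good-reduction subgroup, but it does not touch the quotient $G/G^{\circ}$ by a maximal good-reduction subgroup (for $G=\GL_n$ this is a lattice space, nowhere near locally constant), which is where the surjectivity statement actually lives.
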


\subsection{$G$-Higgs bundles}
Like in complex geometry, one can generalise the notion of Higgs bundles on smoothoid spaces from vector bundles to $G$-bundles for any rigid analytic group $G$ over $K$.

\begin{Definition}
	Let $X$ be a smoothoid space.
	For any $G$-bundle $E$ on $X_{\et}$, one defines
	the \textbf{adjoint bundle} of $E$ to be
	$\ad(E):=\mathfrak g\times^GE$, the associated bundle with respect to the adjoint action $\ad:G\to \GL(\mg)$.
	This has the natural structure of a vector bundle on $X_{\et}$.
\end{Definition}
\begin{Example}
		If $G=\GL_n$, then $\ad(E)=\End(E)$ is the endomorphism bundle.
		
		If $E=G$ is the trivial bundle, then $\ad(E)=\mathfrak g$. This holds for any $E$ if $G$ is commutative.
\end{Example}
Since the adjoint action of $G$ commutes with the Lie bracket on $\mathfrak g$, we obtain by functoriality a Lie bracket on $\ad(E)$. We can use this to define a natural map
\begin{eqnarray*}
	\wedge:\ad(E)\otimes \wtOm^1_X&\to &\ad(E)\otimes \wtOm^2_X,\\ \textstyle\theta=\sum_{i=1}^n A_i\otimes \delta_i&\mapsto& \theta\wedge\theta:=\textstyle\sum_{i<j} [A_i,A_j]\otimes \delta_i\wedge \delta_j.
\end{eqnarray*}

\begin{Definition}\label{d:G-Higgs}
	Let $X$ be a smoothoid space over $K$.
	\begin{enumerate}
		\item 
		A \textbf{$G$-Higgs bundle} on $X$ is a pair $(E,\theta)$ of a $G$-bundle $E$ on $X_{\et}$ and an element $\theta\in H^0(X,\ad(E)\otimes \wtOm_X)$ such that $\theta\wedge \theta=0$. Such $\theta$ are called \textbf{Higgs fields}.
		\item We denote by ${\mathrm{Higgs}}_G$ the sheafification of the presheaf of pointed sets on $X_{\et}$ of isomorphism classes of $G$-Higgs bundles. Using \cref{l:wtOm-is-sheaf-on-big-etale-site}, there is for any morphism of smoothoids $f':X'\to X$ a natural pullback map $\Higgs_G(X)\to \Higgs_G(X')$. We can therefore also regard $\Higgs_G$ as a sheaf on the big \'etale site $\Smd_{K,\et}$.
	\end{enumerate}
\end{Definition}
\begin{Remark}\label{r:higgs-for-commutative-G}
	If $G$ is commutative, then there is no interrelation between $E$ and $\theta$, and the Higgs field condition is vacuous. Therefore $\theta$ is in this case simply any section of $\mg \otimes \wtOm$.
\end{Remark}

\begin{Lemma}\label{l:sheaf-of-Higgs-is-sheaf-of-Higgs-on-trivial}
	There is a natural isomorphisms of sheaves of pointed sets
	\[ {\mathrm{Higgs}}_G=(\mathfrak g \otimes \wtOm_X)^{\wedge=0}/G\]
	given by interpreting the right hand side as a Higgs field on the trivial bundle. Here on the right we form the sheaf quotient by the adjoint action of $G$.
\end{Lemma}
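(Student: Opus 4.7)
The plan is to exhibit the isomorphism by constructing a map of presheaves on $X_{\et}$
\[
\Phi\colon (\mathfrak g\otimes \wtOm_X)^{\wedge=0}\to \Higgs_G^{\mathrm{pre}},\qquad \theta\mapsto [(G_{\mathrm{triv}},\theta)],
\]
where $G_{\mathrm{triv}}$ is the trivial $G$-torsor, whose adjoint bundle is canonically $\ad(G_{\mathrm{triv}})=\mathfrak g$, so that $\theta\in (\mathfrak g\otimes\wtOm_U)^{\wedge=0}(U)$ is interpreted as a Higgs field on $G_{\mathrm{triv}}|_U$. Here $\Higgs_G^{\mathrm{pre}}$ denotes the presheaf of isomorphism classes of $G$-Higgs bundles. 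I then want to show that $\Phi$ factors through the quotient by the adjoint action of $G$ and that the induced map becomes an isomorphism after sheafifying.

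First I would verify the factorisation through the $G$-action. For any $g\in G(U)$, the $G$-equivariant automorphism of $G_{\mathrm{triv}}|_U$ induced by $g$ acts on the associated bundle $\ad(G_{\mathrm{triv}})=\mathfrak g$ precisely by the adjoint action $\ad(g)\in \Aut(\mathfrak g)$; this is the definition of the associated bundle via $\mathfrak g\times^G G=\mathfrak g$. Since $\ad(g)$ is a Lie algebra automorphism of $\mathfrak g$ it commutes with the bracket, so the Higgs condition $\theta\wedge\theta=0$ is preserved, and $(G_{\mathrm{triv}},\theta)$ and $(G_{\mathrm{triv}},\ad(g)(\theta))$ are isomorphic as $G$-Higgs bundles via $g$. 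Consequently $\Phi$ descends to a morphism of sheaves on $X_{\et}$
\[
\overline\Phi\colon (\mathfrak g\otimes \wtOm_X)^{\wedge=0}/G\to \Higgs_G.
\]

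To prove $\overline\Phi$ is an isomorphism it suffices to work \'etale-locally on $X$. For surjectivity on stalks: any $G$-torsor $E$ on $X_{\et}$ is \'etale-locally trivial, so after passing to a cover we may identify $E$ with $G_{\mathrm{triv}}$; under such a trivialisation the Higgs field $\theta\in \ad(E)\otimes\wtOm$ is transported to an element of $(\mathfrak g\otimes\wtOm)^{\wedge=0}$, exhibiting $(E,\theta)$ in the image of $\Phi$. For injectivity on stalks: if $\theta_1,\theta_2\in(\mathfrak g\otimes\wtOm_U)^{\wedge=0}$ give isomorphic $G$-Higgs bundles on $G_{\mathrm{triv}}|_U$, then the underlying isomorphism of $G$-torsors is given by some $g\in G(U)$ (the group of $G$-equivariant automorphisms of the trivial torsor), and by the computation above the compatibility with the Higgs fields translates exactly into $\theta_2=\ad(g)(\theta_1)$, so $\theta_1$ and $\theta_2$ represent the same class in the quotient sheaf.

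The main subtlety will be in the identification of the action: one must verify carefully that the group of $G$-equivariant automorphisms of $G_{\mathrm{triv}}$ is $G$ (acting by translation), and that the induced action on $\ad(G_{\mathrm{triv}})=\mathfrak g$ is the adjoint action rather than some twist; this is a routine but convention-sensitive check using the definition of the associated bundle $\mathfrak g\times^G E$ via the adjoint representation. Naturality in $X$ and in $G$ is then immediate from the functoriality of the construction of $\ad(E)$ and of $\wtOm_X$ (\cref{l:wtOm-is-sheaf-on-big-etale-site}).
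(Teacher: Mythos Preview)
Your proposal is correct and follows the same approach as the paper: construct the natural map from $(\mathfrak g\otimes\wtOm_X)^{\wedge=0}/G$ to $\Higgs_G$ by interpreting sections as Higgs fields on the trivial torsor, observe it is injective already at the presheaf level (your argument that an isomorphism of Higgs bundles on the trivial torsor is given by some $g\in G$ acting via $\ad$), and deduce surjectivity after sheafification from \'etale-local triviality of $G$-torsors. The paper's proof compresses this to two sentences; you have simply unpacked the details, including the routine but convention-sensitive check that the automorphism group of the trivial torsor acts on $\ad(G_{\mathrm{triv}})=\mathfrak g$ via the adjoint action.
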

\begin{proof}
	The map from right to left is injective for presheaves, thus also after the sheafification. It is surjective since for any $G$-Higgs bundle $(E,\theta)$, the $G$-bundle $E$ is trivial \'etale-locally.
\end{proof}
\begin{Example}
	Explicitly, for $G=\GL_n$, we have $\Higgs_G=( M_n(K)\otimes_K\wtOm_X)^{\wedge=0}/\GL_n$.
\end{Example}

The notion of Higgs bundles is functorial in $G$, namely for any homomorphism $\varphi:G\to G'$ of rigid groups there is a functor from $G$-Higgs bundles to $G'$-Higgs bundles defined by sending $(E,\theta)$ to $(G'\times^GE,\mathfrak g'\times^G\theta)$. This defines a morphism of sheaves
$\Higgs_G\to  \Higgs_{G'}$.

\begin{Example}\label{ex:cong-over-Q_p-but-not-Z_p}
	If $G\subseteq G'$ is an open subgroup, then this morphism
	is clearly surjective. But it might not be injective:
	For $X=\Spa(K\langle T\rangle)$, the Higgs fields $A_1dT$ and $A_2dT$ on $E=\O^2$ for 
	$A_1:=\left(\begin{smallmatrix} 1 & 1 \\ 0 & 1\end{smallmatrix}\right)$ and $A_2:=\left(\begin{smallmatrix} 1 & p \\ 0 & 1\end{smallmatrix}\right)$
	are conjugated over $G'=\GL_2(\O)$ via $\left(\begin{smallmatrix} p & p \\ 0 & 1\end{smallmatrix}\right)$, but not over $G=\GL_2(\O^+)$. Hence they are different elements in the same fibre of $\Higgs_G\to  \Higgs_{G'}$.
\end{Example}
Nevertheless, if we just consider the kernel, i.e.\ the fibre over $0$, we do have the following:

\begin{Lemma}\label{l:Zariski-dense-restriction-on-Higgs}
	\begin{enumerate}
		\item 
		If $f:X'\to X$ is a morphism of smoothoid spaces such that the map $f^{\ast}\colon \wtOm_X\to f_{\ast}\wtOm_{X'}$ on $X_{\et}$ is injective, then
		$\Higgs_G(X)\to \Higgs_{G}(X')$
		has trivial kernel. 
		\item If $\varphi:G\rightarrow G'$ is a homomorphism of rigid groups over $K$ such that $\mg\to \mg'$ is injective (e.g.\ if $\varphi$ is injective), then $\Higgs_G(X)\to \Higgs_{G'}(X)$
		has trivial kernel.
	\end{enumerate}
\end{Lemma}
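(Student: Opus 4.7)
The plan is to reduce both statements, via the identification $\Higgs_G=(\mg\otimes\wtOm)^{\wedge=0}/G$ from \Cref{l:sheaf-of-Higgs-is-sheaf-of-Higgs-on-trivial}, to injectivity statements on the sheaf $\mg\otimes\wtOm_X$. The crucial observation is that $0\in\mg\otimes\wtOm$ is a fixed point of the adjoint $G$-action, so there is no ``$G$-twist ambiguity'' around the basepoint: a section of $\Higgs_G$ over $U$ is the trivial class precisely if, after passing to some \'etale cover $W\to U$, it is represented by $\theta=0\in\mg\otimes\wtOm(W)$ (rather than merely by something in the $G$-orbit of $0$).

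For part 2, take $(E,\theta)$ with trivial image in $\Higgs_{G'}(X)$. Pick an \'etale cover $U\to X$ trivialising $E$, so that $\theta|_U\in \mg\otimes\wtOm(U)$, and let $\theta'|_U\in\mg'\otimes\wtOm(U)$ be its image under $\mg\hookrightarrow\mg'$, which represents the induced $G'$-Higgs field on the induced trivialisation of $G'\times^GE|_U$. By the hypothesis and the observation above, there is an \'etale refinement $W\to U$ on which the associated $G'$-bundle admits a second trivialisation carrying $\theta'|_W$ to $0$; since any two trivialisations of the $G'$-bundle differ by an element of $G'(W)$ acting via $\operatorname{ad}$, and since $\operatorname{ad}(g')$ is an automorphism fixing $0$, we conclude $\theta'|_W=0$ in $\mg'\otimes\wtOm(W)$. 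The hypothesis that $\mg\to\mg'$ is injective then yields $\theta|_W=0$ (tensoring with the vector bundle $\wtOm$ preserves injectivity), so $(E,\theta)|_W\cong (G,0)$ and $[(E,\theta)]$ is trivial.

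For part 1, argue analogously with $G'=G$ and $f^{\ast}$ playing the role of ``change of structure group''. Trivialise $E$ on some $U\to X$, set $U'=U\times_XX'$, and run the same two-trivialisation/gauge argument to deduce that $f^{\ast}\theta$ vanishes on some \'etale cover $W\to U'$; by the sheaf property of $\mg\otimes\wtOm_{X'}$ this gives $f^{\ast}\theta|_{U'}=0$. Finally, injectivity of $f^{\ast}\colon \wtOm_X\to f_{\ast}\wtOm_{X'}$ on $X_{\et}$, evaluated at $U\in X_{\et}$, gives an injection $\wtOm_X(U)\hookrightarrow \wtOm_{X'}(U')$, which upgrades to an injection $\mg\otimes\wtOm_X(U)\hookrightarrow \mg\otimes\wtOm_{X'}(U')$. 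This forces $\theta|_U=0$, hence $(E,\theta)|_U\cong (G,0)$ on the \'etale cover $U\to X$. There is no serious obstacle; the only subtlety is the careful use of \Cref{l:sheaf-of-Higgs-is-sheaf-of-Higgs-on-trivial} to translate the pointed-set statement about $\Higgs_G$ into the cleaner sheaf statement about $\mg\otimes\wtOm$, after which both parts become essentially formal.
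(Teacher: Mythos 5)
Your proposal is correct and takes essentially the same approach as the paper: both hinge on the observation that $(\mg\otimes\wtOm)^{\wedge=0}\to\Higgs_G$ has trivial fibre over the basepoint because the adjoint orbit of $0$ is $\{0\}$, after which the claim reduces to injectivity of the induced maps on $\mg\otimes\wtOm$ (for part 1 via the hypothesis on $f^{\ast}$, for part 2 via $\mg\hookrightarrow\mg'$). No issues.
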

\begin{Example}\label{ex:pullback-wtOm-inj}
	\begin{enumerate}
		\item If $f:X'\to X$ is an \'etale morphism with Zariski-dense image, then $\wtOm_{X'}=f^{\ast}\wtOm_X$ and $\O_{X}\to f_{\ast}\O_{X'}$ is injective, hence the conditions of the lemma hold.
		\item Let $X$ be a smooth rigid space and let $g:Y'\to Y$ be a $v$-cover of perfectoid spaces. Set $f=(\id,g):X\times Y'\to X\times Y$. Then again $\wtOm_{X'}=f^{\ast}\wtOm_X$, and $\O_{X}\to f_{\ast}\O_{X'}$ is injective by the $v$-sheaf property. Thus the condition of the lemma holds.
	\end{enumerate}
\end{Example}
\begin{proof}
	We first observe that the surjective morphism of sheaves
	$(\mathfrak g \otimes \wtOm_X)^{\wedge=0}\to \Higgs_G$
	has trivial kernel since $x\in \mg$ is conjugated to $0$ via the adjoint action if and only if $x=0$.
	
	Let now $x\in \Higgs_G(X)$ be in the kernel. After passing to an \'etale cover $\wt X\to X$ with pullback $\wt X'\to X'$, we can assume that $x$ lifts to $\wt x\in(\mathfrak g \otimes \wtOm_X)(\wt X)$. Chasing the diagram
	\[\begin{tikzcd}
		(\mathfrak g \otimes \wtOm_X)^{\wedge=0}(\wt X) \arrow[d] \arrow[r] & \Higgs_G(\wt X) \arrow[d] \\
		(\mathfrak g \otimes \wtOm_X)^{\wedge=0}(\wt X') \arrow[r] & \Higgs_G(\wt X')
	\end{tikzcd}\] 
	in which the map on the left is injective by assumption, we see that $\wt x=0$, whence $x=0$.
	
	Part 2 can be seen similarly, using that $(\mathfrak g \otimes \wtOm_X)^{\wedge=0}\to(\mathfrak g' \otimes \wtOm_X)^{\wedge=0}$
	is injective.
\end{proof}

\section{From $G$-Higgs bundles to $v$-topological $G$-bundles}\label{s:Higgs-to-v}
With the preparations of the last section, we can now state our main result, the sheafified correspondence between $v$-$G$-bundles and $G$-Higgs bundles:

\begin{Theorem}\label{t:main-thm-for-both-O-and-O^+}
	Let $K$ be a perfectoid field over $\Q_p$. Let
	$X$ be a smoothoid space over $K$ (for example a smooth rigid space) and let $\nu:X_v\to X_{\et}$ be the natural morphism of sites.  Let $G$ be a rigid group over $K$, regarded as a sheaf of groups $G=G(\O)$ on $X_v$. Let $\mathfrak g$ be the Lie algebra.
	Then there is a canonical isomorphism of sheaves of pointed sets on $X_{\et}$
	\[\HTlog:R^1\nu_{\ast}G\isomarrow \Higgs_{G}\]
	which is functorial in $G$, $X$ and $K$. Here $\Higgs_G=(\mg\otimes \wtOm_X)^{\wedge =0}/ G$ (see \cref{d:G-Higgs}). 
	
	If $G$ is commutative, we more generally have for any $n\geq 1$ an isomorphism
	\[\HTlog:R^n\nu_{\ast}G\isomarrow \mg\otimes \wtOm^n_X.\]
\end{Theorem}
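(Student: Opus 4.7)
The plan is to construct $\HTlog$ locally on toric charts by using the exponential to linearise cocycles with values in $G$, reducing via the Cartan--Leray sequence for the toric $\Delta$-cover $X_\infty\to X$ to a nonabelian generalisation of Scholze's computation (\Cref{p:Scholze-Prop-3.23}). The commutative case is an enhancement of that computation with $\mg$-coefficients; the nonabelian case for $R^1$ is then obtained by d\'evissage along the exponential filtration of $G$.

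For the commutative statement, I would first invoke \Cref{p:reduction-of-structure-group} to replace $G$ by an arbitrarily small open subgroup $G_r=\exp(\mg_r^+)$ of good reduction, which by \Cref{p:inductive-lifting-ses-for-G} is identified via $\exp$ with an open $\O^+$-submodule of $\mg$, locally free of finite rank. An integral refinement of \Cref{p:RnuO-for-small-dmd} --- obtained by running the computation of \S\ref{s:comp-toric-charts} before inverting $p$, which only introduces bounded $p$-power errors by \Cref{l:Sch13-4.5/5.5}.3 --- then identifies $R^n\nu_\ast G_r$ with $\mg_r^+\otimes\wtOm_X^n$ up to almost zero, and inverting $p$ gives the desired isomorphism $R^n\nu_\ast G\isomarrow \mg\otimes\wtOm_X^n$.

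For the nonabelian case on a toric chart $f:X\to \mathbb T^d\times Y$, \Cref{p:reduction-of-structure-group} reduces any local section of $R^1\nu_\ast G$ to a $G_r$-torsor for small $G_r=\exp(\mg_r^+)$, and \Cref{l:small-bundles-on-perfectoid} trivialises it on $X_\infty$. Cartan--Leray identifies its class with that of a continuous cocycle $c:\Delta\to G_r(X_\infty)$. For $r<s\leq 2r-\alpha_0$, \Cref{p:inductive-lifting-ses-for-G} gives an isomorphism $\exp:\mg_r^+/\mg_s^+\isomarrow G_r/G_s$ of abelian sheaves, so the reduction of $c$ modulo $G_s$ is a genuine continuous group homomorphism $\Delta\to (\mg_r^+/\mg_s^+)(X_\infty)$. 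Applying \Cref{l:HT-isom-on-small-dmd} tensored with $\mg_r^+/\mg_s^+$ yields a compatible system of sections $\theta_s\in(\mg_r^+/\mg_s^+)\otimes\wtOm_X(X)$ whose inverse limit, after inverting $p$, is an element $\theta\in(\mg\otimes\wtOm_X)(X)$. One then verifies: (i) $\theta\wedge\theta=0$, following from abelianity of $\Delta\cong\Z_p^d$: since $c(\gamma)c(\gamma')=c(\gamma')c(\gamma)$, \Cref{l:exp-log-commutativity} forces $[\log c(\gamma),\log c(\gamma')]=0$, and this unwinds under the wedge pairing \Cref{eq:wedge-for-wtOm} (induced by the Lie bracket on $\mg$) to exactly $\theta\wedge\theta=0$; (ii) cocycle equivalence converts via \Cref{l:exp-log-commutativity}(3) into the adjoint $G$-action on $\mg$, so that $\theta$ descends to a section of $\Higgs_G=(\mg\otimes\wtOm_X)^{\wedge=0}/G$ (\Cref{l:sheaf-of-Higgs-is-sheaf-of-Higgs-on-trivial}); independence of the chart $f$ follows from naturality of Cartan--Leray under morphisms of toric charts (\Cref{d:toric-chart}).

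The hardest step will be bijectivity of the resulting morphism of \'etale sheaves. I expect to prove it by d\'evissage along the filtration $G\supseteq G_k\supseteq G_{k+1}\supseteq\cdots$ from \Cref{p:inductive-lifting-ses-for-G} and \Cref{l:completeness-of-G}, using that each abelian graded piece $G_k/G_{k+1}\cong \mg_k^+/\mg_{k+1}^+$ is handled by the commutative case. Injectivity then closes by the long exact sequences of nonabelian cohomology associated to $1\to G_{k+1}\to G_k\to G_k/G_{k+1}\to 1$; surjectivity runs in reverse by exponentiating a local Higgs field modulo each $\mg_s^+$ to produce a compatible system of classes in $H^1_v(X,G/G_s)$ and assembling them via \Cref{l:completeness-of-G}. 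Both directions require Mittag-Leffler vanishing for the relevant inverse systems, which should follow from acyclicity of $\O$ on affinoid smoothoids (\Cref{l:small-adic-space-is-diamantine}) together with the computation of the abelian graded pieces.
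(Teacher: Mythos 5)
Your overall architecture (toric charts, Cartan--Leray for the $\Delta$-cover $X_\infty\to X$, $\exp$/$\log$ to linearise, commutative case first) is the same as the paper's, and your commutative argument is essentially the paper's proof. But the non-abelian case has a genuine gap at its heart. A continuous $1$-cocycle $c:\Delta\to G_r(X_\infty)$ satisfies $c(\gamma\gamma')=c(\gamma)\cdot\gamma^{\ast}c(\gamma')$ (\cref{d:non-ab-1-cocycles}), not $c(\gamma\gamma')=c(\gamma)c(\gamma')$: the $\Delta$-action on $\O(X_\infty)$ is nontrivial, so the reduction of $c$ modulo $G_s$ is a crossed homomorphism, not a ``genuine continuous group homomorphism'', and the values $c(\gamma),c(\gamma')$ need not commute even though $\Delta$ is abelian. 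Consequently your derivation of $\theta\wedge\theta=0$ from ``$c(\gamma)c(\gamma')=c(\gamma')c(\gamma)$'' starts from a false premise. The actual content of the theorem is precisely that every class in $R^1\nu_{\ast}G$ is, \'etale-locally, represented by a cocycle that \emph{is} a continuous homomorphism $\Delta\to G_c(R)$ landing in the $\Delta$-invariants with commuting image; this is Faltings' lifting lemma (\cref{p:part-2-of-Faltings-Lemma} in the paper), proved by a delicate induction on the filtration $G_s$ in which one loses a factor $p^{\gamma}$ at each of two steps (an obstruction class in $H^2$ and a difference class in $H^1$), because the comparison maps $H^i_{\cts}(\Delta,\mg_0^+(R)/p^t)\to H^i_{\cts}(\Delta,\mg_0^+(R_\infty)/p^t)$ of \cref{l:Sch13-4.5/5.5} are only isomorphisms up to bounded $p$-torsion. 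Your proposed substitute --- ``Mittag--Leffler vanishing following from acyclicity of $\O$ on affinoids'' --- does not address this: the inverse system consists of non-abelian cohomology \emph{sets}, and the issue is not $\varprojlim^1$ but the almost-isomorphism defect, which forces the smallness constant $c\geq 5\gamma$ and the passage to finite \'etale covers.

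Two further points. First, injectivity/well-definedness requires showing that if two such homomorphisms become conjugate by some $A\in G(R_\infty)$, then already $A\in G(R)$ (so that the equivalence is the adjoint action of $G$ on $X_{\et}$ appearing in $\Higgs_G$, not of $G(X_\infty)$); this decompletion statement is \cref{l:fully-faithful-case-of-good-reduction} and is a second substantial argument absent from your proposal --- your d\'evissage along $1\to G_{s+1}\to G_s\to G_s/G_{s+1}\to 1$ does not yield it, again because the graded comparison is only almost an isomorphism. Second, your appeal to ``naturality of Cartan--Leray under morphisms of toric charts'' for independence of $f$ is insufficient for gluing to a morphism of sheaves on $X_{\et}$: a given \'etale map of toric smoothoids need not extend to a morphism of toric charts. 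The paper circumvents this by building the map out of more flexible data (an arbitrary pro-finite-\'etale Galois cover, a basis of $H^0(X,\wtOm)$, and representative cocycles) and proving independence of each choice separately; relatedly, when $\Q_p^{\cyc}\not\subseteq K$ the toric tower is not a $\Delta$-torsor, so one needs either the semidirect-product formalism with $\Lambda=\Delta\rtimes Q$ or Galois descent from the cyclotomic case, neither of which your proposal addresses.
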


\begin{Remark}\label{r:alternative topologies-in-main-thm}
	\begin{enumerate}
		\item The notions of $G$-torsors agrees on $X_{v}$ and $X_{\qproet}$, as well as on $X_{\proet}$ if $X$ is smooth or perfectoid, by \cite[Corollary~4.29]{heuer-G-torsors-perfectoid-spaces}. It follows that in the hierarchy
		\[ X_{v}\to X_{\qproet}\to X_{\proet}\to X_{\et},\]
		replacing $X_v$ with $X_{\qproet}$ (or $X_{\proet}$) gives an equivalent formulation of \cref{t:main-thm-for-both-O-and-O^+}. 
		\item If $G=\GL_n$, then by \cref{l:small-adic-space-is-diamantine}.5 also the $G$-torsors on $X_{\et}$ and $X_{\an}$ are equivalent, so we could replace $\nu$ by the projection to $X_{\an}$. But this is not true for general $G$.
		
		\item As mentioned in the introduction, the case of $G=\G_a$ and smooth rigid $X$ recovers Scholze's result that $R^n\nu_{\ast}\O=\wtOm^n_X$ for any $n\geq 1$. In \cite{heuer-Picard-good-reduction}, we have studied the case of $G=\G_m$. For all other $G$, the result is new already for smooth rigid $X$.
		\item  If $X$ is perfectoid, then $\wtOm_X^n=0$ for all $n\geq 1$, hence $\Higgs_G=0$. So this case recovers \cite[Theorem~1.1]{heuer-G-torsors-perfectoid-spaces}, which said that $R^1\nu_{\ast}G=0$ in this case. That said, we note that the cited result is used in the proof of \cref{t:main-thm-for-both-O-and-O^+}.
		
		\item At least for commutative $G$, we can informally remember \Cref{t:main-thm-for-both-O-and-O^+} as saying that one can compute $R^1\nu_{\ast}G(\O)$ by a ``chain rule'' applied to the ``composition'' $G(\O)$, with $\mathfrak g$  interpreted as the ``derivative'' of $G$, and $\wtOm_X^1$ as the ``derivative'' of $\O$.
		\item 
		The functoriality in $X$ and $K$ means in other words that the morphisms $\HTlog$ for varying $X$ can be assembled to an isomorphism of sheaves on the big site $\Smd_{K,\et}$.
		\item That $K$ is perfectoid is necessary: For example, if $K$ is instead discretely valued, then already $\Spa(K)$ has many non-trivial $v$-vector bundles, as described  by Sen theory.
		\item Already for algebraic $G$ like $G=\GL_n$ the proof goes by considering analytic open subgroups of $G$, so the perspective of rigid groups is natural already in this case.
		\item Rigid groups are the $p$-adic analogues of complex Lie groups. However, in the context of the complex Simpson correspondence (see \cref{s:rel-cpx}), one usually only works with reductive Lie groups, and we are not aware of any version without this assumption.
	\end{enumerate}
\end{Remark}

The proof will take us two sections: In this section, we construct a canonical and functorial morphism
\[  \Psi:{\mathrm{Higgs}}_G\to R^1\nu_{\ast}G\]
by exponentiating cocycles. In the subsequent section, we show that $\Psi$  is an isomorphism.

\medskip

To prepare the construction, let $\mathfrak g^\circ\subseteq \mathfrak g$ be an open subgroup of the Lie algebra isomorphic as a rigid group to $\O^{+d}$ that admits an exponential map of rigid spaces
$\exp:\mg^\circ\to G$.
It will in the following be irrelevant how large this subgroup $\mathfrak g^\circ$ is, as long as it is open and thus satisfies $\mg= \cup_{k\in \N}  p^{-k}\mg^\circ$. 
It will be convenient to take $\mg^\circ$ small enough so that $\exp$ still converges on $p^{-1}\mg^\circ$.
By \cref{p:exp-on-Lie-alg}, the image of $\mg^\circ$ under $\exp$ defines a rigid open subgroup 
\[G^\circ\subseteq G.\]
For any affinoid adic space $X$ over $K$, the set $G^\circ(X)$ inherits the structure of a topological group in a canonical way, with underlying topological space homeomorphic $\O^{+d}(X)$ via $\exp$.

\subsection{The morphism $\Psi$ in the commutative case}

By way of motivation, we first consider the much simpler case that $G$ is a commutative rigid group: Apart from $\G_a$ and $\G_m$ and their open subgroups, examples for such $G$ include abelian/abeloid varieties and analytic $p$-divisible groups in the sense of Fargues \cite{Fargues-groupes-analytiques}.

 If $G$ is commutative, then $\exp$ is a homomorphism and restricts to an isomorphism of rigid groups
$\exp:\mg^\circ\to G^\circ$.
For any $m\geq 0$ we derive from this an isomorphism
\[ \exp:R^m\nu_{\ast}\mg^\circ\isomarrow R^m\nu_{\ast}G^\circ.\]
\begin{proof}[Proof of \cref{t:main-thm-for-both-O-and-O^+} for commutative $G$]
	We begin by observing that by  \cref{p:reduction-of-structure-group} applied to $\O^+\subseteq \O$, we have
	$R^m\nu_{\ast}\O^+=R^m\nu_{\ast}\O=\wtOm^m$.
	The projection formula then shows \[R^m\nu_{\ast}\mg^\circ=\mg^\circ\otimes R^m\nu_{\ast}\O^+=\mg\otimes \wtOm^m.\]
	Composing with $\exp$, we obtain from this a canonical and functorial morphism
	\begin{equation}\label{eq:Psi-commutative-case}
		\Psi_G:\mg\otimes \wtOm^m= R^m\nu_{\ast}\mg^\circ\xrightarrow{\exp} R^m\nu_{\ast}G^\circ \to R^m\nu_{\ast}G.
	\end{equation}
	By \cref{p:reduction-of-structure-group} applied to $G^\circ\subseteq G$, also the last morphism is an isomorphism.
\end{proof}
\cref{t:main-thm-for-both-O-and-O^+} for commutative $G$ is already interesting for smooth rigid $X$, where as a consequence, we get a generalisation of the Hodge--Tate spectral sequence, with $G$-coefficients:

\begin{Corollary}
	The Leray sequence for $X_v\to X_{\et}$ induces a first quadrant spectral sequence
	\[ E_2^{ij}:=\begin{dcases}\begin{rcases}
			H^i_{\et}(X,G)& \text{ if }j=0\\
			H^i_{\et}(X,\wtOm_X^j)\otimes_K\mathfrak g& \text{ if }j>0	
	\end{rcases}\end{dcases}\Rightarrow H^{i+j}_{v}(X,G).\]
\end{Corollary}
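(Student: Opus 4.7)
The plan is to write down the Grothendieck--Leray spectral sequence for the morphism of sites $\nu \colon X_v \to X_{\et}$ applied to the abelian sheaf $G$. Since $G$ is assumed commutative, $G$ is a sheaf of abelian groups on $X_v$, so the standard composition-of-functors spectral sequence gives a first-quadrant sequence
\[
E_2^{ij} = H^i_{\et}(X, R^j\nu_{\ast}G) \Rightarrow H^{i+j}_v(X, G).
\]
The corollary then reduces to identifying the terms on the $E_2$-page.

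First I would treat the row $j = 0$: since the diamondification functor identifies étale sites (Lemma~1.8.4), for any $U \in X_{\et}$ the $v$-sections $G(U)$ agree with the $\et$-sections, so $\nu_{\ast}G = G$ as sheaves on $X_{\et}$ and the $j=0$ row reads $H^i_{\et}(X,G)$. For $j \geq 1$, I would invoke the commutative case of \Cref{t:main-thm-for-both-O-and-O^+} (just proved in the preceding paragraphs via the isomorphism $\Psi$ in \cref{eq:Psi-commutative-case}), giving a canonical isomorphism
\[
R^j\nu_{\ast}G \;\cong\; \mathfrak{g} \otimes_K \wtOm_X^j
\]
of sheaves on $X_{\et}$.

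Finally, I would pull the finite-dimensional $K$-vector space $\mathfrak{g}$ out of cohomology: since $\mathfrak{g}$ is a finite-dimensional $K$-vector space, tensoring with it is an exact functor that commutes with étale cohomology on the $\O$-module $\wtOm_X^j$, yielding
\[
H^i_{\et}(X, \mathfrak{g} \otimes_K \wtOm_X^j) \;=\; H^i_{\et}(X, \wtOm_X^j) \otimes_K \mathfrak{g}.
\]
Combining these identifications for $j = 0$ and $j \geq 1$ reproduces the stated $E_2$-page, and convergence to $H^{i+j}_v(X,G)$ comes for free from the Leray spectral sequence. There is no real obstacle here: the corollary is essentially a direct translation of the main theorem into a spectral sequence, and all inputs (existence of Leray, identification of $\nu_{\ast}G$, Theorem~4.1, and the projection formula for a finite-dimensional coefficient space) are already in hand.
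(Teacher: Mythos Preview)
Your proposal is correct and matches the paper's approach: the paper does not give an explicit proof but states the corollary as an immediate consequence of the commutative case of \Cref{t:main-thm-for-both-O-and-O^+}, which is exactly the Leray-spectral-sequence argument you wrote out.
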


Of course this simple construction of $\Psi_G$ cannot work for general $G$ since $\exp$ is then not a homomorphism, and cannot be derived. The basic idea, going  back to Faltings \cite{Faltings_SimpsonI}, is to represent both sides as group cohomology and apply $\exp$ to carefully chosen cocycles.

\subsection{Preparations and choices}\label{s:preps-and-choices}

As long as we later prove that our construction is canonical and functorial in $X$, it suffices to construct $\Psi_G$ locally. We may therefore assume that $X=\Spa(R,R^+)$ is a toric affinoid smoothoid space in the  sense of Definition~\ref{d:toric-chart}. Note that we do not fix a toric chart.

We begin by making some auxiliary choices, we later prove that the construction is independent of these.
We first choose any affinoid perfectoid pro-finite-\'etale Galois cover 
\[\wt X=\Spa(\wt R,\wt R^+)\to X=\Spa(R,R^+).\]

\begin{Remark}\label{ex:choice-of-cover}
	In practice, there are various concrete ways to choose this:
	\begin{enumerate}
		\item If $\Q_p^\cyc\subseteq K$, we can choose a toric chart and consider the toric tower as in \cref{s:comp-toric-charts}.
		\item If $\Spec(\O(X))$ is connected, then any choice of base point $x\in X(\overline K)$ induces an affinoid perfectoid universal pro-finite-\'etale cover 
		$\wt X\to X$, defined as the inverse limit over Zariski-connected finite \'etale covers of $X$
		together with a fixed lift $\tilde{x}$ of $x$.
	\end{enumerate}
However, the greater generality is useful to show that the construction is canonical and functorial. It also allows for a uniform treatment independent of whether $\Q_p^\cyc \subseteq K$ or not.
\end{Remark}

Let us denote the Galois group of $\wt X\to X$ by $\pi$, then $\wt X\to X$
is a pro-finite-\'etale $\pi$-torsor. As $H^1_v(\wt X,\O)=0$,
the Cartan--Leray sequence \cref{p:Cartan--Leray} induces an isomorphism
\[ H^1_{\cts}(\pi,\wt R )=H^1_v(X,\O)=H^0(X,\wtOm_X).\]
Since $X$ is toric, $H^0(X,\wtOm_X)$ is finite free over $R$. Moreover,
by \cref{l:Sch13-4.5/5.5} we can up to bounded torsion identify $H^1_v(X,\O^+)$ with an $R^+$-sublattice of rank $d$ of $H^0(X,\wtOm)$. Thus
\[H^0(X,\wtOm)^{\circ}:=\im\big (H^1_{\cts}(\pi,\wt R^+ )\aeq H^1_v(X,\O^+)\to H^0(X,\wtOm)\big)\]
is an open $R^+$-submodule of $H^0(X,\wtOm)$ that generates the whole module upon inverting $p$.
We now choose a basis $\delta=(\delta_1,\dots,\delta_d)$ of $H^0(X,\wtOm)$ as an $R$-module that already lies in $ \mathfrak m\cdot H^0(X,\wtOm)^{\circ}$.
Let
\[H^0(X,\wtOm)^{+}\subseteq H^0(X,\wtOm)^{\circ}\]
be the finite free $R^+$-sublattice spanned by $\delta$. We also write this as $H^0(X,\wtOm)^{+,\delta}$ to indicate the dependence on $\delta$.
Third, we now choose for each $\delta_i$ a representative  $\rho_i:\pi\to \wt R^+$ in the set of continuous 1-cocycles $\mathcal Z^1_{\cts}(\pi,\wt R^+)$ that maps to $\delta_i$ under the map
\[\HT\circ [-]:\mathcal Z^1_{\cts}(\pi,\wt R^+)\xrightarrow{[-]} H^1_{\cts}(\pi,\wt R^+)\to H^1_v(X,\O^+)\xrightarrow{\HT}  H^0(X,\wtOm)^\circ.\]

\begin{Choices}\label{choices}
	
	In summary, we have made the following choices:
	\begin{enumerate}
		\item an affinoid perfectoid pro-finite-\'etale cover $\wt X\to X$ that is Galois with group $\pi$,
		\item a basis $\delta=(\delta_1,\dots,\delta_d)$ of $H^0(X,\wtOm)$,
		\item a set of representative $1$-cocycles $\rho_i\in \mathcal Z^1_{\cts}(\pi,\O^+(\wt X))$ such that $\HT([\rho_i])=\delta_i$.
	\end{enumerate}
\end{Choices}
Of course the $\rho_i$ determine the $\delta_i$, but it is later helpful to see this choice as two steps.
\begin{Remark}
	If $\Q_p^\cyc\subseteq K$ and $\wt X\to X$ is a toric cover, then by \cref{l:Sch13-4.5/5.5} one can always choose $\rho_i$ of the form 
	$\Delta=\pi\to R^+$.
	But the proof of independence of the toric chart would lead back to more general $\rho_i$, which also allows to treat more general perfectoid $K$ over $\Q_p$.
\end{Remark}

\subsection{The integral morphism $\Phi^+$: exponentiating cocycles}
Having made Choices~\ref{choices}, we now construct as the first step  a map
\[\Phi^+_{\mathrm{grp}}:(H^0(X,\wtOm)^{+}\otimes_{R^+}\mg^\circ(R))^{\wedge=0}\to H^1_{\cts}(\pi,G^\circ(\wt R)).\]
Let $\theta$ be an element on the left. Since $\mg^\circ$ is a finite free $\O^+$-module, $H^0(X,\wtOm)^{+}\otimes_{R^+}\mg^\circ(R)$ is a finite free $R^+$-module. Therefore $\theta$ has a unique expansion
$\theta=\sum_{i=1}^d \delta_i\otimes A_i$
in terms of the basis $\delta$ of the free $R^+$-module $H^0(X,\wtOm)^{+}$
for some $A_i\in \mg^\circ(R)$. The condition $\theta\wedge \theta=0$ means precisely that the $A_i$ commute with each other.
We now first define a map
\begin{eqnarray*}
	\wt \Phi^+_{\mathrm{grp}}:H^0(X,\wtOm)^{+}\otimes_{R^+} \mg^\circ(R)&\to& \Map_{\cts}(\pi,G^\circ(\wt R))\\
	\sum_{i=1}^d \delta_i\otimes A_i&\mapsto &\Big(\gamma \mapsto \prod_{i=1}^d \exp(\rho_i(\gamma)\cdot A_i)\Big).
\end{eqnarray*}
This is well-defined as $\rho_i$ has image in $\wt R^+$ and $A_i\in \mg^\circ(R)$, so their product lies in $\mg^\circ(\wt R)$.

The commutativity condition in \cref{l:exp-log-commutativity} is precisely the reason why the definition is only really sensible when we restrict to Higgs fields, where the matrices $A_i$ commute:

\begin{Lemma}
	If $\theta\wedge\theta=0$, then
	$\wt \Phi^+_{\mathrm{grp}}(\theta)$ is a $1$-cocycle. Hence $\wt \Phi^+_{\mathrm{grp}}$ induces a map
	\[\Phi^+_{\mathrm{grp}}:(H^0(X,\wtOm)^{+}\otimes_{R^+} \mg^\circ(R))^{\wedge=0}\to H^1_{\cts}(\pi,G^\circ(\wt R)).\]
\end{Lemma}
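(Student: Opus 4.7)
The plan is a direct computation: writing $\theta = \sum_{i=1}^d \delta_i \otimes A_i$ with $[A_i,A_j]=0$ (the content of $\theta\wedge\theta=0$), we verify the 1-cocycle identity
\[
\wt\Phi^+_{\mathrm{grp}}(\theta)(\gamma\tau) \;=\; \wt\Phi^+_{\mathrm{grp}}(\theta)(\gamma)\cdot {}^{\gamma}\!\bigl(\wt\Phi^+_{\mathrm{grp}}(\theta)(\tau)\bigr)
\]
in $G^\circ(\wt R)$ for all $\gamma,\tau\in\pi$, where $\pi$ acts on $G^\circ(\wt R)$ through its action on $\wt R$. Continuity of $\gamma\mapsto\wt\Phi^+_{\mathrm{grp}}(\theta)(\gamma)$ is immediate from continuity of the $\rho_i$, of multiplication in $\wt R^+$, and of $\exp$ on $\mg^\circ$.

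First I would unpack the right hand side. Since each $A_i\in\mg^\circ(R)$ is $\pi$-invariant and $\exp$ is functorial in the coefficient ring, we have
\[
{}^{\gamma}\!\exp(\rho_i(\tau)A_i)=\exp\bigl(\gamma(\rho_i(\tau))\cdot A_i\bigr).
\]
Using that $\rho_i\in\mathcal Z^1_{\cts}(\pi,\wt R^+)$ is an additive 1-cocycle, we have $\rho_i(\gamma\tau)=\rho_i(\gamma)+\gamma(\rho_i(\tau))$, and each of $\rho_i(\gamma)A_i,\,\gamma(\rho_i(\tau))A_i$ lies in $\mg^\circ(\wt R)$ because $\rho_i(\gamma),\gamma(\rho_i(\tau))\in\wt R^+$ and $A_i\in\mg^\circ(R)\subseteq\mg^\circ(\wt R)$.

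The key step is then to apply \Cref{l:exp-log-commutativity}(1). The hypothesis $[A_i,A_j]=0$ for all $i,j$ implies that, after multiplying by scalars from $\wt R^+$, the collection of elements
\[
\bigl\{\rho_i(\gamma)A_i,\;\gamma(\rho_j(\tau))A_j\bigr\}_{1\leq i,j\leq d}\subseteq\mg^\circ(\wt R)
\]
is pairwise commuting. Hence all their exponentials commute in $G^\circ(\wt R)$, and for each index $i$ we may split
\[
\exp\bigl(\rho_i(\gamma\tau)A_i\bigr)=\exp\bigl(\rho_i(\gamma)A_i\bigr)\cdot\exp\bigl(\gamma(\rho_i(\tau))A_i\bigr).
\]
Taking the product over $i=1,\dots,d$ and rearranging factors (permitted by the pairwise commutativity just established) yields exactly $\wt\Phi^+_{\mathrm{grp}}(\theta)(\gamma)\cdot{}^{\gamma}\wt\Phi^+_{\mathrm{grp}}(\theta)(\tau)$, proving the cocycle condition.

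There is no real obstacle: the only subtle point is to check that all intermediate products still live in the open subgroup $\mg^\circ$ on which $\exp$ is defined (so that \Cref{l:exp-log-commutativity}(1) applies), which is guaranteed by the integrality $\rho_i(\gamma)\in\wt R^+$ together with the choice that $\exp$ converges on a slightly larger neighbourhood $p^{-1}\mg^\circ$ recorded in \Cref{s:preps-and-choices}. Passing to cohomology classes then gives the stated map $\Phi^+_{\mathrm{grp}}$, independence from the chosen auxiliary data being deferred to the subsequent subsection.
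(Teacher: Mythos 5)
Your proposal is correct and follows essentially the same route as the paper: expand $\rho_i(\gamma\tau)=\rho_i(\gamma)+\gamma^{\ast}\rho_i(\tau)$ using the additive cocycle condition, observe that all the scalar multiples $\rho_i(\gamma)A_i$ and $\gamma^{\ast}\rho_j(\tau)A_j$ still pairwise commute in $\mg^\circ(\wt R)$, and apply \cref{l:exp-log-commutativity}(1) to split and rearrange the exponentials. The extra remarks on continuity and on staying inside the domain of $\exp$ are sound (integrality of the $\rho_i$ suffices here, since $\mg^\circ$ is an $\O^+$-module) and consistent with the paper's setup.
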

\begin{proof}
	Using that $\rho_i\in\mathcal Z^1_{\cts}(\pi,\wt R^+)$, we have by Definition~\ref{d:non-ab-1-cocycles} for any $\gamma_1,\gamma_2\in \pi$
	\[\wt{\Phi}^+_{\mathrm{grp}}(\theta)(\gamma_1\cdot \gamma_2)=\textstyle\prod_i \exp(\rho_i(\gamma_1)A_i+ \gamma_1^\ast\rho_i(\gamma_2)A_i).\]
		Since $\rho_i(\gamma_1)$ and $\gamma_1^\ast\rho_i(\gamma_2)$ are scalars in $\wt R^+$, any two elements of the subset of $\mg^\circ(\wt R^+)$ given by   $\rho_i(\gamma_1)A_i$ and $\gamma_1^\ast\rho_i(\gamma_2)A_i$ for $i=1,\dots,d$ still commute. We may thus write this as
	\[=\textstyle\prod_i \exp(\rho_i(\gamma_1)A_i) \cdot \gamma_1^{\ast}\prod_i \exp(\rho_i(\gamma_2)A_i)=\wt\Phi^+_{\mathrm{grp}}(\theta)(\gamma_1)\cdot \gamma_1^\ast\wt\Phi^+_{\mathrm{grp}}(\theta)(\gamma_2).\qedhere\]
\end{proof}

Via the map
$H^1_{\cts}(\pi,G^\circ(\wt R))\to H^1_v(X,G^\circ)$
of the Cartan--Leray sequence, we get a map
\[\Phi^+:(H^0(X,\wtOm)^{+}\otimes_{R^+} \mg^\circ(R))^{\wedge=0}\xrightarrow{\Phi^+_{\mathrm{grp}}} H^1_{\cts}(\pi,G^\circ(\wt R))\to  H^1_v(X,G^\circ)\to H^1_v(X,G).\]
If we want to indicate $X$, $G$ or the choice of $\delta$ and $\rho$, we add this as a subscript, e.g.\
$\Phi^+_{X,\delta}$.

\medskip

At this point, we have associated to any ``small'' $G$-Higgs field on the trivial bundle a ``small'' $v$-$G$-bundle. This construction is in fact functorial, but not independent of our choices. 
However, we will later prove that the construction becomes independent of choices after sheafifying.
As an intermediate step, we already need the following weaker statement:

\begin{Lemma}\label{l:indep-of-choice-of-cocycle-+}
	For each $i=1,\dots,d$, let $\rho_i'\in \mathcal Z^1_{\cts}(\pi,\wt R^+)$ be such that $[\rho_i']=[\rho_i]$ in $H^1_{\cts}(\pi,\wt R^+)\aeq H^1_v(X,\O^+)$. Then 
	$\Phi^+$ is the same whether it is computed using $\rho_i$ or $\rho_i'$.
\end{Lemma}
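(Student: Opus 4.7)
The plan is to show that the cocycles $\Phi^+_{\mathrm{grp},\rho}(\theta)$ and $\Phi^+_{\mathrm{grp},\rho'}(\theta)$ are cohomologous in $H^1_{\cts}(\pi, G^\circ(\wt R))$, which suffices since $\Phi^+$ only depends on the resulting class in $H^1_v(X,G)$. Since $[\rho_i]=[\rho_i']$ in $H^1_{\cts}(\pi,\wt R^+)$, there exist elements $c_i\in \wt R^+$ with $\rho_i'(\gamma)-\rho_i(\gamma)=\gamma^{\ast}(c_i)-c_i$ for every $\gamma\in\pi$. Writing $\theta=\sum_i\delta_i\otimes A_i$ with $A_i\in\mg^\circ(R)$, the condition $\theta\wedge\theta=0$ says that the $A_i$ pairwise commute in $\mg$. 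Consequently, all $\wt R^+$-scalar multiples of the $A_i$ lie in a common abelian $\wt R^+$-sub-Lie-algebra of $\mg^\circ(\wt R)$; in particular the elements $\rho_i(\gamma)A_i$, $\rho_i'(\gamma)A_i$, $c_iA_i$ and $\gamma^{\ast}(c_i)A_i$ all commute pairwise for varying $i$ and $\gamma$.

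Next, I would apply \Cref{l:exp-log-commutativity}.1 to the additive decomposition $\rho_i'(\gamma)A_i = \rho_i(\gamma)A_i + \gamma^{\ast}(c_i)A_i + (-c_iA_i)$ to obtain
\[ \exp(\rho_i'(\gamma)A_i) = \exp(\rho_i(\gamma)A_i)\cdot \exp(\gamma^{\ast}(c_i)A_i)\cdot \exp(-c_iA_i). \]
By the same lemma applied pairwise, all these exponentials lie in a common abelian subgroup of $G^\circ(\wt R)$, so factors may be freely rearranged. Set $g:=\prod_i\exp(c_iA_i)\in G^\circ(\wt R)$. Using that $\pi$ fixes $A_i\in\mg^\circ(R)$, we have $\gamma^{\ast}(g)=\prod_i\exp(\gamma^{\ast}(c_i)A_i)$ and, by commutativity, $g^{-1}=\prod_i\exp(-c_iA_i)$. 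Rearranging the product in the definition of $\Phi^+_{\mathrm{grp},\rho'}(\theta)(\gamma)$ then yields
\[ \Phi^+_{\mathrm{grp},\rho'}(\theta)(\gamma) \;=\; \Phi^+_{\mathrm{grp},\rho}(\theta)(\gamma)\cdot \gamma^{\ast}(g)\cdot g^{-1} \;=\; g^{-1}\cdot \Phi^+_{\mathrm{grp},\rho}(\theta)(\gamma)\cdot \gamma^{\ast}(g), \]
where the second equality uses commutativity within the abelian subgroup again. This exhibits the two cocycles as cohomologous (via the trivialising element $g^{-1}$) in the non-abelian cohomology $H^1_{\cts}(\pi,G^\circ(\wt R))$, completing the proof.

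The only point requiring care is well-definedness: one must check that each element fed into $\exp$ genuinely lies in the domain $\mg^\circ(\wt R)$. This is automatic because $\mg^\circ$ is an open $K$-linear subgroup of $\mg$ identified with $\O^{+d}$, so $\mg^\circ(R)$ is naturally an $R^+$-module and multiplying by any element of $\wt R^+$ (including the negatives $-c_i$, since $\wt R^+$ is a subring) lands back in $\mg^\circ(\wt R)$. Once this is confirmed, the commutative-subgroup rearrangement and the coboundary identification carried out above are routine.
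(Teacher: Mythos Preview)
Your proof is correct and follows essentially the same approach as the paper: both arguments write $\rho_i'(\gamma)=\rho_i(\gamma)+\gamma^\ast c_i-c_i$ for some $c_i\in\wt R^+$, observe that all scalar multiples of the commuting $A_i$ lie in a common abelian sub-Lie-algebra of $\mg^\circ(\wt R)$, and then use \Cref{l:exp-log-commutativity} to show that the element $g=\prod_i\exp(c_iA_i)$ witnesses $\Phi^+_{\rho'}(\theta)$ and $\Phi^+_{\rho}(\theta)$ as cohomologous. Your explicit verification that the arguments of $\exp$ land in $\mg^\circ(\wt R)$ is a nice addition that the paper leaves implicit.
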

This is weaker than independence of the choice of $\rho_i$, as $\rho_i$ is by definition also a preimage of $\delta_i$ under the map
$\HT:H^1_v(X,\O^+)\to H^0(X,\wtOm)$
which is in general not injective.
\begin{proof}	
	That $[\rho_i]=[\rho_i']$ means that there is $x_i\in \wt R^+$ such that $\rho_i'(\gamma)=\gamma^\ast x_i+\rho_i(\gamma)-x_i$ for all $\gamma \in \pi$.
	Since $\gamma^\ast x$ and $x$ are scalars in $\wt R^+$, all the $\gamma^\ast xA_i$, $xA_i$ and $\rho'_i(\gamma)A_i$ in $\mg^\circ(\wt R)$  still commute with each other for all $i$.
	Consequently, for any $\gamma \in \pi$ we have
	\[\textstyle\prod_i \exp(\rho_i'(\gamma) A_i)=\gamma^\ast\big (\textstyle\prod_i\exp(x_i  A_i)\big)\cdot  \textstyle\prod_i \exp(\rho_i(\gamma) A_i)\cdot (\textstyle\prod_i \exp(x_i A_i))^{-1}.\]
	
	Setting $y:=\prod_i\exp(x_i  A_i)\in G^\circ(\wt R)$, we see that $\prod_i \exp(\rho_i(\gamma) A_i)$ and $\prod_i \exp(\rho'_i(\gamma) A_i)$ agree up to the $\gamma$-conjugation by $y$ and thus have the same image in $H^1_{\cts}(\pi,G^\circ(\wt R))$.
\end{proof}
We will frequently use the following simple observation to make cocycles small:
\begin{Lemma}\label{l:rescaling-rho}
	Let $0\neq a \in K^+$.
	Suppose that each $\rho_i$ has image in $a\wt R^+$. Then $a^{-1}\delta$ is a basis of $H^0(X,\wtOm)$ contained in the image of $H^1_v(X,\O^+)$, and $a^{-1}\rho$ is a choice of integral cocycle representatives for $a^{-1}\delta$. Then on $(H^0(X,\wtOm)^{+,\delta}\otimes_{R^+} \mg^\circ(R))^{\wedge=0}$ we have $\Phi^+_{a^{-1}\rho}=\Phi^+_{\rho}$,
	but $\Phi^+_{a^{-1}\rho}$ is now defined on the larger space $(H^0(X,\wtOm)^{+,\delta}\otimes_{R^+} a^{-1}\mg^\circ(R))^{\wedge=0}$.
\end{Lemma}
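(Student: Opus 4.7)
The plan is a direct verification by unravelling the definition of $\Phi^+$ and using the fact that the exponent $\rho_i(\gamma)\cdot A_i$ in the formula for $\wt\Phi^+_{\mathrm{grp}}$ is a product of a scalar in $\wt R^+$ and a Lie algebra element in $\mg^\circ(R)$, so scalar factors can be freely moved between them.

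First, I would check parts (a) and (b) of the statement. Since the map $\HT\circ[-]:\mathcal Z^1_{\cts}(\pi,\wt R^+)\to H^0(X,\wtOm)^\circ$ is $\O^+$-linear, the cocycle $a^{-1}\rho_i\in\mathcal Z^1_{\cts}(\pi,\wt R^+)$ (well-defined because $\rho_i$ takes values in $a\wt R^+$) satisfies $\HT([a^{-1}\rho_i])=a^{-1}\delta_i$. Hence $a^{-1}\delta_i$ lies in the image of $H^1_v(X,\O^+)\to H^0(X,\wtOm)$, and $a^{-1}\rho$ is an integral system of cocycle representatives for the basis $a^{-1}\delta$ of $H^0(X,\wtOm)$. (That $a^{-1}\delta$ remains an $R$-basis is immediate after inverting $p$.)

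For the equality $\Phi^+_{a^{-1}\rho}=\Phi^+_\rho$ on the common domain $(H^0(X,\wtOm)^{+,\delta}\otimes_{R^+}\mg^\circ(R))^{\wedge=0}$, I would take $\theta=\sum_{i=1}^d \delta_i\otimes A_i$ in this domain and re-expand it in the basis $a^{-1}\delta$, obtaining $\theta=\sum_i (a^{-1}\delta_i)\otimes (aA_i)$ with $aA_i\in\mg^\circ(R)$. The Higgs condition $\theta\wedge\theta=0$ is preserved since $[aA_i,aA_j]=a^2[A_i,A_j]=0$. Then the group-level formula gives, for each $\gamma\in\pi$,
\[
\wt\Phi^+_{\mathrm{grp},a^{-1}\rho}(\theta)(\gamma)=\prod_{i=1}^d \exp\bigl((a^{-1}\rho_i(\gamma))\cdot(aA_i)\bigr)=\prod_{i=1}^d \exp(\rho_i(\gamma)\cdot A_i)=\wt\Phi^+_{\mathrm{grp},\rho}(\theta)(\gamma),
\]
so the two cocycles agree on the nose (no cohomological adjustment needed), and hence their images $\Phi^+_{a^{-1}\rho}(\theta)$ and $\Phi^+_\rho(\theta)$ coincide.

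For the final statement about the enlarged domain, I would observe that the natural $R^+$-linear isomorphism $H^0(X,\wtOm)^{+,\delta}\otimes_{R^+}a^{-1}\mg^\circ(R)\isomarrow H^0(X,\wtOm)^{+,a^{-1}\delta}\otimes_{R^+}\mg^\circ(R)$, given by $\delta_i\otimes B_i\mapsto (a^{-1}\delta_i)\otimes (aB_i)$, identifies the space on which $\Phi^+_{a^{-1}\rho}$ is defined (the right-hand side, using the basis $a^{-1}\delta$ and cocycles $a^{-1}\rho$) with the larger space on the left. This identification respects the wedge-zero condition. There is no genuine obstacle: the only thing to watch is that $\exp$ still converges on the relevant argument, which holds because for $B_i\in a^{-1}\mg^\circ(R)$ and $\gamma\in\pi$ the element $(a^{-1}\rho_i(\gamma))B_i$ lies in $\wt R^+\cdot a^{-1}\mg^\circ(R)\subseteq a^{-1}\mg^\circ(\wt R)$, on which $\exp$ converges by our convention that $\mg^\circ$ was chosen small enough (shrinking $\mg^\circ$ if necessary).
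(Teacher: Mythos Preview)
Your proof is correct and takes the same approach as the paper, which simply writes ``Clear from the definition by writing $A_i\cdot \rho_i=aA_i\cdot a^{-1}\rho_i$.'' You have just spelled this out in more detail.

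One small remark: your final sentence about needing $\exp$ to converge on $a^{-1}\mg^\circ(\wt R)$ is unnecessary and slightly off. Once you use the identification you yourself gave, an element $\theta=\sum_i\delta_i\otimes B_i$ with $B_i\in a^{-1}\mg^\circ(R)$ is rewritten as $\sum_i(a^{-1}\delta_i)\otimes(aB_i)$ with $aB_i\in\mg^\circ(R)$, and then $\Phi^+_{a^{-1}\rho}$ evaluates $\exp$ at $(a^{-1}\rho_i(\gamma))\cdot(aB_i)\in\wt R^+\cdot\mg^\circ(R)\subseteq\mg^\circ(\wt R)$, where convergence is automatic. No shrinking of $\mg^\circ$ is needed.
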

\begin{proof}
	Clear from the definition by writing $A_i\cdot \rho_i=aA_i\cdot a^{-1}\rho_i$.
\end{proof}

\subsection{Extension to all Higgs field}
Next, we extend $\Phi^+$ to a morphism on all of  $(H^0(X,\wtOm)\otimes \mg)^{\wedge=0}$. For this, we first note that already the case of $G=\G_m$ and $X=\B^2$ shows that  $\HTlog\colon H^1_v(X,\G_m)\to H^0(X,\wtOm)$ is in general not surjective, see \cite[\S6]{heuer-v_lb_rigid}. We can therefore only expect $\Phi^+$ to extend to all Higgs fields after \'etale sheafification on $X$, i.e.\ after passing from $H^1_v(X,G)$ to $R^1\nu_{\ast}G$.

We therefore now compose $\Phi^+$ with the map
from the Leray sequence of $\nu:X_v\to X_{\et}$
\[ \Psi^+:(H^0(X,\wtOm)^{+}\otimes \mg^\circ(K))^{\wedge=0}\xrightarrow{\Phi^+} H^1_v(X,G)\to  R^1\nu_{\ast}G(X).\]
As before, we denote this by $\Psi^+_{X}$ or $\Psi^+_{X,\delta}$ etc.\ if we want to indicate $X$, $G$, or our choices. 
This is the map that we now extend to all Higgs fields:
The basic idea for doing so is that every Higgs fields becomes small on a finite Galois cover, by the following two lemmas.

\begin{Lemma}\label{l:induced-choices-on-cover}
	If $\wt X\to X'\to X$ is a sub-cover such that $f:X'\to X$ is finite \'etale, then $f^{\ast}\wtOm_X=\wtOm_{X'}$ by \cref{l:cotangent-sequence-for-smoothoids}. Therefore Choices~\ref{choices} induce natural choices for $X'$: 
	\begin{enumerate}
		\item 
		For the Galois cover we use $\wt X\to X'$, whose Galois group is an open subgroup $\pi'\subseteq \pi$. 
		\item For the basis of $ H^0(X',\wtOm)$ we use the image of $\delta$ under $H^0(X,\wtOm)\to H^0(X',\wtOm)$.
		\item  For the representatives we use the restriction of the $\rho_i$ to $\pi'$.
	\end{enumerate}
\end{Lemma}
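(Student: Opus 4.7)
The plan is to verify each of the three items in sequence; all three reduce to standard functoriality statements, and there is no substantial obstacle.

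First I would handle (i): since $\wt X\to X$ is a pro-finite-\'etale $\pi$-torsor and $f\colon X'\to X$ is a finite \'etale sub-cover, the Galois correspondence for pro-finite-\'etale covers identifies $X'$ with $\wt X/\pi'$ for a unique open subgroup $\pi'\subseteq \pi$, and $\wt X\to X'$ is then automatically a $\pi'$-torsor. The cover $\wt X\to X'$ is still affinoid perfectoid, so it is a valid choice.

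For (ii) I would use that pullback of coherent sheaves along the finite \'etale map $f$ commutes with taking global sections on affinoid spaces. Since $f^\ast\wtOm_X=\wtOm_{X'}$ by \cref{l:cotangent-sequence-for-smoothoids}, this yields $H^0(X',\wtOm_{X'})=H^0(X,\wtOm_X)\otimes_R R'$, where $R'=\O(X')$. Hence the $R$-basis $\delta$ pulls back to an $R'$-basis of $H^0(X',\wtOm_{X'})$.

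The main step (iii) is a cohomological functoriality check. The restriction $\rho_i|_{\pi'}\colon \pi'\to \wt R^+$ is manifestly a continuous $1$-cocycle in $\wt R^+$. To see that its class maps to the pullback of $\delta_i$ under $\HT$, I would combine functoriality of the Cartan--Leray sequence (for the restriction of the $\pi$-torsor $\wt X\to X$ to the $\pi'$-torsor $\wt X\to X'$) with functoriality of the Leray sequence for $\nu\colon (-)_v\to (-)_\et$, yielding a commutative square
\[
\begin{tikzcd}
H^1_{\cts}(\pi,\wt R^+) \arrow[r,"\HT"] \arrow[d,"\res"] & H^0(X,\wtOm)^\circ \arrow[d,"f^\ast"] \\
H^1_{\cts}(\pi',\wt R^+) \arrow[r,"\HT"] & H^0(X',\wtOm)^\circ.
\end{tikzcd}
\]
This also shows that the integral submodule pulls back into the integral submodule, so the smallness hypothesis $\delta_i\in \m\cdot H^0(X,\wtOm)^\circ$ is preserved; the corresponding restricted data thus satisfy all the conditions of Choices~\ref{choices} on $X'$. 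The only points requiring a moment's care are the compatibility of the integral lattice $H^0(-,\wtOm)^\circ$ with $f^\ast$ and the identification $f^\ast\wtOm_X=\wtOm_{X'}$, both of which are already in place.
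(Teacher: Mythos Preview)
Your proposal is correct. The paper gives no proof for this lemma at all, treating the three items as evident from the definitions and the cited cotangent sequence; your verification of each item via Galois correspondence, base change of $\wtOm$, and functoriality of Cartan--Leray is exactly the implicit reasoning the paper takes for granted.
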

\begin{Lemma}\label{l:Higgs-field-becomes-small-on-cover}
	Let $k\in \N$. For any small enough normal open subgroup $\pi'\subseteq \pi$ corresponding to a finite \'etale Galois cover
	$f:X'\to X$, the induced cocycles $\rho_i$ from \cref{l:induced-choices-on-cover} have image in $p^k\O^+(\wt R)$. In particular, for any $\theta\in (H^0(X,\wtOm)\otimes_{K} \mg(K))^{\wedge=0}$, we can arrange that \[f^{\ast}\theta\in H^0(X',\wtOm)^{+,\delta'}\otimes_{K^+}\mg^\circ(K) \quad \text{ where }\delta':=p^{-k}\delta.\]
\end{Lemma}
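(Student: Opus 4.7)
The plan is to prove the two assertions in turn: first, for a given $k$, the cocycle-smallness claim, by analysing preimages $\rho_i^{-1}(p^k\wt R^+)\subseteq \pi$ and showing they are open subgroups; second, the ``in particular'' statement, by a simple rescaling to reduce to the first part.

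For the main assertion, fix $k\in\N$ and, for each $i=1,\dots,d$, consider the set $H_k^{(i)}:=\rho_i^{-1}(p^k\wt R^+)\subseteq \pi$. Since $\rho_i:\pi\to \wt R^+$ is continuous and $p^k\wt R^+$ is open in $\wt R^+$, the set $H_k^{(i)}$ is open and contains the identity. I would then verify that $H_k^{(i)}$ is actually a subgroup. Closure under products follows from the cocycle identity $\rho_i(\gamma_1\gamma_2)=\rho_i(\gamma_1)+\gamma_1^{\ast}\rho_i(\gamma_2)$ together with the crucial observation that the Galois action of $\pi$ on $\wt R^+$ preserves the ideal $p^k\wt R^+$, so that both summands lie in $p^k\wt R^+$ when $\gamma_1,\gamma_2\in H_k^{(i)}$. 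Closure under inverses follows by applying the cocycle identity to $\gamma\gamma^{-1}=e$, yielding $\rho_i(\gamma^{-1})=-(\gamma^{-1})^{\ast}\rho_i(\gamma)$, again using Galois-invariance of $p^k\wt R^+$.

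Set $H_k:=\bigcap_{i=1}^d H_k^{(i)}$, a finite intersection of open subgroups, hence an open subgroup of $\pi$. Since $\pi$ is profinite, $H_k$ has finite index and therefore contains some open \emph{normal} subgroup $\pi'\subseteq \pi$ (for instance, its normal core, which is a finite intersection of conjugates of $H_k$, each of which is open of the same finite index). By construction each $\rho_i|_{\pi'}$ has image in $p^k\wt R^+$, and $f:X'=\wt X/\pi'\to X$ is a finite \'etale Galois cover, giving the first claim. Shrinking $\pi'$ further only makes the $\rho_i|_{\pi'}$ smaller, so the conclusion holds for any normal open subgroup contained in $\pi'$.

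For the ``in particular'' part, I would write $\theta=\sum_{i=1}^d \delta_i\otimes A_i$ with $A_i\in \mg(K)$. Since $\mg^\circ\subseteq \mg$ is open, there exists $k\in\N$ such that $p^kA_i\in \mg^\circ(K)$ for every $i$. Apply the first part of the lemma to this $k$ to produce $\pi'$ and $f:X'\to X$ with $\rho_i|_{\pi'}\in p^k\wt R^+$, which exactly means that $p^{-k}\delta_i=:\delta'_i$ has integral cocycle representative $p^{-k}\rho_i|_{\pi'}\in \wt R^+$ relative to the induced choices of \Cref{l:induced-choices-on-cover}. Rewriting
\[ f^{\ast}\theta=\sum_{i=1}^d \delta_i\otimes A_i=\sum_{i=1}^d (p^{-k}\delta_i)\otimes (p^kA_i)=\sum_{i=1}^d \delta'_i\otimes (p^kA_i) \]
then exhibits $f^\ast\theta$ as an element of $H^0(X',\wtOm)^{+,\delta'}\otimes_{K^+}\mg^\circ(K)$, as required. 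The only subtle point in the whole argument is the verification that $H_k^{(i)}$ is a subgroup, where the combination of the cocycle relation with the Galois-invariance of $p^k\wt R^+$ is essential; everything else is formal.
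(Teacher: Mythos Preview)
Your proof is correct and follows essentially the same approach as the paper. The only difference is that the paper's argument is slightly shorter: it observes that $\rho_i^{-1}(p^k\wt R^+)$ is an open neighbourhood of the identity (without checking it is a subgroup) and then invokes the general fact that in a profinite group every open neighbourhood of the identity contains an open normal subgroup. Your extra verification that $H_k^{(i)}$ is itself a subgroup, via the cocycle identity and Galois-invariance of $p^k\wt R^+$, is correct and pleasant but not strictly needed.
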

\begin{proof}
	Since $\rho_i\colon\pi\to \O^+(\wt X)$ is continuous, $\rho_i^{-1}(p^k\O^+(\wt X))\subseteq \pi$ is an open neighbourhood of the identity. It therefore contains an open normal subgroup $\pi'\subseteq \pi$. This corresponds to a finite \'etale cover $X'\to X$ on which the restriction of $\rho_i$ to $\pi'$ maps into $p^k\O^+(\wt X)$.
\end{proof}
The idea is now to pass to such a cover $X'\to X$ and apply $\Psi_{X'}^+$ there.
In order to be able to use this to extend $\Psi_X^+$ to $(H^0(X,\wtOm)\otimes_{K} \mg(K))^{\wedge=0}$, we need to see that we can descend the image of $\Psi_{X'}^+$ back to $X$.
For this we use a first instance of functoriality of $\Psi$ in $X$:

\begin{Lemma}\label{l:Phi_U'^+-is-G-equivariant}
	Let $f:\wt X\to X'\to X$ be a subcover such that $X'\to X$ is finite \'etale and Galois with group  $Q$. Let $\delta'$ and $\rho'$ be the choices induced by \cref{l:induced-choices-on-cover}. Then the map $ \Phi_{X',\delta',\rho'}^+:(H^0(X',\wtOm)^{+}\otimes_{K^+}\mg^\circ(K))^{\wedge=0}\to H^1_v(X',G)$ is equivariant for the natural $Q$-actions on either side. In particular, so is $\Psi_{X'}^+$.
\end{Lemma}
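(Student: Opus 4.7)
The plan is to verify $Q$-equivariance by direct computation at the level of $1$-cocycles, exploiting that each $\rho_i$ is defined on all of $\pi$ (not merely on $\pi'$). First, I would unpack both $Q$-actions explicitly. Writing $\theta$ as $\sum_i \delta'_i \otimes A_i$ with $A_i \in \mg^\circ(R')$, the pullback origin $\delta'_i = f^\ast\delta_i$ ensures that each $\delta'_i$ is $Q$-fixed, so $q\cdot\theta = \sum_i \delta'_i \otimes q^\ast(A_i)$ where $q^\ast$ acts on the $R'$-coefficients of $A_i$ via the $Q$-action on $R'$. On the target, via the Cartan--Leray identification $H^1_v(X',G)\cong H^1_{\cts}(\pi', G^\circ(\wt R))$, for any lift $\tilde q \in \pi$ of $q \in Q = \pi/\pi'$ the $Q$-action is $(\tilde q\cdot c)(\gamma) = \tilde q^\ast\bigl(c(\tilde q^{-1}\gamma\tilde q)\bigr)$, well-defined on cohomology independent of the lift.

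Next, I would compare the two cocycle representatives. By construction, $\Phi^+_{X'}(q\cdot\theta)$ is represented by $\gamma\mapsto \prod_i \exp(\rho'_i(\gamma)\,q^\ast(A_i))$, whereas $\tilde q\cdot \Phi^+_{X'}(\theta)$ is represented by $\gamma\mapsto \prod_i \exp\bigl(\tilde q^\ast(\rho_i(\tilde q^{-1}\gamma\tilde q))\cdot \tilde q^\ast(A_i)\bigr)$. Since $\pi'$ is normal in $\pi$, the automorphism $\tilde q^\ast$ preserves $R' = \wt R^{\pi'}$ and restricts there to $q^\ast$, so $\tilde q^\ast(A_i) = q^\ast(A_i)$; the comparison thus reduces to the ``scalar'' factors $\rho_i(\gamma)$ versus $\tilde q^\ast(\rho_i(\tilde q^{-1}\gamma\tilde q))$ as elements of $\mathcal Z^1_{\cts}(\pi', \wt R^+)$.

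The crux is a cocycle identity: expanding $\rho_i(\tilde q^{-1}\gamma\tilde q)$ twice via the cocycle rule for $\rho_i$ on $\pi$ and using $\tilde q^\ast\rho_i(\tilde q^{-1}) = -\rho_i(\tilde q)$, one obtains
\[ \tilde q^\ast\bigl(\rho_i(\tilde q^{-1}\gamma\tilde q)\bigr) - \rho_i(\gamma) \;=\; (\gamma^\ast - 1)\,\rho_i(\tilde q). \]
Since $\rho_i(\tilde q) \in \wt R^+$, the two cocycles are cohomologous in $\mathcal Z^1_{\cts}(\pi', \wt R^+)$ via an integral coboundary. Applying \Cref{l:indep-of-choice-of-cocycle-+} to this pair of cohomologous cocycles with the Higgs data $\sum_i \delta'_i \otimes q^\ast(A_i)$ then yields immediately that both cocycles define the same class in $H^1_{\cts}(\pi', G^\circ(\wt R))$, proving $Q$-equivariance of $\Phi^+_{X'}$. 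The corresponding statement for $\Psi^+_{X'}$ follows formally from the $Q$-equivariance of the Leray edge map $H^1_v(X',G) \to R^1\nu_\ast G(X')$.

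The main obstacle is the careful bookkeeping in this last step: the whole argument rests on being able to write the difference of the two cocycles as an integral coboundary with $\rho_i(\tilde q) \in \wt R^+$, which hinges on $\rho_i$ being defined on all of $\pi$ with values in $\wt R^+$. This is precisely why the induced choices on $X'$ were set up in \Cref{l:induced-choices-on-cover} as restrictions of cocycles on $\pi$, rather than as independent cocycles on $\pi'$.
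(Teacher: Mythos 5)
Your proof is correct and follows essentially the same route as the paper's: both reduce $Q$-equivariance to the fact that the restricted classes $[\rho_i|_{\pi'}]$ are $Q$-invariant in $H^1_{\cts}(\pi',\wt R^+)$ and then invoke \Cref{l:indep-of-choice-of-cocycle-+} to swap $\rho_i$ for the conjugated cocycle $\gamma\mapsto \tilde q^\ast\rho_i(\tilde q^{-1}\gamma\tilde q)$, using $\tilde q^\ast A_i = q^\ast A_i$ for $A_i\in\mg(X')$. The only difference is that you make explicit the coboundary identity $\tilde q^\ast\rho_i(\tilde q^{-1}\gamma\tilde q)-\rho_i(\gamma)=(\gamma^\ast-1)\rho_i(\tilde q)$ with $\rho_i(\tilde q)\in\wt R^+$, which the paper leaves implicit in the assertion $[\rho_i]=g[\rho_i]$.
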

\begin{proof}
	Write $\pi'$ for the Galois group of $\wt X\to X'$, then $Q=\pi/\pi'$. 
	Consider the restriction
	\[ \res:H^1_{\cts}(\pi,\wt R^+)\to H^1_{\cts}(\pi',\wt R^+)^Q,\]
	where we recall that the natural action of $Q$ on classes in $H^1_{\cts}(\pi',\wt R^+)$ is given for $g\in Q$ with any lift $\tilde g$ to $\pi$ by $[\rho]\mapsto g[\rho]:=[\tilde g\rho(\tilde g^{-1}-\tilde g)]$. Using  that $[\rho_i]=g[\rho_i]$ in $H^1_{\cts}(\pi',\O^+(\wt X))^Q$, we may  by~\cref{l:indep-of-choice-of-cocycle-+} use either of $\rho$ or $\tilde g\rho(\tilde g^{-1}-\tilde g)$ to compute $\Phi_{X'}^+$.
	
	Let $\theta =\sum \delta_i\otimes A_i$ be in the domain of $\Phi_{X',\delta',\rho'}^+$, then for any $\wt g\in \pi$ with image $g$ in $Q$,
	\begin{align*}
		{\Phi}_{X'}^+(g\theta)&=[\gamma\mapsto\textstyle \prod_i \exp(\rho_i(\gamma)\cdot gA_i)].\\
		&=[\gamma\mapsto \textstyle\prod_i \exp(\tilde g\rho_i(\tilde g^{-1}\gamma \tilde g)\cdot \tilde gA_i)]=g \Phi_{X'}^+(\theta)
	\end{align*}
	because $A_i\in \mg(X')\subseteq \mg(\wt X)$, so we can write $gA_i=\wt gA_i$ inside $\mg(\wt X)$.
\end{proof}

We are now prepared to define $\Psi$:
For any $\theta=\sum \delta_i\otimes A_i\in (H^0(X,\wtOm)\otimes_{K} \mg(K))^{\wedge=0}$, let $k\in \N$ be such that $p^kA_i\in \mg^\circ(X)$ for all $i$. Using Lemma~\ref{l:Higgs-field-becomes-small-on-cover} we find a Galois subcover $\wt X\to X'\to X$ with Galois group $Q$ where all $p^{-k}\rho_i$ become integral with respect to $p^{-k}\delta$, so that $\theta$ is in the domain of $\Psi_{X',p^{-k}\delta}^+$.
Then since $\theta\in (H^0(X',\wtOm)\otimes_K \mg(K))^Q$, Lemma~\ref{l:Phi_U'^+-is-G-equivariant} implies that
\[ \Psi_{X',p^{-k}\delta}^+(\theta)\in  (R^1\nu_{\ast}G(X'))^Q=R^1\nu_{\ast}G(X)\]
as $R^1\nu_{\ast}G$ is  a sheaf on $X_{\et}$.
Using Lemma~\ref{l:rescaling-rho}, we see that $\Psi_{X',p^{-k}\delta}^+(\theta)=\Psi_{X,\delta}^+(\theta)$ for any $\theta\in 	(H^0(X,\wtOm)^{+,\delta}\otimes_{K^+} \mg^\circ(K))^{\wedge=0}$, so this construction is compatible with the integral one.
Since any two finite \'etale sub-covers of $\wt X\to X$ have a common finite \'etale cover under $\wt X$, this also shows that for any two covers $X'\to X$ and $X''\to X$ on which $\theta$ becomes small enough, we have
$\Psi_{X'}^+(\theta)=\Psi_{X''}^+(\theta)$
inside of  $R^1\nu_{\ast}G(X)$.
Thus the following is well-defined:
\begin{Definition}
	In the colimit over finite \'etale sub-covers $\wt X\to X'\to X$, we obtain a map
	\[\Psi_{X}:H^0(X,\wtOm\otimes \mg)^{\wedge=0}\to R^1\nu_{\ast}G(X),\quad 
		\theta\mapsto \varinjlim_{X'\to X}\Psi^+_{X'}(\theta).\]
\end{Definition}

The following lemma summarises the above discussion of compatibility for varying $X'$:
\begin{Lemma}\label{l:check-independence-after-finite-etale-cover}
	Let $\wt X\to X'\to X$ be a finite \'etale sub-cover. Then using the induced choices of \cref{l:induced-choices-on-cover} for $X'$ to compute $\Psi_{X'}$,  the following diagram commutes:
	\[\begin{tikzcd}[row sep = {0.3cm,between origins}]
		H^0(X',\wtOm\otimes \mg)^{\wedge=0}	\arrow[r,"\Psi_{X'}"] &  R^1\nu_{\ast}G(X')\\\\
		\rotatebox[origin=c]{90}{$\quad\subseteq$}&
		\rotatebox[origin=c]{90}{$\quad\subseteq$}\\
		H^0(X,\wtOm\otimes \mg)^{\wedge=0}	\arrow[r,"\Psi_{X}"]  &  R^1\nu_{\ast}G(X).
	\end{tikzcd}\]
\end{Lemma}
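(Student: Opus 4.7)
The plan is to reduce the comparison to a common finite étale sub-cover of $X'$ that dominates both Galois covers used implicitly in the definitions of $\Psi_X$ and $\Psi_{X'}$, and then to invoke sheaf-injectivity of $R^1\nu_\ast G$ along étale covers.

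First I would check that the vertical inclusions make sense: the left one is pullback along $f\colon X'\to X$, which sends $H^0(X,\wtOm\otimes\mg)^{\wedge=0}$ into $H^0(X',\wtOm\otimes\mg)^{\wedge=0}$ because $f^{\ast}\wtOm_X=\wtOm_{X'}$ by \Cref{l:cotangent-sequence-for-smoothoids} and wedge-vanishing is preserved; the right one is the restriction map of the étale sheaf $R^1\nu_\ast G$, which is injective because $f$ is an étale cover.

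Next, fix $\theta\in H^0(X,\wtOm\otimes\mg)^{\wedge=0}$. By \Cref{l:Higgs-field-becomes-small-on-cover}, choose an open normal subgroup $\pi_Y\subseteq\pi$ corresponding to a finite étale Galois cover $Y\to X$ such that the cocycles $\rho_i|_{\pi_Y}$ become so small that $\theta$ lies in the domain of $\Psi^+_{Y,p^{-k}\delta}$ for suitable $k$. Set $\pi_{Y'}:=\pi_Y\cap\pi'$ and $Y':=\wt X/\pi_{Y'}$, so $Y'\to X'$ is finite étale Galois and $Y'\to X$ factors through $Y$. The restrictions of $\rho_i$ to $\pi_{Y'}$ remain small, so $\theta$ still lies in the domain of $\Psi^+_{Y'}$ with respect to the choices induced from $X$ (through $Y$). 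By construction of $\Psi_X(\theta)$ and \Cref{l:indep-of-choice-of-cocycle-+}, its image under the restriction $R^1\nu_\ast G(X)\to R^1\nu_\ast G(Y')$ equals $\Psi^+_{Y'}(\theta)$ computed with these choices.

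Now I would run the same argument starting from $X'$: the induced choices for $X'$ (via \Cref{l:induced-choices-on-cover}) are the cover $\wt X\to X'$ with Galois group $\pi'$, the same basis $\delta$, and the restrictions $\rho_i|_{\pi'}$. Inducing these choices a second time from $X'$ down to $Y'$ gives exactly the cover $\wt X\to Y'$ with group $\pi_{Y'}$, the basis $\delta$, and the cocycles $\rho_i|_{\pi_{Y'}}$ — i.e.\ literally the same triple as the one induced from $X$ via $Y$, since both are obtained by restriction of $\rho$ to $\pi_{Y'}$. Hence the formula $\gamma\mapsto\prod_i\exp(\rho_i(\gamma)A_i)$ produces the same cocycle, so $\Psi_{X'}(\theta)$ has the same image as $f^{\ast}\Psi_X(\theta)$ in $R^1\nu_\ast G(Y')$. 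Since $R^1\nu_\ast G(X')\hookrightarrow R^1\nu_\ast G(Y')$ is injective by the étale sheaf property, the two already agree in $R^1\nu_\ast G(X')$.

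The only subtlety — and the place where the bookkeeping is genuinely used — is verifying that the ``doubly induced'' choices on $Y'$ (first $X\rightsquigarrow Y$, then $Y\rightsquigarrow Y'$, versus $X\rightsquigarrow X'\rightsquigarrow Y'$) really coincide; this is essentially built into \Cref{l:induced-choices-on-cover}, which is set up so that induction along a tower of finite étale sub-covers of $\wt X\to X$ is just restriction of cocycles to the corresponding nested chain of open subgroups of $\pi$, but it must be spelled out to make the argument unambiguous. Everything else is an immediate consequence of the construction together with \Cref{l:rescaling-rho} (to rescale $\delta$ by $p^{-k}$ without changing $\Psi^+$).
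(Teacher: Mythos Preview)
Your proposal is correct and essentially unpacks what the paper leaves implicit: in the paper this lemma is stated without a separate proof, introduced as ``the following lemma summarises the above discussion of compatibility for varying $X'$'', so the argument is meant to be extracted from the construction of $\Psi_X$ together with \cref{l:rescaling-rho}, \cref{l:induced-choices-on-cover}, \cref{l:Higgs-field-becomes-small-on-cover}, and \cref{l:Phi_U'^+-is-G-equivariant}. Your passage to a common refinement $Y'$ and the observation that both routes of inducing choices down to $Y'$ produce literally the same restricted cocycles $\rho_i|_{\pi_{Y'}}$ is exactly the content of that discussion, and the final use of sheaf-injectivity of $R^1\nu_\ast G$ along the \'etale cover $Y'\to X'$ is the natural way to conclude.
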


\subsection{Independence of choice, and functoriality}
While our definition of $\Psi_X$ a priori depends on \cref{choices}, we now check that the result is canonical.
We begin by showing that $\Psi_X$ is independent of \cref{choices}.3, strengthening Lemma~\ref{l:indep-of-choice-of-cocycle}:

\begin{Lemma}\label{l:indep-of-choice-of-cocycle}
	$\Psi_X$ does not depend on the choice of integral $1$-cocycles $\rho_i$ such that  $[\rho_i]=\delta_i$.
\end{Lemma}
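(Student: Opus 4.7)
The plan is to reduce to a cocycle-level computation inside the commutative subgroup generated by the matrices $A_i$: I will exhibit an explicit $g\in G^\circ(\wt R)$ (after passing to a finite \'etale Galois sub-cover if needed) that conjugates $\wt\Phi^+_\rho(\theta)$ into $\wt\Phi^+_{\rho'}(\theta)$ as $1$-cocycles with values in $G^\circ(\wt R)$. Since \cref{l:check-independence-after-finite-etale-cover} lets us verify the identity on any finite \'etale Galois sub-cover $\wt X\to X'\to X$, we have plenty of room to arrange integrality properties.

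First, since the map $\HT:H^1_{\cts}(\pi,\wt R)\to H^0(X,\wtOm)$ obtained from \cref{l:Sch13-4.5/5.5}.3 by inverting $p$ is an isomorphism, the hypothesis $\HT([\rho_i])=\HT([\rho_i'])=\delta_i$ implies that $[\rho_i-\rho_i']=0$ in $H^1_{\cts}(\pi,\wt R)$. So there exist $y_i\in \wt R$ with $(\rho_i-\rho_i')(\gamma)=\gamma^\ast y_i-y_i$ for all $\gamma\in \pi$. Writing $\theta=\sum_i\delta_i\otimes A_i$ with commuting $A_i\in \mg^\circ(R)$, set $Y:=\sum_i y_iA_i\in \mg(\wt R)$. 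Assuming for the moment that $Y$ lies in the convergence domain of $\exp$, put $g:=\exp(Y)\in G^\circ(\wt R)$. Because all of $Y,\gamma^\ast Y,\rho_i(\gamma)A_i,\rho_i'(\gamma)A_i$ lie in the abelian subalgebra spanned by the $A_i$ with $\wt R$-coefficients, \cref{l:exp-log-commutativity} gives
\[ g^{-1}\cdot\Big(\textstyle\prod_i \exp(\rho_i'(\gamma)A_i)\Big)\cdot \gamma^\ast g = \exp\big(-Y+\textstyle\sum_i\rho_i'(\gamma)A_i+\gamma^\ast Y\big)=\textstyle\prod_i\exp(\rho_i(\gamma)A_i), \]
where we used $\gamma^\ast Y-Y=\sum_i(\rho_i-\rho_i')(\gamma)A_i$. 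Thus $\wt\Phi^+_\rho(\theta)$ and $\wt\Phi^+_{\rho'}(\theta)$ are cohomologous in $\mathcal Z^1_{\cts}(\pi,G^\circ(\wt R))$, which gives $\Phi^+_\rho(\theta)=\Phi^+_{\rho'}(\theta)$.

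The remaining obstacle, which is the main technical point, is to ensure $Y\in \mg^\circ(\wt R)$. Since $y_i\in \wt R=\wt R^+\tf$, we have $p^Ny_i\in \wt R^+$ for some $N$, so a priori only $Y\in p^{-N}\mg^\circ(\wt R)$. To remedy this, apply \cref{l:Higgs-field-becomes-small-on-cover} to pass to a finite \'etale Galois sub-cover $X'\to X$ (with group $Q=\pi/\pi'$) on which $\rho_i|_{\pi'}$ and $\rho_i'|_{\pi'}$ are divisible by $p^{N+M}$ for $M\gg 0$. Then $\gamma^\ast y_i-y_i\in p^{N+M}\wt R^+$ for $\gamma\in \pi'$, so $y_i$ modulo $p^{N+M}\wt R^+$ is $\pi'$-invariant; using the almost isomorphism $R'^+/p^{N+M}\aeq (\wt R^+/p^{N+M})^{\pi'}$ analogous to \cref{l:Sch13-4.5/5.5}, we can subtract a $\pi'$-invariant element $z_i\in R'$ from $y_i$ to obtain $y_i^{\mathrm{int}}\in \wt R^+$ satisfying the same coboundary equation on $\pi'$, at the cost of possibly enlarging $M$ to absorb bounded torsion. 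Then $Y^{\mathrm{int}}:=\sum y_i^{\mathrm{int}}A_i\in \mg^\circ(\wt R)$, so $g:=\exp(Y^{\mathrm{int}})\in G^\circ(\wt R)$ is defined and the computation above goes through on $X'$.

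The hard part is the integrality step: correcting $y_i$ to lie in $\wt R^+$ requires some care with the almost mathematics of $\pi'$-invariants of $\wt R^+/p^n$. An alternative is to work throughout modulo $p^n$, verify the cocycle identity there, and then pass to the limit using the $p$-adic completeness of $G^\circ(\wt R)$ coming from \cref{l:completeness-of-G}.
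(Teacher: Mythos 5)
Your proposal is correct and follows essentially the same route as the paper: express $\rho_i'-\rho_i$ as the coboundary of some $y_i\in\wt R$, pass to a finite \'etale subcover to make the $y_i$ (almost) integral by correcting them with $\pi'$-invariant elements, and then conjugate the exponentiated cocycle by $\exp(\sum_i y_iA_i)$ using commutativity. The one place where the paper does a bit more work than your sketch is exactly the step you flag as the crux: lifting the $\pi'$-invariant reduction of $y_i$ from $(\wt R^+/p^k)^{\pi'}$ to an honest element of $\O^+(X')$ requires a further colimit over subcovers to kill the obstruction in $H^1(X',\O^+)[p^k]$, and the paper then settles for $p^\epsilon y_i\in\wt R^+$ rather than exact integrality, which suffices because $\exp$ is arranged to converge on $p^{-1}\mg^\circ$.
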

\begin{proof}
	Let $\rho_i'\in \mathcal Z^1_{\cts}(\pi,\wt R^+)$ be such that $[\rho_i]=[\rho_i']$ in $H^1_{\cts}(\pi,\wt R)$. Then there is $x\in \wt R$ with
	\begin{equation}\label{eq:rho-cmp-rho'}
		\rho_i'(\gamma)=\gamma^\ast x+\rho_i(\gamma)-x \quad \text{ for all $\gamma \in \pi$.}
	\end{equation}
	
	\begin{Claim}
		For any $\epsilon>0$, there is a finite \'etale subcover $\wt X\to X'\to X$ with Galois group $\pi'\subseteq \pi$ on which we can find $x\in \wt R$ such that \cref{eq:rho-cmp-rho'} holds for  $\gamma\in\pi'$ and such that $ p^{\epsilon}x\in\wt R^+$.
	\end{Claim}
	\begin{proof}
		Since \cref{eq:rho-cmp-rho'} implies $(\gamma-1)x\in \wt R^+$ for all $\gamma \in \pi$, there is $k\in \N$ such that $y:=p^kx\in \wt R^+$ satisfies $(\gamma-1)y\in p^k\wt R^+$ for all $\gamma$,
so the image $\overline{y}$ of $y$ in $H^0(\wt X,\O^+/p^k)$ is $\pi$-invariant.
		
		By \cite[Corollary~2.16]{heuer-G-torsors-perfectoid-spaces},
		$H^0(\wt X,\O^+/p^k)^{ \pi}=H^0(X,\O^+/p^k)$.
		The short exact sequences
		\[ 0\to \O^+(X')/p^k\to \O^+/p^k(X')\to H^1(X',\O^+)[p^k]\to 0\]
		in the colimit over all $\wt X\to X'\to X$ show that  
		$\varinjlim H^1(X',\O^+)[p^k]=H^1(\wt X,\O^+)[p^k]\aeq 0.$
		Hence there is $X'$ such that $p^{\epsilon}\overline{y}$ lifts to $\O^+(X')$. Then 
		$p^{\epsilon}y\in \O^+(X')+p^k\O^+(\wt X)$
		which implies $p^{\epsilon}x\in \O(X')+\O^+(\wt X)$. As $x$ is only determined by \cref{eq:rho-cmp-rho'} up to the difference of an element in $\O(X')$, we can thus change $x$ to arrange for $p^{\epsilon}x$ to be in $\wt R^+=\O^+(\wt X)$.
	\end{proof}
	By Lemma~\ref{l:check-independence-after-finite-etale-cover}, we may check the independence on $X'\to X$. Let now $\theta=\sum \delta_i\otimes A_i \in (H^0(X,\wtOm)^{+}\otimes_{R^+} \mg^\circ(R))^{\wedge=0}$, then by the claim, we can assume that $x\cdot A_i,\gamma^{\ast}x\cdot A_i\in p^{-\epsilon}\mg^\circ(\wt X)$ where $\exp$ still converges. Now the statement follows exactly as in Lemma~\ref{l:indep-of-choice-of-cocycle-+}.
\end{proof}

\begin{Lemma}\label{l:independence-of-Choice-of-Galois-cover}
	$\Psi_X$ does not depend on the choice of Galois cover $\wt X\to X$.
\end{Lemma}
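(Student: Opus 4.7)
The plan is to reduce to the case where one cover dominates the other. Given two choices of affinoid perfectoid pro-finite-\'etale Galois covers $\wt X_1, \wt X_2 \to X$ with Galois groups $\pi_1, \pi_2$, we first construct a common refinement: take the fibre product $\wt X_1\times_X \wt X_2$ (which is again affinoid perfectoid and pro-finite-\'etale over $X$), and apply the Galois-closure construction at each finite level to obtain an affinoid perfectoid pro-finite-\'etale Galois cover $\wt Y\to X$ with Galois group $\wt\pi$ that surjects onto both $\pi_1$ and $\pi_2$. By a symmetry argument (applied separately to each of $\wt X_1,\wt X_2$), it therefore suffices to show that $\Psi_{X,\wt X_1}=\Psi_{X,\wt Y}$.

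For this comparison, use the same basis $\delta$ on both sides, and take as cocycles on $\wt Y$ the inflations $\tilde\rho_i:=\rho_i\circ(\wt\pi\twoheadrightarrow \pi_1)$. These are valid choices because the $\HT$-map $H^1_{\cts}(\wt\pi,\O^+(\wt Y))\to H^0(X,\wtOm)^\circ$ factors through the inflation $H^1_{\cts}(\pi_1,\O^+(\wt X_1))\to H^1_{\cts}(\wt\pi,\O^+(\wt Y))$, so $\HT([\tilde\rho_i])=\HT([\rho_i])=\delta_i$; by \cref{l:indep-of-choice-of-cocycle} this is enough. Now fix a Higgs field $\theta=\sum \delta_i\otimes A_i$ and, via \cref{l:Higgs-field-becomes-small-on-cover}, pass to a finite \'etale Galois sub-cover $X'\to X$ of $\wt X_1\to X$ on which $\theta$ becomes small relative to a rescaled basis $p^{-k}\delta$; the same $X'$ is automatically a finite \'etale Galois sub-cover of $\wt Y\to X$, and the induced choices of \cref{l:induced-choices-on-cover} for $\wt Y$ are precisely the inflations of those for $\wt X_1$.

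On $X'$ the constructions of $\Phi^+_{X',\wt X_1}(\theta)$ and $\Phi^+_{X',\wt Y}(\theta)$ are given by $\gamma\mapsto \prod_i\exp(\rho_i(\gamma)A_i)$ on $\pi_1':=\ker(\pi_1\to\Gal(X'/X))$ and by $\tilde\gamma\mapsto \prod_i\exp(\tilde\rho_i(\tilde\gamma)A_i)$ on $\wt\pi':=\ker(\wt\pi\to\Gal(X'/X))$, respectively. By construction, the second cocycle is the inflation of the first along the surjection $\wt\pi'\twoheadrightarrow\pi_1'$. By functoriality of the Cartan--Leray map applied to the tower $\wt Y\to \wt X_1\to X'$, inflation followed by the Cartan--Leray map $H^1_{\cts}(\wt\pi',G^\circ(\O(\wt Y)))\to H^1_v(X',G^\circ)$ coincides with the Cartan--Leray map for $\wt X_1\to X'$. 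Hence $\Phi^+_{X',\wt X_1}(\theta)=\Phi^+_{X',\wt Y}(\theta)$ in $H^1_v(X',G)$, so by construction $\Psi_{X,\wt X_1}(\theta)=\Psi_{X,\wt Y}(\theta)$ in $R^1\nu_\ast G(X)$.

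The only real obstacle is bookkeeping: one needs to verify that the induced cocycles $\tilde\rho_i$ for $\wt Y$ really are valid and compatible with the choices one would make for $X'$, and that the inflation of cocycles is compatible with Cartan--Leray on the tower. Both are formal consequences of functoriality of group cohomology and of the Cartan--Leray spectral sequence; no new input beyond \cref{l:indep-of-choice-of-cocycle,l:check-independence-after-finite-etale-cover} is needed.
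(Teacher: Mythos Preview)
Your proposal is correct and follows essentially the same strategy as the paper: reduce to the case of a common dominating Galois cover, then use that inflation of cocycles is compatible with the Cartan--Leray map. The only difference lies in how the common refinement is produced. You build it from the fibre product $\wt X_1\times_X\wt X_2$ (your Galois-closure step is in fact unnecessary, since this fibre product is already a $\pi_1\times\pi_2$-torsor over $X$, hence Galois), whereas the paper instead compares each $\wt X_i$ to the universal pro-finite-\'etale cover of \cref{ex:choice-of-cover}.2, using that every finite \'etale cover of the latter splits to exhibit a dominating Galois cover. Your explicit inflation argument is precisely what the paper compresses into the phrase ``clear from functoriality of the Cartan--Leray sequence.''
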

\begin{proof}
	We first note that if $\wt X'\to \wt X\to X$ is any dominating pro-finite-\'etale Galois cover, then it is clear from functoriality of the Cartan--Leray sequence that the construction for $\wt X$ is compatible with that of $\wt X'$. Therefore, if  $\wt X_1\to X$ and $\wt X_2\to X$ are any two pro-finite-\'etale Galois cover such that there exists a pro-finite-\'etale Galois cover $\wt X_3\to X$ that dominates both $\wt X_1$ and $\wt X_2$, then  the construction for either agrees with that for $\wt X_3$.
	It now suffices to observe that such a simultaneous dominating cover always exists in the case that $\wt X_2\to X$ is the universal cover from \cref{ex:choice-of-cover}.2: But here any finite \'etale cover of $\wt X_2$ is split, hence again Galois over $X$, from which it is clear that we can find such a cover $\wt X_3$.
\end{proof}
\begin{Remark}\label{r:indep-of-base-point-is-subtle}
	Given that the proof is easy, and that $\wt X\to X$ could simply be induced by a choice of base-point, part 1 may look like the most innocuous part of \cref{choices}. However, it is this choice that keeps us from upgrading the construction to a \textit{functor} from all Higgs bundles to $v$-bundles: The crucial point is that there is no \textit{canonical} choice of  $\wt X_3$ in the above proof: Suppose we are given two base points $x_1$ and $x_2$ of $X$. Let $\wt X_1\to X$ and $\wt X_2\to X$ be the associated pro-finite-\'etale universal covers from \cref{ex:choice-of-cover}.2.
	Any choice of lift $\tilde{x}_2$ of $x_1$ to $\wt X_2$ induces an isomorphism $\phi:\wt X_1\isomarrow \wt X_2$ over $X$ sending $\tilde x_1$ to $\tilde x_2$, and an isomorphism of Galois groups
	$\pi_1(X,x_1)\isomarrow \pi_1(X,x_2)$ for which $\phi$ is equivariant.
	Let $\rho_i:\pi_1(X,x_2)\to \O^+(\wt X_2)$ be any representative cocycles with respect to $x_2$ for the given basis $\delta_i$. Then $
		\phi^{\ast}\rho_i:=[\gamma\mapsto \phi\circ\rho_i(\phi\circ\gamma\circ \phi^{-1})]$
	is a cocycle in $\mathcal Z^1_{\cts}(\pi_1(X,x_1),\O^+(\wt X_1))$ that represent the $\delta_i$ with respect to $x_1$. For this choice, we have $\Phi_{X,\phi^{\ast}\rho,x_1}^+=\phi^{\ast}\circ\Phi_{X,\rho,x_2}^+$ by the Lemma.
	However, the explicit cocycle $\phi^{\ast}\rho_i$ depends on the choice of $\phi$, and thus on $\wt{x}_2$. Only the associated element in group cohomology is independent of this choice.
\end{Remark}

\begin{Lemma}\label{l:indep-of-choice-of-basis}
	$\Psi_X$ is independent of the choice of basis $\delta$ of $H^0(X,\wtOm)$, \cref{choices}.2.
\end{Lemma}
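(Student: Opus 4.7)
The plan is to realise a change of basis of $H^0(X,\wtOm)$ by an invertible matrix $M \in \GL_d(R)$, transport it into a compatible change of cocycle representatives, and then exploit the commutativity built into the Higgs condition together with additivity of $\exp$ on commuting elements to match the resulting cocycles valued in $G^\circ$.

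Concretely, given a second basis $\delta'$ of $H^0(X,\wtOm)$, write $\delta'_j = \sum_i \delta_i M_{ij}$ for a uniquely determined $M = (M_{ij}) \in \GL_d(R)$. Starting from representative cocycles $\rho_i \in \mathcal Z^1_{\cts}(\pi, \wt R^+)$ for $\delta_i$, I set
\[\rho'_j := \sum_i M_{ij}\, \rho_i.\]
Since $M_{ij} \in R$ is $\pi$-invariant, each $\rho'_j$ is again a continuous $1$-cocycle, and by $R$-linearity of $\HT$ on cohomology classes it satisfies $\HT([\rho'_j]) = \delta'_j$. After rescaling by a power of $p$ using \cref{l:rescaling-rho}, I may further assume $\rho'_j$ has image in $\wt R^+$, so this gives a valid system of representatives, and by \cref{l:indep-of-choice-of-cocycle} I am free to use it when computing $\Psi_{X,\delta'}$.

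Now let $\theta \in H^0(X, \wtOm \otimes \mg)^{\wedge=0}$ with the two expansions $\sum_i \delta_i \otimes A_i = \sum_j \delta'_j \otimes A'_j$. Comparing coefficients gives $A_i = \sum_j M_{ij} A'_j$, and hence
\[\sum_j \rho'_j(\gamma) A'_j \;=\; \sum_{i,j} M_{ij} \rho_i(\gamma) A'_j \;=\; \sum_i \rho_i(\gamma) A_i \quad \text{in } \mg(\wt R), \text{ for all } \gamma \in \pi.\]
The Higgs condition forces $[A_i, A_k] = 0$, which by $R$-linearity also implies $[A'_j, A'_\ell] = 0$; so any two elements among the family $\{\rho_i(\gamma) A_i\}_i$, and any two among $\{\rho'_j(\gamma) A'_j\}_j$, commute in $\mg(\wt R)$.

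To conclude, I apply \cref{l:Higgs-field-becomes-small-on-cover} to each basis to produce a single finite \'etale Galois sub-cover $\wt X \to X' \to X$ (by intersecting the two open normal subgroups of $\pi$) on which both presentations of $\theta$ land in the domain of convergence of $\exp$. Then \cref{l:exp-log-commutativity}.1 yields
\[\prod_i \exp(\rho_i(\gamma) A_i) = \exp\Big(\textstyle\sum_i \rho_i(\gamma) A_i\Big) = \exp\Big(\textstyle\sum_j \rho'_j(\gamma) A'_j\Big) = \prod_j \exp(\rho'_j(\gamma) A'_j),\]
so the cocycles defining $\Phi^+_{X', \delta}(\theta)$ and $\Phi^+_{X', \delta'}(\theta)$ coincide in $\mathcal Z^1_{\cts}(\pi', G^\circ(\wt R))$. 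Descending to $X$ via \cref{l:check-independence-after-finite-etale-cover} gives $\Psi_{X, \delta}(\theta) = \Psi_{X, \delta'}(\theta)$, as required.

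The only real obstacle is the bookkeeping around integrality, i.e.\ arranging that after the basis change and on the chosen cover, both cocycle systems and both sets of matrix coefficients simultaneously land inside the relevant integral submodules where $\exp$ converges. This is purely technical and is handled by the combination of \cref{l:rescaling-rho,l:Higgs-field-becomes-small-on-cover}.
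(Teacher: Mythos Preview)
Your proof is correct and follows essentially the same approach as the paper: build cocycle representatives for the second basis as $R$-linear combinations of those for the first, pass to a finite \'etale cover to arrange integrality, and then use commutativity of the $A_i$ together with \cref{l:exp-log-commutativity}.1 to identify the two resulting $G^\circ$-valued cocycles. The paper runs the change of basis in the opposite direction (expressing $\delta$ in terms of $\delta'$) and handles integrality purely via \cref{l:Higgs-field-becomes-small-on-cover} rather than \cref{l:rescaling-rho}---your appeal to the latter is slightly off since that lemma rescales the basis along with the cocycle, but as you note the cover argument from \cref{l:Higgs-field-becomes-small-on-cover} suffices on its own.
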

\begin{proof}
	Let $\delta'=(\delta'_1,\dots,\delta'_d)$ be any other basis lying in $H^0(X,\wtOm)^{\circ}$. Choose $\rho'_j\in\mathcal Z^1_{\cts}(\pi,\wt R^+)$ representing the $\delta'_j$.
	Then $\delta_i=\sum_{j}b_{ij}\delta'_j$ for some $b_{ij}\in \O(X)$, and
	$\sum_i \delta_i\otimes A_i=\sum_j\delta'_j\otimes A_j'$
	for $A_j':=\sum_{i}b_{ij}A_i$.
	To see that $\Psi_{X,\delta}=\Psi_{X,\delta'}$, we may by \cref{l:check-independence-after-finite-etale-cover}  pass to a finite \'etale cover $X'\to X$,  so by \cref{l:Higgs-field-becomes-small-on-cover} we may assume that $b_{ij}\rho_j'\in \mathcal Z^1_{\cts}(\pi',\wt R^+)$ for all $i,j$. Then $\rho_i:=\sum_{j}b_{ij}\rho'_j\in \mathcal Z^1_{\cts}(\pi',\wt R^+)$ represents $\delta_i$, so by \cref{l:indep-of-choice-of-cocycle} we may use it to compute $\Psi_{X',\delta}$. 
	Now since $b_{ij}\rho'_j(\gamma) A_i\in \mg^\circ(\wt R)$ for any $\gamma$ in the Galois group of $\wt X\to X'$, we have
	\[\textstyle\Phi^+_{\rho,\delta}(\theta)(\gamma)=\prod_i \exp(\rho_i(\gamma) A_j)
		= \prod_i \exp(\sum_{j}b_{ij}\rho'_j(\gamma) A_i)
	=\prod_j \exp(\rho'_j(\gamma)  A_i')
		=\Phi^+_{\rho',\delta'}(\theta)(\gamma).\]
	It follows that the same is true for $\Psi^+_{X'}$ and thus for $\Psi_X$.
\end{proof}
In summary, Lemmas \ref{l:indep-of-choice-of-cocycle}, \ref{l:independence-of-Choice-of-Galois-cover} and \ref{l:indep-of-choice-of-basis}  show  that $\Psi_X$ is independent of \cref{choices}, hence canonical. Next, we show functoriality in $X$:
\begin{Lemma}\label{l:functoriality-of-Psi}
	For any morphism $f\colon X_2\to X_1$ of toric smoothoid spaces, the following diagram commutes:
	\[ \begin{tikzcd}
		{H^0(X_1,\wtOm\otimes \mg)^{\wedge=0}} \arrow[r,"\Psi_{X_1}"] \arrow[d,"f^{\ast}"]&  { R^1\nu_{\ast}G(X_1)} \arrow[d,"f^{\ast}"] \\
		{H^0(X_2,\wtOm\otimes \mg)^{\wedge=0}} \arrow[r,"\Psi_{X_2}"] & { R^1\nu_{\ast}G(X_2)}
	\end{tikzcd}\]
\end{Lemma}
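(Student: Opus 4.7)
The proof will exploit the canonicity of $\Psi$ established in Lemmas~\ref{l:indep-of-choice-of-cocycle}, \ref{l:independence-of-Choice-of-Galois-cover}, and \ref{l:indep-of-choice-of-basis}, which allows us to make compatible choices on $X_1$ and $X_2$. First I would fix \cref{choices} for $X_1$: a pro-finite-\'etale Galois cover $\wt X_1 \to X_1$ with group $\pi_1$, a basis $\delta = (\delta_1,\dots,\delta_d)$ of $H^0(X_1,\wtOm)$, and cocycle representatives $\rho_i \in \mathcal Z^1_{\cts}(\pi_1, \O^+(\wt X_1))$ with $\HT([\rho_i]) = \delta_i$. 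For $X_2$, I would pick any pro-finite-\'etale Galois cover $\wt X_2 \to X_2$ dominating the base-change $\wt X_1 \times_{X_1} X_2$; this gives a morphism $\phi:\wt X_2 \to \wt X_1$ over $f$, equivariant for a group homomorphism $\alpha: \pi_2 \to \pi_1$. The pulled-back cocycles $\tau_i := f^* \circ \rho_i \circ \alpha$ then lie in $\mathcal Z^1_{\cts}(\pi_2, \O^+(\wt X_2))$ and, by functoriality of Cartan--Leray and the Leray sequence for $\nu$, satisfy $\HT([\tau_i]) = f^*\delta_i$.

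Next I would extend $f^*\delta_1, \dots, f^*\delta_d$ to a basis $(f^*\delta_1, \dots, f^*\delta_d, \epsilon_{d+1}, \dots, \epsilon_D)$ of $H^0(X_2, \wtOm)$ by adjoining further sections, with any cocycle representatives $\sigma_j \in \mathcal Z^1_{\cts}(\pi_2, \O^+(\wt X_2))$ representing $\epsilon_j$; this may require passing to a toric \'etale cover of $X_2$, which is harmless since $R^1\nu_\ast G$ is an \'etale sheaf. Writing $\theta = \sum_i \delta_i \otimes A_i$, the pullback $f^*\theta = \sum_i f^*\delta_i \otimes f^*A_i$ has vanishing components in each $\epsilon_j$ direction. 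After a finite \'etale cover of $X_2$ on which $f^*\theta$ becomes integral (via \cref{l:Higgs-field-becomes-small-on-cover}), the cocycle computing $\Phi^+_{X_2}(f^*\theta)$ with respect to the extended basis and the cocycles $(\tau_i,\sigma_j)$ reads
\[ \gamma_2 \;\mapsto\; \textstyle\prod_{i=1}^d \exp(\tau_i(\gamma_2) \cdot f^*A_i) \;=\; f^*\!\Big(\textstyle\prod_{i=1}^d \exp(\rho_i(\alpha(\gamma_2)) \cdot A_i)\Big), \]
which is exactly the pullback along $\phi$ of the cocycle defining $\Phi^+_{X_1}(\theta)$. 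Functoriality of the Cartan--Leray and Leray sequences then yields $\Psi_{X_2}(f^*\theta) = f^* \Psi_{X_1}(\theta)$ on the cover, and descent via the sheaf property of $R^1\nu_\ast G$ (combined with \cref{l:check-independence-after-finite-etale-cover}) concludes.

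The main obstacle is the case where $f^*\delta_1, \dots, f^*\delta_d$ are linearly dependent in $H^0(X_2, \wtOm)$, which can occur since the cotangent map $f^*\wtOm_{X_1} \to \wtOm_{X_2}$ of \cref{l:cotangent-sequence-for-smoothoids} is in general only right exact, not injective, when $f$ is not smooth. In such cases the na\"ive extension to a basis fails, and one must instead mimic the argument of \cref{l:indep-of-choice-of-basis}: choose any basis of $H^0(X_2, \wtOm)$, expand $f^*\delta_i$ linearly in terms of it with coefficients in $\O(X_2)$, and absorb the resulting linear relations into cohomologous adjustments of the cocycle representatives via \cref{l:indep-of-choice-of-cocycle}, where the commutativity of the $A_i$ ensured by $\theta \wedge \theta = 0$ is essential to keep all exponentials compatible.
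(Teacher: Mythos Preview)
Your proposal is essentially correct and ends up at the same argument as the paper, but you take a detour that the paper avoids. The paper does not try to extend $f^\ast\delta_1,\dots,f^\ast\delta_d$ to a basis of $H^0(X_2,\wtOm)$ at all; it immediately chooses an \emph{independent} basis $\delta_2$ of $H^0(X_2,\wtOm)^\circ$ with its own cocycle representatives $\rho_{2,j}$, writes $f^\ast\delta_{1,i}=\sum_j b_{ij}\delta_{2,j}$ with $b_{ij}\in\O(X_2)$, and then compares the pulled-back cocycle $f^\ast\rho_{1,i}$ with $\rho'_{2,i}:=\sum_j b_{ij}\rho_{2,j}$ (both represent $f^\ast\delta_{1,i}$, so they differ by a coboundary $x_i\in\O(\wt X_2)$). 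After passing to further finite \'etale subcovers to force $b_{ij}\rho_{2,j}$ and $x_i\cdot f^\ast A_i$ into the integral range, the usual commutativity calculation gives $f'^\ast\Phi^+_{X_1',\rho_1}(\theta)=\Phi^+_{X_2',\rho_2}(f^\ast\theta)$ in $H^1_{\cts}(\pi_2',G^\circ(\wt X_2))$. This is exactly what you sketch in your final paragraph.

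Two minor corrections to your first approach. First, passing to an \'etale cover of $X_2$ does not help with the extension-to-a-basis problem: linear dependence of the $f^\ast\delta_i$ over $\O(X_2)$ persists over any \'etale cover, and even when they are independent they need not span a direct summand of the free $\O(X_2)$-module $H^0(X_2,\wtOm)$, which is what extension to a basis requires. Second, your description of the fallback as ``absorbing linear relations into cohomologous adjustments'' undersells one genuine step: the coboundary elements $x_i$ relating $f^\ast\rho_{1,i}$ and $\rho'_{2,i}$ live only in $\O(\wt X_2)$, not $\O^+(\wt X_2)$, so one must pass to a further finite \'etale subcover and rescale (as in the Claim inside \cref{l:indep-of-choice-of-cocycle}, or equivalently via \cref{l:Higgs-field-becomes-small-on-cover} and \cref{l:indep-of-choice-of-basis}) to ensure $x_i\cdot f^\ast A_i\in\mg^\circ(\wt X_2)$ before the exponential manipulation goes through.
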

\begin{proof}
	 Choose a sequential pro-finite-\'etale Galois cover $\wt X_1=\varprojlim_{n\in\N} X_{1,n}\to X_1$ with group $\pi_1$ as in \cref{choices}. The pullback $\wt X_1\times_{X_1}X_2\to X_2$ is a pro-finite-\'etale $\pi$-torsor over $X_2$, but not necessarily perfectoid. But we can find a sequential affinoid perfectoid Galois cover $\wt X_2\to X_2$ with group $\pi_2$
	  that dominates $\wt X_1\times_{X_1}X_2$. We thus get a morphism $\wt f:\wt X_2\to \wt X_1$ over $f:X_2\to X_1$, and a quotient map
	$\phi:\pi_2\to \pi_1$
	with respect to which $\wt f$ is equivariant.
	By functoriality of the Cartan--Leray sequence, this induces a commutative diagram
	\begin{equation}\begin{tikzcd}\label{eq:functoriality-of-CL}
			{H^1_\cts(\pi_1,\O(\wt X_1))} \arrow[d, " f^{\ast}"] \arrow[r]& {H^1_v(X_1,\O)} \arrow[d, "f^{\ast}"]\arrow[r] & {H^0(X_1,\wtOm)}  \arrow[d, "f^{\ast}"]\\
			{H^1_{\cts}(\pi_2,\O(\wt X_2))} \arrow[r]& {H^1_v(X_2,\O)}\arrow[r] & {H^0(X_2,\wtOm)}
		\end{tikzcd}
	\end{equation}
	where for any 1-cocycle $\rho$ in the top left, we define $f^{\ast}\rho:\pi_2\xrightarrow{\phi} \pi_1\xrightarrow{\rho} \O(\wt X_1)\xrightarrow{\wt f^\ast} \O(\wt X_2)$.
	
	Let $\delta_1$ be any basis in $H^0(X_1,\wtOm)^{\circ}$ and $\delta_2$ any basis in $H^0(X_2,\wtOm)^{\circ}$, and choose representative integral cocycles $\rho_1$ and $\rho_2$.  Then we can find $b_{ij}\in \O(X_2)$ such that
	$f^{\ast}\delta_{1,i}=\sum_jb_{ij}\delta_{2,j}$.
	For any $\theta=\sum_{i}\delta_{1,i}\otimes A_i$ with $A_i\in \mg(X_1)$ set $A_j':=\sum_i b_{ij}f^{\ast}A_i$, then 
	$f^{\ast}\theta=\sum_j\delta_{2,j}\otimes A_j'$.
	
	Choose now a finite \'etale sub-cover $X'_1\to X_1$ such that we can take $A_i\in \mg^\circ(X_1')$. Let $X_2''\to X_2$ be the pullback. Then by Lemma~\ref{l:Higgs-field-becomes-small-on-cover}, we can find a finite \'etale sub-cover $\wt X_2\to X'_2\to X_2''$ of $X_2$ with Galois group $\pi'_2$ such that $b_{ij}\rho_{2,j}\in \mathcal Z^1_{\cts}(\pi_2',\O^+(\wt X_2))$ for all $i,j$. The composition
	$f':X_2'\to X_2'' \to X'_1$
	is an intermediate morphism over $f$ and under $\wt f$.
	
	By definition, we can now use $\Phi_{X'_1,\rho_1}^+$ to compute $\Psi_{X_1}(\theta)$: This yields
	\[ f'^{\ast}\Phi^+_{X'_1,\rho_1}(\theta)=[\gamma\mapsto \textstyle\prod_i\exp(f'^{\ast}\rho_{1,i}(\gamma)f^{\ast}A_i)].\]
	By the commutative diagram \cref{eq:functoriality-of-CL} (applied to $f'\colon X'_2\to X'_1$), the $1$-cocycle $f'^{\ast}\rho_{1,i}$, defined as the restriction of $f^{\ast}\rho_{1,i}$ to $\pi'_2\subseteq \pi_2$, represents $f'^{\ast}\delta_{1,i}\in H^0(X_2',\wtOm)$. The same is true for 
	$\rho_{2,i}':=\sum_{j}b_{ij}\rho_{2,j}\in \mathcal Z^1_{\cts}(\pi_2',\O(\wt X_2))$.
	It follows that there is $x_i\in \O(\wt X_2)$ such that
	\[ (f'^{\ast}\rho_{1,i})(\gamma)=\gamma^{\ast}x_i+\rho'_{2,i}(\gamma)-x_i \quad \text{  for all $\gamma\in\pi_2'$}.\]
	
	Let $k\in \N$ be such that $p^kx_i\in \O^+(\wt X_2)$ and thus $p^k\gamma^{\ast}x_i\in \O^+(\wt X_2)$ for all $\gamma$.
	After possibly increasing $X'_1\to X_1$ and $X'_2\to X_2$, which by Lemmas~\ref{l:Higgs-field-becomes-small-on-cover} and \ref{l:indep-of-choice-of-basis} allows us to replace $\delta_1$ by $p^{-k}\delta_1$, and thus $A_i$ by $p^kA_i$, we can then assume that $\gamma^{\ast}x_i\cdot f^{\ast}A_i \in \mg^\circ(\wt X_2)$ for all $\gamma$. Similarly, we can assume that the $b_{ij}f^\ast A_i$ are in $\mg^\circ(\wt X_2)$, and then so are the  $A'_j$.
	
	The lemma now follows from  the same calculation as in the previous lemmas: By the usual commutativity argument we see that inside $H^1_{\cts}(\pi_2',G^\circ(\wt X_2))$,  we have
	for any $\gamma\in \pi_2'$:
	\[\textstyle f'^{\ast}\Phi^+_{X'_1,\rho_1}(\theta)=\big[\gamma\mapsto\prod_i \exp(\rho'_{2,i}(\gamma)\cdot f^{\ast}A_i)\big]=  \big[\gamma\mapsto  \prod_j\exp(\rho_{2,j}(\gamma)\cdot A_j')\big]=\Phi^+_{X'_2,\rho_2}(f^{\ast}\theta)\]
 The functoriality of $\Psi$ follows in the colimit over sub-covers of $\wt X_{1}\to X_{1}$ and $\wt X_{2}\to X_{2}$.
\end{proof}
As a consequence of Lemma~\ref{l:functoriality-of-Psi}, we see that for any smoothoid $X$, the maps $\Psi_{X'}$ for the basis of toric $X'\to X$ in $X_{\et}$  glue to a morphism of sheaves of pointed sets on $X_{\et}$
\[ \Psi_X:(\wtOm\otimes \mg)^{\wedge=0}\to R^1\nu_{\ast}G.\]

\begin{Lemma}
	$\Psi_X$ factors  through the quotient by the adjoint action of $G$ on $(\wtOm\otimes \mg)^{\wedge=0}$.
\end{Lemma}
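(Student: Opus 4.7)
The statement is \'etale-local on $X$, so by the functoriality already established in \cref{l:functoriality-of-Psi} and the sheaf property of $R^1\nu_{\ast}G$ on $X_{\et}$, it suffices to show that for any toric smoothoid $X$ with fixed \cref{choices}, any Higgs field $\theta=\sum_{i}\delta_i\otimes A_i\in (H^0(X,\wtOm)\otimes\mg)^{\wedge=0}$, and any $g\in G(X)$, one has $\Psi_X(\theta)=\Psi_X(\ad(g)\theta)$ in $R^1\nu_{\ast}G(X)$. First observe that $\ad(g)\theta=\sum_i\delta_i\otimes \ad(g)(A_i)$ is again a Higgs field: the adjoint action is a Lie algebra automorphism, so the $\ad(g)(A_i)$ still commute pairwise and thus satisfy the $\wedge=0$ condition.

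By \cref{l:Higgs-field-becomes-small-on-cover}, I can pass to a finite \'etale Galois subcover $\wt X\to X'\to X$ with group $\pi'$, and, after rescaling the basis to $p^{-k}\delta$ for $k$ sufficiently large, arrange simultaneously that both $\theta$ and $\ad(g)\theta$ lie in the integral domain of $\Psi^+_{X',p^{-k}\delta}$, and that for every $\gamma\in\pi'$ the scaled elements $\rho_i(\gamma)\cdot A_i$ lie in a subgroup $\mg_1\subseteq\mg^\circ$ on which the hypotheses of \cref{l:exp-log-commutativity}(iii) are met (so in particular $\ad(g)(\rho_i(\gamma)A_i)\in\mg^\circ$ and the identity $\exp(\ad(g)(B))=g^{-1}\exp(B)g$ applies to $B=\rho_i(\gamma)A_i$). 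This is the step where one pays most attention: one must enlarge the cover enough that $\ad(g)$, which need not preserve $\mg^\circ$, is tamed on the specific elements appearing in the cocycle.

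Granting this, the central calculation is then immediate from the scalarity of $\rho_i(\gamma)$ and the fact that conjugation by $g$ is a group automorphism preserving the commuting condition of the factors $\exp(\rho_i(\gamma)A_i)\in G^\circ(\wt R)$:
\[
\Phi^+_{X',p^{-k}\delta}(\ad(g)\theta)(\gamma)
=\prod_i \exp\!\bigl(\rho_i(\gamma)\,\ad(g)(A_i)\bigr)
=\prod_i g^{-1}\exp(\rho_i(\gamma)A_i)g
=g^{-1}\,\Phi^+_{X',p^{-k}\delta}(\theta)(\gamma)\,g.
\]
Since $g\in G(X)\subseteq G(\wt R)$ is $\pi'$-invariant, this identity says precisely that the two cocycles $c,c'\in \mathcal Z^1_{\cts}(\pi',G(\wt R))$ differ by the coboundary $\gamma\mapsto g^{-1}\cdot c(\gamma)\cdot \gamma^{\ast}(g)$ associated to $h=g^{-1}$, hence $[c]=[c']$ in $H^1_{\cts}(\pi',G(\wt R))$. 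Mapping through the Cartan--Leray edge map, they coincide in $H^1_v(X',G)$, and therefore in $R^1\nu_{\ast}G(X')$. By the $\pi/\pi'$-Galois equivariance already shown in \cref{l:Phi_U'^+-is-G-equivariant}, this equality descends to $R^1\nu_{\ast}G(X)$, giving $\Psi_X(\theta)=\Psi_X(\ad(g)\theta)$ as desired.

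The main subtlety is the one isolated in the second paragraph: the formula $\exp\circ\ad(g)=\mathrm{conj}(g^{-1})\circ\exp$ is stated in \cref{l:exp-log-commutativity}(iii) only under hypotheses that control the image of $\ad(g)$, while $g\in G(X)$ is arbitrary and $\ad(g)$ need not preserve $\mg^\circ$. The remedy is that both the basis (via $p^{-k}\delta$) and the integral cocycles $\rho_i$ can be rescaled by passage to a suitable finite \'etale cover, so that on that cover the inputs $\rho_i(\gamma)A_i$ are as small as one likes, in particular small enough for the hypotheses of that lemma to apply to the fixed $g$. All other steps, including the $\pi'$-invariance of $g$ and the triviality of conjugation in cohomology, are formal.
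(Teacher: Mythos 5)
Your proposal is correct and follows essentially the same route as the paper: localize to a toric $X$, pass to a finite \'etale cover and rescale $\delta$ so that $\ad(g)(A_i)$ lands in $\mg^\circ$, apply \cref{l:exp-log-commutativity}.3 to identify $\Phi^+(\ad(g)\theta)$ with the conjugate of $\Phi^+(\theta)$ by the $\pi'$-invariant element $g$, and conclude that the cocycles are cohomologous. The only cosmetic difference is that the paper phrases the final identification as ``$g$ defines an isomorphism of the associated $v$-$G$-torsors'' rather than as a coboundary in $H^1_{\cts}(\pi',G(\wt R))$, which amounts to the same thing.
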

\begin{proof}
	It suffices to prove this locally. Let thus $X$ be any toric smoothoid space, let $B\in G(X)$, and let $\theta=\sum_{i=1}^d \delta_i\otimes A_i$. Let $k\in \N$ be large enough such that $p^{k}\ad(B)(A_i)\in \mg^\circ(X)$ for all $i$. After choosing $X'\to X$ large enough, we may by \cref{l:indep-of-choice-of-basis} replace $\delta$ by $p^{-k}\delta$ and thus $A_i$ by $p^kA_i$, so that $\ad(B)(A_i)\in  \mg^\circ(X)$. Then by \cref{l:exp-log-commutativity}.3,
	\[\Phi_{X',\rho}^+(\ad(B)(\theta ))(\gamma)= \textstyle\prod_i \exp(\rho_i(\gamma)\cdot \ad(B)(A_i ))=B	\Phi_{X',\rho}^+(\theta)(\gamma)B^{-1}\]
	for all $\gamma \in \pi'$, where $\pi'$ is the Galois group of $\wt X\to X'$.
	Since $B\in G(X)$, we have $\gamma^{\ast}B=B$ for all $\gamma \in \pi'$. If $B\in G^\circ(R)$, this shows that $\Phi_{X',\rho}^+(\ad(B)(\theta ))$ is equal to $\Phi_{X',\rho}^+(\theta)$ in $H^1_{\cts}(\pi',G^\circ(\wt X))$. In general, one easily verifies directly that the images in $H^1_v(X,G)$ agree: Namely, $B$ defines an isomorphism between the associated $v$-$G$-torsors on $X$.
\end{proof}
Using \cref{l:sheaf-of-Higgs-is-sheaf-of-Higgs-on-trivial}, this finally induces the desired morphism of sheaves on $X_{\et}$
\begin{equation}\label{eq:candidate-for-Psi}
\Psi:\Higgs_{G}\to R^1\nu_{\ast}G.
\end{equation}
The functoriality of $\Psi$ in $G$ is clear from functoriality of the exponential. In summary:

\begin{Proposition}\label{p:Psi-summary}
	The morphism
	$\Psi$ is canonical and functorial in $G$ and $X$.
\end{Proposition}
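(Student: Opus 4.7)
The proof should essentially be a synthesis of the independence and functoriality lemmas already established in the section, together with a brief check that these lift from the local, toric setting to general smoothoid spaces. The plan is as follows.

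First, I would argue canonicality on a toric smoothoid space $X$. By construction, $\Psi_X$ depends a priori on the three ingredients of \cref{choices}: a pro-finite-\'etale Galois cover $\wt X \to X$, a basis $\delta$ of $H^0(X,\wtOm)$, and a lift of $\delta$ to integral $1$-cocycles $\rho_i$. Independence of $\rho$ given $\delta$ is exactly \cref{l:indep-of-choice-of-cocycle}; independence of $\delta$ is \cref{l:indep-of-choice-of-basis}; and independence of $\wt X$ is \cref{l:independence-of-Choice-of-Galois-cover}. Taken together, these show that the morphism of sheaves $\Psi_X$ depends only on $X$ and $G$, not on the auxiliary data used to construct it locally.

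Next, to pass from toric smoothoid spaces to arbitrary ones, I would use that toric smoothoid spaces form a basis of $X_\et$ (by \cref{d:toric-chart}) and invoke functoriality in $X$ among toric smoothoids: \cref{l:functoriality-of-Psi} shows that for any morphism $f\colon X_2 \to X_1$ of toric smoothoids, $f^*\Psi_{X_1} = \Psi_{X_2} \circ f^*$. Together with canonicality, this means the locally defined morphisms $\Psi_{X'}$ for toric $X' \to X$ glue to a well-defined morphism of sheaves $\Psi_X\colon \Higgs_G \to R^1\nu_*G$ on $X_\et$, functorial in $X$. (Strictly speaking, functoriality in $X$ then extends from morphisms of toric smoothoid spaces to arbitrary morphisms of smoothoid spaces, since both source and target are sheaves on $X_\et$ and the identity of functorial pullbacks can be checked locally.)

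Finally, functoriality in $G$ is essentially free: for a homomorphism $\varphi\colon G \to G'$ of rigid groups with induced Lie algebra map $d\varphi\colon \mg \to \mg'$, the exponential is natural, i.e.\ $\exp_{G'} \circ d\varphi = \varphi \circ \exp_G$ on a sufficiently small open of $\mg$. Consequently, for any cocycle $\gamma \mapsto \prod_i \exp(\rho_i(\gamma) A_i)$ representing $\Phi^+_{X,\delta,\rho}(\theta)$, applying $\varphi$ termwise yields the cocycle $\gamma \mapsto \prod_i \exp(\rho_i(\gamma)\, d\varphi(A_i))$ representing $\Phi^+_{X,\delta,\rho}(d\varphi(\theta))$. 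This naturality passes to the sheafification and to the colimit over finite \'etale covers used to extend $\Psi^+$ to all Higgs fields, so the diagram $\Psi_{G'} \circ (d\varphi)_* = \varphi_* \circ \Psi_G$ commutes. The main (and only) real content has already been dispatched in the earlier lemmas; the proposition is just a formal summary, so no serious obstacle is expected here.
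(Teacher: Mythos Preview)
Your proposal is correct and follows essentially the same approach as the paper: the proposition is indeed just a formal summary of the independence lemmas (\cref{l:indep-of-choice-of-cocycle}, \cref{l:independence-of-Choice-of-Galois-cover}, \cref{l:indep-of-choice-of-basis}), the functoriality lemma (\cref{l:functoriality-of-Psi}), and the naturality of the exponential in $G$, exactly as you outline. The paper does not give a separate proof for this proposition either, treating it as a recapitulation of what was already established.
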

In particular, we can assemble the $\Psi$ to a canonical morphism on $\Smd_{K,\et}$. 
At this point, we have constructed a candidate for the isomorphism in \cref{t:main-thm-for-both-O-and-O^+}.

\section{From $v$-topological $G$-bundles to $G$-Higgs bundles}\label{s:abelian-v-G-bundles}
Let $X$ be any smoothoid space over $K$. The goal of this section is to prove that the morphism $\Psi$ from \cref{eq:candidate-for-Psi} is an isomorphism. For this, we may again localise and assume that $X$ is toric.

Following Faltings in the case of $\GL_n$, the basic idea is as follows: Assume that $X$ admits a pro-finite-\'etale Galois cover $X_\infty\to X$ with some \textit{abelian} Galois group $\Delta$. Let now $\mathfrak u\subseteq \mg^\circ$  be any open subgroup and let $U\subseteq G^\circ$ be its image  under $\exp$. Then $U(X)\subseteq G^\circ(U)$ inherits from $\mg^\circ(U)$ the structure of a topological group. We consider the continuous homomorphisms
$\rho:\Delta\to U(X)$.
The inclusion $U(X)\subseteq  U(\wt X)$ and the Cartan--Leray map for $X_\infty\to X$ define natural maps
\[ \Hom_{\cts}(\Delta,U(X))\to H^1_{\cts}(\Delta,U(X_\infty))\to H^1_v(X,U)\to H^1_v(X,G)\]
that associate a $v$-$G$-bundle $V$ on $X$ to $\rho$.
On the other hand, the fact that $\Delta$ is abelian allows us to associate a Higgs bundle to $\rho$ by composing with the logarithm map
\[ \rho\mapsto [\log\circ\rho:\Delta\xrightarrow{\rho} U(X)\xrightarrow{\log}\mg(X)]\in H^1_{\cts}(\Delta,\mg(X))= H^0(X,\wtOm\otimes \mg).\]
By \cref{l:exp-log-commutativity}, as $\Delta$ is abelian, this satisfies the Higgs field condition, thus defines a Higgs field on the trivial $G$-bundle. This is the Higgs bundle that we would like to associate to $V$.

However, it is not clear  whether
any $v$-$G$-bundle arises from such an ``abelian'' cocycle  $\rho$ (surjectivity of $\Psi$), and whether this construction is independent of the choice of $\rho$  (injectivity of $\Psi$).
The goal of this section is to show that both hold after sheafification.

\subsection{Abelian $G$-bundles}\label{s:abelian-G-bundles}
In order to carry out this strategy, let us first assume that $\Q_p^\cyc\subseteq K$. We relax this condition in the next subsection. We fix a toric chart, by \cref{d:toric-chart} this is a  standard-\'etale map 
\[f:X\to \mathbb T^d\times Y\]
where $Y$ is an affinoid perfectoid space and $\mathbb T^d$ is some rigid torus. Using notation as in \cref{s:comp-toric-charts}, $f$ induces a perfectoid cover $X_\infty\to X$ which is an affinoid perfectoid $\Delta:=\Z_p^d(1)$-torsor due to the assumption on $K$. We write $X=\Spa(R,R^+)$ and $X_\infty=\Spa(R_\infty,R_\infty^+)$. 

\begin{Definition}\label{d:W_f}
	Let us write $\mathcal Z^{\ab}(U)$ for the sheaf on $X_{\et}$ that sends $Y\in X_{\et}$ to the set $\Hom_{\cts}(\Delta,U(\O(Y)))$. We call this the ``sheaf of abelian cocycles on $X_{\et}$''.
	Consider the map
	\[W_{f,U}:\mathcal Z^{\ab}(U)(X)\to H^1_{\cts}(\Delta,U(R))\to H^1_{\cts}(\Delta,U(R_\infty))\to  H^1_v(X,G).\]
	Since the action of $\Delta$ on $U(R)$ is trivial, the first map can be described as the quotient  with respect to the conjugation action by  $U(R)$. The third map is the Cartan--Leray map.
	
	Composing $W_f$ with the sheafification on $X$, we obtain a morphism of sheaves on $X_{\et}$
	\[ \mathcal W_{f,U}:\mathcal Z^{\ab}(U)\to R^1\nu_{\ast}G.\]
	It is clear that $W_{f,U}$ is functorial in $f$ and $U$ (for morphisms as in \cref{d:toric-chart}.2), thus so is $\mathcal W_{f,U}$. We will often drop the subscript $U$ from notation when it is clear from context.
\end{Definition}

We now explain how $\mathcal W_f$ allows us to define partial inverses of $\Psi$ on the image of $\mathcal W_f$:
Recall from \cref{l:HT-isom-on-small-dmd} that $f$ induces an isomorphism
$\HT_f:\Hom_{\cts}(\Delta,\O(X))\to H^0(X,\wtOm)$.
By tensoring with the Lie algebra, this induces an isomorphism of sheaves on $X_{\et}$
\[ \HT_f:\Hom_{\cts}(\Delta,\mg)\isomarrow \mg\otimes\wtOm.\]
Under $\HT_f$, the subsheaf of Higgs fields $(\mg\otimes\wtOm)^{\wedge=0}$ gets identified with the subsheaf 
\[\mathcal Z^{\ab}(\mathfrak g)\subseteq \Hom_{\cts}(\Delta,\mg)\]
consisting of those $\varphi:\Delta\to \mg(Y)$ for $Y\in X_\et$ with $[\varphi(x),\varphi(y)]=0$ for all $x,y\in \Delta$. We thus have an isomorphism
\[\HT_f:\mathcal Z^{\ab}(\mathfrak g)\isomarrow (\mg\otimes\wtOm)^{\wedge=0}.\]
Consider now the subsheaf $\mathcal Z^{\ab}(\mathfrak u)\subseteq \mathcal Z^{\ab}(\mathfrak g)$ of homomorphisms $\varphi$ with image in $\mathfrak u$.
On any such homomorphism $\varphi$, the exponential $\exp\circ \varphi$ converges, and the commutativity condition is by \cref{l:exp-log-commutativity} equivalent to $\exp\circ \varphi$ being a homomorphism. We thus have a map
\[\exp:\mathcal Z^{\ab}(\mathfrak u)\isomarrow	\mathcal Z^{\ab}(U)\]
which  has an inverse defined by $\log$. We will use this to compare $\Psi_G$ to the map $\W_f$.

\subsection{A non-cyclotomic variant}\label{s:ab-coc-non-cyc-setup}
In order to allow for more general $K$, we also need a non-cyclotomic variant: Assume that $\Q_p^\cyc \nsubseteq K$, but $\zeta_p\in K$. With notation as in \cref{s:non-cyclotomic}, there is then a $\Lambda$-torsor
\[ X^\cyc_\infty=\Spa(R^\cyc_\infty,R^{\cyc+}_\infty)\to X^\cyc=\Spa(R^\cyc,R^{\cyc+})\to X=\Spa(R,R^+)\]
given by base-change to the cyclotomic extension $K^\cyc|K$. \cref{l:section-for-Lambda} gives a natural section
\[ \sec:\Hom_{\cts}(\Delta,U(R^\cyc))^Q\to  H^1_{\cts}(\Lambda,U(R^\cyc_\infty))\]

\begin{Definition}\label{d:W_f-nc}
	Write $\mathcal Z^{\ab}(U)(X)$  for the set $\Hom_{\cts}(\Delta,U(R^\cyc))^Q$.
	Composing $\sec$ with the Cartan--Leray map for the cover $X^\cyc_\infty\to X$, we obtain a map $W_{f,U}$ defined as
	\[W_{f,U}:\mathcal Z^{\ab}(U)(X)\xrightarrow{\sec} H^1_{\cts}(\Lambda,U(R^\cyc_\infty))\to H^1_v(X,U)\to  H^1_v(X,G).\]
	Replacing $X$ by objects in $X_{\et}$, the  $\mathcal Z^{\ab}(U)$  define a sheaf of pointed sets $\mathcal Z^{\ab}(U)$ on $X_{\et}$, and
	as before, $W_{f,U}$ defines upon sheafification on $X_{\et}$ a morphism $\mathcal W_{f,U}:\mathcal Z^{\ab}(U)\to R^1\nu_{\ast}G$.
\end{Definition}

Second, for the Lie algebra, we observe that we have an isomorphism
\[ \HT_f:\Hom_{\cts}(\Delta,\mathfrak g(R^\cyc))^Q\isomarrow H^1_{\cts}(\Lambda,\mathfrak g(R^\cyc))\isomarrow \mathfrak g\otimes \wtOm,\]
this is clear for $\mg=\O$ and in general follows by tensoring with $\mg$.
As before, we then write $\mathcal Z^{\ab}(\mathfrak g)(X)\subseteq \Hom_{\cts}(\Delta,\mathfrak g(R^\cyc))^Q$
for those $\varphi:\Delta\to \mathfrak g(R^\cyc)$ such that $[\varphi(x),\varphi(y)]=0$ for all $x,y\in \Delta$.  This defines a sheaf $\mathcal Z^{\ab}(\mathfrak g)$ that $\HT_f$ identifies with $(\mathfrak g\otimes \wtOm)^{\wedge =0}$. Let
$\mathcal Z^{\ab}(\mathfrak u)\subseteq \mathcal Z^{\ab}(\mathfrak g)$
be the sheaf of those homomorphism with image in $\mathfrak u(R^\cyc)$. With these definitions, composing with the exponential again defines an isomorphism
\[ \exp:\mathcal Z^{\ab}(\mathfrak u)\isomarrow \mathcal Z^{\ab}(U).\]
As the  cocycles in $\mathcal Z^1_{\cts}(\Lambda,\mathfrak u(R^\cyc))$ and $\mathcal Z^1_{\cts}(\Lambda,U(R^\cyc))$ associated via $\sec$ still satisfy the commutativity condition, $\exp$ and $\log$ commute with passing from $\Delta$-cocycles to $\Lambda$-cocycles.

\subsection{Computing $\Psi_G$ with abelian cocycles}
Assume from now on that we are in either of the two setups of the last two subsections.
We can now use abelian cocycles depending on the choice of toric chart to compute $\Psi_G$:
\begin{Proposition}\label{p:master-diagram-Wf-vs-Psi}
	The following diagram commutes:
	\begin{equation}\label{eq:master-diagram-proof-main-thm}
		\begin{tikzcd}
			\mathcal Z^{\ab}(\mathfrak u) \arrow[d, "\HT_f"'] \arrow[r, "\exp","\sim"'] & \mathcal Z^{\ab}(U) \arrow[d, "\W_{f,U}"'] \\
			(\mg\otimes \wtOm)^{\wedge =0}/ G \arrow[r, "\Psi_G"] & R^1\nu_{\ast}G
		\end{tikzcd}
	\end{equation}
	The morphism $\HT_f$ on the left has trivial fibre over $0$.
\end{Proposition}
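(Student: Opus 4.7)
The plan is to verify commutativity by computing both composites on explicit $1$-cocycles, using that $\Psi_G$ is independent of Choices~\ref{choices} by Lemmas~\ref{l:indep-of-choice-of-cocycle}, \ref{l:independence-of-Choice-of-Galois-cover}, and \ref{l:indep-of-choice-of-basis}. Work locally and fix the toric chart $f$. As the perfectoid pro-\'etale Galois cover in Choices~\ref{choices}, take $X_\infty \to X$ with $\pi = \Delta$ in the cyclotomic setup of \cref{s:abelian-G-bundles}, and $X^\cyc_\infty \to X$ with $\pi = \Lambda$ in the setup of \cref{s:ab-coc-non-cyc-setup}. Let $\gamma_1, \dots, \gamma_d$ be the topological generators of $\Delta = \Z_p^d(1)$ dual to the toric coordinates, let $e_1^*, \dots, e_d^* \colon \Delta \to \Z_p$ be the dual basis, and set $\delta_i := \HT_f(e_i^*)$; by the very construction of $\HT_f$ these form a basis of $H^0(X, \wtOm)$, which after harmless rescaling we may assume lies in $\m \cdot H^0(X, \wtOm)^{\circ}$. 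As integral cocycle representatives choose $\rho_i := e_i^*$ in the cyclotomic case and $\rho_i := \sec(e_i^*) = [(\gamma_0, \sigma) \mapsto e_i^*(\gamma_0)]$ in the non-cyclotomic case; these are continuous $1$-cocycles $\pi \to \Z_p \subseteq \O^+(\wt X)$ mapping to $\delta_i$.

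Given $\varphi \in \mathcal Z^{\ab}(\mathfrak u)(X)$, set $A_i := \varphi(\gamma_i)$. The condition defining $\mathcal Z^{\ab}(\mathfrak u)$ reads $[A_i, A_j] = 0$, and one has $\HT_f(\varphi) = \sum_i \delta_i \otimes A_i$. After passing to a finite \'etale cover as in \cref{l:Higgs-field-becomes-small-on-cover} to enforce the integrality hypotheses, the formula defining $\Phi^+$ yields
\[\Phi^+(\HT_f(\varphi))(\gamma) = \prod_{i=1}^d \exp(\rho_i(\gamma) A_i) = \exp\Bigl(\sum_{i=1}^d \rho_i(\gamma) A_i\Bigr) = \exp(\varphi(\gamma)),\]
where the second equality uses that $\rho_i(\gamma) \in \Z_p$ are central scalars and the $A_i$ pairwise commute, so \cref{l:exp-log-commutativity}.1 applies, and the last equality uses linearity of $\varphi$. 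In the non-cyclotomic setup the same computation gives $(\gamma_0, \sigma) \mapsto \exp(\varphi(\gamma_0))$, which is precisely $\sec(\exp \circ \varphi)$. In either case the resulting $1$-cocycle is exactly the one entering the definition of $\W_{f,U}$ on $\exp \circ \varphi$, and the subsequent passage to $R^1 \nu_{\ast} G$ is identical for both constructions, giving $\Psi_G \circ \HT_f = \W_{f,U} \circ \exp$.

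For the triviality of the fibre over $0$: the left vertical map factors as the injection $\mathcal Z^{\ab}(\mathfrak u) \hookrightarrow \mathcal Z^{\ab}(\mathfrak g)$, the isomorphism $\HT_f \colon \mathcal Z^{\ab}(\mathfrak g) \isomarrow (\mg \otimes \wtOm)^{\wedge=0}$, and the quotient by $G$. Since the adjoint action of $G$ on $\mg$ fixes $0$, the $G$-orbit of $0$ in $(\mg \otimes \wtOm)^{\wedge=0}$ is $\{0\}$; the fibre of the composite over $0$ is therefore the preimage of $\{0\}$ under an injective map, hence trivial. The main obstacle in the commutativity argument is the bookkeeping in the non-cyclotomic setup: one must verify that $\sec$ intertwines the integral $\Lambda$-cocycles used to define $\Phi^+$ with those used in $\W_{f,U}$ on the nose, not merely up to coboundaries, so that both constructions yield the same class in $H^1_{\cts}(\Lambda, G^\circ(R^\cyc_\infty))$ prior to projection.
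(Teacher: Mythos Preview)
Your argument in the cyclotomic case is correct and matches the paper's proof: both pick the toric cover $\wt X=X_\infty$, take the $\rho_i$ to be the coordinate projections $e_i^*:\Delta\to\Z_p\subseteq\O^+(\wt X)$, and then observe that for $\varphi\in\mathcal Z^{\ab}(\mathfrak u)$ the formula for $\Phi^+$ collapses to $\gamma\mapsto\exp(\varphi(\gamma))$, which is exactly the cocycle defining $\W_{f,U}(\exp\varphi)$. Your explicit treatment of the trivial fibre over $0$ is also fine; the paper leaves this implicit, relying on the same observation that the $G$-orbit of $0$ under the adjoint action is $\{0\}$.

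The non-cyclotomic case, however, has a genuine gap. You set $\rho_i:=\sec(e_i^*)=[(\gamma_0,\sigma)\mapsto e_i^*(\gamma_0)]$, but $e_i^*$ is \emph{not} $Q$-invariant in $\Hom_{\cts}(\Delta,\O^+(R^\cyc))$: since $Q\subseteq\Z_p^\times$ acts on $\Delta=\Z_p^d(1)$ by scalar multiplication while fixing $\Z_p\subseteq K$, one computes $(\sigma\cdot e_i^*)(\gamma)=\sigma^{-1}e_i^*(\gamma)\neq e_i^*(\gamma)$. So $e_i^*$ lies outside the domain of $\sec$ (\cref{l:section-for-Lambda}), and one checks directly that the map $(\gamma_0,\sigma)\mapsto e_i^*(\gamma_0)$ fails the $1$-cocycle condition for $\Lambda$ acting on $R^{\cyc+}$: the left side of the cocycle identity picks up a factor of $\chi(\sigma_1)$ that the right side does not. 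Consequently your $\rho_i$ are not valid choices in Choices~\ref{choices}, and the computation of $\Phi^+$ does not go through. The paper circumvents this by invoking \cref{l:non-cyclotomic-version-of-Sch13-4.5/5.5}.3, which provides an almost-isomorphism $\Hom_{\cts}(\Delta,\O^+(R^\cyc))^Q\aeq\O^+(R)^d$; the $\rho'_i$ are then taken to be genuine $Q$-invariant homomorphisms coming from this basis, so that $\sec(\rho'_i)$ really are $\Lambda$-cocycles. With this correction in place, the rest of your computation goes through as in the cyclotomic case.
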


\begin{Remark}
	Diagram~\cref{eq:master-diagram-proof-main-thm} is the reason for the name $\HTlog$ for the inverse of $\Psi_G$.
\end{Remark}

The role of the morphism $\mathcal W_{f,U}$ is therefore that it allows us to define a partial inverse of $\Psi$ on the image of $\mathcal W_{f,U}$ by applying $\log$ to abelian cocycles $\rho\in \mathcal Z^{\ab}(U)$.

\begin{proof}
	We first assume that $\Q_p^\cyc\subseteq K$:
	 We may then compute $\Psi$ using the following choices: For the Galois cover, we take the toric cover $\wt X:=X_\infty\to X$. Fix an isomorphism $\Delta\cong \Z_p^d$, then $\Hom_{\cts}(\Delta,\O^+(X))= \O^+(X)^d$. 
	This induces an integral basis $\delta$ of $H^0(X,\wtOm)$ via the map $\HT_f$ from \cref{l:HT-isom-on-small-dmd}.
	Moreover, the left hand side defines canonical representatives $\rho_i\in \mathcal Z^1_{\cts}(\Delta,\O^+(X_\infty))$ of the $\delta_i$. By \cref{l:indep-of-choice-of-cocycle}, we can now use $\Phi_{X,\rho}^+$ to compute $\Psi_X^+$.
	
	With these choices, any $\theta=\sum \delta_i\otimes A_i\in (\mathfrak u\otimes \wtOm)^{\wedge =0}$ is sent by $\Phi_{X,\rho}^+$ to the cocycle
	\[\Delta\to U(X),\quad \gamma\mapsto \textstyle\prod_i \exp(\rho_i(\gamma)A_i)=\exp(\sum_i \rho_i(\gamma)A_i)\]
	The associated $G$-torsor is exactly $\W_{f,U}(\exp(\theta))$ by definition, so the square \Cref{eq:master-diagram-proof-main-thm} commutes.
	
	If $\Q_p^\cyc\nsubseteq K$,  we use the cover $X_\infty^\cyc\to X$ from \cref{s:ab-coc-non-cyc-setup}. By \cref{l:non-cyclotomic-version-of-Sch13-4.5/5.5}.3, we have
	$\Hom_{\cts}(\Delta,\O^+(R^{\cyc}))^Q\stackrel{a}{\cong} \O^+(R)^d$
	as $\O^+(R)$-modules. Tensor with $\m$ and let $\rho'_1,\dots,\rho'_d$ be the images of the standard basis of $p\O^+(R)^d$ on the left hand side, then via the map
	\[	\Hom_{\cts}(\Delta,\O^+(R^{\cyc}))^Q\xrightarrow{\sec}H^1_{\cts}(\Lambda,\O^+(R^{\cyc}_\infty))\to H^1_v(X,\O^+)\xrightarrow{\HT}H^0(X,\wtOm)^\circ,\]
	the images of  the $\rho_i'$ define an integral basis. Let $\rho_1,\dots,\rho_d$ be the images of the $\rho'_i$ in $H^1_{\cts}(\Lambda,\O^+(R^{\cyc}_\infty))$. Computing $\Psi_G$ for these choices proves the statement, as before.
\end{proof}
\subsection{Surjectivity of $\Psi$}
The map $W_{f,U}$ cannot in general be surjective as the map $H^1_v(X,U)\to H^1_v(X,G)$ is not in general surjective. However, it turns out that for $U$ small enough, this is the only obstruction, namely we now prove that any $v$-$G$-bundle on $X$ becomes abelian on an \'etale cover. For this  we adapt Faltings' method from \cite[\S2]{Faltings_SimpsonI}, using the technical means  prepared in \S2.
\begin{Proposition}\label{p:part-2-of-Faltings-Lemma}
	Assume that $\zeta_p\in K$. Let $G$ be a rigid group of good reduction over $K$. Then there is $c\geq 0$ such that with $U:=G_c$ as defined in \cref{p:inductive-lifting-ses-for-G}, the following hold for the map
	$W_f:=W_{f,U}$ from \cref{d:W_f} (if $\Q_p^\cyc\subseteq K$) or \cref{d:W_f-nc} (if $\Q_p^\cyc\nsubseteq K$):
	\begin{enumerate}
		\item $W_f$ has the same image as the natural map $H^1_v(X,G_c)\to H^1_v(X,G)$.
		\item Assume that $\Q_p^\cyc\subseteq K$. Then for any two homomorphisms $\varphi_1$,$\varphi_2:\Delta\to G_c(X)$ with $W_f(\varphi_1)=W_f(\varphi_2)$, there is $g\in G(X)$ such that $\varphi_1(\gamma)=g\varphi_2(\gamma)g^{-1}$ for all $\gamma\in \Delta$.
	\end{enumerate}
	Moreover, we can choose $c$ uniformly among all finite \'etale subcovers of $X_\infty \to X$.
\end{Proposition}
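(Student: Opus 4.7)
The plan is to adapt Faltings' inductive approximation strategy from \cite[\S2]{Faltings_SimpsonI}, organised via the filtration $G_c\supseteq G_{c+1}\supseteq \cdots$ whose graded pieces $G_{r}/G_{s}$ are abelian and canonically isomorphic to $\mathfrak g_{r}^+/\mathfrak g_{s}^+$ via $\exp$ by \cref{p:inductive-lifting-ses-for-G}, together with the cohomological approximation results \cref{l:Sch13-4.5/5.5} and \cref{l:non-cyclotomic-version-of-Sch13-4.5/5.5}. First I would fix $c$ to exceed $\alpha$ (so that $\exp$ converges and the $G_{c+k}$ are defined) by enough that at each inductive step the $p^\gamma$-torsion correction from \cref{l:Sch13-4.5/5.5}.3 fits inside the next layer of the filtration. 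The uniformity in finite \'etale subcovers is then immediate from part (ii) of \cref{l:Sch13-4.5/5.5}, which allows $\beta$, and thus $\gamma$, to be made arbitrarily small after passing to a finite layer $X_n\to X$ of the toric tower.

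For part (1), the image of $W_f$ clearly lies in the image of $H^1_v(X,G_c)\to H^1_v(X,G)$. For the converse, let $[V]\in H^1_v(X,G_c)$. By \cref{l:small-bundles-on-perfectoid}, $V$ is trivial on $X_\infty$ (in the cyclotomic case) or on $X_\infty^\cyc$ (in the non-cyclotomic case), so by Cartan--Leray it is represented by a continuous $1$-cocycle $\rho\in \mathcal{Z}^1_{\cts}(\Delta,G_c(X_\infty))$, respectively $\mathcal{Z}^1_{\cts}(\Lambda,G_c(X_\infty^\cyc))$. I would inductively construct pairs $(\varphi_k,h_k)$ where $\varphi_k\colon \Delta\to G_c(X)/G_{c+k}(X)$ is a homomorphism and $h_k\in G_c(X_\infty)$, compatibly in $k$, such that $\rho\equiv h_k\cdot \varphi_k\cdot (\gamma h_k)^{-1}$ in $G_c(X_\infty)/G_{c+k}(X_\infty)$. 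At the inductive step, the defect at level $c+k$ defines via $\log$ a class in $H^1_{\cts}(\Delta,\mathfrak g_{c+k}^+(X_\infty)/\mathfrak g_{c+k+1}^+(X_\infty))$; by \cref{l:Sch13-4.5/5.5}.3, up to a $p^\gamma$-coboundary this class comes from $\Hom_{\cts}(\Delta,\mathfrak g_{c+k}^+(X)/\mathfrak g_{c+k+1}^+(X))$ (the $\Delta$-action on $X$-sections being trivial). This simultaneously lifts $\varphi_k$ to $\varphi_{k+1}$ and refines $h_k$. Passing to the limit using \cref{l:completeness-of-G} yields the desired $\varphi\colon \Delta\to G_c(X)$ with $W_f(\varphi)=[V]$. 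In the non-cyclotomic case the same scheme runs with $\Lambda$-cocycles, inserting the $Q$-invariant splitting of \cref{l:non-cyclotomic-version-of-Sch13-4.5/5.5}.2 at each step to land in $\mathcal Z^{\ab}(U)(X)=\Hom_{\cts}(\Delta,U(R^\cyc))^Q$.

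For part (2), given $g\in G(X_\infty)$ with $\varphi_1(\gamma)=g\cdot \varphi_2(\gamma)\cdot (\gamma g)^{-1}$, the same strategy produces an approximating sequence $g_k\in G(X)/G_{c+k}(X)$ whose reductions to $G(X_\infty)/G_{c+k}(X_\infty)$ agree with that of $g$: at each step, the obstruction to descending $g$ modulo $G_{c+k}$ to $G(X)/G_{c+k}(X)$ lies in an abelian quotient which by \cref{l:Sch13-4.5/5.5}.3 is $p^\gamma$-torsion relative to classes coming from $G(X)$, and this torsion is absorbed into the next filtration level by our choice of $c$. Compatibility of the correction with the conjugation relation is automatic since the corrections take values in the centre modulo higher filtration. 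Limiting via \cref{l:completeness-of-G} yields $g'\in G(X)$ with $\varphi_1(\gamma)=g'\varphi_2(\gamma)g'^{-1}$. The main obstacle is the constant-tracking: pinning down a single $c$ so that every $p^\gamma$-torsion correction remains within the required filtration level for all inductive steps \emph{and} all finite \'etale subcovers of $X_\infty\to X$. This is precisely what the uniform-in-$n$ version of \cref{l:Sch13-4.5/5.5}.(ii) is designed for.
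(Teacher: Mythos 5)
Your overall strategy is the paper's: Faltings-style inductive approximation along the filtration $(G_s)_s$, using \cref{l:Sch13-4.5/5.5} (resp.\ \cref{l:non-cyclotomic-version-of-Sch13-4.5/5.5} with the section $\sec$) to compare $\Delta$-cohomology over $R$ and $R_\infty$, and \cref{l:completeness-of-G} to pass to the limit. But the bookkeeping as you set it up does not close. You take unit steps $G_{c+k}\to G_{c+k+1}$ and propose to absorb the $p^\gamma$-torsion defect from \cref{l:Sch13-4.5/5.5}.3 ``inside the next layer of the filtration'' by enlarging $c$. Enlarging $c$ does not change the width of the graded pieces $G_{c+k}/G_{c+k+1}\cong \mg^+_{c+k}/\mg^+_{c+k+1}$, which is always one power of $p$; and $\gamma=2\beta+\alpha$ cannot be made small, because although \cref{l:Sch13-4.5/5.5}.(ii) lets you shrink $\beta$ by going up the tower, $\alpha\geq \tfrac{1}{p-1}$ is a fixed lower bound (it is needed to kill the $(\zeta-1)$-torsion), so for $p=2$ one has $\gamma\geq 1$ and a unit step makes no net progress at all. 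The paper's resolution is different: it lifts in steps of width $t=3\gamma$ using the sequence $0\to\overline\mg^+_t\to G/G_{s+t}\to G/G_s\to 1$, loses $2\gamma$ per step (once for the obstruction, once for the comparison of $H^1$'s), and thus advances by $t-2\gamma=\gamma$ each time, with $c\geq 5\gamma$ chosen so that the auxiliary condition $r_t(\varphi_s)=1$ is preserved. Some choice of step width strictly exceeding the total loss per step is essential; your write-up does not supply it.

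A second gap: you only discuss the $H^1$-defect between your candidate lift and $\rho$, but before that comparison makes sense you must produce \emph{some} lift of the homomorphism $\varphi_k\colon\Delta\to G(R)/G_{s}(R)$ to a homomorphism one level up. The bottom row of the relevant diagram is left-exact but not right-exact (one cannot obviously choose commuting lifts of the images of generators), so there is an obstruction class in $H^2_{\cts}(\Delta,\mg^+_0(R)/p^t)$. The paper kills it by functoriality of the obstruction (it dies over $R_\infty$ because $\rho$ lifts) together with the $p^\gamma$-torsion kernel of the $H^2$-comparison map -- costing one of the two $p^\gamma$ losses per step. Your phrase ``this simultaneously lifts $\varphi_k$ to $\varphi_{k+1}$'' skips this. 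Relatedly, the twisted module ${}_\phi(\mg^+_0(R_\infty)/p^t)$ must be identified with the untwisted one (possible since $\phi\equiv 1\bmod p^t$) before \cref{l:Sch13-4.5/5.5}.3 applies to the $H^1$-defect. With these two points repaired -- wide lifting steps and the $H^2$ obstruction argument -- your outline matches the paper's proof, including the treatment of part (2) by rerunning the induction with $\rho:=\varphi_1$, $\wt\varphi:=\varphi_2$ and tracking that the conjugating elements can be taken in $G(R)/G_s(R)$, and the non-cyclotomic case via $\Lambda$-cohomology and $\sec\circ\res$.
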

We will see at the end of \cref{s:construction-of-local-correspondence} that part 2 also holds for general $K$. 
\begin{proof}
	Let us first explain the argument in the simpler case that $\Q_p^\cyc\subseteq K$:
	Let $\alpha,\beta$ be as in \cref{l:Sch13-4.5/5.5} and let $\gamma=2\beta+\alpha$. We claim that any
	$c\geq 5\gamma$
	does the job. To see this, we can argue essentially as in \cite[Lemma~1.(i)]{Faltings_SimpsonI}, except that further care needs to be taken in our much more general setting, as the ``isomorphism up to torsion''-results are weaker.
	
	 It is clear from the construction that $W_f$ factors through the image of $H^1_v(X,G_c)$. Conversely, let $x\in H^1_v(X,G_c)$, then
	by \cref{l:small-bundles-on-perfectoid}, this $x$ becomes trivial in $H^1_v(X_\infty,G_c)$ and thus comes from an element in $H^1_{\cts}(\Delta,G_c(R_\infty))$ via the Cartan--Leray sequence. Let 
	\[ \rho:\Delta\to G_c(R_\infty)\subseteq G(R_\infty)\]
	be any $1$-cocycle representing $x$. In the following, for any $s\geq 0$ let us write $\rho_s$ for the image of $\rho$ in $H^1_{\cts}(\Delta,G/G_s(R_\infty))$. Then $\rho_c=1$.
	We consider for varying $s\geq c$ the natural map 
	\[ h_s:\Hom_{\cts}(\Delta,G(R)/G_s(R))\to H^1_{\cts}(\Delta,G/G_s(R_\infty))\]
	as well as for any $c\leq k<s$ the reduction map
	\[ r_k:\Hom_{\cts}(\Delta,G(R)/G_{s}(R))\to \Hom_{\cts}(\Delta,G(R)/G_k(R)).\]
	
	Set $t:=3\gamma$. We will inductively construct  a compatible system of continuous homomorphisms $\varphi_s:\Delta\to G(R)/G_s(R)$ for any $s\geq c$ such that $h_s(\varphi_s)=\rho_s$ and $r_t(\varphi_s)=1$.
	
	We start with $s=c$, for which we can take $\varphi_s:=1$ to be the trivial representation. 
	
	For the induction, we  claim that we can find a lift of the reduction $r_{s-2\gamma}(\varphi_s)$ to a homomorphism $\varphi'_{s+\gamma}:\Delta\to G(R)/G_{s+\gamma}(R)$ such that $h_{s+\gamma}(\varphi'_{s+\gamma})=\rho_{s+\gamma}$. Note that since $s\geq c\geq 5\gamma$, this still satisfies $r_{t}(\varphi'_{s+\gamma})=r_t(\varphi_{s})=1$, so that we can then lift $\varphi_s$ inductively.
	
	For the claim, the conditions imply $s+t<2s-\alpha_0$, so that by \cref{p:inductive-lifting-ses-for-G} we have a short exact sequence
	\[ 0\to \overline{\mg}^+_t\to G/G_{s+t}\to G/G_{s}\to 1,\]
	where $\overline{\mg}^+_t:=\mg_0^+/\mg_t^+$ is isomorphic to $(\m\O^+/p^t\m\O^+)^{\dim G}$. It therefore stays exact upon evaluation at the affinoid perfectoid $X_\infty$. For the same reason, $\overline{\mg}^+_t(R_\infty)={\mg}^+_0(R_\infty)/p^t$.
	
	On the other hand, by the second part of \cref{p:inductive-lifting-ses-for-G}, we also have an exact sequence
	\begin{equation}\label{eq:ses-2-in-proof-of-Faltings-lifting-lemma}
	0\to \mg^+_0(R)/p^t\to G(R)/G_{s+t}(R)\to G(R)/G_s(R)\to 1
	\end{equation}
	Taking continuous $\Delta$-cohomology of both sequences, we thus obtain a  commutative diagram
	\[ \begin{tikzcd}[column sep = 0.3cm]
		{ H^1_{\cts}(\Delta,{\mg}_0^+(R_\infty)/p^t)} \arrow[r] & { H^1_{\cts}(\Delta,G/G_{s+t}(R_\infty))} \arrow[r] & {H^1_{\cts}(\Delta,G/G_{s}(R_\infty))} \\
		{ \Hom_{\cts}(\Delta,{\mg}_0^+(R)/p^t)} \arrow[r]\arrow[u] & 	{ \Hom_{\cts}(\Delta,G(R)/G_{s+t}(R))} \arrow[r]\arrow[u,"h_{s+t}"'] & {\Hom_{\cts}(\Delta,G(R)/G_{s}(R))}.\arrow[u,"h_{s}"']
	\end{tikzcd}\]

	 By induction hypothesis, the image of $\rho_{s+t}$ in the top right has a preimage $\varphi_s$ under $h_s$. Hence there is $b=b_s\in G/G_{s}(R_\infty)$ such that
	$b^{-1}\cdot \rho_s(\gamma)\cdot \gamma^{\ast}b=\varphi_s(\gamma) $ for all $\gamma\in \Delta$.
	
	We now repeatedly use that by \cref{l:Sch13-4.5/5.5}, the map
	\begin{equation}\label{eq:H^i(R)->H^i(wt R)-in-lifting-claim}
		H^i_{\cts}(\Delta,{\mg}_0^+(R)/p^t)\to H^i_{\cts}(\Delta,{\mg}_0^+(R_\infty)/p^t)
	\end{equation}
	has $p^\gamma$-torsion kernel and cokernel for any $i\geq 1$.
	
	The bottom row of the above diagram is still left-exact, but not necessarily right-exact (it is not clear that one can lift the images to $G(R)/G_{s+t}(R)$ in such a way that they commute). However, as the term on the left in \cref{eq:ses-2-in-proof-of-Faltings-lifting-lemma} is an abelian group, the obstruction to lifting $\varphi_s$ to the middle of the bottom row is a class in $H^2_{\cts}(\Delta,{\mg}_0^+(R)/p^t)$ by \cref{p:non-ab-les}.3. This class is mapped to $0$ under the map  \cref{eq:H^i(R)->H^i(wt R)-in-lifting-claim} for $i=2$
	since $\rho_s$ does lift and the obstruction class is functorial. As the kernel of \cref{eq:H^i(R)->H^i(wt R)-in-lifting-claim} is killed by $p^\gamma$, it follows that after reducing the whole diagram mod $p^\gamma$, we can find a lift $\wt{\varphi}$ of $\varphi_s$ to the middle of the bottom row of the diagram
		\[ \begin{tikzcd}[column sep = 0.3cm]
		{ H^1_{\cts}(\Delta,{\mg}_0^+(R_\infty)/p^t)} \arrow[r] & { H^1_{\cts}(\Delta,G/G_{s+t-\gamma}(R_\infty))} \arrow[r] & {H^1_{\cts}(\Delta,G/G_{s-\gamma}(R_\infty))} \\
		{ \Hom_{\cts}(\Delta,{\mg}_0^+(R)/p^t)} \arrow[r]\arrow[u] & 	{ \Hom_{\cts}(\Delta,G(R)/G_{s+t-\gamma}(R))} \arrow[r]\arrow[u,"h_{s+t-\gamma}"'] & {\Hom_{\cts}(\Delta,G(R)/G_{s-\gamma}(R))}.\arrow[u,"h_{s-\gamma}"']
	\end{tikzcd}\]

	Let us denote by $\phi$ the image of $\wt{\varphi}$ under $h_{s+t-\gamma}$. Then by commutativity of the diagram, $\phi$ and $\rho_{s+t-\gamma}$ both define a lift of $\rho_{s-\gamma}$ (i.e.\ the image of $\rho_\gamma$ in the top right) along the top right map.
	By \cref{p:non-ab-les}.2, these differ by a class $\delta$ in 
	$H^1_{\cts}(\Delta,{}_{\phi}({\mg}_0^+(R_\infty)/p^t))$,
	where ${}_{ \phi}(\dots)$ denotes the module with the $\Delta$-action twisted by $\phi$:
	Explicitly, let $\wt b$ be any lift of $b$ to $G/G_{s+t-\gamma}(R_\infty)$. Then there is a unique cocycle $\delta:\Delta\to {}_{\phi}({\mg}_0^+(R_\infty)/p^t)$ such that
	\begin{equation}\label{eq:cocycle-battle}
		\wt b^{-1} \cdot \rho(\gamma) \cdot \gamma^{\ast}\wt b = \delta(\gamma)\cdot \phi(\gamma) \text{ in } G/G_{s+t-\gamma}(R_\infty)
	\end{equation}
	under the identification ${\mg}_0^+(R_\infty)/p^t=G_{s-\gamma}(R_\infty)/G_{s+t-\gamma}(R_\infty)$. Since $\rho\equiv 1 \bmod p^t$, we have $\phi\equiv 1 \bmod p^t$. This shows that on the level of cohomology sets, we have
	\[H^1_{\cts}(\Delta,{}_{\phi}({\mg}_0^+(R_\infty)/p^t))=H^1_{\cts}(\Delta,{\mg}_0^+(R_\infty)/p^t).\]
	Therefore, we can again use \cref{eq:H^i(R)->H^i(wt R)-in-lifting-claim}, this time for $i=1$: The image of $\delta$ in the cokernel of this map is annihilated by $p^{\gamma}$. We deduce that after reducing the whole diagram mod $p^{\gamma}$ once again, 
	the cocycle $\delta$ in the top left can be lifted to a class $\delta'$ in the bottom left.
	Explicitly, this means that there is $a\in {\mg}_0^+(R_\infty)/p^t= G_{s-2\gamma}(R_\infty)/G_{s+t-2\gamma}(R_\infty)$ such that
	\[\delta'(\gamma):=a^{-1}\cdot \delta(\gamma) \cdot \gamma \ast_{\phi}a=a^{-1}\cdot \delta (\gamma)\cdot \phi(\gamma)\cdot \gamma^{\ast}a \cdot \phi(\gamma)^{-1}\]
	defines a homomorphism $\delta':\Delta\to G_{s-2\gamma}(R)/G_{s+t-2\gamma}(R)$.
	Then
	\[\wt\varphi':=\delta'\cdot \widetilde\varphi:\Delta\to G(R)/G_{s+t-2\gamma}(R)\] is a homomorphism which also lifts $\varphi_{s-2\gamma}$ due to exactness of the bottom row, and whose image $\phi':=h_{s+t-2\gamma}(\wt\varphi')$ is equivalent to $\rho_{s+t-2\gamma}$ in	$H^1_{\cts}(\Delta,G/G_{s+t-2\gamma}(R_\infty))$: Namely, we can again be more explicit in terms of cocycles. By definition, we have for any $\gamma\in \Delta$
	\[\phi'(\gamma)=\delta'(\gamma)\cdot \phi(\gamma)= a^{-1}\cdot \delta (\gamma)\cdot \phi(\gamma)\cdot \gamma^{\ast}a \cdot \phi(\gamma)^{-1}\phi(\gamma)=
		\stackrel{\eqref{eq:cocycle-battle}}{=}(\wt ba)^{-1} \cdot \rho(\gamma) \cdot \gamma^{\ast}(\wt ba).\]
	Note that $b_{s+t-2\gamma}:=\wt ba$ is still a lift of the image $b_{s-2\gamma}$ of $b$ in $G/G_{s+t-2\gamma}(R_\infty)$ as $a$ is trivial mod $G_{s-2\gamma}(R_\infty)$. Using $s+t-2\gamma=s+\gamma$,
	this shows that $\varphi_{s+\gamma}:=\wt\varphi'$ defines a preimage of $\rho_{s+\gamma}$ under $h_{s+\gamma}$ with the desired properties, completing the induction step.
	
	\medskip
	
	In the limit $s\to \infty$, the $\varphi_{s}:\Delta\to G(R)/G_s(R)$ define by \cref{l:completeness-of-G} a homomorphism $\varphi:\Delta\to G(R)$, and the $b_s$ define an element $b\in G(R_\infty)$ such that
	\[ \varphi(\gamma)=b^{-1}\rho(\gamma)\gamma^{\ast}b.\]
	This shows that $\rho$ and $\varphi$ define the same class in $H^1_{\cts}(\Delta,G(R_\infty))$. Thus $W_f(\varphi)$ agrees with the image of $x$ in $H^1_v(X,G)$.
	This proves part 1 in the case that $\Q_p^\cyc\subseteq K$.
	
	\medskip
	
	To deduce part 2, we run the above argument with $\rho:=\varphi_1$ and $\wt\varphi:=\varphi_2$. Inductively, we then see that we may choose $b_s\in G(R)/G_s(R)$: For the induction start $c=s$ this is clear. In the induction step, we may then find $a\in \mg^+_0(R)/p^t$, and thus $\wt b\in G(R)/G_s(R)$ such that 
	$\varphi_{1,s}=\wt b^{-1}\varphi_{2,s}\wt b$.
	Again using \cref{l:completeness-of-G}, this defines in the limit $s\to \infty$ the desired element $b\in G(R)$. This finishes the proof in the case that $K$ contains $\Q_p^\cyc$.
	
	\medskip

	In the general case, we can argue in exactly the same way, but instead use the diagram
		\[ \begin{tikzcd}[column sep = 0.3cm]
		{ H^1_{\cts}(\Lambda,{\mg}_0^+(R^\cyc_\infty)/p^t)} \arrow[r] & { H^1_{\cts}(\Lambda,G/G_{s+t}(R^\cyc_\infty))} \arrow[r] & {H^1_{\cts}(\Lambda,G/G_{s}(R^\cyc_\infty))} \\
		{ H^1_{\cts}(\Lambda,{\mg}_0^+(R^\cyc)/p^t)} \arrow[r]\arrow[u] & 	{ H^1_{\cts}(\Lambda,G(R^\cyc)/G_{s+t}(R^\cyc))} \arrow[r]\arrow[u,"h_{s+t}"'] & {H^1_{\cts}(\Lambda,G(R^\cyc)/G_{s}(R^\cyc))}.\arrow[u,"h_{s}"']
	\end{tikzcd}\]
The map on the the left, and the map on obstruction classes in $H^2$, have $p^\gamma$-torsion cokernel by \cref{l:non-cyclotomic-version-of-Sch13-4.5/5.5}. The same inductive lifting procedure as in the cyclotomic case then shows that we can lift $\rho$ to an element $\varphi\in H^1_{\cts}(\Lambda,G(R^\cyc))$. 
In fact, we can arrange for $\varphi$ to be in the image of the map $\sec$ of  \cref{l:section-for-Lambda}:
 As in the first part of the proof, we find a lift $\wt{\varphi}$ of $\varphi_s$ to the middle of the bottom row. In an additional step, we now apply $\sec\circ \res$ to this:
 Since in the induction start $\varphi_s$ is trivial, and $\sec$ and $\res$ are functorial, this shows inductively that $\sec \circ \res(\wt{\varphi})$ is also a lift of  $\varphi_s$.
Second, on $H^1_{\cts}(\Lambda,{\mg}_0^+(R^\cyc_\infty)/p^t)$, the map $\res$ is an isomorphism up to the $p^\gamma$-torsion kernel, which is a direct factor. We can therefore reduce modulo $p^\gamma$ once more to arrange that the difference $\delta'$ from the last proof is already in the image of $\sec$. This way we ensure inductively that $\varphi_{s+\gamma}$ is in the image of $\sec$.
\end{proof}
We can now prove the next instance of \cref{t:main-thm-for-both-O-and-O^+}, the case that $G$ has good reduction:

\begin{Proposition}\label{t:Psi-G-isomorphism-good-reduction}
	Let $K$ be a perfectoid field over $\Q_p$.
	Let $G$ be a rigid group over $K$ of good reduction. Then for any smoothoid space $X$ over $K$, the map $\Psi_G$ is surjective. If moreover $\Q_p^\cyc\subseteq K$, then $\Psi_G$ is an isomorphism.
\end{Proposition}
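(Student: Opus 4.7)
The plan is to combine the master diagram (\cref{p:master-diagram-Wf-vs-Psi}) with the Faltings-style lifting result \cref{p:part-2-of-Faltings-Lemma}. Since $\Psi_G$ is a morphism of sheaves on $X_{\et}$, both surjectivity and injectivity can be verified after \'etale base change; we may therefore assume that $X$ is toric with a fixed chart $f\colon X\to\mathbb T^d\times Y$. Fix $c>0$ as in \cref{p:part-2-of-Faltings-Lemma}, set $U:=G_c$ and $\mathfrak u:=\log U$, and choose the auxiliary subgroup $\mathfrak g^\circ\subseteq\mathfrak u$ used in the construction of $\Psi_G$ small enough so that the master diagram applies to $U$.

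For surjectivity (this part needs only that $K$ be perfectoid over $\Q_p$): given a section $\xi\in R^1\nu_\ast G(X)$, the sheafification definition allows us to assume, after an \'etale cover, that $\xi$ lifts to a class in $H^1_v(X,G)$. By surjectivity of $R^1\nu_\ast U\to R^1\nu_\ast G$ from \cref{p:reduction-of-structure-group}, after a further \'etale cover $\xi$ is represented by a class $x\in H^1_v(X,U)$. \cref{p:part-2-of-Faltings-Lemma}.1 then provides an abelian cocycle $\varphi\in\mathcal Z^{\ab}(U)(X)$ with $W_f(\varphi)=x$, and the master diagram rewrites this as $\xi=W_f(\varphi)=\Psi_G(\HT_f(\log\varphi))$, proving that $\xi$ lies in the image of $\Psi_G$ locally.

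For injectivity under $\Q_p^\cyc\subseteq K$: let $\theta_1,\theta_2\in\Higgs_G(X)$ satisfy $\Psi_G(\theta_1)=\Psi_G(\theta_2)$. After an \'etale cover, we may lift each to a genuine Higgs field $\widetilde\theta_i\in(\mathfrak g\otimes\wtOm)^{\wedge=0}(X)$ on the trivial $G$-bundle, corresponding via $\HT_f$ to an abelian cocycle $\varphi_i\in\mathcal Z^{\ab}(\mathfrak g)(X)$. Using \cref{l:Higgs-field-becomes-small-on-cover,l:rescaling-rho,l:indep-of-choice-of-basis}, after passing to a further finite \'etale cover and rescaling the basis $\delta$ by a sufficiently negative power of $p$, which leaves $\Psi_G(\widetilde\theta_i)$ unchanged, we may arrange that each $\varphi_i$ takes values in $\mathfrak u$. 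Then $\exp\varphi_i\in\mathcal Z^{\ab}(U)(X)=\Hom_{\cts}(\Delta,U(X))$, and the master diagram yields $W_f(\exp\varphi_1)=\Psi_G(\widetilde\theta_1)=\Psi_G(\widetilde\theta_2)=W_f(\exp\varphi_2)$. Now \cref{p:part-2-of-Faltings-Lemma}.2 (which crucially uses the cyclotomic hypothesis) produces $g\in G(X)$ with $\exp(\varphi_1(\gamma))=g\exp(\varphi_2(\gamma))g^{-1}$ for all $\gamma\in\Delta$; taking logarithms and invoking \cref{l:exp-log-commutativity}.3 gives $\varphi_1=\ad(g)\varphi_2$, so $\widetilde\theta_1$ and $\widetilde\theta_2$ lie in the same $G(X)$-orbit and hence define the same element of $\Higgs_G(X)$.

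The main work is not conceptual but bookkeeping: one must check that the chain of operations (\'etale localisation, reduction of structure group, rescaling of the basis $\delta$, passage to finite \'etale subcovers) is mutually compatible and preserves $\Psi_G$, which is precisely what was arranged in the careful construction of $\Psi_G$ in \S\ref{s:Higgs-to-v}. Note that surjectivity is unconditional on $K$ because \cref{p:part-2-of-Faltings-Lemma}.1 is; only the injectivity half uses the cyclotomic assumption, entering through \cref{p:part-2-of-Faltings-Lemma}.2.
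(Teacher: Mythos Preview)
Your overall strategy matches the paper's, and the injectivity argument is essentially identical to the one given there. However, the surjectivity argument has a real gap that is not just bookkeeping.

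You fix $c$ (and hence $U=G_c$) once and for all for the original toric $X$, then invoke \cref{p:reduction-of-structure-group} to reduce the structure group to $U$ after passing to an \'etale cover $X'\to X$. The problem is that the constant $c$ in \cref{p:part-2-of-Faltings-Lemma} depends on the base space: the constant $c'$ required for $X'$ may be strictly larger than $c$, in which case $G_{c'}\subsetneq G_c$ and you only know your class lies in the image of $H^1_v(X',G_c)\to H^1_v(X',G)$, not of $H^1_v(X',G_{c'})$. So \cref{p:part-2-of-Faltings-Lemma}.1 does not apply on $X'$ with the $c$ you fixed. Iterating \cref{p:reduction-of-structure-group} does not help, as each further cover may again increase the constant.

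The paper resolves this by an extra step using the toric tower (this is also spelled out in \cref{l:v-bundle-locally-small}): after reducing to some $G_k$ on a cover $X'$, the class is represented by a continuous cocycle $\rho:\Delta\to G_k(X'_\infty)$; by continuity one can replace $\Delta$ by an open subgroup $\Delta_n$, i.e.\ pass to the finite \'etale subcover $X'_n\to X'$ in the toric tower, on which $\rho$ lands in $G_{c'}(X'_\infty)$. Crucially, the last sentence of \cref{p:part-2-of-Faltings-Lemma} guarantees that $c'$ is \emph{uniform} along the toric tower, so this does not restart the regress. Equivalently, one uses \cref{l:Sch13-4.5/5.5}.2 to shrink $c'$ by going up the tower. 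Either way, this toric-tower step is the missing ingredient in your surjectivity argument.
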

\begin{proof}
	We may work locally in $X_{\et}$ and assume that there is a toric chart $f:X\to \mathbb T^d\times Y$, as well as $\zeta_p\in K$. We then have the commutative diagram \cref{p:master-diagram-Wf-vs-Psi}. 
	
	To see that $\Psi_G$ is surjective, let $x\in H^1_v(X,G)$. Let $k\in \N$ and consider the subgroup $G_k\subseteq G$ from  \cref{p:inductive-lifting-ses-for-G}. By \cref{p:reduction-of-structure-group}, every $v$-$G$-bundle on $X$ admits a reduction of structure group to $G_k$ on some \'etale cover $X'\to X$. Going up the toric tower of $X'$, we can for $k\gg 0$ arrange by \cref{l:Sch13-4.5/5.5}.2 that $k>c$ where $c$ is the constant of \cref{p:part-2-of-Faltings-Lemma} for $X'$. Now $x$ is in the image of $H^1_v(X',G_c)\to H^1_v(X',G)$.
	By \cref{p:part-2-of-Faltings-Lemma}.1, this implies that $x\in \im(W_f)$. It follows from \cref{p:master-diagram-Wf-vs-Psi} that $x\in \im(\Psi_G)$.
	
	To see that $\Psi_G$ is injective, let $\theta_1,\theta_2\in (\mg\otimes \wtOm)^{\wedge =0}(X)$ be such that $\Psi_G(\theta_1)=\Psi_G(\theta_2)$. By \cref{p:master-diagram-Wf-vs-Psi}, we can pass to an \'etale cover $X'\to X$ where we can lift these to homomorphisms $\wt \theta_1, \wt \theta_2\in \mathcal Z^{\ab}(\mathfrak u)$. Then by commutativity, we have 
	$W_f(\exp(\wt \theta_1))=W_f(\exp(\wt \theta_2))$,
	which by  \cref{p:part-2-of-Faltings-Lemma}.2 implies that $\exp(\wt \theta_1)$ and $\exp(\wt \theta_2)$ are conjugated via $G(X')$. By \cref{l:exp-log-commutativity}.3, it follows that already $\wt \theta_1$, $\wt \theta_2$ are conjugated via $G(X')$, thus so are $\theta_1,\theta_2$.
\end{proof}

\begin{Corollary}
	Let $K$ be any perfectoid field over $\Q_p$.
	Let $G$ be any rigid group over $K$.  Then $\Psi_G$ is surjective on any smoothoid space over $K$.
\end{Corollary}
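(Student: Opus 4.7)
The plan is to reduce to the good-reduction case already settled in \cref{t:Psi-G-isomorphism-good-reduction}. Surjectivity of $\Psi_G$ as a morphism of sheaves on $X_\et$ is a local statement, so given any local section $x$ of $R^1\nu_\ast G$ it suffices to produce, after a further \'etale cover, a preimage under $\Psi_G$.

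First, I would invoke \cref{c:open-subgroup-of-good-reduction} to choose a rigid open subgroup $U \subseteq G$ of good reduction. Since $U \subseteq G$ is open, the Lie algebra $\mathrm{Lie}(U)$ coincides with $\mg = \mathrm{Lie}(G)$, so the inclusion induces a natural surjective morphism of sheaves $\Higgs_U = (\mg \otimes \wtOm)^{\wedge=0}/U \twoheadrightarrow (\mg \otimes \wtOm)^{\wedge=0}/G = \Higgs_G$. Next, by \cref{p:reduction-of-structure-group} applied to $U \subseteq G$, the map $R^1\nu_\ast U \to R^1\nu_\ast G$ is surjective, so after passing to a suitable \'etale cover of $X$ we may lift $x$ to a class $y \in R^1\nu_\ast U$.

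Now $U$ is a rigid group of good reduction, so \cref{t:Psi-G-isomorphism-good-reduction} applies and gives surjectivity of $\Psi_U$ on $X$ (as well as on any smoothoid above it). Hence after another \'etale localisation we can write $y = \Psi_U(\bar\theta)$ for some $\bar\theta \in \Higgs_U$. Finally, by the functoriality of $\Psi$ in the rigid group, established in \cref{p:Psi-summary}, the square
\[
\begin{tikzcd}
\Higgs_U \arrow[r,"\Psi_U"] \arrow[d] & R^1\nu_\ast U \arrow[d] \\
\Higgs_G \arrow[r,"\Psi_G"] & R^1\nu_\ast G
\end{tikzcd}
\]
commutes, so letting $\theta \in \Higgs_G$ denote the image of $\bar\theta$ we obtain $\Psi_G(\theta) = x$, which proves the claim. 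There is no genuine obstacle here since neither step requires $\Q_p^\cyc \subseteq K$: we only use the surjectivity half of \cref{t:Psi-G-isomorphism-good-reduction}, which is valid over any perfectoid $K$ over $\Q_p$.
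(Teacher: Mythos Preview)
Your proof is correct and follows essentially the same approach as the paper: reduce to an open subgroup $U\subseteq G$ of good reduction via \cref{c:open-subgroup-of-good-reduction}, use the functoriality square from \cref{p:Psi-summary}, invoke \cref{p:reduction-of-structure-group} for surjectivity of $R^1\nu_\ast U\to R^1\nu_\ast G$, and apply the surjectivity half of \cref{t:Psi-G-isomorphism-good-reduction} (valid without the cyclotomic hypothesis) for $\Psi_U$. The only cosmetic difference is that you record surjectivity of $\Higgs_U\to\Higgs_G$, which is true but not actually needed for the diagram chase.
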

\begin{proof}
	By \cref{c:open-subgroup-of-good-reduction}, there is an open subgroup $G^\circ\subseteq G$ of good reduction. The diagram
	\[\begin{tikzcd}
		\Higgs_{G^\circ} \arrow[d]\arrow[r,"\Psi_{G^\circ}"] & R^1\nu_{\ast}G^\circ \arrow[d] \\
		\Higgs_{G} \arrow[r,"\Psi_G"] & R^1\nu_{\ast}G
	\end{tikzcd}\]
	commutes by functoriality of $\Psi$. By \cref{p:reduction-of-structure-group}, the right vertical map  is surjective. The top morphism is surjective by \cref{t:Psi-G-isomorphism-good-reduction}. Thus the bottom map is surjective.
\end{proof}

\subsection{Injectivity of $\Psi$}
Throughout this subsection, we assume that $\Q_p^\cyc\subseteq K$ and retain the notation of
\cref{s:abelian-G-bundles}, i.e.\ $X=\Spa(R,R^+)$ is a toric smoothoid space with $\Delta$-torsor $X_\infty= \Spa(R_\infty,R_\infty^+)\to X$. In order to prove that $\Psi_G$ is injective, we use the map
$W_f\colon \Hom(\Delta,U(R))\to H^1_{\cts}(X,G(R_\infty))$
from \cref{s:abelian-G-bundles}. Unravelling \cref{d:non-ab-1-cocycles}, we see that we have the following:
\begin{Lemma}\label{l:eq-cond-Wf-injective}
	For any two $\rho_1,\rho_2\colon \Delta\to U(R)$ in $\mathcal Z^{\ab}(G)(X)$, the following are equivalent:
	\begin{enumerate}
		\item $W_f(\rho_1)=W_f(\rho_2)$
		\item There is $A\in G(R_\infty)$ such that 
		\begin{equation}\label{eq:inj-of-W_f}
			\rho_1(\gamma)=A^{-1}\cdot \rho_2(\gamma)\cdot \gamma^{\ast}A \quad \text{for all $\gamma\in\Delta$.}
		\end{equation}
	\end{enumerate}
\end{Lemma}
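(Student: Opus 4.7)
The plan is to unwind the composition defining $W_f$ and identify the stated equivalence with the standard definition of cohomologous non-abelian $1$-cocycles, thereby reducing the statement to a tautology modulo the injectivity of the Cartan--Leray map at the level of non-abelian $H^1$.

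Concretely, $W_f$ factors as
\[ \mathcal Z^{\ab}(U)(X) \to H^1_{\cts}(\Delta, G(R_\infty)) \to H^1_v(X,G), \]
where the first arrow sends a continuous homomorphism $\rho\colon \Delta\to U(R)$, composed with $U(R)\hookrightarrow U(R_\infty)\hookrightarrow G(R_\infty)$, to its class as a $1$-cocycle (which is a valid $1$-cocycle since $\Delta$ acts trivially on $U(R)$), and the second arrow is the Cartan--Leray map for the pro-finite-\'etale $\Delta$-torsor $X_\infty\to X$. By the non-abelian Cartan--Leray sequence of \cref{p:Cartan--Leray}, this second arrow is injective as a map of pointed sets, identifying $H^1_{\cts}(\Delta, G(R_\infty))$ with the subset of $H^1_v(X,G)$ consisting of $G$-torsors which trivialise on $X_\infty$. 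Consequently, $W_f(\rho_1)=W_f(\rho_2)$ holds in $H^1_v(X,G)$ if and only if $\rho_1$ and $\rho_2$ have the same image in $H^1_{\cts}(\Delta, G(R_\infty))$.

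By \cref{d:non-ab-1-cocycles}, two continuous $1$-cocycles $\rho_1,\rho_2\colon\Delta\to G(R_\infty)$ are cohomologous precisely when there exists $A\in G(R_\infty)$ with $\rho_1(\gamma)=A^{-1}\cdot\rho_2(\gamma)\cdot\gamma^{\ast}A$ for all $\gamma\in\Delta$, which is exactly condition (2) of the lemma. Combining the two observations yields the claimed equivalence. The only non-trivial input is the Cartan--Leray injectivity, which is standard non-abelian descent and is already used tacitly throughout the preceding sections, so no serious obstacle is anticipated.
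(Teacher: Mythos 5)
Your proof is correct and matches the paper's intended argument: the paper offers no proof beyond the remark that the lemma follows by ``unravelling \cref{d:non-ab-1-cocycles}'', which is precisely your reduction of $W_f(\rho_1)=W_f(\rho_2)$ to the cocycles being cohomologous in $H^1_{\cts}(\Delta,G(R_\infty))$ via the injectivity of the Cartan--Leray map. You correctly isolate that injectivity (i.e.\ that $H^1_{\cts}(\Delta,G(R_\infty))$ classifies exactly the $v$-$G$-torsors on $X$ trivialised over $X_\infty$) as the only input beyond the definitions.
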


	Equivalently, \cref{eq:inj-of-W_f} expresses that there is an isomorphism between the $G$-torsors on $X_v$ associated to $\rho_1$ and $\rho_2$. As we will explain in detail in the next section, the following Proposition  says that this can be translated into an isomorphism of $G$-Higgs bundles:

\begin{Proposition}\label{l:fully-faithful-case-of-good-reduction}
	Assume that there is $A\in G(R_\infty)$ such that \cref{eq:inj-of-W_f} holds.
	Then $A\in G(R)$.
\end{Proposition}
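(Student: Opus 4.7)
The plan is to adapt the inductive lifting strategy from the proof of \cref{p:part-2-of-Faltings-Lemma} to the present setting. First, I would reduce to the case $\rho_1 = \rho_2$. By \cref{p:part-2-of-Faltings-Lemma}.2, there exists $B\in G(R)$ with $B\rho_1 B^{-1} = \rho_2$; since $\gamma^{\ast}B = B$, this $B$ itself satisfies \cref{eq:inj-of-W_f}. Setting $C := B^{-1}A\in G(R_\infty)$, a direct calculation using $\gamma^{\ast}B = B$ and $B^{-1}\rho_2^{-1}B = \rho_1^{-1}$ yields $\gamma^{\ast}C = \rho_1(\gamma)^{-1}C\rho_1(\gamma)$ for all $\gamma\in\Delta$. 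Since $A = BC$ with $B\in G(R)$, it suffices to prove $C\in G(R)$; I write $\rho := \rho_1$ from now on.

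The key observation is that the map $h_s\colon G(R)/G_s(R)\hookrightarrow G(R_\infty)/G_s(R_\infty)$ is injective, because $G_s$ is a rigid open subgroup of $G$, so $G_s(R) = G(R)\cap G_s(R_\infty)$. Combined with the completeness $G=\varprojlim G/G_k$ from \cref{l:completeness-of-G}, it suffices to construct, for each $s\geq c$, an element $D_s\in G(R)/G_s(R)$ with $h_s(D_s) = C\bmod G_s(R_\infty)$. Injectivity of $h_s$ forces such $D_s$ to be unique and hence the resulting system to be automatically compatible in $s$; its limit then yields $D\in G(R)$ with $D=C$.

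The construction of the $D_s$ proceeds by induction on $s$, following the pattern of the proof of part 1 of \cref{p:part-2-of-Faltings-Lemma}. For the base case $s=c$, the equation $\gamma^{\ast}C = \rho^{-1}C\rho$ reduces modulo $G_c$ to $\gamma^{\ast}C_c = C_c$ (since $\rho\equiv 1\bmod G_c$), exhibiting $C_c\in (G/G_c)(R_\infty)^\Delta$; this $\Delta$-invariant class descends to $G(R)/G_c(R)$ via the Cartan--Leray sequence together with the $p^\gamma$-torsion bounds of \cref{l:Sch13-4.5/5.5}.3. For the inductive step from $D_s$ to $D_{s+\gamma}$, one uses the short exact sequences $0\to \overline{\mg}^+_t\to G/G_{s+t}\to G/G_s\to 1$ of \cref{p:inductive-lifting-ses-for-G} to express the lifting problem in terms of cohomology classes in $H^i_{\cts}(\Delta, \mg^+_0(R)/p^t)$ for $i=1,2$, and resolves it by the same reduction modulo $p^\gamma$ and adjustment argument as in that proof.

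The hardest part will be handling the commutator terms introduced by the conjugation by $\rho$ in the twisted invariance equation, which are absent in \cref{p:part-2-of-Faltings-Lemma}.1. Since $\rho\in G_c$ with $c$ chosen large (in particular $c>5\gamma$), however, these extra terms lie in small-order subgroups and are absorbed by the same $p^\gamma$-torsion machinery of \cref{l:Sch13-4.5/5.5}. Taking the limit of the $D_s$ then produces $D\in G(R)$ with $D=C$, and hence $A = BD\in G(R)$, completing the proof.
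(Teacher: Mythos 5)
Your overall strategy — reduce to $\rho_1=\rho_2$ via \cref{p:part-2-of-Faltings-Lemma}.2 and then lift the twisted-invariant element $C$ through the filtration $G(R)/G_s(R)$ by the same degree-zero version of the inductive argument — is indeed the paper's route for $G$ of good reduction (\cref{c:fully-faithful-for-G-of-good-reduction}), and your observation that injectivity of $h_s$ makes the system of lifts automatically compatible is correct. However, there is a genuine gap at the base case $s=c$. The equation $\gamma^{\ast}C=\rho(\gamma)^{-1}C\rho(\gamma)$ does show that $C_c$ is a $\Delta$-invariant element of $G(R_\infty)/G_c(R_\infty)$, but such an element need \emph{not} come from $G(R)/G_c(R)$: by \cref{p:non-ab-les}.1 the obstruction is a class in $H^1_{\cts}(\Delta,G_c(R_\infty))$, which by Cartan--Leray and \cref{l:small-bundles-on-perfectoid} injects into $H^1_v(X,G_c)$ — i.e.\ it is exactly a (possibly nontrivial) small $v$-$G_c$-bundle on $X$, the very objects the whole theory is about. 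The $p^\gamma$-torsion bounds of \cref{l:Sch13-4.5/5.5} control the cokernel of $H^1$ of $R$ mapping to $H^1$ of $R_\infty$ on the abelian graded pieces; they do not make this nonabelian $H^1$ vanish. The paper resolves this by passing to an \'etale cover $X'\to X$ on which $\overline{A}_c$ does lift (using \cite[Proposition 4.16]{heuer-G-torsors-perfectoid-spaces}), and then descending the conclusion back to $X$. That descent relies on the one ingredient your proposal omits entirely, namely \cref{cl:from-decompletion-to-descent}: if $A$ lies in $G(R_n)$ for some \emph{finite} level of the toric tower, then applying $\log$, dividing by $p^n$ in the Lie algebra, and exponentiating back shows that \cref{eq:inj-of-W_f} already forces $\gamma^{\ast}A=A$. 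This claim is the genuinely new idea beyond the machinery of \cref{p:part-2-of-Faltings-Lemma}, and without it neither the base case nor the final descent closes.

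A second, related omission: your argument only applies to $G$ of good reduction, since $G_c$, the graded pieces $\overline{\mg}^+_t$ and \cref{p:part-2-of-Faltings-Lemma} all require that hypothesis, whereas the Proposition is stated and used (e.g.\ in \cref{p:Psi-G-injective} and \cref{t:local-paCS}) for arbitrary rigid groups $G$. The reduction of the general case to the good-reduction case is itself nontrivial: one shows the image of $A$ in $(G/U)(R_\infty)$ descends to a finite level $R_n$ by a decompletion result, lifts it to $A_n\in G(R_n)$ after an \'etale localisation, replaces $A$ by $A\cdot A_n^{-1}\in U(R_\infty)$ and $\rho_2$ by its $A_n$-conjugate on an open subgroup of $\Delta$, applies the good-reduction case, and then again invokes \cref{cl:from-decompletion-to-descent} to conclude. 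Finally, a minor imprecision: the conjugation terms in the twisted action are not "absorbed by torsion"; rather, since $\rho\equiv 1\bmod p^t$, the $\rho$-twisted action on each graded piece $\mg^+_0/p^t$ coincides with the untwisted one, so the twist is invisible exactly where the cohomological comparison is applied.
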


For $G=\GL_n$, this is the analogue of \cite[Lemma~1.(ii)]{Faltings_SimpsonI} in our setting. However, our argument is different to Faltings', which uses the embedding of $\GL_n$ into the ambient ring $M_n$, for which there is no analogue for general rigid groups $G$.
\begin{proof}
	The proof consists of two steps: A ``decompletion'' showing that $A\in G(X_n)$ for some subcover $X_\infty\to X_n\to X$, and then the descent further to $G(X)$. We begin with the latter:

\begin{Claim}\label{cl:from-decompletion-to-descent}
	Assume that \cref{eq:inj-of-W_f} holds with $A\in G(R_n)$ for some $n\in \N$. Then $A\in G(R)$.
\end{Claim}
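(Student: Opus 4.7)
The plan is to use finite Galois descent along the layer $X_n\to X$, combined with the Lie-theoretic structure of the small subgroup $U=G_c$ where $\rho_1,\rho_2$ take values. Let $Q_n:=\Delta/p^n\Delta$ denote the (finite abelian) Galois group of $X_n\to X$; the $\Delta$-action on $R_n$ factors through $Q_n$, so $p^n\Delta$ acts trivially on $R_n$ and hence on $A\in G(R_n)$. It suffices to show $\gamma^{\ast}A=A$ for every $\gamma\in\Delta$, since by Galois descent for the finite \'etale cover $X_n\to X$ this will force $A\in G(R_n)^{Q_n}=G(R)$.

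First I specialise \cref{eq:inj-of-W_f} to $\gamma\in p^n\Delta$. Then $\gamma^{\ast}A=A$, and the equation collapses to $A\rho_1(\gamma)A^{-1}=\rho_2(\gamma)$. Since $\rho_1,\rho_2$ are continuous homomorphisms from the abelian group $\Delta$, one has $\rho_i(p^n\gamma)=\rho_i(\gamma)^{p^n}$, which upgrades the identity to
\[
(A\,\rho_1(\gamma)\,A^{-1})^{p^n} \;=\; A\,\rho_1(\gamma)^{p^n}\,A^{-1} \;=\; \rho_2(\gamma)^{p^n} \qquad\text{for every }\gamma\in\Delta.
\]

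Next, the open subgroup $U=G_c\subseteq G$ is normal, being the kernel of the reduction $G\to G/G_c$ supplied by \cref{p:inductive-lifting-ses-for-G}; hence $A\rho_1(\gamma)A^{-1}$ also lies in $G_c(R_n)$. Under the exponential isomorphism $\exp\colon \mathfrak{g}_c^+\isomarrow G_c$ the $p^n$-th power map on $G_c$ corresponds to multiplication by $p^n$ on $\mathfrak{g}_c^+$, which is injective because $\mathfrak{g}_c^+$ is a torsion-free $K^+$-module in characteristic zero. The displayed identity therefore forces $A\rho_1(\gamma)A^{-1}=\rho_2(\gamma)$ for every $\gamma\in\Delta$. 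Substituting back into \cref{eq:inj-of-W_f} yields
\[
\gamma^{\ast}A \;=\; \rho_2(\gamma)^{-1}\,A\,\rho_1(\gamma) \;=\; A\,\rho_1(\gamma)^{-1}\,A^{-1}\cdot A\,\rho_1(\gamma) \;=\; A,
\]
so $A$ is $\Delta$-invariant and in particular $Q_n$-invariant, as required.

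The main potential obstacle is the unique $p^n$-divisibility used to cancel the $p^n$-th powers in a nonabelian group; this is delicate in general, but is automatic for $U=G_c$ once we appeal to the exponential parametrisation and normality provided by \cref{p:inductive-lifting-ses-for-G}. Consequently, the argument reduces to a clean two-step calculation — reduce the equation to $p^n\Delta$, extract $p^n$-th roots in $G_c$ — rather than requiring any explicit cocycle bootstrapping in $H^1(Q_n,-)$.
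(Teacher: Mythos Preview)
Your core idea --- restrict to $p^n\Delta$ where $\gamma^\ast A = A$, obtain an equality of $p^n$-th powers, then extract unique $p^n$-th roots via the logarithm --- is exactly the paper's strategy. But your cancellation step rests on the claim that $A\rho_1(\gamma)A^{-1}$ lies in $G_c(R_n)$, which you justify by asserting $G_c\trianglelefteq G$ via \cref{p:inductive-lifting-ses-for-G}. This is where the argument breaks. That proposition applies only when $G$ has good reduction; in that case $G_c$ is indeed normal in $G$ as a reduction kernel. But the Claim is stated (and is needed, at the very end of the proof of \cref{l:fully-faithful-case-of-good-reduction}) for an \emph{arbitrary} rigid group $G$ and an arbitrary open $U\subseteq G^\circ$. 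For $G=\GL_n$, the congruence subgroup $1+p^c\mathfrak m\, M_n(\O^+)$ is normal only in $\GL_n(\O^+)$, not in $\GL_n$: conjugation by a matrix with large entries can take elements of $G_c$ far from the identity. Since $A$ is merely in $G(R_n)$, the element $A\rho_1(\gamma)A^{-1}$ need not lie in $G^\circ(R_n)$ at all, so $\log$ may be undefined on it and the $p^n$-th root extraction is unjustified.

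The paper avoids normality by never leaving $U(R)$: rather than conjugating $\rho_1(\gamma)$ outward, it observes that $A^{-1}\rho_2(\gamma)^{p^n}A=\rho_1(\gamma)^{p^n}$ already lies in $U(R)$, so both $\rho_i(\gamma)^{p^n}$ have well-defined logarithms in $\mathfrak u(R)$. One then tracks the conjugation through the intersection subgroups $V=U(R)\cap AU(R)A^{-1}$ and $V'=U(R)\cap A^{-1}U(R)A$, on which $g\mapsto A^{-1}gA$ restricts to a bijection compatible with $\log$ and $\ad(A)$. This yields $p^n\log\rho_1(\gamma)=p^n\,\ad(A)\log\rho_2(\gamma)$ in the $K$-vector space $\mathfrak g(R_n)$, where one can divide by $p^n$ and re-exponentiate.
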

\begin{proof}
	$A$ is fixed by $\Delta_n:=p^n\Delta$, hence $\rho_1(\gamma^{p^n})=A^{-1}\cdot \rho_2(\gamma^{p^n})\cdot A$ for all $\gamma\in \Delta$.
Let $x:=\rho_1(\gamma)$ and $y:=\rho_2(\gamma)$. Consider the subgroups $V:=U(R)\cap AU(R) A^{-1}\subseteq U(R)$ and $\mathfrak v:=\mathfrak u(R)\cap  \ad(A^{-1})(\mathfrak u(R))\subseteq \mathfrak u(R)$, then the above equation shows that $y^{p^n}\in V$. Let moreover $V':=U(R)\cap A^{-1}U(R) A\subseteq U(R)$ and $\mathfrak v':=\mathfrak u(R)\cap  \ad(A)(\mathfrak u(R))\subseteq \mathfrak u(R)$, then
by functoriality of the logarithm, we have a commutative diagram of bijections
\[ \begin{tikzcd}[column sep = 1.5cm]
	V \arrow[d, "\log"'] \arrow[r, "g\mapsto A^{-1}gA"] & V' \arrow[d, "\log"] \\
	\mathfrak v \arrow[r, "\ad(A)"] & \mathfrak v'.
\end{tikzcd}\]
It follows that
$ p^n\log(x)=\log(x^{p^n})=\log(A^{-1}y^{p^n}A)=\ad(A)(\log(y^{p^n}))=p^n\ad(A)\log(y)$ in $\mathfrak v'\subseteq \mg(R)$, 
which implies $\log(x)=\ad(A)(\log(y))$ in the $K$-vector space $\mathfrak g(R)$. It follows from this that we still have $\log x\in \mathfrak v'$. We can therefore reverse the calculation using $\exp$, 
\[ x=\exp(\ad(A)(\log(y)))=A^{-1}\cdot y\cdot A.\]
Since $x=A^{-1}y\gamma^{\ast}A$ by assumption, this shows that $\gamma^{\ast}A=A$. It follows that $A\in G(R)$.
\end{proof}

\begin{Claim}\label{c:fully-faithful-for-G-of-good-reduction}
	If $G$ has good reduction, there is $c\geq 0$ depending on $X$ such that for any $\rho_{1},\rho_2:\Delta\to G_c(R)$ in $\mathcal Z^{\ab}(G_c)(X)$ and $A\in G(R_\infty)$ such that \cref{eq:inj-of-W_f} holds, 
	we have $A\in G(R)$.
\end{Claim}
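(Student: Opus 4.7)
My plan follows the strategy of Proposition~\ref{p:part-2-of-Faltings-Lemma}.2, doing an inductive ``decompletion'' via the filtration $G_k \subseteq G$, but now starting with an $A \in G(R_\infty)$ rather than constructing one from scratch.

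\medskip

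\emph{Step 1: Reduce to the case $\rho_1 = \rho_2$.} The cocycle hypothesis \eqref{eq:inj-of-W_f} exactly says that $\rho_1$ and $\rho_2$ define the same class in $H^1_v(X, G)$ via $W_f$. Applying Proposition~\ref{p:part-2-of-Faltings-Lemma}.2 (enlarging $c$ if necessary), there is $g \in G(R)$ with $\rho_2(\gamma) = g\rho_1(\gamma)g^{-1}$ for all $\gamma$. Setting $B := g^{-1}A \in G(R_\infty)$, a direct computation using $g \in G(R)$ gives $\gamma^* B = \rho(\gamma)^{-1} B \rho(\gamma)$ for $\rho := \rho_1$. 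It suffices to prove $B \in G(R)$.

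\medskip

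\emph{Step 2: Reduce $B$ to a small subgroup.} Since $\rho$ has values in $G_c$, the image $\overline B$ of $B$ in $(G/G_c)(R_\infty)$ satisfies $\gamma^* \overline B = \overline B$, so it descends to a section of the sheaf $G/G_c$ over $X$. As injectivity of $\Psi$ may be checked étale-locally on $X$, we may pass to an étale cover on which this section lifts to some $B_0 \in G(R)$. Replacing $B$ by $B_0^{-1}B$, and $\rho$ by its conjugate $B_0^{-1}\rho B_0$ (which still takes values in $G_{c'}$ for some $c'$ within a bounded distance of $c$, since conjugation by the bounded element $B_0$ acts with bounded norm on $\mg$), we may assume $B \in G_c(R_\infty)$. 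Setting $b := \log B \in \mg_c^+(R_\infty)$, the equation becomes the linear relation
\[
\gamma^* b = \ad(\rho(\gamma)^{-1})\, b = \exp(-\ad L(\gamma))\, b, \qquad L(\gamma) := \log\rho(\gamma) \in \mg_c^+(R).
\]

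\medskip

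\emph{Step 3: Inductive decompletion.} I would show by induction on $s \geq c$ the existence of $b_s \in \mg(R)$ with $b - b_s \in \mg_s^+(R_\infty)$ and $\ad(L(\gamma))\, b_s = 0$ for all $\gamma$. Starting from $b_c := 0$, the induction step rests on applying the $R$-linear, $\Delta$-equivariant normalized trace $\tr_0 \colon R_\infty \to R$ of Lemma~\ref{l:module-splitting-of-toric-tower} to the difference $b - b_s$. Since $\tr_0$ commutes with the $R$-linear operator $\ad(\rho(\gamma)^{-1})$ (as $\rho(\gamma) \in G(R)$), the element $\tr_0(b - b_s) \in \mg_s^+(R)$ again satisfies $\gamma^*\tr_0(b-b_s) = \ad(\rho(\gamma)^{-1})\tr_0(b-b_s)$, which on an $R$-linear element forces $\ad(L(\gamma))\tr_0(b-b_s) = 0$. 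The correction $b_{s+1} := b_s + \tr_0(b - b_s)$ then improves the approximation: modulo $\mg_{s+c-O(\gamma)}^+(R_\infty)$, the defect $b - b_{s+1}$ lies in $\ker(\tr_0)$ and satisfies the same linear equation, so by the decompletion isomorphism from Lemma~\ref{l:Sch13-4.5/5.5}.3 applied to the graded piece $\mg_s^+/\mg_{s'}^+$, it is annihilated by a bounded power of $p$ and can be absorbed into the next level by choosing $c$ large.

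\medskip

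\emph{Step 4: Conclusion.} By completeness of $\mg$ in the $p$-adic filtration (Proposition~\ref{p:inductive-lifting-ses-for-G} and Lemma~\ref{l:completeness-of-G}), the $b_s$ converge to an element $b_\infty \in \mg(R)$ which agrees with $b$ in $\mg(R_\infty)$. Hence $B = \exp(b) \in G(R)$, and unwinding the reductions of Steps~1--2 gives $A \in G(R)$. The principal obstacle is Step~3: controlling the non-linear twist $\exp(-\ad L(\gamma))$ under the linear decompletion. This is handled by choosing $c$ large so that $\ad(L(\gamma))$ acts with norm $\leq p^{-c}$ on $\mg(R_\infty)$; the higher-order terms of $\exp(-\ad L(\gamma))$ then contribute only to the next inductive level and can be absorbed in the error, exactly as the quadratic ``obstruction-class'' terms were handled in the proof of Proposition~\ref{p:part-2-of-Faltings-Lemma}.
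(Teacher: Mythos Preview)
Your Step~1 matches the paper. The genuine problem is in Step~2: the substitution $B\mapsto B_0^{-1}B$, $\rho\mapsto B_0^{-1}\rho B_0$ does \emph{not} preserve the equation $\gamma^\ast B=\rho(\gamma)^{-1}B\rho(\gamma)$. Setting $B'=B_0^{-1}B$ and $\rho'=B_0^{-1}\rho B_0$, one computes
\[
\gamma^\ast B' \;=\; B_0^{-1}\rho(\gamma)^{-1}B\,\rho(\gamma),
\qquad
\rho'(\gamma)^{-1}B'\rho'(\gamma)\;=\;B_0^{-1}\rho(\gamma)^{-1}B\,\bigl(B_0^{-1}\rho(\gamma)B_0\bigr),
\]
and these agree only if $B_0$ commutes with $\rho(\gamma)$. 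If you instead try $B'=B_0^{-1}BB_0$ (which does preserve the form of the equation with $\rho'=B_0^{-1}\rho B_0$), then $B'$ is no longer in $G_c(R_\infty)$. What you actually obtain for $C:=B_0^{-1}B\in G_c(R_\infty)$ is the asymmetric relation
\[
\gamma^\ast C \;=\; \sigma(\gamma)\cdot C\cdot \rho(\gamma),\qquad \sigma(\gamma):=B_0^{-1}\rho(\gamma)^{-1}B_0\in G_{c'}(R),
\]
whose linearisation via $c=\log C$ is affine in $c$ rather than the homogeneous equation $\gamma^\ast b=\exp(-\ad L(\gamma))b$ you assume. The constant term $\log\sigma(\gamma)+\log\rho(\gamma)=(1-\ad(B_0^{-1}))\log\rho(\gamma)$ does not vanish, so your trace-based inductive scheme in Step~3---which relies on $\tr_0$ annihilating the defect while preserving the \emph{linear} equation---breaks down.

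The paper sidesteps this entirely by not linearising. It works directly with the filtration $G/G_s$: one first observes $\overline A_c\in (G/G_c)(R_\infty)^\Delta=(G/G_c)(R)$, lifts it \'etale-locally to $G(R)/G_c(R)$ (using \cite[Prop.~4.16]{heuer-G-torsors-perfectoid-spaces} and \cref{l:Sch13-4.5/5.5}.2 to control $c$ after the cover, together with \cref{cl:from-decompletion-to-descent}), and then runs the cohomological lifting argument of \cref{p:part-2-of-Faltings-Lemma} in degree~$0$: for each $s\geq c$ the obstruction to lifting $\overline A_s$ further lives in $H^1_{\cts}(\Delta,{}_\rho\mg_0^+/p^t)$, and the comparison of twisted cohomology for $R$ vs.\ $R_\infty$ (bounded $p^\gamma$-torsion kernel/cokernel, as in \cref{l:Sch13-4.5/5.5}) lets one lift inductively. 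This avoids both the non-homogeneous term and the delicate bounds your Step~3 would need.
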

\begin{proof}
	By \cref{p:part-2-of-Faltings-Lemma}, we already know that for $c\geq 0$, there is $B\in G(R)$ such that $\rho_1(\gamma)=B^{-1}\rho_2B$. Replacing $A$ by $B^{-1}A$. We may therefore assume that $\rho_1=\rho_2=:\rho$.
	
	Consider $G(R_\infty)$ endowed with the action of $\Delta$ twisted by $\rho$ in the sense of \cref{d:twisted-action}, i.e.\ $\gamma\ast_{\rho} x=\rho(\gamma)\gamma^{\ast}x\rho(\gamma)^{-1}$, we write this $\Delta$-module as ${}_\rho G(R_\infty)$. Then \cref{eq:inj-of-W_f} can be expressed as
	$A\in H^0(\Delta,{}_\rho G(R_\infty))$,
	and we wish to see that this is already in the image of
	\[ H^0(\Delta,{}_\rho G(R))\to H^0(\Delta,{}_\rho G(R_\infty)).\]
	This can be seen by the same argument as in \cref{p:part-2-of-Faltings-Lemma} but in cohomological degree $0$: 
	
	Let $\gamma$ be as in \cref{l:Sch13-4.5/5.5}, then $c\geq t:= 3\gamma$.
	We first observe that in $G/G_c=\overline{G}_c$, where $\rho_1$ becomes trivial, the image $\overline{A}_c$ of $A$ satisfies $\gamma^{\ast}\overline{A}_c=\overline{A}_c$ and is thus contained in $\overline{G}_c(R_\infty)^\Delta=\overline{G}_c(R)$. By \cite[Proposition 4.16]{heuer-G-torsors-perfectoid-spaces}, we may find an \'etale cover $X'\to X$ on which $\overline{A}_c$ lifts to $G(X')/G_c(X')$. Moving up the toric tower of $X'$, we may by \cref{l:Sch13-4.5/5.5}.2 assume after replacing $\Delta$ by an open subgroup $\Delta_n$ that $c$ also works for $X'$. Replacing $X$ by $X'$, this first shows that $A\in G(X'_n)$, and then by \cref{cl:from-decompletion-to-descent} that $A\in G(X')$ is already $\Delta$-invariant, thus $A\in G(X)$.  We may therefore assume that $\overline{A}_c$ lifts to $G(X)/G_c(X)$.
	
	\medskip
	
	We now show inductively that for any $s\geq c$, the image $\overline{A}_s$ of $A$ in $\overline{G}_s$ is $\Delta$-invariant and lifts to $G(R)/G_s(R)$:
	If we know this for $s$, then by \cref{p:inductive-lifting-ses-for-G} we have exact sequences
	\[ \begin{tikzcd}[column sep = 0.25cm]
		0\arrow[r] &{ ({}_\rho {\mg}_0^+(R_\infty)/p^t)}^{\Delta} \arrow[r] & { ({}_\rho G/G_{s+t}(R_\infty))^\Delta} \arrow[r] & {({}_\rho G/G_{s}(R_\infty))^\Delta} \arrow[r] &	{ H^1_{\cts}(\Delta,{}_\rho {\mg}_0^+(R_\infty)/p^t)}\\
		0 \arrow[r] &{ ({}_\rho {\mg}_0^+(R)/p^t)})^\Delta \arrow[r]\arrow[u,"h_{t}"']& 	{ ({}_\rho G(R)/G_{s+t}(R))^{\Delta}} \arrow[r]\arrow[u,"h_{s+t}"'] & {({}_\rho G(R)/G_{s}(R))^{\Delta}}\arrow[r] \arrow[u,"h_{s}"']&	{ H^1_{\cts}(\Delta,{}_\rho {\mg}_0^+(R)/p^t)}\arrow[u,"h_{t}"'].
	\end{tikzcd}\]
	As the first and last vertical maps are isomorphisms up to $p^\gamma$-torsion by \cref{l:Sch13-4.5/5.5}, the same chase as in \cref{p:part-2-of-Faltings-Lemma} shows that there is a preimage of $\overline{A}_{s+t-2\gamma}$ under $h_{s+t-2\gamma}$ that lifts to $G(R)$.
	This shows that $A\in \varprojlim_{s\in \N} G(X)/G_s(X)=G(X)$.
\end{proof}
We now return to the case of general $G$. To prove \cref{l:fully-faithful-case-of-good-reduction}, it suffices by \cref{cl:from-decompletion-to-descent} to prove that $A$ is fixed by an open subgroup of $\Delta$. For the proof, we may replace  $X$ by an \'etale cover. Keeping this in mind, consider the image of $A$ in $G/U(R_\infty)$. 
	By \cite[Proposition 4.1]{heuer-G-torsors-perfectoid-spaces}, there is $n\in \N$ such that this comes from $G/U(R_n)$. After replacing $X$ by an \'etale cover, we can assume that this lifts to an element $A_n\in G(R_n)$. Then $A^\circ:=A\cdot A_n^{-1}\in U(R_\infty)$. Consider $\rho_3(\gamma):=A_n\rho_2(\gamma)A_n^{-1}$ defined on the open subgroup $\Delta_n\subseteq \Delta$ that fixes $R_n\subseteq R_\infty$. On the level of Lie algebras we see that we can find a small rigid open subgroup $H\subseteq U$ such that $A_nH(R_\infty) A_n^{-1}\subseteq U(R_\infty)$. After  increasing $n$ to replace $\Delta$ by an  open subgroup such that $\rho_2$ has image in $H$, we can therefore arrange for $\rho_3$ to also be a continuous homomorphism of the form $\rho_3:\Delta\to U(R_n)$. Then for any $\gamma\in \Delta_n$,
	\[\rho_1(\gamma)=A^{-1}\cdot \rho_2(\gamma)\cdot \gamma^{\ast}A=A^{-1}\cdot A_n^{-1}\rho_3(\gamma) A_n\cdot \gamma^{\ast}A=A^{\circ -1}\rho_3(\gamma)\gamma^{\ast}A^{\circ}.\]
	We now apply \cref{c:fully-faithful-for-G-of-good-reduction} to $\rho_1$ and $\rho_3$ to deduce that $A^\circ\in U(R_n)$, hence $A\in G(R_n)$.
\end{proof}

\begin{Proposition}\label{p:Psi-G-injective}
	Let $K|\Q_p^\cyc$ be a perfectoid field. 
	Let $X$ be any smoothoid over $K$ and let $G$ be any rigid group over $K$. Then
	$\Psi_G$ is injective.
\end{Proposition}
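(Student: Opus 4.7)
The plan is to run the injectivity half of the argument in \cref{t:Psi-G-isomorphism-good-reduction} verbatim, but with the descent statement \cref{l:fully-faithful-case-of-good-reduction}---which, crucially, was proved for \emph{arbitrary} rigid $G$---playing the role that \cref{p:part-2-of-Faltings-Lemma}.2 played there under the good reduction hypothesis. Since both $\Higgs_G$ and $R^1\nu_\ast G$ are sheaves on $X_\et$, I would first localise to reduce to the case that $X=\Spa(R,R^+)$ is toric with a fixed toric chart $f\colon X\to\mathbb T^d\times Y$, and take two sections $\theta_1,\theta_2\in(\mg\otimes\wtOm)^{\wedge=0}(X)$ whose images in $R^1\nu_\ast G(X)$ agree; it must then be shown that after passing to an étale cover of $X$, there is $A\in G$ with $\theta_2=\ad(A^{-1})\theta_1$.

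By the commutativity of \cref{eq:master-diagram-proof-main-thm} in \cref{p:master-diagram-Wf-vs-Psi}, we may pass to an étale cover of $X$---going up the toric tower suffices by \cref{l:Sch13-4.5/5.5}.2---on which $\wt\theta_i:=\HT_f^{-1}(\theta_i)$ lifts to sections of $\mathcal Z^{\ab}(\mathfrak u)$ for an open subgroup $\mathfrak u\subseteq\mg^\circ$ with image $U=\exp(\mathfrak u)\subseteq G^\circ$. Replacing $X$ by this cover, the commutativity of \cref{eq:master-diagram-proof-main-thm} gives $\W_{f,U}(\exp(\wt\theta_1))=\W_{f,U}(\exp(\wt\theta_2))$. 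After shrinking further if necessary so as to realise this identity already in $H^1_v(X,G)$, \cref{l:eq-cond-Wf-injective} supplies $A\in G(R_\infty)$ with
\[\exp(\wt\theta_1)(\gamma)=A^{-1}\cdot\exp(\wt\theta_2)(\gamma)\cdot\gamma^\ast A\quad\text{for all }\gamma\in\Delta.\]
\cref{l:fully-faithful-case-of-good-reduction} now forces $A\in G(R)$, so $\gamma^\ast A=A$ and $\exp(\wt\theta_1)=A^{-1}\exp(\wt\theta_2)A$ as honest homomorphisms $\Delta\to G(R)$.

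To conclude, I would shrink $\mathfrak u$ once more (again going deeper up the toric tower) so that $\ad(A^{-1})(\mathfrak u)\subseteq\mg^\circ$, which is possible because $A$ is fixed; then \cref{l:exp-log-commutativity}.3 lets us apply $\log$ to the conjugation identity and obtain $\wt\theta_1=\ad(A^{-1})\wt\theta_2$ in $\mathcal Z^{\ab}(\mg)$, hence after $\HT_f$ also $\theta_1=\ad(A^{-1})\theta_2$ in $(\mg\otimes\wtOm)^{\wedge=0}$. This identifies the images of $\theta_1,\theta_2$ in $\Higgs_G$ on this cover, as required. The single genuine obstacle in the plan is the descent $A\in G(R_\infty)\Rightarrow A\in G(R)$, which is precisely the content of \cref{l:fully-faithful-case-of-good-reduction}; everything else is bookkeeping with the master diagram of \cref{p:master-diagram-Wf-vs-Psi} and the functoriality of $\exp$ and $\log$ under conjugation.
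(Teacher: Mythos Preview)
Your proposal is correct and follows essentially the same approach as the paper's proof: localise to a toric chart, lift $\theta_1,\theta_2$ to abelian cocycles via the master diagram of \cref{p:master-diagram-Wf-vs-Psi}, use \cref{l:eq-cond-Wf-injective} to obtain $A\in G(R_\infty)$, descend to $A\in G(R)$ via \cref{l:fully-faithful-case-of-good-reduction}, and then apply $\log$ using \cref{l:exp-log-commutativity}.3. Your additional remarks about shrinking further to realise the equality already in $H^1_v$ and about shrinking $\mathfrak u$ so that $\ad(A^{-1})(\mathfrak u)\subseteq\mg^\circ$ are careful touches that the paper leaves implicit, but the argument is otherwise identical.
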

\begin{proof}
	We may assume that $X$ is toric and choose a toric chart. We consider diagram \cref{eq:master-diagram-proof-main-thm}:
	
	Let $\theta_1,\theta_2\in (\mg\otimes \wtOm)^{\wedge =0}(X)$ be such that $\Psi_G(\theta_1)=\Psi_G(\theta_2)$. As in the last part of the proof of  \cref{t:Psi-G-isomorphism-good-reduction}, we can pass to an \'etale cover $X'\to X$ where we can lift these to homomorphisms $\wt \theta_1, \wt \theta_2$ in the top left, so that $W_f(\exp(\wt \theta_1))=W_f(\exp(\wt \theta_2))$
	inside $H^1_v(X',G)$.
	By \cref{l:eq-cond-Wf-injective}, there is $A\in G(R_\infty)$ such that 
	$\exp(\wt \theta_1(\gamma))=A^{-1}\exp(\wt \theta_2(\gamma))\gamma^{\ast}A$. By \cref{l:fully-faithful-case-of-good-reduction}, we have $A\in G(X')$, so $\exp(\wt \theta_1)$ and $\exp(\wt \theta_2)$ are conjugated via $A$. By \cref{l:exp-log-commutativity}.3 it follows that already $\wt \theta_1$, $\wt \theta_2$ are conjugated via $G(X')$, thus so are $\theta_1,\theta_2$.
\end{proof}
This finishes the proof the $\Psi_G$ is an isomorphism if $K$ contains $\Q_p^\cyc$. 
One could prove the case of non-cyclotomic perfectoid $K$ directly with some more work, but what we have shown so far is already enough to deduce this case by descent in the next section.

\section{The local $p$-adic Simpson correspondence for $G$}
The technical work of the previous section also proves a new instance of Faltings' ``local $p$-adic Simpson correspondence'': Via the identification of ``generalised representations'' with $v$-vector bundles (\cite[Proposition~2.6]{heuer-G-torsors-perfectoid-spaces}), we can now generalise this from $\GL_n$ to rigid groups $G$ and from rigid spaces to perfectoid families of such.
For the formulation, we need a notion of ``smallness'' on either side of the correspondence. In the setting of rigid groups, this is not always intrinsic to $G$. Rather, it depends on an integral structure on $G$: The choice of a rigid open subgroup $G^+\subseteq G$ of good reduction, which always exists by \cref{c:open-subgroup-of-good-reduction}. 
\begin{Example}
		For $G=\GL_n$, the canonical choice $G^+=\GL_n(\O^+)$ recovers the classical setting of the local Simpson correspondence. More generally, there is a canonical choice for $G^+$ when $G$ is the analytification of an algebraic group over $K$ that extends to a group scheme $\mathcal G$ over $\O_K$: Namely, we can then take $G^+:=\mathcal G(\O^+)$, which is represented by the generic fibre of the $p$-adic completion of $\mathcal G$. This works for any split reductive groups.
\end{Example} 
Recall from \cref{p:inductive-lifting-ses-for-G} that given $G^+$, we get a canonical system of open subgroups $G^+_k$ for $k\geq 0$, as well as the integral subgroups $\mg^+_k\subseteq \mg$ of the Lie algebra. By \cref{p:exp-on-Lie-alg}, there is $\alpha>0$ such that for any $k>\alpha$, we have an exponential map
$\exp:\mg^+_k\isomarrow G^+_k$.

\begin{Definition}\label{d:small}
Let $X$ be a toric smoothoid space  and fix a toric chart $f:X\to \mathbb T^d\times Y$ (\cref{d:toric-chart}). Let $G$ be a rigid group and fix an open subgroup $G^+\subseteq G$ of good reduction. 
\begin{enumerate}
\item 
A $v$-$G$-bundle $V$ on $X_v$ is called \textbf{small} (with respect to $G^+$) if it admits a reduction of structure group to $G^+_c$ for some $c$ such that \cref{p:part-2-of-Faltings-Lemma} holds.

\item A $G$-Higgs bundle $(E,\theta)$ on $X$ is called \textbf{small}  (with respect to $G^+$) if for some $c>\alpha$ such that \cref{p:part-2-of-Faltings-Lemma} holds, there is a $G^+_c$-torsor $E^+$ with an isomorphism $E=G\times^{G^+_c}E^+$ with respect to which $\theta$ is a section of the integral $\O^+$-submodule
\[\ad(E)^+\otimes_{\O^+} \wtOm^+\subseteq \ad(E)\otimes_\O \wtOm,\]
where $\ad(E)^+:=\mg^+_c\times^{G_c^+}E^+\subseteq \ad(E)$ is the $\O^+$-sublattice induced by $E$, and $\wtOm^+\subseteq \wtOm$ is the locally free integral submodule induced by the chart $f$, see \cref{l:wtOm-integral-submodule-ind-by-f}.
\end{enumerate}
\end{Definition}
For $G=\GL_n$ and $G^+:=\GL_n(\O^+)$, we also define a $v$-vector bundle $V$ to be small if it is the change of fibre of a $v$-topological $1+p^c\m M_n(\O^+)$-bundle $V_c$ to $\G_a^d$, i.e.\ $V\cong \G_a^d\times^{G_c}V_c$.
By a small Higgs bundle, we then mean the change of fibre of a small Higgs bundle to $\G_a^d$.
\begin{Lemma}\label{l:etale-small-is-trivial}
	Any small $v$-$G$-bundle on $X_{v}$ that is \'etale-locally trivial is already globally trivial on $X$. In particular, the $G$-bundle underlying any small Higgs bundle is trivial.
\end{Lemma}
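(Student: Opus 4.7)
The plan is to reduce to the toric affinoid setting and then apply \cref{p:part-2-of-Faltings-Lemma} directly. First, I would work locally on $X_{\et}$ to arrange that $X=\Spa(R,R^+)$ is a toric smoothoid with a fixed toric chart $f\colon X\to \mathbb T^d\times Y$ as in \cref{d:small}, with respect to which the small $v$-$G$-bundle $V$ acquires a reduction of structure group to $G^+_c$ for some $c$ for which \cref{p:part-2-of-Faltings-Lemma} applies. Smallness combined with \cref{p:part-2-of-Faltings-Lemma}.1 then produces an abelian cocycle $\rho\colon \Delta \to G^+_c(X)$ with $W_f(\rho)=[V]$ in $H^1_v(X,G)$.

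The second step exploits étale-local triviality: by hypothesis there is an étale cover $X'\to X$ on which $V$ becomes trivial as a $v$-$G$-bundle. After shrinking $X'$ I would arrange that $X'$ is toric with a chart compatible with $f$, and by going up the toric tower of $X'$ and invoking \cref{l:Sch13-4.5/5.5}.2 I would enlarge $c$ so that \cref{p:part-2-of-Faltings-Lemma} also holds for $X'$ with the same constant, without disturbing the reduction of structure group. Now $W_f(\rho|_{X'})=1=W_f(1)$ in $H^1_v(X',G)$, so \cref{p:part-2-of-Faltings-Lemma}.2 yields $g\in G(X')$ with $\rho|_{X'}(\gamma)=g\cdot 1\cdot g^{-1}=1$ for all $\gamma\in \Delta$. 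Since the structure sheaf is an étale sheaf, the map $G^+_c(X)\hookrightarrow G^+_c(X')$ is injective, forcing $\rho=1$ on $X$, and thus $V$ is trivial.

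For the second assertion, the $G$-bundle $E$ underlying a small Higgs bundle $(E,\theta)$ admits by \cref{d:small}.2 a reduction of structure group to $G^+_c$, so viewed as a $v$-$G$-bundle via \cref{l:fully-faithful-functor-torsors} it is small. Being an étale $G$-torsor, it is by definition trivialised on some étale cover, hence étale-locally trivial as a $v$-bundle. The first part then yields triviality of $E$ on $X$.

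The main technical obstacle I anticipate is the bookkeeping along the étale localisation $X'\to X$: one must simultaneously preserve the reduction to $G^+_c$, ensure that $c$ still meets the hypotheses of \cref{p:part-2-of-Faltings-Lemma} on $X'$ (which may require enlarging $c$ via the toric tower and in turn replacing $\Delta$ by an open subgroup), and secure the cyclotomic hypothesis of \cref{p:part-2-of-Faltings-Lemma}.2, either by appealing to its announced extension to general perfectoid $K$ at the end of \cref{s:construction-of-local-correspondence} or by descending from $K^\cyc$ along the $Q$-torsor $\Spa(K^\cyc)\to \Spa(K)$, arguing as in \cref{s:non-cyclotomic}.
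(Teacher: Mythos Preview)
Your argument is workable but takes a different path from the paper's and leaves one step incomplete. The paper does not pass to an explicit \'etale cover at all; instead it notes that an \'etale-locally trivial class $x\in H^1_v(X,G)$ maps to $0$ in $R^1\nu_\ast G(X)$, so any abelian cocycle $\rho$ with $W_f(\rho)=x$ satisfies $\mathcal W_f(\rho)=0$. The commutative square of \cref{p:master-diagram-Wf-vs-Psi}, together with \cref{c:Psi-G-trivial-kernel} (the statement that $\Psi_G$ has trivial fibre over $0$, which is logically independent of the present lemma), then shows $\mathcal W_f^{-1}(0)=\{1\}$, whence $\rho=1$ and $x=0$. This stays on $X$ throughout and never needs to compare the constant $c$ on $X$ with one on a cover.

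Your route via \cref{p:part-2-of-Faltings-Lemma}.2 on an \'etale cover $X'$ must confront exactly that bookkeeping, and there is a gap. Going up the toric tower of $X'$ to arrange that the given $c$ works replaces the Galois group $\Delta$ by the open subgroup $\Delta_n=p^n\Delta$; part~2 applied on $X'_n$ then only yields $\rho(\gamma)=1$ for $\gamma\in\Delta_n$, not for all $\gamma\in\Delta$ as you assert. The fix is easy---since $\rho$ has abelian image in $G^+_c(X)$, one has $\rho(\gamma)^{p^n}=\rho(\gamma^{p^n})=1$, and applying $\log$ gives $p^n\log\rho(\gamma)=0$ in the torsion-free module $\mathfrak g^+_c(X)$, hence $\rho(\gamma)=1$---but it should be stated. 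Two smaller remarks: $X$ is already toric with a fixed chart by the standing hypotheses of \cref{d:small}, so your initial localisation is superfluous; and the phrase ``enlarge $c$'' is backwards---going up the tower makes \emph{smaller} $c$ admissible, the point being that the fixed $c$ coming from smallness of $V$ then suffices on $X'_n$.
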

\begin{proof}
	By \cref{p:part-2-of-Faltings-Lemma}, any element $x$ in the image of $H^1_v(X,G_c^+)\to H^1_v(X,G)$ is in the image of $W_f$. On the other hand, if $x$ comes from $H^1_{\et}(X,G)$, it is sent to $0$ in $R^1\nu_{\ast}G(X)$, thus the image under $\mathcal W_f:\mathcal Z^{\ab}(G_c)(X)\to R^1\nu_{\ast}G(X)$ is trivial.  But the fibre over $0$ of this map  is trivial by \cref{p:master-diagram-Wf-vs-Psi} and \cref{c:Psi-G-trivial-kernel}. Hence $x=0$.
\end{proof}

\begin{Lemma}\label{l:v-bundle-locally-small}
	Any $v$-$G$-bundle $V$ on $X_v$ becomes small on an \'etale cover of $X$.
\end{Lemma}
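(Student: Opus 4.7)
The plan is to use \Cref{p:reduction-of-structure-group} to reduce structure group, combined with \Cref{p:part-2-of-Faltings-Lemma} to tell us which subgroups are ``small enough''. Since the assertion is étale-local on $X$, we may first assume that $X$ is toric and fix a toric chart $f\colon X\to \mathbb T^d\times Y$. By \Cref{p:part-2-of-Faltings-Lemma}, applied to the rigid group $G^+$ of good reduction (which is justified by \Cref{c:open-subgroup-of-good-reduction}), we can then fix a constant $c>\alpha$ for which the conclusions of \Cref{p:part-2-of-Faltings-Lemma} hold for $X$, so that $G^+_c$ is an open subgroup of $G^+$ witnessing smallness in the sense of \Cref{d:small}.

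Next, recall from \Cref{l:small-adic-space-is-diamantine}.1 that $X$ is sousperfectoid. \Cref{p:reduction-of-structure-group} therefore applies to the inclusion of rigid groups $G^+_c\subseteq G$, and yields that the morphism of étale sheaves on $X_{\et}$
\[ R^1\nu_{\ast}G^+_c \to R^1\nu_{\ast}G \]
is surjective. The class $[V]\in H^1_v(X,G)$ of our given bundle has an image in $(R^1\nu_{\ast}G)(X)$, so surjectivity gives an étale cover $X'\to X$ on which this image lifts to an element of $(R^1\nu_{\ast}G^+_c)(X')$, which corresponds to a $v$-topological $G^+_c$-torsor $V'$ on $X'$. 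By construction $V'\times^{G^+_c}G$ and $V|_{X'}$ have the same class in $R^1\nu_{\ast}G(X')$, hence (after possibly refining the étale cover once more, so as to realise an abstract isomorphism of $v$-$G$-bundles) we obtain a reduction of structure group of $V|_{X'}$ to $G^+_c$. By \Cref{d:small}, this shows that $V|_{X'}$ is small.

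There is really no main obstacle: both ingredients are already in place, and the argument is essentially a packaging of facts already proved. The only thing one has to be a little careful about is that the constant $c$ featuring in ``smallness'' genuinely depends on $X$ (via the constant $\gamma$ of \Cref{l:Sch13-4.5/5.5}), so it is crucial that \Cref{p:reduction-of-structure-group} applies for \emph{this} specific $c$; but as we are free to shrink $G^+_c$ (increase $c$) without losing the surjectivity of \Cref{p:reduction-of-structure-group}, this causes no trouble.
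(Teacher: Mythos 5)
Your argument has a genuine gap, and it is exactly at the point you try to wave away at the end. Applying \Cref{p:reduction-of-structure-group} to $U=G^+_c$ does produce an \'etale cover $X'\to X$ on which $V$ acquires a reduction of structure group to $G^+_c$ — but smallness of $V|_{X'}$ is a condition on $X'$, not on $X$: by \Cref{d:small} you need $c$ to be a constant for which \Cref{p:part-2-of-Faltings-Lemma} holds \emph{for $X'$} (with some toric chart of $X'$). The constant $c$ comes from the torsion bounds $\beta,\gamma$ of \Cref{l:Sch13-4.5/5.5}, which genuinely depend on the space, and the \'etale cover $X'$ handed to you by the surjectivity of $R^1\nu_\ast G^+_c\to R^1\nu_\ast G$ is not under your control; its constant may well exceed the $c$ you started with. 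Your proposed fix — ``increase $c$ first'' — is circular: enlarging $c$ changes the subgroup $G^+_c$ you feed into \Cref{p:reduction-of-structure-group}, hence changes the cover $X'$ you obtain, whose required constant may again be larger. There is no a priori bound that closes this loop. (Your parenthetical ``after possibly refining the \'etale cover once more'' only makes matters worse, since each refinement can again increase the constant.)

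The paper resolves this in two steps. First it uses \Cref{p:reduction-of-structure-group} only to reduce the structure group to $G^+_k$ for \emph{some} $k>\alpha$ on an \'etale cover; at this point one simply replaces $X$ by that cover and accepts whatever constant $c$ it has. Then, on the new $X$, the bundle is trivial on the affinoid perfectoid cover $X_\infty$ by \Cref{l:small-bundles-on-perfectoid}, hence is given by a continuous cocycle $\rho\in \mathcal Z^1_{\cts}(\Delta,G^+_k(X_\infty))$ via Cartan--Leray. By continuity of $\rho$, an open subgroup of $\Delta$ is sent into $G^+_c(X_\infty)$, i.e.\ the reduction to $G^+_c$ is achieved after going up the toric tower $X_n\to X$. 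The key point that makes this work — and which your argument lacks — is the final assertion of \Cref{p:part-2-of-Faltings-Lemma}: the constant $c$ can be chosen \emph{uniformly} among all finite \'etale subcovers of $X_\infty\to X$ (via \Cref{l:Sch13-4.5/5.5}.2, going up the toric tower only improves the torsion bounds). So this particular class of covers does not spoil the constant, whereas an arbitrary \'etale cover might. To repair your proof you would need to restrict the second localisation to covers along which $c$ is stable, which is precisely the toric-tower argument.
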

\begin{proof}
	By \cref{p:reduction-of-structure-group}, we can after replacing $X$ by an \'etale cover find a reduction of structure group of $V$ to $G^+_k$ for some $k>\alpha$. This does not prove the lemma yet since passage to an \'etale cover might change the required bound $c$, which depends on $X$. However, we now can argue exactly as in the proof of \cite[Lemma~2.30]{heuer-G-torsors-perfectoid-spaces}: By \cref{l:small-bundles-on-perfectoid}, the $v$-$G$-bundle $V$ becomes trivial on $X_\infty\to X$. It therefore corresponds to a class $[\rho]$ in $H^1_{\cts}(\Delta,G^+_k(X_\infty))$. Going up the toric tower, which does not change $c$ by \cref{p:part-2-of-Faltings-Lemma}, we can replace $\Delta$ by an open subgroup that is sent into $G^+_c(X_\infty)$ by $\rho$. 
\end{proof}

\begin{Theorem}[Local $p$-adic Simpson correspondence for $G$-bundles in perfectoid families]\label{t:local-paCS}
Let $X$ be a toric smoothoid space over $K$ and fix a toric chart $f:X\to \mathbb T^d\times Y$ (see \cref{d:toric-chart}). Let $G$ be a rigid group over $K$. Fix an open subgroup $G^+\subseteq G$ of good reduction. This induces a notion of smallness of $G$-bundles (see \cref{d:small}).
\begin{enumerate}
\item There is an equivalence of categories
\[ LS_{f}:\{\text{small $G$-bundles on $X_v$}\}\isomarrow \{\text{small $G$-Higgs bundles on $X_{\et}$}\} \]
that is natural in $G$, but not in general independent of the chart $f$.
\item In the case of $G=\GL_n$, this extends to an exact equivalence of categories
\[ \{\text{small vector bundles on $X_v$}\}\isomarrow \{\text{small Higgs bundles on $X_{\et}$}\}.\]
\item Let $g:X'\to X$ be any morphism of smoothoids with toric charts $f$ of $X$ and $f'$ of $X'$. Then any morphism of toric charts $\widetilde{g}:f'\to f$ over $g$ induces a natural equivalence 
\[
\begin{tikzcd}
	\{\begin{array}{@{}c@{}l}\text{small $G$-bundles on $X'_v$} \end{array}\} \arrow[r, "LS_{f'}"]  & \{\begin{array}{@{}c@{}l}\text{small $G$-Higgs bundles on $X'_\et$}\end{array}\}       \\
	\{\begin{array}{@{}c@{}l}\text{small $G$-bundles on $X_v$} \end{array}\} \arrow[u,"g^{\ast}",dashed] \arrow[r, "LS_{f}"] & 	\{\begin{array}{@{}c@{}l}\text{small $G$-Higgs bundles on $X_\et$} \end{array}\} \arrow[u,"g^\ast",dashed]\arrow[lu,Rightarrow,"\wt g"',shorten >=7ex,shorten <=7.0ex]
\end{tikzcd}
\]
where the vertical functors are defined only on the respective subcategories of small objects with respect to $X$ for which the pullback along $g$ is small with respect to $X'$.

\item Let $V$ be a small $v$-vector bundle on $X_v$ and let $LS_f(V)=(E,\theta)$. Then there is a natural isomorphism 
$R\Gamma_v(X,V)=R\Gamma_{\Higgs}(X,(E,\theta))$
in $\mathcal D(\O(X))$. In particular, for $\nu:X_v\to X_{\et}$ we get a natural isomorphism  $R\nu_{\ast}V=R\Gamma_{\Higgs}(E,\theta)$ in $\mathcal D(X_{\et})$.
\end{enumerate}
\end{Theorem}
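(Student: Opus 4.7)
The plan is to construct $LS_f$ explicitly from the abelian-cocycle description of Sections~\ref{s:Higgs-to-v}--\ref{s:abelian-v-G-bundles}, and then to read off parts (2)--(4) from results already established about this description.

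\textbf{Construction and part (1).} Assume first $\Q_p^\cyc\subseteq K$; the general case follows by descent along $\Spa(K^\cyc)\to\Spa(K)$, using naturality of the construction in $K$. By \cref{d:small} a small $v$-$G$-bundle $V$ has a reduction of structure group to a $G^+_c$-torsor $V^+$, and by \cref{l:small-bundles-on-perfectoid} applied to the toric cover $X_\infty\to X$, it is classified by a class in $H^1_\cts(\Delta,G^+_c(X_\infty))$. By \cref{p:part-2-of-Faltings-Lemma}.1 this class admits a representative abelian cocycle $\rho\in \mathcal Z^{\ab}(G^+_c)(X)$, unique up to $G(X)$-conjugation by \cref{p:part-2-of-Faltings-Lemma}.2. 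We set
\[
LS_f(V):=\bigl(\text{trivial }G\text{-bundle},\ \theta_\rho\bigr),\qquad \theta_\rho:=\HT_f(\log\circ\rho).
\]
The condition $\theta_\rho\wedge\theta_\rho=0$ holds because $\Delta$ is abelian and $\log$ commutes with products of commuting elements (\cref{l:exp-log-commutativity}). On morphisms: a $G$-equivariant map $V_1\to V_2$ becomes, on $X_\infty$, an element $A\in G(X_\infty)$ with $\rho_1(\gamma)=A^{-1}\rho_2(\gamma)\gamma^\ast A$; \cref{l:fully-faithful-case-of-good-reduction} forces $A\in G(X)$, and functoriality of $\log$ and $\ad$ then gives $\ad(A^{-1})(\theta_{\rho_2})=\theta_{\rho_1}$. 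Essential surjectivity onto small Higgs bundles is the inverse direction of the bijection $\exp\colon \mathcal Z^{\ab}(\mathfrak g^+_c)\isomarrow \mathcal Z^{\ab}(G^+_c)$ of \cref{p:master-diagram-Wf-vs-Psi}; naturality in $G$ is then clear from naturality of $\exp$, $\log$ and $\ad$. The general-$K$ case of \cref{p:part-2-of-Faltings-Lemma}.2 promised in the paragraph before the theorem drops out of this same argument applied to $K^\cyc$ combined with $Q$-descent.

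\textbf{Part (3).} A morphism of toric charts $\widetilde g\colon f'\to f$ lifts canonically to a morphism of affinoid perfectoid toric covers $\mathbb T^{d'}_\infty\times Y'\to \mathbb T^{d}_\infty\times Y$ (\cref{d:toric-chart}.2), hence to a map $X'_\infty\to X_\infty$ equivariant for an induced homomorphism $\Delta'\to \Delta$. Pulling back an abelian cocycle $\rho$ for $V$ along this map gives an abelian cocycle $g^\ast\rho$ for $g^\ast V$ (where the latter is small), and the natural isomorphism $g^\ast\circ LS_f\Rightarrow LS_{f'}\circ g^\ast$ is the pullback compatibility $g^\ast\HT_f=\HT_{f'}\circ g^\ast$ on cocycles, which is functoriality of the Hodge--Tate isomorphism of \cref{l:HT-isom-on-small-dmd} in morphisms of charts.

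\textbf{Part (4) and part (2).} For a small $v$-vector bundle $V$ classified by $\rho\colon\Delta\to 1+p^c\mathfrak m M_n(\O^+)(X)$, the Cartan--Leray sequence for $X_\infty\to X$ gives
\[
R\Gamma_v(X,V)\ \cong\ R\Gamma_\cts\bigl(\Delta,\ {}_\rho V(X_\infty)\bigr),
\]
where the subscript denotes the twisted action $\gamma\cdot v=\rho(\gamma)\gamma^\ast v$. Choosing generators $\gamma_1,\dots,\gamma_d$ of $\Delta\cong\Z_p^d$ and writing $A_i:=\log\rho(\gamma_i)\in \End(V(X))$, the right-hand side is the Koszul complex on $V(X_\infty)$ in the commuting operators $\rho(\gamma_i)\gamma_i^\ast-1$, while $R\Gamma_\Higgs(X,(V(X),\theta))$ for $\theta=\sum A_i\otimes \delta_i$ is the Koszul complex on $V(X)$ in the commuting operators $A_i$. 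Matching them uses the decomposition
\[
\rho(\gamma_i)\gamma_i^\ast-1=(\gamma_i^\ast-1)+(\rho(\gamma_i)-1)\gamma_i^\ast,
\]
together with the normalized trace splittings of \cref{l:module-splitting-of-toric-tower} and \cref{p:RnuO-for-small-dmd}: the horizontal differentials $\gamma_i^\ast-1$ compute $\wtOm^\ast_X\otimes V(X)$, and the vertical differentials $(\rho(\gamma_i)-1)\gamma_i^\ast$ induce, after this horizontal decompletion, wedge product with $\sum A_i\otimes\delta_i=\theta$. This decompletion is the main obstacle of the whole proof: it extends the $H^0$- and $H^1$-computations of \S\ref{s:Higgs-to-v}--\S\ref{s:abelian-v-G-bundles} to all cohomological degrees and requires careful bookkeeping with the $\rho$-twisted Koszul differentials. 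Once established, part~(2) is immediate: small vector bundles are equivalent to small $\GL_n$-torsors via change of fibre along the standard representation, so (1) gives a bijection on objects; on morphisms, $\Hom(V_1,V_2)=H^0_v(X,V_1^\vee\otimes V_2)$ and $\Hom(LS_fV_1,LS_fV_2)=H^0_\Higgs(X,LS_f(V_1^\vee\otimes V_2))$ agree by (4); exactness follows since the twisted-invariants functor $({}_\rho-)^\Delta$ is exact on short exact sequences of $\Delta$-equivariant $\O(X_\infty)$-modules.
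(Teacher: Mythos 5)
Parts (1)--(3) of your proposal are essentially the paper's argument. The paper constructs the functor in the opposite direction, $LS_f^{-1}$ from small Higgs bundles to small $v$-$G$-bundles via $V_\rho(W)=\{s\in E(X_\infty\times_XW)\mid \gamma\cdot s=\exp(-\rho(\gamma))s\}$, which sidesteps the fact that your $LS_f(V)$ depends on a choice of abelian representative $\rho$ and of trivialisation on $X_\infty$; but the inputs are the same: \cref{p:part-2-of-Faltings-Lemma} for essential surjectivity, \cref{l:fully-faithful-case-of-good-reduction} together with \cref{l:exp-log-commutativity}.3 for full faithfulness, and equivariance of the toric towers for (3). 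One step you treat as formal is not: in the descent from the cyclotomic extension back to $K$, effectivity of the resulting descent datum for the \'etale $G$-torsor underlying the Higgs bundle requires \cref{c:descent-along-base-change}, which itself rests on the sheafified correspondence.

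The genuine gap is in part (4), which you yourself flag as ``the main obstacle''. Two ingredients are missing. First, the decompletion for \emph{twisted} coefficients: one needs that $H^m_{\cts}(\Delta,M)\to H^m_{\cts}(\Delta,M\otimes_{R^+}R^+_\infty)$ is an almost isomorphism in all degrees when $M=R^{+n}$ carries the action via a small $\rho$. The untwisted statements you cite (\cref{l:module-splitting-of-toric-tower}, \cref{p:RnuO-for-small-dmd}) do not give this: the normalised traces split off $M$ as a twisted $\Delta$-submodule, but the acyclicity of the complement for the $\rho$-twisted differential is exactly what has to be proved. The paper does this in \cref{l:ScholzeLemma-5.5-small-v-bundles} by decomposing $M\otimes_{A^+}A^+_\infty/p^k$ into weight spaces $(M/p^k)\cdot T_1^{i_1}\cdots T_d^{i_d}$ and observing that $\zeta^i\rho(\gamma_j)-1$ is a unit multiple of $\zeta^i-1$ for $i\neq 0$ precisely because $\rho\equiv 1\bmod p^\alpha$. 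Your splitting $\rho(\gamma_i)\gamma_i^{\ast}-1=(\gamma_i^{\ast}-1)+(\rho(\gamma_i)-1)\gamma_i^{\ast}$ does not obviously produce a filtration or double complex, since the two summands need not commute. Second, even after decompleting to $M$, the group-cohomology Koszul complex has differentials built from $\rho(\gamma_i)-1$, not from $A_i=\log\rho(\gamma_i)$; so the ``vertical'' differential is cup product with $\sum(\rho(\gamma_i)-1)\otimes\delta_i$, not with $\theta$. Identifying the two complexes is the content of \cref{l:Higgs-cohom-as-grp-cohom}, which uses the explicit units $u_i=A_i/(\rho(\gamma_i)-1)$ and the mutual commutativity of all $\rho(\gamma_i)$ and $A_i$. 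Since your part (2) routes full faithfulness and exactness through part (4), it inherits this gap as written, although for $\Hom$-sets only the degree-zero case of the decompletion is needed (which is the easy case, cf.\ the remark after \cref{l:ScholzeLemma-5.5-small-v-bundles}), so (2) is salvageable; note also that ``exactness of twisted invariants'' is not literally true (invariants are only left exact), and the paper instead deduces exactness from functoriality in $G$ applied to parabolic subgroups.
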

\begin{Remark}\label{r:explaining-localpaCS}
\begin{enumerate}
\item For functoriality in $f$ in part 3, we use the notion of morphisms of toric charts from
\cref{d:toric-chart}.2. We caution that for a given morphism $X'\to X$, it is not always possible to find charts $f$ and $f'$ that admit a  morphism of charts between them.
\item Our notion of ``smallness'' is more restrictive than that in other setups like \cite{Faltings_SimpsonI} \cite{AGT-p-adic-Simpson} \cite{Tsuji-localSimpson} \cite{Wang-Simpson}. This seems necessary to treat general rigid groups $G$.
We discuss the precise comparison to these other works in \cref{s:comparison-to-AGT}.
\item The last sentence in (3) is due to the fact that in general, $g^\ast$ may not preserve smallness, as the implicit constant $c$ in \Cref{p:part-2-of-Faltings-Lemma} depends on $X$. However, in many concrete situations of interest, $g^\ast$ does preserve smallness, for example when $X=Y$ and $g$ describes some Galois action.
\item For $G=\GL_n$, apart from exactness, the difference between \Cref{t:local-paCS}.(1) and (2) is that in (2), we allow morphisms that are not necessarily isomorphisms, see \cref{r:GLn-torsors-vs-vector-bundles}.
\item Even if $G$ is reductive and $X$ is a rigid space, we do not see how this could simply be deduced from $G=\GL_n$ by the Tannakian formalism, due to the smallness conditions.
\item The cohomological comparison in \Cref{t:local-paCS}.(4) is the generalisation to smoothoids of results that are known for smooth rigid spaces at least in the arithmetic setup: For Faltings local correspondence, this is due to Faltings \cite[p852]{Faltings_SimpsonI}, see also \cite[IV]{AGT-p-adic-Simpson}. It is also closely related to \cite[Theorem 2.1.(v)]{LiuZhu_RiemannHilbert} and \cite[Theorem 3.13]{MinWang22} which give analogous comparison results in the analytic setting over discretely valued fields. Our proof of (4) follows a similar strategy. Apart from the different base field, our  result can be interpreted as giving a relative version for families of rigid spaces. 
\end{enumerate}
\end{Remark}

\subsection{Proof of the local correspondence for general $G$}\label{s:construction-of-local-correspondence}
\begin{proof}[Proof of \cref{t:local-paCS}]
We first assume that $\Q_p^\cyc\subseteq K$, then the toric cover $X_\infty\to X$ associated to $f$ in \cref{s:comp-toric-charts} is an affinoid perfectoid $\Delta$-torsor.
In this setting, our technical preparations so far allow us to essentially follow Faltings' construction \cite[Theorem 3]{Faltings_SimpsonI}:

We will define a functor $LS^{-1}_f$ from right to left, and show that this is fully faithful and essentially surjective.
Let $(E,\theta)$ be a small $G$-Higgs bundle and let $E^+\subseteq E$ be a reduction of structure group to $G_c^+$ with respect to which $\theta$ has coefficients in $\ad(E)^+$. As explained in the beginning of \cref{s:abelian-v-G-bundles}, $\theta$ can be written via Lemma~\ref{l:HT-isom-on-small-dmd} as a continuous homomorphism
$\rho:\Delta\to \ad(E)^+(X)$ with commutative image. By \cref{l:etale-small-is-trivial} and \cref{l:exp-log-commutativity}, we can thus apply $\exp$ to turn this into a continuous $1$-cocycle
$\exp(\rho):\Delta\to \underline{\Aut}_{G_c^+}(E^+)(X)$.
This defines via Cartan--Leray a small $v$-$G$-bundle $V_{\rho}$ on $X_v$ that sends $W\in X_v$ to
\[ V_{\rho}(W):=\{s\in E(X_\infty\times_XW)|\gamma\cdot  s=\exp(-\rho(\gamma))s \text{ for all }\gamma\in \Delta\}.\]
(The sign in front of $\rho(\gamma)$ is usually required due to our conventions on cocycles \cref{d:non-ab-1-cocycles}, to make the cocycle condition translate into $\gamma_1\gamma_2\cdot x=\gamma_1^\ast\exp(-\rho(\gamma_2))\exp(-\rho(\gamma_1))x$. Here it is of minor importance as $\rho$ has commutative image). We now set $LS^{-1}(E,\theta):=V_{\rho}$.

We note that $E$ is a trivial $G$-bundle on $X$ by \cref{l:etale-small-is-trivial}, and $V_{\rho}$ is given by regarding $\exp(\rho)$ as a descent datum for $E$ along $X_\infty\to X$. In particular, $V_{\rho}$ becomes trivial on $X_\infty$.

We claim that this construction is functorial: A morphism of $G$-Higgs bundles $(E_1,\theta_1)\to (E_2,\theta_2)$ is the same as a morphism of $G$-bundles $\varphi:E_1\to E_2$ such that for the homomorphisms $\rho_1:\Delta\to \ad(E_{1})^+(X)$ and $\rho_2:\Delta\to \ad(E_{2})^+(X)$ associated to $\theta_1,\theta_2$ we have 
\begin{equation}\label{eq:local-paCS-ff-1}
\varphi^{-1}\rho_2(\gamma)\varphi=\rho_1(\gamma)\quad \forall \gamma\in\Delta.
\end{equation}
By \cref{l:exp-log-commutativity}.3, it follows that inside $E_1(X)$, we have
\begin{equation}\label{eq:local-paCS-ff-2} \varphi^{-1}\exp(-\rho_2(\gamma))\varphi=\exp(-\rho_1(\gamma))\quad \forall \gamma\in\Delta,
\end{equation}
which implies that for any $s\in E(X_\infty\times_XW)$, we have 
$\gamma\cdot \varphi(s)=\exp(-\rho_2(\gamma))\varphi(s)$ for all $\gamma\in\Delta$.
This shows that the natural homomorphism
$\varphi:E_1(X_\infty\times_XW)\to E_2(X_\infty\times_XW)$
restricts to the desired map $V_{\rho_1}(W)\to  V_{\rho_2}(W)$.

	\medskip

\textbf{$LS^{-1}$ is essentially surjective:} Let $V$ be a small $v$-$G$-bundle on $X$. Then by \cref{p:part-2-of-Faltings-Lemma}.1, we can find $\rho:\Delta\to G_c^+(X)$ such that the class of $V$ in $H^1_v(X,G)$ is equal to $W_f(\rho)$. Then the $G$-Higgs bundle $(G,\theta)$ with $\theta:=\log(\rho)$ is such that $LS^{-1}(G,\theta)\cong V$.

	\medskip

\textbf{$LS^{-1}$ is fully faithful:} Let $(E_1,\theta_1)$ and $(E_2,\theta_2)$ be two small $G$-Higgs bundles on $X$. By \cref{l:etale-small-is-trivial}, we can find isomorphisms $E_1=G$ and $E_2=G$.
Let $\phi:V_{\rho_1}\to V_{\rho_2}$ be any morphism between the associated $v$-topological $G$-torsors. Since $V_{\rho_1}$ and $V_{\rho_2}$ become trivial on $X_{\infty}$ by construction, 
this is by descent the same as a $\Delta$-invariant $G$-linear homomorphism
$\phi:V_{\rho_1}(X_\infty)\to V_{\rho_2}(X_\infty)$.
Choosing generators $x$ and $y$ on either side, this is a $\Delta$-equivariant homomorphism
$\phi:G(X_\infty)x\to G(X_\infty)y$.
Let $A\cdot y$ be the image of $1\cdot x\in G(X_\infty)x$, then the $\Delta$-equivariance means that for all $\gamma\in \Delta$, we have:
\begin{alignat*}{5}
	\gamma\cdot \phi(x)&=&&\phi(\gamma\cdot x)	\quad &\Rightarrow\quad&& \gamma^\ast A\exp(-\rho_2(\gamma))\cdot y &=&&\exp(-\rho_1(\gamma))A\cdot y\\
	&&&&\Rightarrow\quad&& A\exp(\rho_2(\gamma))\gamma^\ast A^{-1} &=&&\exp(\rho_1(\gamma))A
\end{alignat*}
By \cref{l:fully-faithful-case-of-good-reduction}, this implies $A\in G(X)$. 
We thus have \cref{eq:local-paCS-ff-2} with $\varphi=A$, which again by \cref{l:exp-log-commutativity}.3 is equivalent to \cref{eq:local-paCS-ff-1}, expressing that $A$ defines a uniquely determined morphism of Higgs bundles $A:(E_1,\theta_1)\to (E_2,\theta_2)$. Hence $LS^{-1}$ is an equivalence.

	\medskip

\textbf{Naturality in $f$ and $G$:}  Naturality in $G$ is clear.
Functoriality in $f$ can be seen exactly as in \cref{l:functoriality-of-Psi}: Any morphism $\wt g$ of toric charts between $f'$ and $f$
as in  \cref{d:toric-chart} induces a morphism $X'_\infty \to X_\infty$ which is equivariant with respect to the induced map $\Delta'\to \Delta$ between the Galois groups. The pullback of $G$-Higgs bundles is then given by sending $\rho$ to
$\rho':\Delta'\to \Delta\xrightarrow{\rho} \ad(E^+)(X)\to \ad(E^+)(X')$
and $\exp$ of this agrees with the cocycle
\[ \exp(\rho'):\Delta'\to \Delta\xrightarrow{\exp(\rho)} \underline{\Aut}_{G_c^+}(E^+)(X)\to \underline{\Aut}_{G_c^+}(E^+)(X').\]
Thus the pullback of $V_{\rho}$ along $X'\to X$ agrees with $V_{\rho'}$ defined with respect to $f'$.

Part 2 can be seen exactly like part 1: We just need to replace \cref{eq:local-paCS-ff-1} by the equation
$\rho_2(\gamma)\varphi=\varphi\rho_1(\gamma)$ for all $ \gamma\in\Delta$
which is equivalent to
$\exp(\rho_2(\gamma))\varphi=\varphi\exp(\rho_1(\gamma))$, this time
by \cite[Lemma~3.11]{heuer-G-torsors-perfectoid-spaces}.
Finally, exactness in part 2 follows from the case of parabolic subgroups $G\subseteq \GL_n$, and functoriality of the local correspondence in $G$.

This finishes the proof of \cref{t:local-paCS}.1 and 2.

\medskip

We end this subsection with two remarks on globalisation: 
\begin{Remark}One key  difference between \Cref{t:main-thm-for-both-O-and-O^+} and \Cref{t:local-paCS} is that the former works for any smooth rigid space $X$, while the latter depends on a toric chart $f:X\to \mathbb T^d$. For general smooth $X$, one might try to globalise the construction by choosing a cover by affinoids that admit a toric chart. However, \Cref{t:local-paCS}.3 says that it is only possible to compare the equivalences on overlaps in a canonical way after the choice of a morphism of toric charts. It is usually not possible to find these in a way that preserves gluing data.
	
	This should not be regarded as a flaw of the local correspondence, but rather as a meaningful conceptual barrier: In fact, the global correspondences of Faltings \cite{Faltings_SimpsonI}, Abbes-Gros and Tsuji \cite{AGT-p-adic-Simpson}, Wang \cite{Wang-Simpson} and others (see \Cref{s:comparison-to-AGT}) all rely on an additional choice, namely the choice of an $A_{\inf}/\xi^2$-lift of a semi-stable model of $X$. Second, they require a stronger smallness condition than the one that works for the local correspondence.
\end{Remark}
\begin{Remark}
	In this light, it is interesting to ask to what extent it is possible to generalise \Cref{t:local-paCS} to a more general class of perfectoid Galois covers, thus allowing better gluing properties in some special cases. We will therefore now sketch how one can use the more general preparations  from \Cref{s:Higgs-to-v} to generalise the local correspondence by axiomatising the class of Galois covers for which the proof still goes through. We will apply this in \cite{HMZ} to prove a small $p$-adic Simpson correspondence for $G$-torsors on abeloid varieties.
	
	 As in \S\ref{s:preps-and-choices}, let $X$ be an affinoid smoothoid adic space over a perfectoid field $K$ containing $\Q_p^\cyc$ and let
	\[f:\wt X=\Spa(R_\infty,R^+_\infty)\to X=\Spa(R,R^+)\]
	be a pro-\'etale affinoid perfectoid cover. We suppose that $f$ satisfies the following axioms:
	\begin{enumerate}[label=(\arabic*)]
		\item $f$ is a pro-finite-\'etale Galois cover, and the Galois group $\Delta$ is a finite free $\Z_p$-module.
		\item There is $\gamma>0$ such that for any $s\in \N$ and $i\in \{0,1,2\}$, the kernel and cokernel of 
		\[ H^i_{\cts}(\Delta,R^+/p^s)\to H^i_{\cts}(\Delta,R^+_\infty/p^s)\]
		are killed by  $p^{\gamma}$. Moreover, the same is true for
		$H^i_{\cts}(\Delta,R^+)\to H^i_{\cts}(\Delta,R^+_\infty)$.
		\end{enumerate}
	Note that (1) and (2) imply that $\Delta\cong \Z_p^d$ where $d$ is the smooth dimension of \Cref{d:differential-dimension}. Part (2) replaces \Cref{l:Sch13-4.5/5.5}.3. The case of $i=1$ implies that the Cartan--Leray map of $f$
	 \[ \Hom_\cts(\Delta,\O(R))\to H^1_{\cts}(\Delta,\O(R_\infty))\isomarrow H^1_v(X,\O)\xrightarrow{\HT}H^0(X,\wtOm^1)\]
	 is an isomorphism: Indeed, the second map is an isomorphism because $H^1_v(\wt X,\O)=0$ due to the assumption that $\wt X$ is affinoid perfectoid, and the map $\HT$ is an isomorphism because $X$ is affinoid.
	 We can thus deduce from (2) the analogue of \Cref{l:HT-isom-on-small-dmd} in this setting.
	 
	 Then the analogue of \Cref{p:part-2-of-Faltings-Lemma} holds for $f$, namely the exact same proof goes through with $c:=5\gamma$ by replacing \Cref{l:Sch13-4.5/5.5} with axiom (2). We can use this to define a notion of smallness as in \Cref{d:small}. However, we will have to slightly modify $c$ further:
	 
	 We also require an analogue of \Cref{l:fully-faithful-case-of-good-reduction} for $f$, but for this we need to be slightly more careful. The proof of this goes through verbatim except for the step in \Cref{c:fully-faithful-for-G-of-good-reduction} where \cref{l:Sch13-4.5/5.5}.2 is invoked, for which we do not have a direct analogue for $f$.
	  However, this step of the proof can still be generalised with some more work, by replacing the \'etale localisation argument with a lifting argument at the expense of increasing $c$: 
	  
	  In the notation of the proof, we need to show that $\overline{A}_c\in G/G_c(X)$ lifts to $G(X)$. The obstruction to lifting defines a class in $H^1_v(X,G_c)$. We claim that this class vanishes if we replace $U=G_c$ in \Cref{c:fully-faithful-for-G-of-good-reduction} by $U:=G_{2c}$. Indeed, if $\rho_1,\rho_2$ factor through $G_{2c}(R)$, then we already have $\overline{A}_{2c}\in G/G_{2c}(X)$ and the lifting obstruction thus lies in the image of a map
	 \[ h:H^1_v(X,G_{2c})\to H^1_v(X,G_c).\]
	 It thus suffices to see that $h=1$. But this follows from \Cref{l:etale-small-is-trivial}: Unravelling the definitions, we see that for the group $G':=G_c$, a $v$-$G'$-bundle on $X$ is small if it admits a reduction of structure group to $G'_c=G_{2c}$. Hence $h=1$, so $\overline{A}_{c}$ lifts uniquely to $G(X)/G_c(X)$ as desired. From here the proof of \Cref{l:fully-faithful-case-of-good-reduction} goes through without further changes.
	 
	 Adapting the notion of smallness accordingly, so that a $v$-$G$-bundle is small if it admits a reduction of structure group to $G_{2c}$, it follows that also \Cref{l:etale-small-is-trivial} holds in this setting.
	 
	 With these results at hand, the construction of the functor
	 \[ LS_{f}:\{\text{small $G$-bundles on $X_v$}\}\to \{\text{small $G$-Higgs bundles on $X_{\et}$}\} \]
	 now goes through verbatim: Indeed, the first part of the proof of \Cref{t:local-paCS} work in the same way using the analogue of \Cref{l:etale-small-is-trivial}. The proof of essential surjectivity works verbatim using the analogue of  \Cref{p:part-2-of-Faltings-Lemma}, and the proof of fully faithfulness works verbatim using the analogue of \Cref{l:fully-faithful-case-of-good-reduction}.
	 
	 All in all, this shows that we get a more general version of part 1, 2, 3 of \Cref{t:local-paCS} for not necessarily toric covers $\wt X\to X$ satisfying axioms (1) and (2), but at the expense of a more restrictive smallness assumption (defined in terms of $2c$ instead of $c$, where $c=5\gamma$). In some special cases where one has global perfectoid covers, this allows for better globalisation than one has for toric charts. 
	 That being said, the cohomological correspondence, \Cref{t:local-paCS}.4, will require more input specific to toric charts. This is our next goal.
\end{Remark}

\subsection{Cohomology of $v$-vector bundles}
Continuing the proof of \cref{t:local-paCS}, we now move on to part 4. 
For this the main computation is a more general version of part of  \cref{l:Sch13-4.5/5.5} (which was in turn based on  \cite[Lemma~5.5]{Scholze_p-adicHodgeForRigid}), describing the $v$-cohomology of small $v$-vector bundles, \cref{l:Sch13-4.5/5.5} being the case of the trivial bundle $\O$. As before, we write $X=\Spa(R,R^+)$ and $X_\infty=\Spa(R_\infty,R^+_\infty)$.

\begin{Lemma}\label{l:ScholzeLemma-5.5-small-v-bundles}
	Let $\rho:\Delta\to \GL_n(R^+)$ be a continuous representation such that $\rho\equiv 1\bmod p^\alpha$. Let either $M:=R^{+n}$ or $M:=R^{+n}/p^k$ for some $k\in \N$, endowed with $\Delta$-action via $\rho$. Endow $M\otimes_{R^+}R^+_\infty$ with the diagonal $\Delta$-action and  consider for any $m\geq 0$ the natural map
	\[ H^m_{\cts}(\Delta,M)\to H^m_{\cts}(\Delta,M\otimes_{R^+}R^+_\infty).\]
	Then we can find $c>0$ independent of $k$ such that the  kernel and cokernel of this map are annihilated by $p^{c}$. For $m=0$ and $M=R^{+n}$, the map is an isomorphism. 
\end{Lemma}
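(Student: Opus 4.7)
The plan is to deduce the result from \Cref{l:Sch13-4.5/5.5} by a perturbation argument exploiting that $\rho-1\in p^\alpha M_n(R^+)$. Assuming $\Q_p^\cyc\subseteq K$ for exposition, take $\Delta\cong \Z_p^d$ with topological generators $\gamma_1,\dots,\gamma_d$, so that both cohomologies are computed by explicit Koszul complexes. The differentials of $K^\bullet(M)$ involve $\rho(\gamma_i)-1\in p^\alpha M_n(R^+)$, while those of $K^\bullet(M\otimes_{R^+}R^+_\infty)$ involve $\rho(\gamma_i)\gamma_i^*-1=(\gamma_i^*-1)+(\rho(\gamma_i)-1)\gamma_i^*$, the second summand being $p^\alpha$-small.

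First I would dispose of $m=0$ with $M=R^{+n}$: given $w\in R^{+n}_\infty$ with $\rho(\gamma)\gamma^*(w)=w$ for all $\gamma$, rewriting gives $(\gamma^*-1)w=(\rho(\gamma)^{-1}-1)w\in p^\alpha R^{+n}_\infty$. Applying the normalised traces $\tr_n\colon R_\infty\to R_n$ from \Cref{l:module-splitting-of-toric-tower} and iterating, using that $\rho-1$ is $p^\alpha$-contracting, shows that $w=\lim_n \tr_n(w)\in R^{+n}$. For general $m$, the short exact sequence $0\to M\to M\otimes_{R^+}R^+_\infty\to M\otimes_{R^+}(R^+_\infty/R^+)\to 0$ of $\Delta$-modules (with diagonal action) reduces the claim, via its long exact sequence, to showing $H^m_\cts(\Delta,M\otimes_{R^+}(R^+_\infty/R^+))$ is annihilated by a uniform power of $p$. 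Since $\rho$ is trivial modulo $p^\alpha$, this cohomology mod $p^\alpha$ is $n$ copies of $H^m_\cts(\Delta,(R^+_\infty/R^+)/p^\alpha)$ with only the $\gamma^*$-action, which is $p^\gamma$-torsion by \Cref{l:Sch13-4.5/5.5}. An inductive lifting procedure modelled on the proof of \Cref{p:part-2-of-Faltings-Lemma}, using the $p^\alpha$-small perturbation and invoking \Cref{l:Sch13-4.5/5.5} at each step, then yields the desired uniform bound.

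The main obstacle I anticipate is ensuring that $c$ is independent of $k$: a naive dévissage along the $p$-adic filtration of $R^{+n}/p^k$ would give torsion bounds accumulating linearly in $k$. The key point avoiding this is that the $\rho$-perturbation has constant size $p^\alpha$ regardless of $k$, so the inductive lifting iterates in the Koszul degree rather than in the $p$-adic one. For $\Q_p^\cyc\not\subseteq K$ the argument is identical, substituting the section $\sec$ of \Cref{l:section-for-Lambda} and the non-cyclotomic variant \Cref{l:non-cyclotomic-version-of-Sch13-4.5/5.5}.
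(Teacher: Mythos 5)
Your strategy — split off the ``complement'' via $0\to M\to M\otimes_{R^+}R^+_\infty\to M\otimes_{R^+}(R^+_\infty/R^+)\to 0$ and treat the twist as a $p^\alpha$-small additive perturbation — has a quantitative gap at its central step that I do not think can be repaired in this form. Write $N:=M\otimes_{R^+}(R^+_\infty/R^+)$. Your step (a) reduces modulo $p^\alpha$, where the action becomes untwisted, and invokes \cref{l:Sch13-4.5/5.5} to say the resulting cohomology is $p^\gamma$-torsion. But $\gamma=2\beta+\alpha\geq\alpha$, so this assertion is vacuous: $N/p^\alpha$ is killed by $p^\alpha$, hence so is all of its cohomology, and $p^\alpha$ already divides $p^\gamma$. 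The perturbation is therefore \emph{not} smaller than the torsion exponent of the unperturbed cohomology, and there is no base from which your step (b) can start. The d\'evissage along $p^{j\alpha}N\subseteq N$ (whose graded pieces do carry the untwisted action) then accumulates torsion linearly in $j$, hence in $k$ — the very problem you flag. The proposed escape, that ``the inductive lifting iterates in the Koszul degree rather than in the $p$-adic one,'' is not an argument: the lifting in \cref{p:part-2-of-Faltings-Lemma} converges precisely because each step gains strictly more $p$-adic precision ($t=3\gamma$) than it loses ($2\gamma$), and that strict inequality is exactly what fails here since the perturbation has size $p^\alpha$ while the error terms are only controlled modulo $p^\gamma$ with $\gamma\geq\alpha$.

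The mechanism that actually makes the constant independent of $k$ is multiplicative, not additive, and requires a finer decomposition than $R^+_\infty/R^+$. One first replaces $R^+_\infty$ by $R^+\hotimes_{A^+}A^+_\infty$ using \cref{l:Sch13-4.5/5.5}.(1) (at a uniform cost of $p^{2\beta}$); then $M\otimes_{A^+}A^+_\infty/p^k$ decomposes $\Delta$-equivariantly into rank-one isotypic pieces $(M/p^k)\cdot T_1^{i_1}\cdots T_d^{i_d}$. The $i=0$ piece is the source, and on a piece with $i_j\neq 0$ the relevant differential is $\zeta^{i_j}\rho(\gamma_j)-1$, which is a \emph{unit multiple} of $\zeta^{i_j}-1$ because $v(\rho(\gamma_j)-1)\geq\alpha\geq\tfrac{1}{p-1}\geq v(\zeta^{i_j}-1)$; hence each nonzero component contributes cohomology killed by $\zeta_p-1$, uniformly in $i$ and $k$. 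Passing to $R^+_\infty/R^+$ discards exactly this decomposition and with it the only way to absorb the perturbation. (Your $m=0$ argument via normalised traces is also incomplete as written — knowing $(\gamma^\ast-1)w\in p^\alpha R^{+n}_\infty$ only controls $w$ modulo $p^\alpha$, and iterating runs into the same $\gamma\geq\alpha$ obstruction; the clean route is to deduce the rational isomorphism from the torsion statement and then intersect using $R^+=R\cap R^+_\infty$.)
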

\begin{Remark}
For $m=0$, this give an alternative proof of \cref{l:fully-faithful-case-of-good-reduction} for $G=\GL_n$, by setting $M=\End(\rho_1,\rho_2)$. This is essentially Faltings' original proof in \cite[Lemma~1]{Faltings_SimpsonI}.
\end{Remark}
\begin{proof}
	We use \cref{l:Sch13-4.5/5.5} and its notation: As $R^+\hotimes_{A^+}A_{\infty}^+\to R^+_\infty$ is injective with $p^\beta$-torsion cokernel, it suffices to see the statement with $R^+\to R_\infty^+$ replaced by $A^+\to A_\infty^+$.
	For $m=0$ we then get upon inverting $p$ an isomorphism
	$M^\Delta\otimes_{R^+}R=(M\otimes_{R^+}R_\infty)^\Delta$,
	and using that $R^+=R^+_\infty\cap R$, we see that it is already an isomorphism before inverting $p$.
	
	By a limit argument, it thus suffices to prove the statement for $M/p^k$ for any $k\in \N$. Here we can argue as in \cite[p205-206]{Faltings_AlmostEtale}: Fix a compatible system of $p$-power roots of unity $\zeta$. Write $\gamma_1,\dots,\gamma_d$ for the induced generators of $\Delta=\Z_p^d$.
	We then have a decomposition
	\[M\otimes_{A^+}A_\infty^+/p^k=\bigoplus_{i=(i_1,\dots,i_d)\in [0,1)\cap \Z[\frac{1}{p}]}(M/p^k) \cdot T_1^{i_1}\cdots T_d^{i_d}  \]
	as a $\Delta$-module,
	where $(M/p^k) \cdot T_1^{i_1}\cdots T_d^{i_d}$ is the $R^+$-module $M/p^k$ with the action of $\rho$ twisted by the character  $\gamma_j\mapsto \zeta^{i_j}$. It thus suffices to prove that already
	$H^m_{\cts}(\Delta,(M/p^k) \cdot T_1^{i_1}\cdots T_d^{i_d})$
	is $p^{c}$-torsion unless $i=0$. 
	Via the inflation-restriction sequence and \cref{l:cts-grp-cohom-of-Zp}, we can reduce to $\Z_p$-cohomology.
	We are thus left to describe $R\Gamma_{\cts}(\Z_p,M/p^k)$ with $1\in \Z_p$ acting as multiplication by $\zeta^i\rho(\gamma_j)$ on $M/p^k$. By \cref{l:cts-grp-cohom-of-Zp}, this is computed by the complex
	\[ 0\to M/p^k\xrightarrow{\zeta^i\rho(\gamma_j)-1}  M/p^k\to 0.\]
	By assumption we have $\rho(\gamma_j)=1+p^\alpha A$ for some $A\in M_n(R^+)$, so $(\zeta^i\rho(\gamma_j)-1)/(\zeta^i-1)$ is a unit for $i\neq 0$. Hence the above map has the same kernel and cokernel as $\zeta^i-1$.
\end{proof}

\begin{Lemma}\label{l:Higgs-cohom-as-grp-cohom}
Let $\rho:\Delta\to \GL_n(R^+)$ be a continuous representation  such that $\rho\equiv 1\bmod p^\alpha$.
Let $M:=R^n$ endowed with $\Delta$-action via $\rho$. Then for $E:=M\otimes \O$ and $\theta:=\log(\rho)$ interpreted as a section of $M_n\otimes \wtOm$ as in \cref{s:construction-of-local-correspondence}, there is a natural isomorphism
\[ R\Gamma_{\cts}(\Delta,M)\isomarrow R\Gamma_{\mathrm{Higgs}}(X,(E,\theta)).\]
\end{Lemma}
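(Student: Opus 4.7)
The plan is to reduce both sides to explicit Koszul complexes on $M = R^n$ and then identify them via the identity $\exp(B)-1 = B\cdot U(B)$ for a convergent unit $U$.

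First I would identify the right hand side with a Koszul complex. Because $X = \Spa(R,R^+)$ is toric affinoid, every vector bundle on $X$ is acyclic on $X_{\et}$ by \cref{l:small-adic-space-is-diamantine}, so $R\Gamma_{\et}(X,\mathcal C^\ast_{\Higgs}(E,\theta))$ is just the global-sections complex of the Higgs complex from \cref{d:Higgs-cohom}. On the chosen toric chart $f$ the sheaf $\wtOm$ is free over $\O$ with basis $\delta_1,\dots,\delta_d$ dual to the generators $\gamma_1,\dots,\gamma_d$ of $\Delta = \Z_p^d$ under $\HT_f$ (\cref{l:HT-isom-on-small-dmd}), so each $\wtOm^k = \wedge^k\wtOm$ is free with basis $\delta_S := \wedge_{i\in S}\delta_i$ indexed by $S \subseteq \{1,\dots,d\}$ of size $k$. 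Writing $\theta = \sum_i A_i\otimes \delta_i$ with pairwise commuting $A_i \in M_n(R)$ (the Higgs condition $\theta\wedge\theta = 0$), the global-sections Higgs complex becomes the classical Koszul complex $K^\bullet(A_1,\dots,A_d;M)$.

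Next I would identify the left hand side similarly. The standard description of continuous $\Z_p^d$-cohomology expresses $R\Gamma_{\cts}(\Delta,M)$ as the Koszul complex $K^\bullet(\gamma_1-1,\dots,\gamma_d-1;M)$ on the pairwise commuting operators $\gamma_i - 1 = \rho(\gamma_i) - 1 \in \End_R(M)$. By construction $\theta$ corresponds to $\log\rho$ under $\HT_f$, so $A_i = \log\rho(\gamma_i)$; combined with $\rho \equiv 1 \bmod p^\alpha$ this forces $A_i \in p^\alpha M_n(R^+)$. The series
\[ U_i := \sum_{k\geq 0}\frac{A_i^k}{(k+1)!} = \frac{\exp(A_i) - 1}{A_i} \]
therefore converges to an invertible element of $M_n(R^+)$, and since each $U_i$ is a power series in $A_i$ and the $A_j$ commute among themselves, every $U_i$ commutes with every $A_j$ and every $U_j$. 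This gives the key identity $\gamma_i - 1 = A_i U_i$.

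The comparison is then implemented by the diagonal chain map
\[
\alpha_\bullet\colon K^\bullet(\gamma_1-1,\dots,\gamma_d-1;M) \to K^\bullet(A_1,\dots,A_d;M), \quad \alpha_k(e_S\otimes m) := e_S\otimes \Big(\textstyle\prod_{i\notin S}U_i\Big)m.
\]
Each $\alpha_k$ is invertible because each $U_i$ is, and compatibility with the Koszul differentials reduces to $A_i U_i = \gamma_i - 1$ together with the commutativity relations $[A_i,A_j] = [U_i,U_j] = [U_i,A_j] = 0$; a short direct calculation confirms it. Composing with the identifications of the previous two paragraphs yields the desired natural isomorphism. The main obstacle will be purely bookkeeping: fixing sign and ordering conventions (for the Koszul differentials, for $\HT_f$, and for the generators of $\Delta$) so that the resulting isomorphism is well defined on cohomology independently of these choices and so that on $H^0$ it matches the isomorphism induced by the functor $LS_f$ of \cref{t:local-paCS}. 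No further analytic input is required beyond convergence of $U_i$ and its commutation with the $A_j$, both immediate from $A_i$ being topologically nilpotent and the $A_i$'s commuting.
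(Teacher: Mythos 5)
Your proposal is correct and follows essentially the same route as the paper: both sides are identified with Koszul-type complexes on $M$ (via \cref{l:cts-grp-cohom-of-Zp} and acyclicity of vector bundles on the affinoid $X$), and the comparison is the diagonal chain isomorphism built from the invertible elements relating $\gamma_i-1$ and $A_i=\log\gamma_i$; your $U_i=(\exp(A_i)-1)/A_i$ is the inverse of the paper's $u_i=A_i/(\gamma_i-1)$, and multiplying by $\prod_{i\notin S}U_i$ versus $\prod_{i\in S}u_i$ differs only by the automorphism ``multiply by $\prod_{i=1}^d u_i^{-1}$'' of the target complex. One small caution: your factorial-denominator series for $U_i$ only converges when the congruence exponent strictly exceeds $\tfrac{1}{p-1}$, whereas the paper's series for $u_i$ (denominators $w$, powers of the topologically nilpotent $\gamma_i-1$) converges for any positive exponent; since $U_i=u_i^{-1}$ exists in any case, this does not affect the argument.
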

\begin{proof}
	Let $\gamma_1,\dots,\gamma_d\in \Delta$ be topological generators and let $A_1,\dots, A_d$ be the images of $\gamma_1,\dots,\gamma_d$ under $\theta$. Via $\Hom(\Delta,\O(X))=\wtOm(X)$, the dual basis of $\gamma$ induces a basis $\delta_1,\dots,\delta_d$ of $\wtOm(X)$ and thus an isomorphism $\wtOm(X)=R^d$. The Higgs complex now evaluates on $X$ to
\[\mathcal C^\ast_{\mathrm{Higgs}}=\big [M\to M\otimes_RR^d\to  M\otimes_R\wedge^2R^d\to \dots \to M\otimes_R\wedge^dR^d\big]\]
where the $k$-th transition map sends $m \otimes e_{i_1}\wedge\dots\wedge e_{i_k}\mapsto\sum_j A_j(m)\otimes e_{i_1}\wedge\dots\wedge e_{i_k}\wedge e_j$

Comparing the complex $\mathcal C^\ast_{\mathrm{Higgs}}$ to $\mathcal C^\ast_{\mathrm{grp}}$ from \cref{l:cts-grp-cohom-of-Zp}, we see that both have the same terms, but different transition maps. We claim that there is an $R$-linear isomorphism
$u:\mathcal C^\ast_{\mathrm{grp}}\to \mathcal C^\ast_{\mathrm{Higgs}}$.
For this we first observe that inside $\End_R(M)$, when we identify $\gamma_i$ with its image under $\rho$, then
$A_i/(\gamma_i-1)=u_i:=1+(\gamma_i-1)\sum_{w=2}^\infty\frac{(-1)^{w+1}}{w}(\gamma_i-1)^{w-2}\in \End_R(M)$ is invertible as $\gamma_i-1$ is topologically nilpotent in $\End_R(M)$. Writing each term in $\mathcal C^{\ast}_{\mathrm{grp}}$ as
$M\otimes_R\wedge^kR^d=\bigoplus_{i_1<\dots<i_k}M \cdot e_{i_1}\wedge\dots \wedge e_{i_k}$,
we now define $u$ on each direct summand as
\[ u:M\cdot e_{i_1}\wedge\dots \wedge e_{i_k}\to M\cdot  e_{i_1}\wedge\dots \wedge e_{i_k}, \quad m\mapsto u_{i_1}\cdots u_{i_k} m.\]
This gives an isomorphism of complexes as the $\gamma_i$ and $A_i$ all commute with each other.
\end{proof}
In summary, we obtain for the $v$-vector bundle $V$  natural isomorphisms in $\mathcal D(R)$
\[ R\Gamma_v(X,V)=R\Gamma_{\cts}(\Delta,V(X_\infty))=R\Gamma_{\cts}(\Delta,M)=R\Gamma_{\Higgs}(X,(E,\theta))\]
where the first morphism comes from the Cartan--Leray sequence \cite[Proposition~2.8]{heuer-v_lb_rigid}.

This finishes the proof of \cref{t:local-paCS} in the case that $K$ contains $\Q_p^\cyc$.

\medskip

It remains to deduce the general case by Galois descent. For this we use the following:

\begin{Corollary}\label{c:Psi-G-trivial-kernel}
	For any perfectoid field $K$, any smoothoid $X$ over $K$ and any rigid group $G$ over $K$, the morphism
	$\Psi_G$
	has trivial fibre over the trivial element.
\end{Corollary}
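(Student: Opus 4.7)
The plan is a Galois descent argument reducing the general perfectoid case to the cyclotomic case already handled in \Cref{p:Psi-G-injective}. If $K\supseteq \Q_p^\cyc$, the claim is immediate from \Cref{p:Psi-G-injective}, which proves the stronger statement that $\Psi_G$ is injective as a morphism of sheaves of pointed sets on $X_\et$. So suppose $K$ does not contain $\Q_p^\cyc$; we will deduce the case of $K$ from that of a perfectoid cyclotomic extension.

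Concretely, let $K^\cyc$ denote the perfectoid field obtained from $K$ by adjoining all $p$-power roots of unity and $p$-adically completing, and set $f\colon X^\cyc:=X\times_{\Spa(K)}\Spa(K^\cyc)\to X$. Then $X^\cyc$ is naturally a smoothoid space over the perfectoid field $K^\cyc\supseteq \Q_p^\cyc$, and $f$ is the base change along the pro-finite-\'etale $v$-cover $\Spa(K^\cyc)\to\Spa(K)$. Given $\theta\in\Higgs_G(X)$ with $\Psi_G(\theta)=0$, the functoriality of $\Psi_G$ in $K$ (\Cref{p:Psi-summary}) yields $\Psi_G(f^\ast\theta)=0$ in $R^1\nu_\ast G(X^\cyc)$, and applying \Cref{p:Psi-G-injective} over $K^\cyc$ then forces $f^\ast\theta=0$ in $\Higgs_G(X^\cyc)$.

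The final step is to descend this vanishing from $X^\cyc$ back to $X$, for which we invoke \Cref{l:Zariski-dense-restriction-on-Higgs}.1: it suffices to check that $f^\ast\colon \wtOm_X\to f_\ast\wtOm_{X^\cyc}$ is injective on $X_{\et}$. Since $f$ is the base-change of the $v$-cover $\Spa(K^\cyc)\to\Spa(K)$, the $v$-sheaf property of $\O$ yields injectivity of $\O_X\to f_\ast\O_{X^\cyc}$, and the projection formula combined with local freeness of $\wtOm_X$ then gives the required injectivity on $\wtOm_X$, provided one knows the compatibility $\wtOm_{X^\cyc}=f^\ast\wtOm_X$. The latter is the one mildly subtle point of the argument: it requires that relative differentials of smoothoid spaces are insensitive to perfectoid base change of the base field, and we verify it using the local Cartesian description of smoothoids from \Cref{l:smoothoid-is-pullback-of-smooth-rigid} together with \Cref{ex:pullback-wtOm-inj}.2, which handles precisely this situation for product charts.
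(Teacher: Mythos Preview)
Your proof is correct and follows essentially the same route as the paper: base-change to a perfectoid field containing $\Q_p^\cyc$, apply \Cref{p:Psi-G-injective} there via functoriality of $\Psi_G$, and descend the vanishing using \Cref{l:Zariski-dense-restriction-on-Higgs}. The only cosmetic differences are that the paper base-changes to a completed algebraic closure rather than $K^\cyc$, and leaves the verification of the injectivity hypothesis of \Cref{l:Zariski-dense-restriction-on-Higgs} implicit, whereas you spell it out via \Cref{ex:pullback-wtOm-inj}.2 and local product charts.
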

\begin{proof}
	Let $X_{K'}\to X$ be the base-change to a completed algebraic closure of $K$. The diagram
	\[\begin{tikzcd}
		\Higgs_G(X_{K'}) \arrow[r,"\Psi_G"] & R^1\nu_{\ast}G(X_{K'}) \\
		\Higgs_G(X) \arrow[r,"\Psi_{G}"] \arrow[u] & R^1\nu_{\ast}G(X) \arrow[u]
	\end{tikzcd}\]
	commutes by \cref{p:Psi-summary}. The top morphism is injective by \cref{p:Psi-G-injective}. The left map has trivial kernel by \cref{l:Zariski-dense-restriction-on-Higgs}. Hence the bottom map has trivial kernel.
\end{proof}
\begin{Corollary}\label{c:descent-along-base-change}
	Let $X$ be a smoothoid space over $K$. Let $K'|K$ be any extension of perfectoid fields. Then we have $v$-descent of \'etale $G$-torsors along the base-change map $X_{K'}\to X$.
\end{Corollary}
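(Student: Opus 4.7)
The plan is to reduce the descent statement to a kernel triviality for the map $\Higgs_G(X)\to\Higgs_G(X_{K'})$. Since \'etale $G$-torsors embed fully faithfully into $v$-$G$-torsors by \Cref{l:fully-faithful-functor-torsors}, and the $v$-site has tautological descent for $v$-$G$-torsors, the \'etale $G$-torsors on $X_{K'}$ equipped with $v$-descent data along $f\colon X_{K'}\to X$ correspond to $v$-$G$-torsors $V$ on $X$ whose pullback $f^\ast V$ is \'etale-locally trivial. So it suffices to show: $V$ itself is then already \'etale-locally trivial (the converse is formal).

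By \Cref{t:main-thm-for-both-O-and-O^+}, $V$ corresponds under $\HTlog$ to a section $\theta\in\Higgs_G(X)$, and $V$ is \'etale-locally trivial if and only if $\theta = 0$. Functoriality of $\HTlog$ in $X$ and $K$ (\Cref{p:Psi-summary}) identifies the class of $f^\ast V$ with $f^\ast\theta\in\Higgs_G(X_{K'})$, hence our hypothesis gives $f^\ast\theta = 0$. The desired triviality $\theta = 0$ then follows from \Cref{l:Zariski-dense-restriction-on-Higgs}.(i), provided the induced map $f^\ast\colon\wtOm_X\to f_\ast\wtOm_{X_{K'}}$ is injective. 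This mirrors the argument already used in \Cref{c:Psi-G-trivial-kernel}.

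To verify this injectivity, I would work locally on $X_\et$. By \Cref{l:smoothoid-is-pullback-of-smooth-rigid} one may assume $X$ fits into a Cartesian diagram $X = Z\times_S Y$ with $Z\to S$ a smooth morphism of smooth rigid spaces and $Y$ affinoid perfectoid over $K$. Then $X_{K'} = Z\times_S (Y\hat\times_K K')$ with $Y\hat\times_K K'$ again perfectoid, and applying \Cref{p:RnuO-for-small-dmd} to both diagrams yields canonically $\wtOm_{X_{K'}} = f^\ast\wtOm_X$. Injectivity of $f^\ast\wtOm_X\to f_\ast\wtOm_{X_{K'}}$ then reduces to injectivity of $\O_X\to f_\ast\O_{X_{K'}}$, which holds because $\O$ is a $v$-sheaf and on affinoid opens the base-change map $A\to A\hat\otimes_K K'$ is injective for any topologically finitely generated $K$-algebra $A$, as $K\hookrightarrow K'$ is.

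The main subtlety I anticipate is ensuring that the local identification $\wtOm_{X_{K'}}=f^\ast\wtOm_X$ behaves functorially under the morphism $f$, so that it actually identifies the pullback map $f^\ast\wtOm_X\to f_\ast\wtOm_{X_{K'}}$ with the natural adjunction. This is built into the canonicity statement of \Cref{p:RnuO-for-small-dmd}, but warrants a brief verification. Once that is in hand, the rest of the argument is bookkeeping using the preceding structural results.
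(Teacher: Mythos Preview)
Your overall strategy coincides with the paper's: reduce to showing that a $v$-$G$-torsor $V$ on $X$ whose pullback to $X_{K'}$ is \'etale-locally trivial is already \'etale-locally trivial on $X$, then deduce this from the commutative square relating $\Higgs_G$ and $R^1\nu_\ast G$ on $X$ and $X_{K'}$, using that $\Higgs_G(X)\to\Higgs_G(X_{K'})$ has trivial kernel. Your verification of the latter via \Cref{p:RnuO-for-small-dmd} and injectivity of $\O_X\to f_\ast\O_{X_{K'}}$ is correct and in fact spells out a detail the paper leaves implicit.

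There is, however, a genuine circularity in your citations. You invoke \Cref{t:main-thm-for-both-O-and-O^+} to say that $\HTlog$ is an isomorphism over $X$ and over $X_{K'}$, but at the point in the paper where this corollary appears, \Cref{t:main-thm-for-both-O-and-O^+} has only been established when $\Q_p^{\cyc}\subseteq K$; the general case is completed \emph{after} \Cref{c:descent-along-base-change}, and in fact uses it as an input for the Galois descent step. Concretely, your sentence ``hence our hypothesis gives $f^\ast\theta=0$'' silently uses that $\Psi_G$ is injective on $X_{K'}$, which is not yet known for arbitrary perfectoid $K'$.

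The fix is straightforward and is exactly what the paper does: instead of citing the full isomorphism, use only what is available at this stage, namely that $\Psi_G$ is surjective for any perfectoid base field (the corollary following \Cref{t:Psi-G-isomorphism-good-reduction}) and that $\Psi_G$ has trivial fibre over $0$ (\Cref{c:Psi-G-trivial-kernel}, proved immediately before). So lift the class of $V$ to some $\theta\in\Higgs_G(X)$ via surjectivity; functoriality of $\Psi$ (\Cref{p:Psi-summary}) gives $\Psi_G(f^\ast\theta)=0$; \Cref{c:Psi-G-trivial-kernel} applied over $K'$ gives $f^\ast\theta=0$; and your kernel argument via \Cref{l:Zariski-dense-restriction-on-Higgs} then yields $\theta=0$, whence the class of $V$ vanishes. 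With this correction your proof is complete and matches the paper's.
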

\begin{proof}
	Any descent datum for an \'etale $G$-torsor along the morphism $X_{K'}\to X$ defines a $v$-$G$-bundle $V$ on $X$ that becomes trivial on $X_{K'}$. This is \'etale if its class in $R^1\nu_{\ast}G(X)$ vanishes. This follows by chasing the above diagram using that $\Psi_G$ is surjective by \cref{t:Psi-G-isomorphism-good-reduction}.
\end{proof}

Let $C$ be the completion of the cyclotomic extension of $K$ obtained by adjoining all $p$-power roots. By almost purity, this is again a perfectoid field. Let $G=\Gal(C|K)$ be the Galois group. Let $f_C:X_C\to \mathbb T^d_C\times Y$ be the base-change of $f$ from $K$ to $C$.

Let now $V$ be a $G$-torsor on $X_v$. Let $V_C$ be the pullback to $X_C$ and let $(E,\theta)$ be the Higgs bundle associated to $V_C$ via the local correspondence $LS$ with respect to $f_C$. Then by functoriality in the toric chart, the $v$-descent datum for $V_C$ along $X_C\to X$ induces a descent datum on $(E,\theta)$ for $X_C\to X$. By  \cref{c:descent-along-base-change}, this is effective and thus defines a Higgs bundle on $X$. 
The functoriality of this construction and all desired compatibilities now follow from those for the correspondence on $X_C$ using that the functor from \'etale $G$-torsors to $v$-$G$-torsors is fully faithful by \cref{l:fully-faithful-functor-torsors}, and thus morphisms can be defined $v$-locally.

Similarly, it suffices to check that the morphism between cohomologies in part 3 is an isomorphism after an extension of base-field.
This completes the proof in general.
\end{proof}	

The construction of \cref{t:local-paCS} is clearly compatible with that of \cref{t:main-thm-for-both-O-and-O^+}, in the sense that after passing to isomorphism classes and sheafifying on $X_{\et}$, the functor $LS$ computes the map $\Psi$. This completes the last missing piece of the proof of \cref{t:main-thm-for-both-O-and-O^+}: The above descent shows that $\Psi$ is an isomorphism over general perfectoid base fields. \qed

\subsection{Relation to Faltings' local $p$-adic Simpson correspondence}\label{s:rel-loc-corresp}\label{s:comparison-to-AGT}

We now elaborate on the comparison of our correspondence in the case of smooth rigid spaces and $G=\GL_n$ to Faltings' local correspondence as studied by Abbes--Gros--Tsuji.

The idea to build a global $p$-adic Simpson correspondence out of  local correspondences between small objects goes back to \cite{Faltings_SimpsonI}, and instances of such a correspondence for $G=\GL_n$ are now known in good generality: For example, it is known for a certain class of log schemes defined over discretely valued $k$ due to Faltings \cite{Faltings_SimpsonI}, Abbes--Gros \cite{AGT-p-adic-Simpson} and completed by Tsuji \cite{Tsuji-localSimpson}, as well as in cases of good reduction, due to Wang \cite{Wang-Simpson}. Let us also mention in this context the ``q-deformed'' version due to Morrow--Tsuji \cite{MorrowTsuji}.

\medskip

The first difference of these to our version is the technical setup in which the correspondence is formulated:
The approach of Faltings/Abbes--Gros/Tsuji is rooted in the setting of Faltings' \mbox{$p$-adic} Hodge theory, and deals with certain log schemes with toroidal singularities, defined over discretely valued base fields $k$. On these one defines the category of generalised representations, and the local correspondence then essentially happens after the base change to the completed algebraic closure $K$ of $k$. Let us refer to this as the ``arithmetic'' setup.

An alternative technical foundation (of course inspired by Faltings') is that of Scholze's $p$-adic Hodge theory \cite{Scholze_p-adicHodgeForRigid}, which instead works with smooth rigid spaces $X$ over algebraically closed $K$. For the Hodge--Tate comparison (on which the $p$-adic Simpson correspondence is based), this is possible without assuming that $X$ admits a model over a discretely valued field. One then uses locally perfectoid constructions, like the pro-\'etale site, or the category of diamonds, which allow one to reinterpret generalised representations as locally free sheaves on $X$. Let us call this the ``geometric'' setup.

In many situations, it is possible to pass from the ``arithmetic setup'' in the above sense to the ``geometric setup'' by analytifying and passing from $k$ to $K$. This is not to say that one setup is more general than the other, for example Faltings' setting allows for more flexibility in the regularity conditions imposed on $X$. Moreover, there are of course many variations in between, for example many authors use Scholze's technical language but work with varieties that allow models over discretely valued fields, for various good reasons: For example on can then expect a ``de Rham'' side to the non-abelian picture, as in \cite{LiuZhu_RiemannHilbert}.

\medskip

The second difference is that we use a different notion of smallness, related to the difference between \cite[Definition~13.1 vs 13.2]{AGT-p-adic-Simpson}: In terms of generalised representations, the definition used by Faltings/Abbes--Gros/Tsuji is that the underlying module is projective and \textit{generated} by elements on which the action becomes trivial $\bmod p^{\alpha}$, whereas the definition that we use here is that the module is already finite free with trivial action $\bmod p^{\alpha}$.

The proof of the local Simpson correspondence for $G=\GL_n$ in Faltings' setting is quite subtle, as explored in detail by Abbes--Gros--Tsuji (see in particular \cite[II.14]{AGT-p-adic-Simpson}, \cite[Remark~2.3]{Tsuji-localSimpson}). The major difficulty  is that when using their notion of smallness, the construction  requires an additional technical result, which in the language of Abbes--Gros is that ``small representations are Dolbeault''. This was the missing piece provided by \cite{Tsuji-localSimpson}. We choose to sidestep these problems by way of our more restrictive notion of smallness, which also seems more natural in our general setting of $G$-torsors.

\medskip

If we ignore all of these technical differences, the main novelty in \cref{t:local-paCS} is of course the generalisation from $\GL_n$ to rigid groups $G$ and from smooth rigid spaces $X$ to the relative setting of perfectoid families thereof. A minor additional point is the generalisation from algebraically closed $K$ to any perfectoid base field over $\Q_p$. 
 
 \subsection{Relation between sheafified and local correspondence}

	One could in principle deduce a weaker version of \cref{t:main-thm-for-both-O-and-O^+} from \cref{t:local-paCS} by passing to isomorphism classes, using that $G$-bundles and $G$-Higgs bundles are \'etale-locally small. But it is important for our purposes that \cref{t:main-thm-for-both-O-and-O^+} is stronger than this version, as it is ``more canonical and more functorial in $X$'' in the following ways:
	
	\medskip
	
	\textbf{Dependence on the chart:} As mentioned in the introduction, in contrast to the complex case, the $p$-adic Simpson correspondence is in general non-canonical and will depend on certain choices. The situation for the local $p$-adic Simpson correspondence is slightly better, but still quite subtle: It still depends on the choice of chart up to non-canonical isomorphism  (see also \cite[Remark~13.3]{Tsuji-localSimpson}). This means that it is in general not clear how to globalize from the local case, since one has no canonical glueing data on overlaps. Other versions of the local correspondence have similar restrictions.
		Indeed, there are conceptual reasons why one cannot expect a completely canonical correspondence in general: If this existed, then by functoriality one would be able to glue the local correspondences to get a global correspondence without any smallness assumptions, which is known not to exist.
		
		In contrast, the advantage of the ``sheafified correspondence'' of \cref{t:main-thm-for-both-O-and-O^+} is that it is completely canonical and functorial with respect to localisation. In particular, the local correspondence, \cref{t:main-thm-for-both-O-and-O^+} holds for any smooth rigid space. This functoriality is crucial for our purposes in this article, and especially for constructing the Hitchin morphism $\wt {\mathcal H}$.
		
		\medskip
		
	\textbf{Functoriality:} Given a morphism of smoothoids $h:X_1\to X_2$, it is not always  possible to find compatible toric charts, and hence to compare the  local correspondences, even \'etale locally. The functoriality property of \cref{t:main-thm-for-both-O-and-O^+}  is stronger and includes such morphisms.
		
		\medskip
	
	\textbf{Smallness:} A third way in which \cref{t:main-thm-for-both-O-and-O^+} is more canonical than \cref{t:local-paCS} is that for general rigid groups $G$, there is no canonical notion of ``smallness'' of $G$-torsors.

\section{Applications to $v$-vector bundles on rigid spaces}\label{s:applications}
We now give several immediate applications of \cref{t:main-thm-for-both-O-and-O^+} and \cref{t:local-paCS}. Let $G$ be any rigid group.
Our first application is that the explicit description of $R^1\nu_{\ast}G$ lets us deduce $v$-descent criteria for $G$-torsors. By this we mean methods to answer the following question:

\begin{Question}\label{q:v-descent}
	Let $f:X'\to X$ be a $v$-cover of smoothoid (e.g.\ smooth rigid or perfectoid) spaces. What are conditions on $f$ that ensure descent of \'etale $G$-torsors along $f$?
\end{Question}
As it is tautological that we have descent of $v$-$G$-torsors along $f$, it is equivalent to ask how we can tell whether $v$-$G$-torsors on $X$ are already \'etale-locally trivial.

Examples of situations where this question arises include moduli stacks of \'etale vector bundles, but also the study of automorphic sheaves like bundles of overconvergent $p$-adic modular forms defined via descent from perfectoid Shimura varieties of infinite level.
\subsection{Criteria for $v$-vector bundles to be \'etale-locally trivial}
Since \cref{q:v-descent} is \'etale-local, we can use either of \cref{t:main-thm-for-both-O-and-O^+} and \cref{t:local-paCS} to study it. We shall mainly use the former as it is slightly more convenient in this setting, e.g.\ it does not require additional choices. We begin with the following immediate consequence:
\begin{Corollary}\label{c:Leray-sequence}
	For any smoothoid space $X$ and a rigid group $G$ over $K$, the Leray sequence for $\nu:X_v\to X_{\et}$ induces an exact sequences of pointed sets, functorial in $X$ and $G$,
\[
1 \to H^1_{\et}(X,G)\to   H^1_{v}(X,G)\to  \Higgs_G(X).
\]
\end{Corollary}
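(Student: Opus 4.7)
The plan is to derive this from Theorem~\ref{t:main-thm-for-both-O-and-O^+} together with the standard non-abelian five-term exact sequence associated to the morphism of sites $\nu : X_v \to X_{\et}$. Recall that for a sheaf of (possibly non-abelian) groups on a site, the Leray spectral sequence yields a low-degree exact sequence of pointed sets
\[
1 \to H^1(\text{base}, \nu_{\ast}G) \to H^1(\text{top}, G) \to H^0(\text{base}, R^1\nu_{\ast}G),
\]
and this is what we will use here with $G$ viewed as a $v$-sheaf on $X$.

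First I would identify the leftmost term with $H^1_{\et}(X, G)$. For this we need $\nu_{\ast}G = G$ as sheaves on $X_{\et}$, where on the right $G$ denotes the representable \'etale sheaf associated to the rigid group. This is immediate: for $U \in X_{\et}$, the $v$-sections $\nu_{\ast}G(U) = G(U_v)$ are just morphisms of diamonds $U^\diamondsuit \to G^\diamondsuit$, which by full faithfulness of diamondification on smoothoid spaces (\Cref{l:small-adic-space-is-diamantine}.4) coincide with morphisms of adic spaces $U \to G$, i.e.\ with $G(U)$.

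Next, the rightmost term $H^0(X, R^1\nu_{\ast}G)$ is by Theorem~\ref{t:main-thm-for-both-O-and-O^+} canonically identified with $\Higgs_G(X)$ via the isomorphism $\HTlog : R^1\nu_{\ast}G \isomarrow \Higgs_G$ of sheaves of pointed sets on $X_{\et}$. Substituting the two identifications into the five-term exact sequence gives exactly the claimed sequence.

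The main obstacle is really just to ensure that the non-abelian five-term sequence is available in our setting, since $G$ need not be commutative. But this is standard: for any morphism of sites $\nu$ and any sheaf of groups $G$ on the source, one has an exact sequence of pointed sets as above (see e.g.\ the discussion of Leray in \S\ref{s:Higgs-to-v} and the references to \Cref{p:non-ab-les}, which provide exactly the non-abelian exactness one needs). Functoriality in $X$ and $G$ is automatic from the functoriality in $X$ and $G$ of the sites $X_v, X_{\et}$, of the pushforward $\nu_{\ast}$ and $R^1\nu_{\ast}$, and of the isomorphism $\HTlog$ established in Theorem~\ref{t:main-thm-for-both-O-and-O^+}.
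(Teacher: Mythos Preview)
Your proposal is correct and matches the paper's intended argument: the paper presents this as an immediate consequence of Theorem~\ref{t:main-thm-for-both-O-and-O^+} via the non-abelian Leray sequence for $\nu$, exactly as you outline (cf.\ also \cref{eq:HT-seq-G} in the introduction and \Cref{l:fully-faithful-functor-torsors} for the injectivity of the first map). Your identification $\nu_\ast G = G$ via \Cref{l:small-adic-space-is-diamantine}.4 is the right justification for the leftmost term.
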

We deduce  the following criterion for $v$-descent, a generalisation of \cite[Corollary~1.5]{heuer-v_lb_rigid}:

\begin{Corollary}\label{c:analyticity-criterion}
Let $V$ be a $G$-torsor on $X_v$. Let $U\to X$ be any \'etale map with Zariski-dense image. Then $V$ is \'etale, i.e.\ comes from a $G$-torsor on $X_{\et}$, if and only if $V|_U$ is.
\end{Corollary}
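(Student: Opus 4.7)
The plan is to combine the Leray sequence from \cref{c:Leray-sequence} with the kernel-triviality result \cref{l:Zariski-dense-restriction-on-Higgs}. One direction is immediate: if $V$ comes from $H^1_{\et}(X,G)$ then so does its pullback to $U$. For the non-trivial implication, I would let $[V] \in H^1_v(X,G)$ denote the class of $V$ and use \cref{c:Leray-sequence} (which is the main payoff of \cref{t:main-thm-for-both-O-and-O^+}): the class $[V]$ comes from $H^1_{\et}(X,G)$ if and only if $\HTlog([V]) \in \Higgs_G(X)$ is trivial. So everything reduces to showing that $\HTlog([V]) = 0$.

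Next, by the functoriality of $\HTlog$ in $X$ asserted in \cref{t:main-thm-for-both-O-and-O^+}, the image of $\HTlog([V])$ under the restriction map $\Higgs_G(X) \to \Higgs_G(U)$ equals $\HTlog([V|_U])$. The hypothesis that $V|_U$ is étale-locally trivial means, again by \cref{c:Leray-sequence}, that $\HTlog([V|_U]) = 0$. Therefore $\HTlog([V])$ lies in the kernel of the pullback map $\Higgs_G(X) \to \Higgs_G(U)$.

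To close the argument, I would invoke \cref{l:Zariski-dense-restriction-on-Higgs}.(1): whenever $f \colon U \to X$ is a morphism of smoothoid spaces such that $f^\ast\colon \wtOm_X \to f_\ast \wtOm_U$ is injective, the induced map $\Higgs_G(X) \to \Higgs_G(U)$ has trivial kernel. By \cref{ex:pullback-wtOm-inj}.(1), the injectivity of $f^\ast$ on differentials (and indeed on the structure sheaf) holds precisely under our hypothesis that $U \to X$ is étale with Zariski-dense image. Hence $\HTlog([V])$ must itself be the trivial class, which by \cref{c:Leray-sequence} means $V$ is étale. No step should present a real obstacle: the entire content of the corollary is already encoded in the sheafified correspondence and its functoriality, together with the separated behaviour of $\Higgs_G$ under Zariski-dense étale restriction.
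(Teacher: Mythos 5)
Your proposal is correct and follows essentially the same route as the paper: the paper also reduces the claim, via the functoriality of $\HTlog$ and the exact sequence of \cref{c:Leray-sequence} for $X$ and $U$, to the triviality of the kernel of $\Higgs_G(X)\to\Higgs_G(U)$, which it likewise obtains from \cref{l:Zariski-dense-restriction-on-Higgs} together with \cref{ex:pullback-wtOm-inj}.1. The only difference is presentational: the paper phrases the argument as a chase in a commutative diagram of the two Leray sequences rather than tracking the single class $\HTlog([V])$.
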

In particular, if $X$ is connected, this means that to prove that $V$ is \'etale on $X$, it suffices to prove this on \textit{any} non-empty open subspace $U\subseteq X$. This is a priori quite surprising.
\begin{proof}
By functoriality in Theorem~\ref{t:main-thm-for-both-O-and-O^+}, the restriction to $U$ fits into a commutative diagram
\[\begin{tikzcd}
1 \arrow[r] &  \arrow[d] \arrow[r] H^1_{\et}(X,G)&  H^1_{v}(X,G)\arrow[d] \arrow[r] &  \Higgs_G(X)\arrow[d] \\
1 \arrow[r] &  \arrow[r] H^1_{\et}(U,G)&  H^1_{v}(U,G)\arrow[r] &  \Higgs_{G}(U).
\end{tikzcd}\]
Thus it suffices to see that the restriction map on the right
has trivial kernel. This is precisely the statement of Lemma~\ref{l:Zariski-dense-restriction-on-Higgs} applied in the case of Example~\ref{ex:pullback-wtOm-inj}.1.
\end{proof}
We also get the following $v$-descent criterion, giving a satisfactory answer to \cref{q:v-descent}:

\begin{Corollary}\label{c:descent-criterion}
	Let $f:X'\to X$ be a $v$-cover of affinoid smoothoid spaces. Then descent of \'etale $G$-torsors along $f$ is effective if and only if $f^{\ast}:H^0(X,\wtOm_X)\to H^0(X',\wtOm_{X'})$ is injective.
\end{Corollary}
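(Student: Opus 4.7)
My approach is to combine \cref{t:main-thm-for-both-O-and-O^+} with the Leray sequence for $\nu\colon X_v\to X_\et$ to obtain the commutative diagram of pointed sets
\[
\begin{tikzcd}
1 \arrow[r] & H^1_{\et}(X,G) \arrow[d] \arrow[r] & H^1_{v}(X,G) \arrow[d,"f^\ast"] \arrow[r,"\HTlog"] & \Higgs_G(X) \arrow[d,"f^\ast"] \\
1 \arrow[r] & H^1_{\et}(X',G) \arrow[r] & H^1_{v}(X',G) \arrow[r,"\HTlog"] & \Higgs_{G}(X').
\end{tikzcd}
\]
Since $v$-descent is tautologically effective, a descent datum for an \'etale $G$-torsor on $X'$ along $f$ produces a canonical $v$-$G$-torsor $V$ on $X$, and effectiveness of descent amounts to the statement that every such $V$ with $f^\ast V$ \'etale-locally trivial already lies in the image of $H^1_\et(X,G)$. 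Since $\HTlog$ identifies $H^1_v(-,G)/H^1_\et(-,G)$ with a subset of $\Higgs_G(-)$, effectiveness thus translates into the triviality of the kernel of the restriction of $f^\ast\colon \Higgs_G(X)\to \Higgs_G(X')$ to this subset.

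For the $(\Rightarrow)$ direction, I would specialise to $G=\G_a$. By \cref{l:small-adic-space-is-diamantine}.1, both $H^i_\et(X,\O)$ and $H^i_\et(X',\O)$ vanish for $i\geq 1$, so both Leray sequences degenerate into canonical isomorphisms $H^1_v(X,\G_a)\isomarrow H^0(X,\wtOm_X)$ and $H^1_v(X',\G_a)\isomarrow H^0(X',\wtOm_{X'})$. Effective descent for $\G_a$-torsors then reads precisely as the injectivity of $f^\ast\colon H^0(X,\wtOm_X)\to H^0(X',\wtOm_{X'})$, giving the required implication.

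For the $(\Leftarrow)$ direction, the plan is to upgrade the global-sections injectivity to the sheaf-level hypothesis of \cref{l:Zariski-dense-restriction-on-Higgs}.1, i.e.\ that $\wtOm_X\to f_\ast\wtOm_{X'}$ is injective on $X_\et$. Writing $X=\Spa(R)$ and $X'=\Spa(S)$, \cref{p:RnuO-for-small-dmd} together with \cref{l:small-adic-space-is-diamantine}.5 identifies $\wtOm_X$ with the coherent sheaf attached to a finite projective $R$-module $M:=H^0(X,\wtOm_X)$, and $\wtOm_{X'}$ with that attached to a finite projective $S$-module $N:=H^0(X',\wtOm_{X'})$. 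For any affinoid \'etale $U=\Spa(B)\to X$, functoriality of $\wtOm$ on $\Smd_{K,\et}$ (\cref{l:wtOm-is-sheaf-on-big-etale-site}) together with the cotangent sequence \cref{l:cotangent-sequence-for-smoothoids} applied to the \'etale morphisms $U\to X$ and $U\times_XX'\to X'$ identifies the map
\[
\wtOm_X(U)\to \wtOm_{X'}(U\times_XX')
\]
with the base change $M\otimes_RB\to N\otimes_S(B\otimes_RS)\cong N\otimes_RB$ of $M\to N$ along $-\otimes_RB$. Since any \'etale morphism is flat, injectivity of $M\to N$ propagates to every such $U$, yielding the sheaf-level injectivity on $X_\et$; \cref{l:Zariski-dense-restriction-on-Higgs}.1 then gives triviality of the kernel of $\Higgs_G(X)\to \Higgs_G(X')$ for every rigid group $G$, which completes the argument. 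The only genuinely technical point is this propagation step, which is precisely where the affinoid hypothesis on both $X$ and $X'$ enters in an essential way.
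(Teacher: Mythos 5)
Your overall architecture is the same as the paper's: reduce effectiveness of descent to triviality of the kernel of $\Higgs_G(X)\to \Higgs_G(X')$ via the two Leray sequences, and feed this into \cref{l:Zariski-dense-restriction-on-Higgs}. Your $(\Rightarrow)$ direction via $G=\G_a$ is fine (the paper does not even address this direction), with the caveat that it only proves the converse under the reading that $G$ ranges over all rigid groups; for a fixed $0$-dimensional $G$ the converse is vacuously false, so that is indeed the intended reading. The genuine gap is in the propagation step of $(\Leftarrow)$, for three escalating reasons. First, $\O(U\times_XX')$ is the \emph{completed} tensor product $B\hat\otimes_RS$, not $B\otimes_RS$; your identification $N\otimes_S(B\otimes_RS)\cong N\otimes_RB$ silently drops the completion, whose kernel can be enormous --- indeed $B\hat\otimes_RS$ vanishes on every component of $X'$ whose image in $X$ misses $U$. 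Second, flatness of $R\to B$ for \'etale $B$ is a Noetherian fact; for smoothoid (sousperfectoid, non-Noetherian) affinoids flatness of rational localisations is not available in this generality and is used nowhere in the paper. Third, and decisively, the intermediate statement you aim at --- that injectivity of $H^0(X,\wtOm_X)\to H^0(X',\wtOm_{X'})$ forces injectivity of the sheaf map $\wtOm_X\to f_\ast\wtOm_{X'}$ on $X_\et$ --- is false. Take $X=\B^1=\Spa(K\langle T\rangle)$, covered by the disc $D=\{|T|\le |p|\}$ and the Zariski-dense annulus $U_2=\{|T|\ge |p|\}$, and set $X'=\wt D\sqcup U_2$ with $\wt D\to D$ an affinoid perfectoid pro-\'etale cover. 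Then $H^0(X',\wtOm_{X'})=H^0(U_2,\wtOm)$ and the global map is injective, but for $U=\{|T|\le |p|^2\}$ one has $U\times_XX'=U\times_D\wt D$ perfectoid, so $(f_\ast\wtOm_{X'})(U)=0$ while $\wtOm_X(U)\neq 0$. No propagation argument can succeed.

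To be fair, the paper disposes of this step with the one-line assertion that, $\wtOm_X$ being a vector bundle, injectivity of $\wtOm_X\to f_\ast\wtOm_{X'}$ may be checked on global sections --- which is the same claim, and fails for the same example; so you have correctly located the crux, but closing it requires a different idea rather than a more careful execution of this one. Note that the diagram chase only needs triviality of the kernel of $\Higgs_G(X)\to\Higgs_G(X')$ on global sections, not sheaf-level injectivity. In the example above this does hold: a class killed on $X'$ is in particular killed on the Zariski-dense \'etale subspace $U_2\hookrightarrow X$, so \cref{l:Zariski-dense-restriction-on-Higgs} together with \cref{ex:pullback-wtOm-inj}.1 applies to $U_2\to X$ instead of to $f$. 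A correct general argument has to exploit such a gluing/analytic-continuation mechanism --- a local representative of a kernel class is forced to vanish on the locus where $f$ retains the differentials, and this vanishing propagates along the conjugation gluing data to all of $X$ --- rather than pointwise injectivity of $\wtOm_X\to f_\ast\wtOm_{X'}$, which genuinely fails.
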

\begin{proof}
	Any descent datum for an \'etale $G$-torsor $E$ along $f$ defines a $v$-topological $G$-torsor on $X$ whose pullback to $X'$ is $E$. The result thus follows from a similar diagram as in the proof of \cref{c:analyticity-criterion}, again using \cref{l:Zariski-dense-restriction-on-Higgs}: Here by \cref{p:RnuO-for-small-dmd}, the sheaf $\wtOm_X$ is a vector bundle, so $\wtOm_X\to f_{\ast}\wtOm_{X'}$ is injective if and only if it is injective on global sections.
\end{proof}
This generalises \cref{c:descent-along-base-change}, where $f$ is a base-change $X_{K'}\to X_K$. Another case is:

\begin{Corollary}\label{c:test-v-local-analyticity-criterion}
	Let $X$ be a smooth rigid space, let $Y$ be a perfectoid space, and let $Z\to X\times Y$ be \'etale. Then a $G$-torsor on $Z_v$ is  \'etale if it is after pullback to a perfectoid $v$-cover $Y'\to Y$.
\end{Corollary}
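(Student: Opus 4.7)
The plan is to reduce the claim to the $v$-descent criterion \cref{c:descent-criterion} applied to the $v$-cover $g\colon Z' := Z\times_Y Y' \to Z$. Given a $G$-torsor $V$ on $Z_v$ whose pullback $g^{\ast}V$ is étale on $Z'$, the tautological descent datum on $V$ for $g$ is effective precisely when the pullback map $H^0(Z,\wtOm_Z) \to H^0(Z',\wtOm_{Z'})$ is injective. So it suffices to verify this injectivity.

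First, $Z$ and $Z'$ are smoothoid: $X \times Y$ is smoothoid because the projection to $Y$ is smooth and $Y$ is perfectoid, hence so is anything étale over it; similarly for $X\times Y'$ and $Z'$. Writing $h\colon Z\to X\times Y$ and $h'\colon Z'\to X\times Y'$ for the étale maps, \cref{l:cotangent-sequence-for-smoothoids} gives canonical identifications $\wtOm_Z = h^{\ast}\wtOm_{X\times Y}$ and $\wtOm_{Z'}=h'^{\ast}\wtOm_{X\times Y'}$.

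Next, set $f = (\id,g_0)\colon X\times Y' \to X\times Y$ where $g_0\colon Y' \to Y$ is the given $v$-cover of perfectoid spaces. By \cref{ex:pullback-wtOm-inj}.2, the natural map of étale sheaves $\wtOm_{X\times Y} \to f_{\ast}\wtOm_{X\times Y'}$ is injective (this uses that $\wtOm_{X\times Y}$ is locally free and that $\O_{X\times Y}\to f_{\ast}\O_{X\times Y'}$ is injective by the $v$-sheaf property of $\O$). The square formed by $h, h', f, g$ is cartesian, and since étale pullback commutes with pushforward along $f$ (flat base change for étale morphisms), applying $h^{\ast}$ to the above injection yields an injection $\wtOm_Z \hookrightarrow g_{\ast}\wtOm_{Z'}$ on $Z_{\et}$. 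Taking global sections preserves injectivity, so the hypothesis of \cref{c:descent-criterion} is verified, completing the proof.

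There is no real obstacle here; the entire argument is just an assembly of \cref{c:descent-criterion}, \cref{l:cotangent-sequence-for-smoothoids}, and \cref{ex:pullback-wtOm-inj}.2 via étale base change. The only mildly subtle point is checking that $Z$ is a smoothoid and that the identifications of $\wtOm$ pass through the étale map $h$, but both are immediate from the preceding material.
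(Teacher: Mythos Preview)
Your argument is essentially correct and matches the paper's intended approach: the paper places this corollary immediately after \cref{c:descent-criterion} without proof, clearly expecting the reader to combine that criterion with \cref{ex:pullback-wtOm-inj}.2, which is exactly what you do (with the additional étale base change along $h$).

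One small technical point: \cref{c:descent-criterion} is stated for \emph{affinoid} smoothoid spaces, whereas your $Z$ need not be affinoid. This is easily handled in either of two ways. First, since you actually establish the sheaf-level injection $\wtOm_Z \hookrightarrow g_\ast\wtOm_{Z'}$, you can bypass \cref{c:descent-criterion} and invoke the underlying diagram chase of \cref{c:analyticity-criterion} together with \cref{l:Zariski-dense-restriction-on-Higgs}.1 directly (that lemma only needs the sheaf-level injectivity). Second, you could reduce to the affinoid case by noting that whether a $v$-$G$-torsor is étale is detected by the vanishing of its class in $R^1\nu_\ast G(Z)=\Higgs_G(Z)$, which is an étale sheaf and hence can be checked on an affinoid étale cover of $Z$. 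Either route closes the gap; the substance of your argument is unchanged.
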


Applying the same argument as in \cref{c:analyticity-criterion} to the case of Example~\ref{ex:pullback-wtOm-inj}.2 shows:
\begin{Corollary}\label{c:analyticity-criterion-G->GL_n}
	Let $G\hookrightarrow G'$ be an injective homomorphism of rigid groups.
Let $V$ be a $G$-torsor on $X_v$. Then $V$ is \'etale-locally trivial if and only if the $G'$-torsor $V\times^{G}G'$ is.
\end{Corollary}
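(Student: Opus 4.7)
The plan is to mimic the proof of \Cref{c:analyticity-criterion} step by step, but with the injective homomorphism $G \hookrightarrow G'$ playing the role of the restriction map $H^0(X,\wtOm_X) \to H^0(U,\wtOm_U)$. The key is that the relevant inputs for both proofs are gathered together in \Cref{l:Zariski-dense-restriction-on-Higgs}: part (i) is used for \Cref{c:analyticity-criterion}, and it is part (ii) that I will invoke here.

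First, I would set up the main commutative diagram. By the functoriality in $G$ of $\HTlog$ (\Cref{t:main-thm-for-both-O-and-O^+}) and of the Leray sequence for $\nu\colon X_v\to X_{\et}$, the morphism $\varphi\colon G\hookrightarrow G'$ induces a commutative diagram of pointed sets
\[
\begin{tikzcd}
1 \arrow[r] & H^1_{\et}(X,G) \arrow[d] \arrow[r] & H^1_{v}(X,G) \arrow[d] \arrow[r,"\HTlog"] & \Higgs_G(X) \arrow[d] \\
1 \arrow[r] & H^1_{\et}(X,G') \arrow[r] & H^1_{v}(X,G') \arrow[r,"\HTlog"] & \Higgs_{G'}(X),
\end{tikzcd}
\]
in which the rows come from \Cref{c:Leray-sequence} and are exact.

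Next, observe that a $v$-$G$-torsor $V$ is \'etale-locally trivial if and only if it lies in the kernel of $H^1_v(X,G)\to\Higgs_G(X)$ (i.e.\ its image in $R^1\nu_{\ast}G(X)$ is trivial), by exactness of the top row, and similarly for $V\times^G G'$ on the bottom row. The \emph{only if} direction of the corollary is then immediate: if $V$ admits an \'etale trivialisation then so does its extension of structure group $V\times^G G'$.

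For the \emph{if} direction, suppose $V\times^G G'$ is \'etale-locally trivial, so that the image of $V$ in $\Higgs_{G'}(X)$ is trivial by commutativity of the right square. The hard step is to deduce that already the image of $V$ in $\Higgs_G(X)$ is trivial, i.e.\ that the right vertical map has trivial kernel. This is precisely \Cref{l:Zariski-dense-restriction-on-Higgs}.(ii): since $\varphi$ is injective and we work over the characteristic zero field $K$, the induced map on Lie algebras $\mg\hookrightarrow \mg'$ is injective, so $(\mg\otimes \wtOm_X)^{\wedge=0}\hookrightarrow (\mg'\otimes \wtOm_X)^{\wedge=0}$ is injective, and chasing the diagram in the proof of that lemma shows the kernel of $\Higgs_G(X)\to\Higgs_{G'}(X)$ is trivial. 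Combining this with the exactness of the top row gives that $V$ comes from an \'etale $G$-torsor, completing the proof.
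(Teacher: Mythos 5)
Your proof is correct and is exactly the argument the paper intends: the paper disposes of this corollary in one line by invoking the same diagram chase as in \cref{c:analyticity-criterion}, with the trivial-kernel statement for $\Higgs_G(X)\to\Higgs_{G'}(X)$ supplied by \cref{l:Zariski-dense-restriction-on-Higgs}.2 in place of part 1. Your identification of ``\'etale-locally trivial'' with lying in the kernel of $H^1_v(X,G)\to R^1\nu_{\ast}G(X)$ and the use of functoriality of $\HTlog$ in $G$ are both exactly as in the paper.
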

 As a special case of the Corollary is the map $\GL_n(\O^+)\hookrightarrow \GL_n(\O)$, for which this shows:
 
\begin{Corollary}\label{c:generically-et-loc-free-implies-et-loc-free}
	Let $E^+$ be a finite $v$-locally free $\O^+$-module on $Z_v$ such that $E:=E^+\tf$ is \'etale-locally free. Then $E^+$ is \'etale-locally free. 
\end{Corollary}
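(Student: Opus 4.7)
The plan is to deduce this as an immediate consequence of \cref{c:analyticity-criterion-G->GL_n} applied to a well-chosen inclusion of rigid groups. We may work locally on $Z_{\et}$ and thus assume that $E^+$ is everywhere of the same rank $n$. Set $G' := \GL_n$ and let $G := \GL_n^+ \hookrightarrow G'$ be the rigid open subgroup of good reduction whose functor of points sends $(R,R^+) \mapsto \GL_n(R^+)$, i.e.\ the adic generic fibre of the $p$-adic formal completion of $\GL_n$ over $\O_K$, as discussed after \cref{t:local-paCS}.

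The starting observation is that finite $v$-locally free $\O^+$-modules of rank $n$ on $Z_v$ are, via their frame bundles, equivalent to $G$-torsors on $Z_v$: local frames give cocycles with values in the sheaf of $\O^+$-linear automorphisms $\underline{\Aut}_{\O^+}((\O^+)^n) = \GL_n(\O^+) = G$, exactly as in the rank-one case $G = \O^{+,\times}$ treated in \cite[\S2]{heuer-G-torsors-perfectoid-spaces}. Under this equivalence, inverting $p$ (i.e.\ tensoring with $\O$ over $\O^+$) corresponds precisely to the change-of-fibre along $G \hookrightarrow G'$, since this replaces $\GL_n(\O^+)$-valued transition matrices with $\GL_n(\O)$-valued ones. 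In particular, if $V$ denotes the $G$-torsor corresponding to $E^+$, then $V \times^G G'$ is the $G'$-torsor corresponding to $E$.

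Both $G$ and $G'$ have the same Lie algebra $\mathfrak{gl}_n$ over $K$ (since the Lie algebra is a $K$-vector space, determined by the generic fibre), so the induced map $\mathfrak g \to \mathfrak g'$ is the identity; in particular it is injective, and the hypothesis of \cref{c:analyticity-criterion-G->GL_n} is satisfied. Since $E$ is \'etale-locally free by assumption, $V \times^G G'$ is \'etale-locally trivial, hence so is $V$ by \cref{c:analyticity-criterion-G->GL_n}. Translating back, this means $E^+$ is \'etale-locally free, as desired. There is no real obstacle in this argument; its entire content is packaged into the sheafified non-abelian Hodge correspondence underlying \cref{c:analyticity-criterion-G->GL_n}, once one has the elementary observation that $G \hookrightarrow G'$ has trivial effect on Lie algebras.
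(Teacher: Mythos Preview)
Your proof is correct and follows exactly the paper's approach: the paper states this corollary as an immediate application of \cref{c:analyticity-criterion-G->GL_n} to the inclusion $\GL_n(\O^+)\hookrightarrow \GL_n(\O)$, and you have simply unpacked this in detail (identifying $G$-torsors with $v$-locally free $\O^+$-modules via frames, noting that change of fibre corresponds to inverting $p$, and observing that the inclusion is injective so the hypothesis applies).
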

However,  \cref{ex:cong-over-Q_p-but-not-Z_p} shows that the map $R^1\nu_{\ast}G\to R^1\nu_{\ast}G'$ is not in general injective.

\begin{Remark}
	Another application of 
\cref{c:analyticity-criterion-G->GL_n} shows that for a linear algebraic group $G$ with embedding $G\hookrightarrow \GL_n$, a $G$-torsor $V$ on $X_v$ is \'etale-locally trivial if and only if the associated $v$-vector bundle $\GL_n\times^GV$ is. The more general question whether two $G$-torsors $V_1$, $V_2$ on $X_v$ are \'etale-locally isomorphic if the associated $v$-vector bundles are isomorphic for every representation of $G$ is more subtle, and \cref{t:main-thm-for-both-O-and-O^+} translates this into a question about simultaneous conjugation that is related to Steinberg's conjugacy conjecture \cite{Steinberg_conjugacy_conjecture}.
\end{Remark}

We finish with two results about $v$-vector bundles, generalising \cite[Corollary~3.6]{heuer-v_lb_rigid}:
\begin{Corollary}
Let $V_1\hookrightarrow V_2$ be an injective morphism of $v$-vector bundles on a smoothoid space $Z$. If $V_2$ is \'etale-locally free, then so is $V_1$.
\end{Corollary}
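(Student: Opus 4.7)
The statement is \'etale-local on $Z$, so I may assume that $Z$ is a toric affinoid smoothoid space with a fixed toric chart $f\colon Z \to \mathbb T^d\times Y$, and that $V_2 \cong \O^n$ already as an \'etale vector bundle, hence as a $v$-vector bundle. After a further standard-\'etale localisation on $Z$, \Cref{l:v-bundle-locally-small} lets me arrange that $V_1$ is small with respect to $f$; since $V_2$ is trivial, it is automatically small on any chart, so the two smallness conditions can be achieved simultaneously.

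The plan is to apply the exact equivalence of categories given by the local $p$-adic Simpson correspondence for vector bundles (\Cref{t:local-paCS}.2). Under $LS_f$ the trivial bundle $V_2$ corresponds to the trivial Higgs bundle $(\O^n,0)$, so by functoriality the injection $\varphi\colon V_1 \hookrightarrow V_2$ is sent to an injective morphism of small Higgs bundles
\[
(E_1,\theta_1)\xrightarrow{\varphi}(\O^n,0).
\]
Compatibility with the Higgs fields reads $(\varphi\otimes\id_{\wtOm})\circ\theta_1 = 0\circ\varphi = 0$. Since $\wtOm_Z$ is a locally free $\O$-module by \Cref{p:RnuO-for-small-dmd}, tensoring with it preserves the injectivity of $\varphi$, so $\varphi\otimes\id_{\wtOm}$ is still injective. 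Hence $\theta_1 = 0$, and $E_1 \hookrightarrow \O^n$ is an embedding of \'etale vector bundles on $Z$ (the fact that $E_1$ is already an \'etale vector bundle is built into the definition of a small Higgs bundle, \Cref{d:small}).

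To conclude, choose an \'etale cover $g\colon Z'\to Z$ on which $E_1$ trivialises, so that $g^{\ast}(E_1,\theta_1)\cong (\O^r,0)$. Using the functoriality of the local correspondence in the chart (\Cref{t:local-paCS}.3), I transport this back through $LS_{f'}$ with the induced chart on $Z'$: because $LS_{f'}$ is an equivalence and sends the trivial $v$-vector bundle $\O^r$ on $Z'_v$ to the trivial Higgs bundle $(\O^r,0)$, I conclude $g^{\ast}V_1 \cong \O^r$ as a $v$-vector bundle. Hence $V_1$ is \'etale-locally free.

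I expect the main bookkeeping step to be the simultaneous smallness of $V_1$ and $V_2$ with respect to a single chart, and the verification that the implicit constants in the definition of smallness behave well under the further \'etale localisation used to trivialise $E_1$; both are handled by the fact that trivial bundles are small for any chart, together with \Cref{l:v-bundle-locally-small} which allows shrinking $Z$ freely in $Z_{\et}$.
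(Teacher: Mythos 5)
Your proposal is correct and follows essentially the same route as the paper: localise to make both bundles small, apply the local correspondence of \Cref{t:local-paCS} to turn the injection into an injection of Higgs bundles, and deduce $\theta_1=0$ from $\theta_2=0$. Your final passage to a further cover $Z'\to Z$ and $LS_{f'}$ is unnecessary, since by \Cref{l:etale-small-is-trivial} the bundle underlying a small Higgs bundle is already trivial, so $V_1\cong LS_f^{-1}(E_1,0)\cong E_1$ is \'etale directly.
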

\begin{proof}
We can work locally and assume that $V_1$, $V_2$ are small. Then \cref{t:local-paCS}  translates this into an inclusion of Higgs bundles $(E_1,\theta_1)\subseteq (E_2,\theta_2)$, and $\theta_1$ vanishes if $\theta_2$ does. 
\end{proof}

Second, as a consequence of \cref{t:local-paCS}.3, we immediately see on the Higgs side:

\begin{Corollary}
	Let $X$ be a smoothoid space over $K$ and let $V$ be a $v$-vector bundle on $X$. Let $\nu:X_v\to X_{\et}$ be the natural morphism of sites and let $n \in \N$.
	\begin{enumerate}
		\item The $\O_X$-module
		$R^n\nu_{\ast}V$ vanishes for $n\geq \dim X$.
		\item If $X$ is a smooth rigid space, then the $\O_X$-module
		$R^n\nu_{\ast}V$
		on $X_{\et}$ is coherent.
	\end{enumerate}
\end{Corollary}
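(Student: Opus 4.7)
Both statements are local on $X_{\et}$, so I may reduce to a convenient setting before invoking any finer structure. By \Cref{l:v-bundle-locally-small} applied to $G = \GL_n$, after replacing $X$ by a suitable étale cover I may assume that $X$ is a toric smoothoid admitting a toric chart $f$ and that $V$ is a small $v$-vector bundle on $X$. Then by \Cref{t:local-paCS}.(2), there is an associated small Higgs bundle $(E,\theta) = LS_f(V)$ on $X$, and by \Cref{t:local-paCS}.(4) there is a natural isomorphism
\[R\nu_* V \cong \mathcal C^*_{\Higgs}(E,\theta)\]
in $\mathcal D(X_{\et})$. Consequently, $R^n\nu_* V$ is naturally isomorphic to the $n$-th cohomology sheaf of the Higgs complex, and both vanishing and coherence questions reduce to the corresponding assertions about this explicit complex.

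For part~(1), the Higgs complex $\mathcal C^*_{\Higgs}(E,\theta)$ is concentrated in degrees $0,\dots,d$ where $d = \dim X$, since by \Cref{d:differential-dimension} the sheaf $\wtOm^1_X$ is locally free of rank $d$, so $\wtOm^k_X = \wedge^k \wtOm^1_X$ vanishes for $k > d$. Hence $R^n\nu_* V = 0$ whenever $n > \dim X$.

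For part~(2), assume that $X$ is a smooth rigid space. Then $E$ is a finite locally free $\O_X$-module (being a $\GL_n$-torsor locally trivial on $X_{\et}$ by \Cref{l:etale-small-is-trivial} — and in any case coherent), and each $\wtOm^k_X$ is locally free of finite rank. So every term $E \otimes \wtOm^k_X$ of the Higgs complex is a coherent $\O_X$-module, and the differentials $\theta_k$ are $\O_X$-linear by construction (\Cref{d:Higgs-cohom}). Since coherent $\O_X$-modules on $X_{\et}$ form an abelian category closed under kernels and cokernels of $\O_X$-linear maps — which we can pass through \Cref{l:small-adic-space-is-diamantine}.(5) to the usual analytic theory on the smooth rigid space $X_{\an}$ — the cohomology sheaves of $\mathcal C^*_{\Higgs}(E,\theta)$ are coherent.

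The only delicate point is that the isomorphism of \Cref{t:local-paCS}.(4) depends on the chart $f$, and on overlaps the different local identifications of $R\nu_* V$ with a Higgs complex differ by the $2$-morphisms of \Cref{t:local-paCS}.(3). However, both vanishing of a sheaf and its coherence are étale-local properties, so only the existence of \emph{some} local Higgs-complex description is needed, not a canonical global gluing. Hence both conclusions follow directly from the local computation above.
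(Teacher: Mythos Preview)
Your approach is correct and matches the paper's: the paper treats this corollary as an immediate consequence of the local correspondence (specifically \Cref{t:local-paCS}.(4), which identifies $R\nu_*V$ étale-locally with a Higgs complex), and you have spelled out exactly that argument.

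One point worth flagging: the statement as printed says ``vanishes for $n\geq \dim X$'', but you only establish vanishing for $n>\dim X$, and in fact that is all one can prove. For $V=\O$ and $d=\dim X\geq 1$ one has $R^d\nu_*\O=\wtOm^d_X\neq 0$, so the $\geq$ in the statement is a typo for $>$. Your argument is the correct one; the Higgs complex has length $d$, not $d-1$. You might want to note this discrepancy explicitly rather than silently proving the stronger-looking but false version.
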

This reproves a result of Kedlaya--Liu \cite[Theorem 8.6.2.(a)]{KedlayaLiu-II} for pseudo-coherent modules in the special case of $v$-vector bundles.
For paracompact rigid $X$, it follows from Grothendieck vanishing \cite[Corollary 2.5.10]{deJongvdPut} that $H^i_v(X,V)=0$ for $i>2\dim X$. A related result has been obtained in a similar way by Min--Wang \cite[Corollary~1.10]{MinWang22} for proper $X$  over a discretely valued field $K_0$. We note that the Corollary also illustrates the failure of $R\nu_{\ast}$ to be conservative: If the associated Higgs complex is exact, then $R\nu_{\ast}V=0$.

\subsection{$v$-stacks of \'etale $G$-torsors}
Let $f:X\to \Spa(K)$ be a smooth rigid space over $X$.
As an application of \cref{c:test-v-local-analyticity-criterion}, we can now prove that the prestack of \'etale vector bundles on $X$ relatively over the $v$-site over $K$ is a stack. This is one of our main motivations for introducing smoothoid spaces.

We shall freely use the language of $v$-stacks from \cite[\S9]{etale-cohomology-of-diamonds}. We now recall their definition, adapted to our setting of perfectoid spaces over a fixed perfectoid field $K$. In other words, we will always consider $v$-stacks with a fixed structure map to $\Spd(K)$:
\begin{Definition}\label{d:v-stack}
	\begin{enumerate}
\item A prestack over $K$ is a contravariant functor $F: \Perf_K\to \mathrm{Groupoids}$. 
\item A prestack $F$ is called a \textbf{$v$-stack} if it satisfies $v$-descent, i.e.\ if for each $v$-cover $Y'\to Y$ with projections $\pi_{1},\pi_2:Y'\times_YY'\to Y'$, the following functor is an equivalence:
\[ F(Y)\to \{(s,\alpha)|s\in F(Y'), \alpha:\pi_1^{\ast}s\isomarrow  \pi_2^{\ast}s \text{ such that cocycle condition holds}\}\]

\item A $v$-stack $F$ is called \textbf{small} if there is a surjection $h:S\to F$ from a perfectoid space for which $R=S\times_FS$ is a small $v$-sheaf, i.e.\ there is a surjection $S'\to R$ of $v$-sheaves from a perfectoid space $S'$. It is not difficult to see that one gets an equivalent definition if $S$ and $S'$ are just assumed to be diamonds.
\end{enumerate}
\end{Definition}
\begin{Definition}
Let $G$ be any rigid group over $K$, for example $G=\GL_n$.
\begin{enumerate}
\item We denote by $\cBun_{G,v}$ the prestack on $\Perf_K$ that sends a perfectoid space $Y$ to the groupoid of $v$-$G$-bundles on $(X\times Y)_{v}$, where as usual the fibre product is over $\Spa(K)$.
\item We denote by $\cBun_{G,\et}$ the prestack on $\Perf_K$ that sends a perfectoid space $Y$ to the groupoid of \'etale  $G$-bundles on $(X\times Y)_{\et}$.
\item We  denote by $\cHiggs_G$
the prestack on $\Perf_K$ that sends a perfectoid space $Y$ to the groupoid of $G$-Higgs bundles on $(X\times Y)_{\et}$.
\end{enumerate}
\end{Definition}

We now use the descent criteria from the last section to prove our next main result:
\begin{Theorem}\label{t:Higgs-is-vstack}
All of the prestacks $\cBun_{G,v}$, $\cBun_{G,\et}$ and  $\cHiggs_G$ are  small $v$-stacks. 
\end{Theorem}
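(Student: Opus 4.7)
The plan is to treat the three prestacks in turn and conclude with smallness. The $v$-stack property for $\cBun_{G,v}$ is tautological: any $v$-cover $Y'\to Y$ in $\Perf_K$ induces a $v$-cover $X\times Y'\to X\times Y$ inside $(X\times Y)_v$, and $v$-$G$-torsors satisfy descent by the very definition of the $v$-site.

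For $\cBun_{G,\et}$, given a $v$-cover $Y'\to Y$ and a descent datum for an \'etale $G$-torsor, the preceding step yields a $v$-$G$-torsor $V$ on $(X\times Y)_v$ that becomes \'etale-locally trivial after pullback to $X\times Y'$. I would then invoke \Cref{t:main-thm-for-both-O-and-O^+}: the class of $V$ in $R^1\nu_\ast G(X\times Y)$ corresponds under $\HTlog$ to a class $[\theta]\in \Higgs_G(X\times Y)$ whose pullback to $X\times Y'$ is trivial. By \Cref{ex:pullback-wtOm-inj}.2, the projection $X\times Y'\to X\times Y$ is a morphism of smoothoids along which $\wtOm_{X\times Y}\to f_\ast\wtOm_{X\times Y'}$ is injective; \Cref{l:Zariski-dense-restriction-on-Higgs} then forces $[\theta]=0$, and hence $V$ is \'etale.

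For $\cHiggs_G$, a descent datum consists of a descent datum for an underlying $G$-torsor $E$, which descends to $E_0$ on $X\times Y$ by the previous paragraph, together with a Higgs field $\theta$ on $X\times Y'$ compatible with the cocycle. The module $\ad(E_0)\otimes\wtOm_{X\times Y}$ is locally free for the analytic topology; since $\wtOm_{X\times Y'}=f^\ast\wtOm_{X\times Y}$ by \Cref{ex:pullback-wtOm-inj}.2 and $\O$ is a $v$-sheaf on smoothoids (\Cref{l:small-adic-space-is-diamantine}.3), the section $\theta$ descends. The constraint $\theta\wedge\theta=0$ is automatically preserved, yielding a $G$-Higgs bundle on $X\times Y$.

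The remaining task is smallness, for which I would use an explicit cocycle construction. For $\cBun_{G,\et}$, take a cofinal system of standard-\'etale covers $\mathcal U\to X$ and consider the presheaf on $\Perf_K$ sending $Y$ to the space of $1$-cocycles $Z^1(\mathcal U\times Y/X\times Y,G)$; this is representable by a rigid space relative to $Y$, and the coproduct over $\mathcal U$ surjects onto $\cBun_{G,\et}$. For $\cHiggs_G$ one pairs this with the parameter space for Higgs fields, namely sections of the vector bundle $(\mg\otimes\wtOm)^{\wedge=0}$. The main obstacle is smallness of $\cBun_{G,v}$: a $v$-$G$-torsor trivialises only on a perfectoid cover, which can be arbitrarily large. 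The plan is to reduce, using \Cref{l:v-bundle-locally-small}, to small $v$-$G$-torsors with respect to a toric chart, parametrise these by the abelian cocycle space $\mathcal Z^{\ab}(G_c)$ from \Cref{s:abelian-G-bundles} which is representable, and take the coproduct over a cofinal family of toric charts, carefully tracking the chart-dependent smallness constant $c$.
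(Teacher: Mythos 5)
Your treatment of the $v$-stack property coincides with the paper's: $\cBun_{G,v}$ is tautological, $\cBun_{G,\et}$ is handled via the injectivity of $\wtOm(X\times Y)\to \wtOm(X\times Y')$ together with \cref{l:Zariski-dense-restriction-on-Higgs} (this is exactly the content of \cref{c:descent-criterion}), and $\cHiggs_G$ by descending first the underlying torsor and then the Higgs field as a section of an \'etale vector bundle. That part is fine.

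The smallness argument is where you diverge, and there is a genuine gap. Your proposed atlas for $\cBun_{G,\et}$ is the coproduct, over \'etale covers $\mathcal U\to X$, of the cocycle spaces $Y\mapsto Z^1(\mathcal U\times Y/X\times Y,G)$. For this to surject onto $\cBun_{G,\et}$ you would need every \'etale $G$-torsor on $X\times Y$ to trivialise, at least $v$-locally on $Y$, on a cover of product form $\mathcal U\times Y$ with $\mathcal U\to X$ \'etale. This fails: a trivialising \'etale cover of $X\times Y$ genuinely mixes the two factors (already its rational-open constituents need not be pulled back from $X$), and such covers do not descend to $X$ even after restriction to a geometric point of $Y$. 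The claim that the cocycle spaces are ``representable by a rigid space relative to $Y$'' is also unjustified --- at best one can hope for diamonds, and establishing even that requires an argument. For $\cBun_{G,v}$ you correctly identify the delicate point, namely that the smallness constant $c$ of \cref{p:part-2-of-Faltings-Lemma} depends on the toric chart and on the perfectoid test object, but ``carefully tracking'' it is precisely the unresolved difficulty: no uniform bound over all $Y$ is available, so the proposed coproduct cannot be formed over a fixed set of parameters.

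The paper takes a different route, following \cite{FS-Geometrization}: smallness reduces to showing $F(S)=\varinjlim_i F(S_i)$ for $S=\Spa(R,R^+)$ affinoid perfectoid, where $(R_i)$ runs over the topologically countably generated perfectoid subalgebras of $R$. For $\cBun_{G,\et}$ and $\cHiggs_G$ this is an approximation argument (the trivialising cover, the cocycle and the Higgs field each involve only countably many elements of $R$, hence descend to some $S_i$), and smallness of $\cBun_{G,v}$ is then deduced from that of $\cHiggs_G$ via the local correspondence \cref{t:local-paCS} applied on an \'etale cover of $X$ by toric affinoids on which the $v$-bundle becomes small. You would need to replace your atlas construction by an argument of this kind.
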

\begin{proof}
Let $g:Y'\to Y$ be a $v$-cover in $\Perf_K$, then $X\times Y'\to X\times Y$ is also a $v$-cover. We clearly have $v$-descent for $v$-topological $G$-bundles, so $\cBun_{G,v}$ is a $v$-stack. 

For $\cBun_{G,\et}$, we need to prove that we have descent of \'etale $G$-bundles along the map $f:X\times Y'\to X\times Y$. By \cref{c:descent-criterion} to Theorem~\ref{t:main-thm-for-both-O-and-O^+}, it suffices to see that the map  $\wtOm(X\times Y)\to \wtOm(X\times Y')$
is injective. This follows from \cref{p:RnuO-for-small-dmd} which shows $\wtOm_{X\times Y}=\wtOm_X\otimes_{\O_X} \O_Y$, and the fact that $g_{\ast}\O_Y\to \O_{Y'}$ is injective as $g$ is a $v$-cover.

The case of $\cHiggs_G$ now follows easily: Given a descent datum on a $G$-Higgs bundle $(E',\theta')$, we first descend the \'etale bundle $E'$ to a $G$-bundle $E$. Then $\mathcal F=\ad(E)\otimes \wtOm_X$, being an \'etale vector bundle by \cref{p:RnuO-for-small-dmd}, satisfies $\nu_{\ast}\nu^{\ast}\mathcal F=\mathcal F$. In other words, $\mathcal F$ already satisfies the sheaf property for the $v$-topology on $\Perf_K$, so $\theta$ can be defined $v$-locally. By the same argument, the vanishing of the section $\theta\wedge\theta\in \ad(E)\otimes \wtOm^2$ can be checked $v$-locally.

That the $v$-stack $\cBun_{G,\et}$ is small can be seen similarly as in \cite[Proposition III.1.3]{FS-Geometrization}: As explained there, it suffices to prove that for any affinoid perfectoid space $S=\Spa(R,R^+)$ over $K$, we have \[\cBun_{G,\et}(S)=\textstyle\varinjlim_{i\in I}\cBun_{G,\et}(S_i)\]
where $S_i=\Spa(R_i,R_i^+)$ and $(R_i)_{i\in I}$ ranges through the topologically countably generated perfectoid subalgebras of $R$. To see that this holds, let $E\subseteq \cBun_{G,\et}(R)$ be any \'etale $G$-torsor, and let $U\to X\times S$ be a trivialising standard-\'etale cover so that $G$ corresponds to a class in the kernel of $G(U)\to G(U\times_{X\times S}U)$. Note that $U$ descends to a standard-\'etale cover $U_i\to X\times S_i$ for any $i\gg 0$.  Then $\O(U)=\varinjlim \O(U_i)$ since any function of $U$ can be presented using countably many functions of $R$ which are thus already contained in some $R_i$.  After refining $U$, we may assume that $U$ is a finite disjoint union of affinoid opens that factor through an affinoid open $\Spa(A,A^+)\subseteq G$ of topologically finite presentation, so that $E$ is already in the image of
$\varprojlim G(U_i)\to \varprojlim G(U)$.
As $\varinjlim G(U_i\times_{X\times S}U_i)=G(U\times_{X\times S}U)$ is injective, it follows that $E$ descends to a $G$-torsor $E_i$ on $X\times S_i$ for $i\gg 0$, as desired.

The same approximation arguments show that $\ad(E)\otimes \wtOm(X\times S)=\varinjlim \ad(E)\otimes \wtOm(X\times S_i)$, so that any Higgs field $\theta$ on $E$ already descends to a Higgs field on $E_i$ for $i\gg 0$. Thus 
\[\cHiggs_{G,\et}(S)=\textstyle\varinjlim_{i\in I}\cHiggs_{G,\et}(S_i),\]
which shows that $\cHiggs_G$ is small. We deduce the case of $\cBun_{G,v}$ from this using \cref{t:local-paCS}: Let $V$ be a $v$-$G$-bundle on $X\times S$. By \cref{l:v-bundle-locally-small}, this becomes small after passing to an \'etale cover of $X$ by toric affinoid subspaces $U\to X$. By functoriality of the local correspondence in the toric chart, we deduce that the restriction of $V$ to $U\times S$ descends to some $U\times S_i$. Since morphisms of Higgs bundles also descend from $U\times S$ to some $U\times S_i$, we see that we can descend the gluing data, and deduce that $V$ descends to some $X\times S_i$.
\end{proof}

While we believe that the $v$-stack $\cBun_{G,\et}$ is of independent interest, its role in the following is that we use it to understand $\cHiggs_{G}$. For example, for commutative $G$ we have:

\begin{Lemma}\label{l:stacks-for-commutative-G}
	Let $G$ be a commutative rigid group. Let $X$ be a proper smooth rigid space. Then we have a canonical isomorphism of $v$-stacks on $\Perf_K$
	\[ \cHiggs_{G}=\cBun_{G,\et}\times H^0(X,\mg\otimes \wtOm_X)\otimes_K \G_a,\]
	where the second factor on the right hand side is a rigid vector group.
	In particular, all of $\cBun_{G,\et}$,  $\cBun_{G,v}$ and $\cHiggs_{G}$ then have the structure of a group $v$-stack.
\end{Lemma}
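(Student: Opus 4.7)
The plan is to identify the two sides directly by unwinding the definition of $G$-Higgs bundle on $X \times T$ when $G$ is commutative, and then applying a flat base change along $X \times T \to X$.

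First, since $G$ is commutative the adjoint action is trivial, so for any $G$-bundle $E$ on $(X\times T)_\et$ we have $\ad(E) = \mathfrak g \otimes_K \O$ canonically (see \Cref{r:higgs-for-commutative-G}). Moreover, the wedge condition is vacuous because $[\mathfrak g, \mathfrak g] = 0$. Hence a $G$-Higgs bundle on $X \times T$ is simply a pair $(E, \theta)$ with $E \in \cBun_{G,\et}(T)$ and $\theta \in H^0(X \times T, \mathfrak g \otimes \wtOm^1_{X\times T})$, and a morphism $(E_1,\theta_1) \to (E_2,\theta_2)$ is a morphism of bundles $E_1 \to E_2$ satisfying $\theta_1 = \theta_2$ (since the induced map on $\ad$ is the identity). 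Therefore, as groupoids,
\[
\cHiggs_G(T) \;=\; \cBun_{G,\et}(T) \;\times\; H^0(X \times T, \mathfrak g \otimes \wtOm^1_{X\times T}),
\]
naturally in $T$, where the second factor is a discrete groupoid.

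Next, I would identify the second factor with the $v$-sheaf represented by the rigid vector group $H^0(X, \mathfrak g \otimes \wtOm^1_X) \otimes_K \G_a$. By \Cref{p:RnuO-for-small-dmd}, applied to the Cartesian square with $Z = X$, $S = \Spa(K)$, $Y = T$, we have $\wtOm^1_{X\times T} = \pi_X^\ast \wtOm^1_X$ where $\pi_X\colon X\times T \to X$. Since $\mathfrak g \otimes \wtOm^1_X$ is a coherent $\O_X$-module on the proper rigid space $X$, flat base change along $T \to \Spa(K)$ gives
\[
H^0(X\times T, \pi_X^\ast(\mathfrak g \otimes \wtOm^1_X)) \;=\; H^0(X, \mathfrak g \otimes \wtOm^1_X) \otimes_K \O(T),
\]
which is exactly the value on $T$ of the rigid vector group associated to the finite-dimensional $K$-vector space $H^0(X, \mathfrak g \otimes \wtOm_X)$. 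Combining with the groupoid decomposition above gives the desired product decomposition of $v$-stacks.

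For the final assertion, I would note that $\cBun_{G,\et}$ and $\cBun_{G,v}$ inherit group $v$-stack structures from the commutativity of $G$ (Baer sum / contracted product of torsors), and rigid vector groups are obviously abelian group objects; these combine through the decomposition just established to make $\cHiggs_G$ a group $v$-stack. The main technical point is the flat base change step: it rests on $X$ being proper so that $H^0(X, \mathfrak g \otimes \wtOm^1_X)$ is finite-dimensional over $K$ and the formation of $H^0$ of a coherent sheaf commutes with arbitrary base change in $T$. This is where properness enters in an essential way, since without it one cannot expect the second factor to be represented by a rigid vector group.
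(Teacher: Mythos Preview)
Your proposal is correct and follows essentially the same approach as the paper: unwind \Cref{r:higgs-for-commutative-G} to get the groupoid splitting, then use base change for sections of a vector bundle on the proper space $X$ (the paper cites \cite[Proposition~4.1]{heuer-diamantine-Picard}, which is the content of your flat base change step and also appears later as \Cref{l:section-vb-on-product-of-proper-with-perfectoid}). Your account of the group structure via contracted product of torsors matches the paper's as well.
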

\begin{proof}
	In the proper case, we have $H^0(X\times Y,\wtOm)=H^0(X,\wtOm)\otimes \O(Y)$ by \cite[Proposition~4.1]{heuer-diamantine-Picard}. The description therefore follows from \cref{r:higgs-for-commutative-G}. The group structure on $\cBun_{G,\et}$ and $\cBun_{G,v}$ comes from the tensor product on the category of $G$-bundles given by sending $G$-torsors $E_1,E_2$ to the pushout of $E_1\times E_2$ along the multiplication $G\times G\to G$.
\end{proof}

A similar description is still possible if we drop the assumption that $X$ is proper. In this case, the second factor might no longer be a rigid group, but it is still a diamond:
\begin{Lemma}\label{l:pushforward-of-vb-is-diamond}
	Let $f:X\to \Spa(K)$ be a smooth rigid space. Let $E$ be an analytic vector bundle on $X$. Then the $v$-sheaf $f_{\ast}E$ on $\Perf_K$ defined by $Y\mapsto E(X\times Y)$ is a diamond.
\end{Lemma}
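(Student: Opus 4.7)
The plan is to reduce by standard dévissage to the case $X = \mathbb{B}^d$ and $E = \O$, then realize the resulting functor as a diamond via an internal mapping-space presentation.

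First, since $E$ is analytically locally trivial by \cref{l:small-adic-space-is-diamantine}.5 and any smooth rigid space admits an admissible cover by affinoid opens trivialising $E$, the analytic sheaf property on the smoothoid spaces $X \times Y$ expresses $f_{\ast}E$ as a limit/equalizer of finite products of functors of the form $Y \mapsto \O(U \times Y)$ for smooth affinoid $U$. Finite limits of diamonds are diamonds, so it suffices to treat $E = \O$ and $X = U$ smooth affinoid. A topologically finite presentation $\O(U) = K\langle T_1, \ldots, T_d\rangle/(g_1, \ldots, g_r)$ then expresses $\O(U \times Y)$ as a kernel inside $\O(\mathbb{B}^d \times Y)^r$, reducing the claim to showing that $\mathcal{F}\colon Y \mapsto \O(Y)\langle T_1, \ldots, T_d\rangle$ is a diamond.

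Decomposing $\mathcal{F}(Y) = \bigcup_{n \geq 0} \pi^{-n}\mathcal{G}(Y)$ with $\mathcal{G}(Y) := \O^+(Y)\langle T_1, \ldots, T_d\rangle$ and the union along open immersions of $v$-sheaves, it suffices to handle $\mathcal{G}$. The monomial Schauder basis of $\O^+(\mathbb{B}^d)$ identifies $\mathcal{G}(Y)$ with the set of null-sequences $(a_I)_{I \in \mathbb{N}^d}$ in $\O^+(Y)$. Letting $S = \mathbb{N}^d \cup \{\infty\}$ be the one-point compactification as a profinite topological space, the associated affinoid perfectoid space $\underline{S}$ over $K$ satisfies $\O^+(Y \times \underline{S}) = C^0(S, \O^+(Y))$, so the internal mapping functor $\underline{\Map}_K(\underline{S}, \mathbb{B})\colon Y \mapsto \Hom_K(Y \times \underline{S}, \mathbb{B})$ classifies all convergent sequences in $\O^+(Y)$. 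The null-sequence functor $\mathcal{G}$ is then the fibre over $0 \in \mathbb{B}$ of the evaluation-at-$\infty$ morphism $\mathrm{ev}_\infty\colon \underline{\Map}_K(\underline{S}, \mathbb{B}) \to \mathbb{B}$, hence a fibre product of diamonds once we know $\underline{\Map}_K(\underline{S}, \mathbb{B})$ is one.

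The hard part is thus to show that the infinite-dimensional internal mapping space $\underline{\Map}_K(\underline{S}, \mathbb{B})$ is a diamond. Writing $\underline{S} = \varprojlim_n \underline{S_n}$ with $S_n$ finite gives $\underline{\Map}_K(\underline{S}, \mathbb{B}) = \varinjlim_n \mathbb{B}^{|S_n|}$ as a colimit along closed diagonal immersions; such closed-immersion colimits of rigid spaces are not in general diamonds, so the argument must exploit extra structure coming from the perfectoid source $\underline{S}$. The approach would be to produce a quasi-pro-\'etale presentation by using that $Y \times \underline{S}$ is perfectoid and that $\Hom_K(Y \times \underline{S}, \mathbb{B}) = \O^+(Y \times \underline{S})$ is fully controlled by the perfectoid geometry of this source — this is precisely the type of relative-rigid-space ``Weil restriction'' construction for which the smoothoid formalism of the paper is designed, and should make the verification of the diamantine structure essentially formal within that framework.
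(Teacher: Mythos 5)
Your overall strategy (localise on $X$, reduce to the polydisc, then handle convergent power series) parallels the paper's, but two of your three steps have genuine gaps, and the tool that makes the paper's proof work is missing from your argument.

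First, the reduction from a smooth affinoid $U$ to $\mathbb B^d$ via a presentation $\O(U)=K\langle T_1,\dots,T_d\rangle/(g_1,\dots,g_r)$ does not work: such a presentation exhibits $\O(U\times Y)$ as a \emph{quotient} of $\O(\mathbb B^d\times Y)$, not as a kernel inside $\O(\mathbb B^d\times Y)^r$, and quotients of diamonds by non-representable equivalence relations need not be diamonds (there is no functorial splitting of the restriction map along a closed immersion). This is exactly why the paper invokes Achinger's theorem that every smooth affinoid admits a \emph{finite \'etale} map $X\to\mathbb D$ to a polydisc \cite{Achinger_Kpi1}: then $\O(X\times Y)=\O(\mathbb D\times Y)\otimes_{\O(\mathbb D)}\O(X)$ is a finite locally free $\O(\mathbb D\times Y)$-module, hence a direct summand of a finite power, which \emph{injects} into $\mathcal F_{\mathbb D,\O}^N$ — and injections are what one can exploit.

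Second, your final step is deferred rather than proved: you correctly identify that showing $\underline{\Map}_K(\underline S,\mathbb B)$ is a diamond is "the hard part," and then assert it should be "essentially formal," which is not an argument. The key input you are missing is \cite[Proposition 11.10]{etale-cohomology-of-diamonds}: \emph{any} sub-$v$-sheaf of a diamond is a diamond. With this, the convergence condition on power series coefficients never needs to be cut out geometrically — one simply observes that $Y\mapsto\O(Y)\langle T_1,\dots,T_n\rangle$ injects via its coefficients into $\prod_{\N}\A^n$, which is a diamond by \cite[Lemma 11.22]{etale-cohomology-of-diamonds}, and one is done. (The same proposition also replaces your equalizer presentation in the first localisation step by a bare injection $\mathcal F_{X,E}\hookrightarrow\prod_i\mathcal F_{U_i,E}$, avoiding any discussion of intersections or finiteness of the cover, and would in fact also rescue your mapping-space construction — but it renders that construction unnecessary.) Without Proposition 11.10 your approach does not close; with it, the elaborate one-point-compactification detour collapses to a one-line coefficient embedding.
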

\begin{proof}
	Let us write $\mathcal F_{X,E}:=f_{\ast}E$.
	Given any cover $X=\cup U_i$, the map $\mathcal F_{X,E}\hookrightarrow \prod \mathcal F_{U_i,E}$ is injective, where $\mathcal F_{U_i,E}$ is the restriction to $U_i$ . By \cite[Proposition 11.10]{etale-cohomology-of-diamonds}, we can thus work locally on $X$ and assume that $E$ is free, reducing to $E=\O$. By a result of Achinger, we can moreover assume that $X$ admits a finite \'etale map $X\to \mathbb D$ to some rigid polydisc \cite[Proposition 6.6.1]{Achinger_Kpi1}\cite[Corollary B.5]{Zavyalov_acoh}. In this case, 
	$\O(X\times Y)=\O(\mathbb D\times Y)\otimes_{\O(\mathbb D)}\O(X)$, so we are reduced to the case that $X=\Spa(K\langle T_1,\dots,T_n\rangle)$. In this case, the result follows because $\mathcal F_{\O,\mathbb D}\subseteq \prod_{\N}\A^n$, and the latter is a diamond by \cite[Lemma 11.22]{etale-cohomology-of-diamonds}.
\end{proof}

\section{The Hitchin morphism for $v$-vector bundles}
We now give the second main application of the sheafified correspondence \cref{t:main-thm-for-both-O-and-O^+}: The construction of the Hitchin morphism on the Betti side.
To motivate the construction, we reiterate that our goal in this series is to investigate to what extent non-abelian $p$-adic Hodge theory can be understood in terms of a comparison of  the moduli $v$-stacks introduced in the last section.
As in the complex theory, it is clear from the case of $G=\G_m$ which we explain in detail in \S\ref{s:Hitchin-for-Gm} that we cannot expect  $\cHiggs_{G}$ and $\cBun_{G,v}$ to be isomorphic as $v$-stacks. Instead, as mentioned in \cref{t:part-II-twist-for-curves} in the introduction, what we will show in part II is that for $G=\GL_n$ and $X$ a smooth proper curve, the two $v$-stacks are twists of each other via two natural morphisms to the same rigid analytic base: the Hitchin base. 

The goal of this section is to introduce these morphisms and prove some first properties. We also explain why we believe that the Hitchin morphism on the Betti side is interesting for studying representations of the \'etale fundamental group.

As before, let $K$ be any perfectoid extension of $\Q_p$. Throughout this section, let $X$ be a smooth rigid space over $K$. We discuss the the Hitchin morphisms on the Higgs side.

\subsection{The Hitchin morphism for Higgs bundles in the case of $G=\GL_n$}\label{s:Hitchin-for-GLn}
We describe the Hitchin morphism for $G=\GL_n$ because this is the main case studied in the $p$-adic Simpson correspondence so far, and the description simplifies significantly in this case.
To simplify notation, let us in this subsection write
\[ \cBun_{n,v}:=\cBun_{\GL_n,v},\quad \cHiggs_{n}:=\cHiggs_{\GL_n}.\]

In  algebraic geometry, the Hitchin morphism for a smooth proper variety $Y$ is a morphism from the stack of Higgs bundles of $Y$ to the Hitchin base, which is a certain affine space depending on $Y$  \cite[cf \S6]{SimpsonModuliII}. The construction is straightforward to adapt to the $p$-adic analytic setting, as we now discuss, roughly following \cite[p.20]{SimpsonModuliII}.

\medskip

Let $Z$ be any smoothoid space and 
let $(E,\theta)$ be a Higgs bundle of rank $n$ on $Z_{\et}$. We regard $\theta$ as a section of $\End(E)\otimes \wtOm$. On any $U\in Z_{\et}$ where $E$ becomes trivial, choose an isomorphism $\psi:E|_U\cong \O^n_U$, then $\theta$ defines a homogeneous element of degree 1 in $M_n(H^0(U,\Sym\wtOm))$ and we can consider its characteristic polynomial in $H^0(U,\Sym\wtOm[T])$. As this is independent of the choice of $\psi$, this glues to a polynomial 
\[ \chi_{E,\theta}= T^n+a_{1}T^{n-1}+\dots + a_{n} \in H^0(Z,\Sym\wtOm[T])\]
defined over all of $Z$, 
where $a_k\in H^0(Z,\Sym^k\wtOm)$ for $k=1,\dots,n$.

\begin{Definition}
Since $\chi_{E,\theta}$ only depends on the isomorphism class of $(E,\theta)$, this defines a natural map from the set of isomorphism classes of Higgs bundles on $Z_{\et}$ of rank $n$
\[H:\{\text{Higgs bundles on $Z_{\et}$ of rank $n$}\}/\!\sim\to \textstyle\bigoplus\limits_{k=1}^n H^0(Z,\Sym^k\wtOm), \quad (E,\theta)\mapsto \chi_{E,\theta}.\]
As $H$ is functorial in $Z$ by construction, it induces a morphism of sheaves on $Z_{\et}$
\begin{equation}\label{eq:Higgs-as-morphism-of-shvs}
h_Z:\Higgs_n\to \textstyle\bigoplus\limits_{k=1}^n \Sym^k\wtOm.
\end{equation}
\end{Definition}

\begin{Remark}
If $\dim Z=1$, then $\Sym^i\wtOm=\wtOm^{\otimes i}$, and we get a simpler description of $\chi_{E,\theta}$. For  this we  consider for each $k=1,\dots,n$ the $k$-th wedge product of the Higgs field
\[ \wedge^k \theta:\wedge^kE\to  \wedge^k(E\otimes\wtOm)=\wedge^kE\otimes\wtOm^{\otimes k},\]
then $a_k:=\tr( \wedge^k \theta)$ is the $k$-th coefficient of $\chi_{E,\theta}$. This matches Hitchin's definition \cite{HitchinFibration}.
\end{Remark} 

We now return to the smooth rigid space $X$, for which we can assemble the various maps $h_Z$ on $Z=X\times Y$ for $Y\in \Perf_K$ to a Hitchin  morphism in terms of moduli stacks, using:

\begin{Definition}
The \textbf{Hitchin base} of $X$ is the $v$-sheaf $\mathcal A_{n}=\mathcal A_{X,n}$ on $\Perf_K$ defined by
\[  \mathcal A_{n}\colon Y\mapsto  \textstyle\bigoplus\limits_{k=1}^nH^0(X\times Y,\Sym^k\wtOm_{X\times Y}).\]
In general, $\mathcal A_{n}$ is a diamond by \cref{l:pushforward-of-vb-is-diamond}, but if $X$ is proper, then by \cref{p:RnuO-for-small-dmd} and  \cref{l:section-vb-on-product-of-proper-with-perfectoid} below, $\mathcal A_{n}$ is represented by the rigid space $ \bigoplus_{k=1}^nH^0(X,\Sym^k\wtOm)\otimes_K\G_a$.
\end{Definition}

\begin{Lemma}\label{l:section-vb-on-product-of-proper-with-perfectoid}
Let $X$ be a smooth proper rigid space and let $Y$ be a perfectoid space. Let $\pi:X\times Y\to X$ be the projection. Then for any vector bundle $E$ on $X_{\et}$ and $i\geq 0$, we have
\[H^i_{\et}(X\times Y,\pi^{\ast}E)=H^i_{\et}(X,E)\otimes_K \O(Y).\]
\end{Lemma}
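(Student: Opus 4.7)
The plan is to reduce to the case $E=\O$ (which is \cite[Proposition~4.1]{heuer-diamantine-Picard}, already invoked in the preceding definition) by a \v Cech argument with respect to a cover on which $E$ trivialises. The key inputs needed beyond that are: first, that sheafy affinoid smoothoids have acyclic structure sheaf (\cref{l:small-adic-space-is-diamantine}.1), which ensures that \v Cech cohomology computes sheaf cohomology on both sides; second, flatness of $\O(Y)$ over $K$, which lets us commute $-\otimes_K\O(Y)$ with cohomology of the \v Cech complex.

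In more detail, I would proceed as follows. By \cref{l:small-adic-space-is-diamantine}.5, $E$ is locally free for the analytic topology, so we can choose a finite affinoid cover $\mathcal U=(U_i)_{i\in I}$ of $X$ such that $E|_{U_i}\cong \O^{n}_{U_i}$ for each $i$. All the intersections $U_J:=\bigcap_{i\in J}U_i$ for $J\subseteq I$ finite are then affinoid smooth over $K$, and their products $U_J\times Y$ are affinoid smoothoid over $K$ with trivialisations $\pi^{\ast}E|_{U_J\times Y}\cong \O^n$. Hence by \cref{l:small-adic-space-is-diamantine}.1, all $U_J\times Y$ are acyclic for $\pi^{\ast}E$. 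It follows that \v Cech cohomology with respect to the cover $\mathcal U\times Y$ computes $H^i_{\et}(X\times Y,\pi^{\ast}E)$, and similarly for $X$. Now on each piece we have
\[ \pi^{\ast}E(U_J\times Y)=E(U_J)\otimes_{\O(U_J)}\O(U_J\times Y)=E(U_J)\otimes_{\O(U_J)}\big(\O(U_J)\hotimes_K\O(Y)\big)=E(U_J)\otimes_K\O(Y),\]
where the middle equality is the case $E=\O$ of \cite[Proposition~4.1]{heuer-diamantine-Picard} (which gives $\O(U_J\times Y)=\O(U_J)\hotimes_K\O(Y)$), and the last equality holds because $E(U_J)$ is a finite free $\O(U_J)$-module, so the completed tensor agrees with the algebraic one. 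Therefore the \v Cech complex for $\pi^{\ast}E$ with respect to $\mathcal U\times Y$ is exactly $\check C^{\bullet}(\mathcal U,E)\otimes_K\O(Y)$, and taking cohomology commutes with $-\otimes_K\O(Y)$ since $K$ is a field.

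The only real content lies in the case $E=\O$ — i.e.\ in the base-change formula $H^i_{\et}(X,\O)\otimes_K\O(Y)\isomarrow H^i_{\et}(X\times Y,\O)$ — which uses properness of $X$ in an essential way (through finite-dimensionality of $H^i(X,\O)$ and strictness of the resulting differentials), and this is precisely what is supplied by \cite[Proposition~4.1]{heuer-diamantine-Picard}. Once that input is granted, the only thing to verify carefully is the comparison between \v Cech and derived cohomology on $X\times Y$, which is where the acyclicity statement from \cref{l:small-adic-space-is-diamantine}.1 (applied to the smoothoid affinoids $U_J\times Y$) is needed; this is a routine but indispensable check.
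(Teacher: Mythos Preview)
Your overall strategy (a \v Cech computation, exactly as the paper suggests via \cite[Proposition~3.31]{heuer-diamantine-Picard}) is the right one, but there is a genuine error in the displayed chain of equalities. The last step
\[
E(U_J)\otimes_{\O(U_J)}\big(\O(U_J)\hotimes_K\O(Y)\big)=E(U_J)\otimes_K\O(Y)
\]
is false: finite freeness of $E(U_J)$ over $\O(U_J)$ lets you pull the rank-$n$ direct sum through, giving $E(U_J)\hotimes_K\O(Y)$, but it does \emph{not} turn the completed tensor over $K$ into the algebraic one, because $\O(U_J)$ is infinite-dimensional over $K$. Consequently the \v Cech complex for $\pi^\ast E$ is $\check C^\bullet(\mathcal U,E)\hotimes_K\O(Y)$, not $\check C^\bullet(\mathcal U,E)\otimes_K\O(Y)$, and the subsequent appeal to flatness over a field is beside the point. (Incidentally, the identity $\O(U_J\times Y)=\O(U_J)\hotimes_K\O(Y)$ is just the definition of the fibre product of affinoids; it is not an instance of \cite[Proposition~4.1]{heuer-diamantine-Picard}, which concerns proper $X$, whereas $U_J$ is affinoid.)

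The fix is exactly what you gesture at in your final paragraph, but it has to replace, not supplement, the flatness step: keep the completed tensor throughout and use that $H^i(X,E)$ is finite-dimensional over $K$ (Kiehl's finiteness theorem, since $X$ is proper). This forces the differentials of $\check C^\bullet(\mathcal U,E)$ to be strict maps of $K$-Banach spaces, so the short exact sequences $0\to B^i\to Z^i\to H^i\to 0$ and $0\to Z^i\to C^i\to B^{i+1}\to 0$ are admissible; applying $-\hotimes_K\O(Y)$ then preserves exactness and yields $H^i\big(\check C^\bullet(\mathcal U,E)\hotimes_K\O(Y)\big)=H^i(X,E)\otimes_K\O(Y)$ (the last tensor is now genuinely algebraic because $H^i(X,E)$ is finite-dimensional). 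This is the content of the reference the paper gives; note that properness enters for $E$ itself, not only for $E=\O$.
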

\begin{proof}
This follows from a smoothoid version of ``cohomology and base change'', or by a direct computation in \cH-cohomology as in \cite[Proposition~3.31]{heuer-diamantine-Picard}.
\end{proof}

\begin{Definition}
Using the small $v$-stack of Higgs bundles $\cHiggs_n$ from \cref{t:Higgs-is-vstack}, we define a morphism of $v$-stacks over $K$, the \textbf{Hitchin morphism} for $\GL_n$
\begin{equation}\label{eq:Higgs-as-morphism-of-stacks}
\mathcal H: \cHiggs_n\to\mathcal A_{n}
\end{equation}
as follows: We compose the map $\cHiggs_{n}(X)\to \Higgs_n(X\times Y)$, given by passing to isomorphism classes and sheafifying on $(X\times Y)_{\et}$, with the map $h_Z$ from \cref{eq:Higgs-as-morphism-of-shvs}.

\end{Definition}

\begin{Remark}\label{r:product-structure-on-Hitchin-base}
We think of elements $(a_1,\dots,a_n)\in \mathcal A_{n}$ of the Hitchin base as monic polynomials 
$f(T)=T^n+a_{1}T^{n-1}+\dots+a_n$.
For example, this defines a natural map of $v$-sheaves
\[\cdot:\mathcal A_{n}\times \mathcal A_{m}\to  \mathcal A_{n+m}, \quad f(T),g(T)\mapsto (f\cdot g)(T).\]
The image of a direct sum of Higgs bundles $E_1\oplus E_2$ under $\mathcal H$ is then $\H(E_1)\cdot \H(E_2)$.
\end{Remark}

\subsection{The Hitchin morphism for general $G$}
We now describe the construction of the Hitchin morphism for general rigid groups $G$. Again, this is a fairly straightforward adaptation to $v$-stacks of the algebraic construction,  for which we roughly follow \cite{Ngo_Hitchin}:
Let $(R,R^+)$ be any Huber pair over $(K,K^+)$ and let $V$ be a finite free $R$-module of rank $d$. Write $V\otimes\A^1_R$ for the associated affine space over $\Spec(R)$, where $\A^1_{R}$ is the affine line over $\Spec(R)$ considered as a scheme. Any element of $\mg\otimes V$ defines a morphism 
$V^\vee\otimes \A^1_{R}\to \mg\otimes  \A^1_{R}$ of vector groups over $R$, where $V^\vee$ is the $R$-linear dual.
Composed with the quotient by $G$ in the sense of geometric invariant theory
\[ \mg\otimes  \A^1_{R} \to \mg\otimes  \A^1_{R}\sslash G :=\Spec(R[\mg]^G),\]
we obtain a $\G_m$-equivariant morphism
$V^\vee\otimes \A^1_{R}\to \Spec(R[\mg]^G)$.
\begin{Definition}\label{d:prep-of-Hitchin-base}
Let $A_{V,G,R}$ be the $v$-sheaf over $\Spa(R,R^+)$ sending a perfectoid algebra $(S,S^+)$ to the set of $\G_m$-equivariant morphisms of $R$-schemes
$V^\vee\otimes \A^1_{S}\to \Spec(R[\mg]^G)$.
One verifies that this is a $v$-sheaf, using that $\O$ on $\Perf_K$ is a $v$-sheaf.
\end{Definition}

Passing from schemes to $v$-sheaves, we obtain a natural morphism of $v$-sheaves on $\Perf_K$
\[
\mg\otimes_K V\otimes_R \G_{a,R}\to A_{V,G,R}
\]
that is functorial in $R$ and equivariant for the natural $G\times \GL_V$-action where  $G$ acts via the adjoint action on $\mg$ on the left and trivially on the right, and $\GL_V$ acts on $V$ on  both sides.

Let now $Z$ be any smoothoid space over $K$. We apply this construction to $R=\O(U)$ for any toric open subspace $U\to Z$ in $Z_{\et}$ and $V:=H^0(U,\wtOm_Z)$. We thus obtain natural maps of $v$-sheaves
\[ \mg \otimes H^0(U,\wtOm_U)\otimes_R \G_{a,R}\to A_{V,G,\O(U)}\]
that are still $G\times \GL_V$-equivariant. For varying $U$, the $A_{V,G,\O(U)}$ clearly glue to a sheaf $A_{G}$ over $Z_{\et}$. Due to the $G$-invariance, the above maps then glue to a natural morphism of sheaves on  $Z_{\et}$
\begin{equation}\label{eq:Hitchin-morphims-general-G-sheafified}
h_Z:\Higgs_G\to A_G.
\end{equation}

Let now $X$ be a smooth rigid space and consider $Z=X\times Y$ for test objects $Y\in \Perf_K$.

\begin{Definition} The $v$-sheaf $\mathcal A_G$ (or $\mathcal A_{G,X}$) on $\Perf_K$ defined by 
	$\mathcal A_G(Y):=A_G(X\times Y)$
	 is the \textbf{Hitchin base for $G$} and $X$. 
	 It is clear from the construction that the formation of $\mathcal A_{G,X}$ is functorial in both $G$ and $X$.
	 We then easily verify that $h_{X\times Y}$ is functorial in $Y$. 
\end{Definition}
In general, the $v$-sheaf $\mathcal A_G$ is a diamond by \cref{l:pushforward-of-vb-is-diamond}. However, we have the following:

\begin{Lemma}\label{l:Hitchin-base-is-rigid-space-if-G-red-or-com}
	Assume that $X$ is proper, and that  $G$ is a rigid group such that the Lie algebra $\mathfrak g$ satisfies
	$K[\mg]^G=K[u_1,\dots,u_n]$
	for some homogeneous generators $u_i$ of degree $e_i$, for example by Chevalley's Restriction Theorem we can take split reductive $G$, but we can also take commutative $G$.
 	Then $\mathcal A_G$ is represented by an affine rigid space over $K$, namely
\[  \mathcal A_G\cong \textstyle\bigoplus\limits_{k=1}^nH^0(X,\Sym^{e_k}\wtOm_X)\otimes \G_a.\]
\end{Lemma}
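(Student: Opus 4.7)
The plan is to compute the sheaf $A_G$ on $(X\times Y)_{\et}$ explicitly under the hypothesis on $K[\mg]^G$, and then take global sections, using properness of $X$ in the last step.

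First, I would unwind \Cref{d:prep-of-Hitchin-base} locally: for any $K$-algebra $R$ and finite free $R$-module $V$, a $\G_m$-equivariant morphism of $R$-schemes $V^\vee\otimes \A^1_S\to \Spec(R[\mg]^G)$ is the same thing as a graded $R$-algebra homomorphism $R[\mg]^G\to \Sym_R(V)\otimes_R S$. By the hypothesis together with flat base change of $G$-invariants, which is valid for split reductive $G$ in characteristic zero via the Reynolds operator and trivial for commutative $G$ (where the adjoint action on $\mg$ is trivial), we have $R[\mg]^G=R\otimes_K K[\mg]^G=R[u_1,\dots,u_n]$ with $u_k$ homogeneous of degree $e_k$. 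Such a graded $R$-algebra homomorphism is then freely determined by the images of the generators, namely by elements $a_k\in \Sym^{e_k}_R(V)\otimes_R S$. Thus, functorially in $R$, $S$ and $V$,
\[ A_{V,G,R}(S)=\bigoplus_{k=1}^{n}\Sym^{e_k}_R(V)\otimes_R S. \]

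Next, I would apply this to $V=H^0(U,\wtOm_U)$ for toric $U\in (X\times Y)_{\et}$ and observe that the identification is natural under the $\GL_V$-action, hence glues on $(X\times Y)_{\et}$ to a canonical isomorphism of sheaves $A_G\cong \bigoplus_{k=1}^{n}\Sym^{e_k}\wtOm_{X\times Y}$, where we use that $\wtOm_{X\times Y}$ is an \'etale vector bundle by \Cref{p:RnuO-for-small-dmd}. Specializing the Cartesian diagram in \Cref{p:RnuO-for-small-dmd} to the case $S=\Spa(K)$, $Z=X$ gives $\wtOm_{X\times Y}=\pi^{\ast}\wtOm_X$ and hence $\Sym^{e_k}\wtOm_{X\times Y}=\pi^{\ast}\Sym^{e_k}\wtOm_X$.

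Finally, using properness of $X$, \Cref{l:section-vb-on-product-of-proper-with-perfectoid} yields
\[ H^0(X\times Y,\Sym^{e_k}\wtOm_{X\times Y})=H^0(X,\Sym^{e_k}\wtOm_X)\otimes_K \O(Y), \]
and summing over $k$ exhibits $\mathcal{A}_G(Y)$ as the $Y$-points of the affine rigid space $\bigoplus_{k=1}^{n} H^0(X,\Sym^{e_k}\wtOm_X)\otimes_K\G_a$, giving the claimed identification (note that the direct summands are finite-dimensional over $K$ by properness of $X$). The only nontrivial input beyond direct computation is the flat base change for $G$-invariants in the first step, which is exactly why the statement restricts to the two classes of $G$ named; I do not expect any further obstacles.
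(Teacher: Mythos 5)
Your proof is correct and follows essentially the same route as the paper's: identify $A_{V,G,R}$ locally as $\prod_{k=1}^n(\Sym^{e_k}_R V)\otimes\A^1$ using the hypothesis on $K[\mg]^G$, glue over a toric cover to get $A_G\cong\bigoplus_k\Sym^{e_k}\wtOm_{X\times Y}$, and then invoke properness together with \Cref{l:section-vb-on-product-of-proper-with-perfectoid}. Your explicit justification of the local identification via graded algebra homomorphisms and flat base change of invariants (Reynolds operator for reductive $G$, triviality of the adjoint action for commutative $G$) is a detail the paper leaves implicit, and it is a welcome addition.
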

\begin{proof}
	The condition on $G$ ensures that for any toric affinoid $U\subseteq X$ and $R=H^0(U\times Y,\wtOm)$, we have
	$A_{V,G,R}\cong \prod_{k=1}^n(\Sym_R^{e_k}V)\otimes\A^1$.
	Gluing for a cover of $X$ by such $U$, we see that $A_G= \prod_{k=1}^nH^0(X\times Y,\Sym^{e_k}\wtOm_{X\times Y})\otimes\A^1$. We now use that $X$ is proper and therefore $H^0(X,\Sym^{e_k}\wtOm_X)$ is a finite dimensional $K$-vector space. It follows from \cref{l:section-vb-on-product-of-proper-with-perfectoid} that 
	\[H^0(X\times Y,\Sym^{e_k}\wtOm_{X\times Y})=H^0(X,\Sym^{e_k}\wtOm_X)\otimes \O(Y),\]
	and the right hand side is indeed represented by the rigid space $H^0(X,\Sym^{e_k}\wtOm_X)\otimes \G_a$.
\end{proof}

\begin{Definition}
Composing the morphism of $v$-sheaves $h_{X\times Y}$ from \cref{eq:Hitchin-morphims-general-G-sheafified} with the sheafification $\cHiggs_G(Y)\to \Higgs_G(X\times Y)$ for varying $Y\in \Perf_K$, we get the \textbf{Hitchin morphism}
\begin{equation}\label{eq:Hitchin-morphims-general-G}
\mathcal H:\cHiggs_G\to \mathcal A_G,
\end{equation}
a morphism of small $v$-stacks on $\Perf_K$ that is functorial in $G$ and $X$.
\end{Definition}
For $G=\GL_n$, this recovers the earlier description since the coefficients of the characteristic polynomial generate $k[\mg]^G$. Indeed, these are mapped to the elementary symmetric polynomials under the restriction map $k[\mg]^G\to k[\mathfrak t]^W$ in Chevalley's Restriction Theorem. 
\begin{Example}\label{ex:commutative-G}
	If $G$ is commutative, then we have $K[\mg]^G=K[\mg]$. If $X$ is proper, we then have $\mathcal A_G=\mathfrak g\otimes \wtOm_X$. In any case, $\mathcal H$ is for such $G$ simply the projection to the second factor
	$\cHiggs_G\isomarrow\cBun_{G,\et}\times \mathcal A_G\to \mathcal A_G$
	where the first map is from \cref{l:stacks-for-commutative-G}.
\end{Example}

\subsection{The Hitchin morphism on the Betti side}
On the other side of the $p$-adic Simpson correspondence, we now construct the promised Hitchin morphism for the stack of $v$-topological $G$-bundles for any rigid group $G$ over $K$.

Let $Z$ be any smoothoid space. Combining the isomorphism $\HTlog$ from Theorem~\ref{t:main-thm-for-both-O-and-O^+} with the Hitchin map \cref{eq:Higgs-as-morphism-of-shvs} or \cref{eq:Hitchin-morphims-general-G-sheafified}, we obtain a morphism of sheaves on $Z_{\et}$
\[R^1\nu_{\ast}G\xrightarrow{\HTlog} \Higgs_G\xrightarrow{h_Z} \mathcal A_G\]

Let $X$ be any smooth rigid space over $K$.  Using crucially that $\HTlog$ is functorial in $Z$, we see that these morphisms for  $Z=X\times Y$ glue for varying $Y$ to a morphism of $v$-stacks:

\begin{Definition}\label{d:Definition-of-wtH}
The \textbf{Hitchin morphism on the Betti side} is the morphism of $v$-stacks
\[\wtcH:\cBun_{G,v}\to \mathcal A_G\]
defined on $Y\in \Perf_K$ as the composition
\[ \cBun_{G,v}(Y)\to H^1_v(X\times Y,G)\to  R^1\nu_{\ast}G(X\times Y)\xrightarrow{\HTlog} \Higgs_G(X\times Y) \xrightarrow{h_{X\times Y}}  \mathcal A_G(Y)\]
where the first map is the passage from groupoids to sets of isomorphism classes, where  $\nu:(X\times Y)_v\to (X\times Y)_{\et}$ is the natural map, where $\HTlog$ is the isomorphism from \cref{t:main-thm-for-both-O-and-O^+}, and $h_{X\times Y}$ is the sheafified Hitchin morphism on the Higgs side \cref{eq:Higgs-as-morphism-of-shvs} or \cref{eq:Hitchin-morphims-general-G-sheafified}.
\end{Definition}

With this definition, the basic idea for our moduli theoretic approach to $p$-adic non-abelian Hodge theory is to compare $\cBun_{G,v}$ and $	\cHiggs_G$ geometrically via the morphisms
\begin{equation}\label{eq:the-two-Hitchin-maps}
\begin{tikzcd}[row sep =-0.05cm,column sep = 1cm]
{\cBun_{G,v}} \arrow[rd,"\wtcH"] &   \\                        &  \mathcal A_{G}\\
\cHiggs_G\arrow[ru,"\H"']  &          
\end{tikzcd}
\end{equation}

By \cref{l:Hitchin-base-is-rigid-space-if-G-red-or-com}, this is particularly interesting if $X$ is proper and $G$ is split reductive or commutative, as the Hitchin base $\mathcal A_{G}$ is then represented by an affine rigid space.

\begin{Proposition}\label{p:props-of-HHTlog}
The morphism $\wtcH:\cBun_{G,v}\to \mathcal A_{G}$ has the following properties:
\begin{enumerate}
\item $\wtcH$ is functorial in $X\to \Spa(K)$. In particular, if $X$ has a model over a subfield $K_0\subseteq K$, then $\wt {\mathcal H}$ is equivariant with respect to the $\Aut(K|K_0)$-actions on both sides.
\item $\wtcH$ is functorial in $G$. In particular, if $0\to V_1\to V\to  V_2\to 0$ is a short exact sequence of $v$-vector bundles, then in the notation of \cref{r:product-structure-on-Hitchin-base} we have $\wtcH(V)=\wtcH(V_1)\cdot \wtcH(V_2)$.
\item If $G$ is commutative, then $\wtcH$ is a homomorphism of group stacks.
\end{enumerate}
\end{Proposition}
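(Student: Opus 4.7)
The strategy is to observe that each of the four maps in the factorisation
\[\cBun_{G,v}(Y)\to H^1_v(X\times Y,G)\to R^1\nu_\ast G(X\times Y)\xrightarrow{\HTlog}\Higgs_G(X\times Y)\xrightarrow{h_{X\times Y}}\mathcal A_G(Y)\]
is functorial in $(X,G)$ (and that the last three are morphisms of sheaves of pointed sets on $(X\times Y)_\et$, respectively $\Perf_{K,v}$). For (1), functoriality in $X$ is immediate for the passage to isomorphism classes and for the Leray edge map $H^1_v\to R^1\nu_\ast$; functoriality of $\HTlog$ in $X$ is part of \cref{t:main-thm-for-both-O-and-O^+}; and functoriality of $h_Z$ in $Z$ is clear from its construction in \cref{eq:Higgs-as-morphism-of-shvs}/\cref{eq:Hitchin-morphims-general-G-sheafified}, since the formation of characteristic polynomials (or more generally of $A_{V,G,R}$ in \cref{d:prep-of-Hitchin-base}) commutes with pullback along morphisms of smoothoid spaces. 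The Galois equivariance is then the special case of functoriality in $X$ applied to the $\Aut(K|K_0)$-action on $X_K = X_0\times_{\Spa(K_0)}\Spa(K)$.

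For (2), functoriality in $G$ of the first two maps is tautological, while for $\HTlog$ it is again part of \cref{t:main-thm-for-both-O-and-O^+} and for $h$ it is clear from the construction. To deduce the product formula, given a short exact sequence $0\to V_1\to V\to V_2\to 0$ of $v$-vector bundles, the $\GL_n$-bundle underlying $V$ admits a natural reduction of structure group to the parabolic subgroup $P\subseteq \GL_n$ of block upper triangular matrices with blocks of sizes $\rk V_1$ and $\rk V_2$. Functoriality in $G$ applied to the inclusion $P\hookrightarrow\GL_n$ reduces the problem to computing $\wt{\mathcal H}$ of a $v$-$P$-bundle, which by functoriality applied to $P\twoheadrightarrow\GL_{\rk V_1}\times \GL_{\rk V_2}$ factors through the product on the two Levi factors. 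Since the characteristic polynomial of a block upper triangular matrix is the product of the characteristic polynomials of its diagonal blocks, we obtain $\wtcH(V) = \wtcH(V_1)\cdot\wtcH(V_2)$.

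For (3), if $G$ is commutative, then each of the four maps is a homomorphism of abelian group valued functors: the first two are sheafification maps on abelian sheaves; $\HTlog$ is the inverse of the map $\Psi$ constructed in \cref{eq:Psi-commutative-case} by applying $\exp$ to classes in $R^1\nu_\ast\mg^\circ$, which is a group homomorphism since in the commutative case $\exp\colon\mg^\circ\isomarrow G^\circ$ is itself a homomorphism of abelian sheaves; finally, by \cref{ex:commutative-G} the Hitchin map for commutative $G$ is simply the projection $\cHiggs_G\isomarrow\cBun_{G,\et}\times\mathcal A_G\to\mathcal A_G$, which is a homomorphism of group $v$-stacks by \cref{l:stacks-for-commutative-G}. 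The composition is therefore a homomorphism of group $v$-stacks.

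No step here is hard; the main content of the proposition is really a bookkeeping exercise verifying that the various naturality properties of the constituent maps established in the earlier sections are compatible with the formation of $\wtcH$. The only slightly non-formal point is the reduction to the parabolic in (2), which requires knowing that a $v$-local short exact sequence of vector bundles produces an honest reduction of structure group to $P$; this is standard since the subspace $V_1\subseteq V$ defines a $v$-$P$-reduction which, viewed through $\HTlog$ and $h$, is all that is used.
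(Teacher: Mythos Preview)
Your proof is correct and follows essentially the same approach as the paper: both argue by tracing functoriality through the factorisation of $\wtcH$, invoke the parabolic reduction for short exact sequences in part (2), and appeal to the commutative case of \cref{t:main-thm-for-both-O-and-O^+} together with \cref{ex:commutative-G} for part (3). Your version simply spells out in more detail what the paper's proof compresses into a few lines.
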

\begin{proof}
The functoriality follows from the analogous properties for Higgs bundles and from functoriality of $\HTlog$. To see part 2, we apply $\HTlog$ for parabolic subgroups $G\subseteq \GL_n$ and use functoriality in $G$. On the Higgs side, the statement then follows from \cref{r:product-structure-on-Hitchin-base}.

If $G$ is commutative, then $\HTlog$ is a homomorphism by \cref{t:main-thm-for-both-O-and-O^+}, and $\mathcal H$ is a homomorphism of group stacks by \cref{ex:commutative-G}.
\end{proof}

The possibility of constructing a Hitchin morphism for $v$-$G$-bundles is a  new idea already for the case $G=\GL_n$ of $v$-vector bundles, which was suggested to us by Scholze in reaction to an earlier version of \cref{t:main-thm-for-both-O-and-O^+}. There are however two abelian special cases that can be described explicitly using known results, as we now explain in detail.
From now on, we assume that $K$ is algebraically closed and that $X$ is a smooth proper rigid space over $K$.
\subsection{The Hitchin morphism on the Betti side for $G=\G_a$}\label{s:Hitchin-for-Ga}

We first treat $G=\G_a$: In this case, \cref{l:Hitchin-base-is-rigid-space-if-G-red-or-com} shows that $\mathcal A_{\G_a}=H^0(X,\wtOm)\otimes\G_a$ and
by \cref{ex:commutative-G}, the map $\H:\cHiggs_{\G_a}\to \mathcal A_{\G_a} $ is simply the projection
\[ \H:\cBun_{\G_a,\et}\times H^0(X,\wtOm)\otimes \G_a\to H^0(X,\wtOm)\otimes \G_a.\]
Here $\cBun_{\G_a,\et}$ parametrises \'etale $\G_a$-torsors on $X\times Y$. By \cref{l:section-vb-on-product-of-proper-with-perfectoid} for $i=1$, it can therefore be identified with the $v$-sheaf represented by the rigid vector group $H^1_{\et}(X,\O)\otimes \G_a$. 

On the other hand, the Hitchin morphism on the Betti side is of the form
\[ \wtcH:\cBun_{\G_a,v}=H^1_v(X,\O)\otimes \G_a\to H^0(X,\wtOm)\otimes \G_a\]
and we easily see by comparing the construction of $\wtcH$ with Scholze's construction of the Hodge--Tate sequence \cite[\S3]{Scholze2012Survey} that this map is the morphism of rigid varieties associated to the Hodge--Tate map $\HT$ from \cref{eq:HT-SES}. This makes precise the idea that $\wtcH$ generalises $\HT$.

We deduce from these explicit descriptions that \cref{eq:the-two-Hitchin-maps} takes the following form:
\begin{Proposition}
Any splitting of  \cref{eq:HT-SES} induces an isomorphism of rigid spaces (and thus of $v$-stacks) $\cBun_{\G_a,v}\isomarrow \cHiggs_{\G_a}$ that commutes with the Hitchin fibrations.
\end{Proposition}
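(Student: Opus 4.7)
The plan is to observe that in the case $G = \mathbb G_a$, both sides of the comparison are already described very explicitly as rigid vector groups, and the Hitchin morphism $\wtcH$ fits into a short exact sequence of rigid vector groups which is nothing other than the $\G_a$-tensor of the Hodge--Tate short exact sequence \cref{eq:HT-SES}. A splitting of this sequence then directly produces the desired isomorphism.

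More precisely, I would first unwind the two sides: by \cref{ex:commutative-G} and \cref{l:stacks-for-commutative-G}, together with the description of $\cBun_{\G_a,\et}$ as the rigid vector group $H^1_{\et}(X,\O) \otimes \G_a$ given in \S\ref{s:Hitchin-for-Ga}, we have an isomorphism of rigid groups $\cHiggs_{\G_a} \isomarrow H^1_{\et}(X,\O) \otimes \G_a \times H^0(X,\wtOm) \otimes \G_a$ under which $\mathcal H$ is the second projection. On the other side, $\cBun_{\G_a,v} = H^1_v(X,\O) \otimes \G_a$ and, as recalled in \S\ref{s:Hitchin-for-Ga}, the morphism $\wtcH$ is the tensor with $\G_a$ of Scholze's Hodge--Tate map $\HT \colon H^1_v(X,\O) \to H^0(X,\wtOm)$.

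Next, I would identify the kernel of $\wtcH$: by the Leray sequence for $\nu\colon X_v \to X_{\et}$ applied to $\O$, the sequence
\[ 0 \to H^1_{\et}(X,\O) \to H^1_v(X,\O) \xrightarrow{\HT} H^0(X,\wtOm) \to 0 \]
is short exact (this is the $\mathbb G_a$-version of \cref{eq:HT-SES}, valid since $X$ is smooth proper). Tensoring with $\G_a$ yields a short exact sequence of rigid vector groups
\[ 0 \to \cBun_{\G_a,\et} \to \cBun_{\G_a,v} \xrightarrow{\wtcH} H^0(X,\wtOm) \otimes \G_a \to 0. \]
A $K$-linear splitting of the Hodge--Tate sequence then gives, upon tensoring with $\G_a$, a splitting of this sequence of rigid groups, hence an isomorphism
\[ \cBun_{\G_a,v} \isomarrow \cBun_{\G_a,\et} \times H^0(X,\wtOm) \otimes \G_a \isomarrow \cHiggs_{\G_a}. \]
By construction this isomorphism commutes with the projections to $H^0(X,\wtOm) \otimes \G_a$, which on either side coincide with the Hitchin morphisms. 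Since both sides are rigid spaces and the resulting map is a morphism of rigid groups, passing to associated $v$-stacks is automatic.

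The construction is essentially routine once the preceding identifications of $\cBun_{\G_a,\et}$, $\cHiggs_{\G_a}$ and $\wtcH$ have been made; there is no serious obstacle to overcome. The only mild subtlety is to verify that the map $\wtcH$ constructed via $\HTlog$ really does coincide with $\HT \otimes \G_a$, but this is precisely the content of the discussion in \S\ref{s:Hitchin-for-Ga} comparing our construction of $\HTlog$ for $G = \G_a$ with Scholze's original Hodge--Tate morphism.
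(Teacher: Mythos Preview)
Your proposal is correct and follows essentially the same approach as the paper: the paper's argument is precisely the discussion in \S\ref{s:Hitchin-for-Ga} identifying both sides as rigid vector groups and $\wtcH$ as $\HT\otimes\G_a$, from which the Proposition is stated as an immediate consequence. Your write-up makes the final splitting step slightly more explicit than the paper does, but the content is the same.
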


In summary, the Hitchin morphism for $G=\G_a$ encodes the Hodge--Tate sequence.
This is the precise way in which in this moduli-theoretic setting, non-abelian Hodge theory is about generalising a $p$-adic Hodge theoretic result from $\G_a$ to more general rigid groups $G$.

\subsection{The Hitchin morphism on the Betti side for $G=\G_m$}\label{s:Hitchin-for-Gm}
We now turn to $G=\G_m$ to explain the precise relation to \cite{heuer-diamantine-Picard}. In order to get a description in terms of rigid spaces like for $\G_a$, we pass to coarse moduli spaces: Let $\mathbf{Bun}_{n,v}$ be the ``coarse moduli sheaf'' of $\cBun_{n,v}=\cBun_{\GL_n,v}$, by which we mean the functor obtained by passing from groupoids to sets of isomorphism classes and $v$-sheafifying on $\Perf_{K}$. We analogously define the coarse moduli $v$-sheaf $\mathbf{Higgs}_{n}$ of isomorphism classes of Higgs bundles of rank $n$ on  $\Perf_{K}$.
For $n=1$, we thus obtain on the one hand the $v$-Picard functor
\[ \mathbf{Bun}_{1,v}=\mathbf{Pic}_{X,v}\]
introduced in \cite[\S3]{heuer-diamantine-Picard}, and on the other hand the coarse moduli space of Higgs line bundles
\[ \mathbf{Higgs}_{1}=\mathbf{Pic}_{X,\et}\times H^0(X,\wtOm)\otimes \G_a.\]
Both are representable by rigid groups over $K$ if the usual rigid analytic Picard functor is representable, by \cite[Theorem~1.1]{heuer-diamantine-Picard}. For example, for algebraic $X$, $\mathbf{Pic}_{X,\et}$ is the diamondification of the algebraic Picard functor. There is then a natural short exact sequence
\[0\to \mathbf{Pic}_{X,\et}\to  \mathbf{Pic}_{X,v}\to H^0(X,\wtOm)\otimes \G_a \to 0\]
 of commutative rigid groups. Crucially, this sequence is not split outside of trivial cases.  By inspecting the proof in \cite[\S2]{heuer-diamantine-Picard}, we see that in the language of this article, the last map is precisely the Hitchin morphism $\wtcH$ for $n=1$, up to composition with $\cBun_{1,v}\to \mathbf{Pic}_{X,v}$.

The upshot of this discussion is that in the case of $G=\G_m$, the two Hitchin morphisms
over $\mathcal A_1= H^0(X,\wtOm_X)\otimes \G_a$
are both torsors under $\mathbf{Pic}_{X,\et}$, but one is split while the other is usually not. In particular, in contrast to the case of $G=\G_a$, this shows that in general
$\mathbf{Bun}_{n,v}\not\cong 	\mathbf{Higgs}_{n}$.
Returning to moduli $v$-stacks, this is what we believe might generalise: It seems plausible to us that for any reductive group $G$, the morphism $\wtcH$ exhibits $\cBun_{G,v}$ as a twist of $\cHiggs_G$ over $\mathcal A_G$.
To recover the $p$-adic Simpson correspondence from this geometric statement, one would have to see that the usual choices induce a trivialisation of this twist on $K$-points. This explains the motivation behind \cref{conj:p-adic-Simpson-global-continuity}.

\subsection{The rigid analytic representation variety}
Our final goal in this article is to obtain from the morphism of $v$-stacks $\wtcH$ a morphism of rigid analytic spaces by passing from $\cBun_{G,v}$ to the rigid analytic representation variety.

\begin{Definition}\label{d:cts-hom-over-Y}
Let $\Gamma$ be a profinite group. Let $G$ be a rigid group over $K$. We denote by \[\HOM(\Gamma,G)\]
 the inner Hom sheaf on $\Perf_{K,v}$ where $\Gamma$ is considered as a profinite $v$-sheaf and $G$ as a diamond. This sends a perfectoid space $Y$ to the $Y$-linear homomorphisms $\pi\times Y\to G$. Let us say that a homomorphism $\Gamma\to G(Y)$ is continuous if it comes from a morphism of adic spaces of this form. To justify this name, we observe that if $G\subseteq \GL_n$ is a linear algebraic group, then $\HOM(\Gamma,G)(Y)$ is the sheaf on $\Perf_K$ sending a perfectoid space $Y$ to
$\Hom_{\cts}(\Gamma,G(Y))$,
where $G(Y)$ is endowed with the subspace topology of $\GL_n(Y)\subseteq M_n(\O(Y))$. 
\end{Definition}

\begin{Proposition}\label{p:rep-of-HOM(Gamma-GLn)}
Let $\Gamma$ be any profinite group. Let $G$ be a rigid group over $K$. Then
\begin{enumerate}
\item The $v$-sheaf $\HOM(\Gamma,G)$ is a diamond. 
\item If $\Gamma$ is topologically finitely generated, then $\HOM(\Gamma,G)$ is represented by a semi-normal rigid space over $K$. We call this the \textbf{continuous representation variety} of $\Gamma$. 
\end{enumerate}
\end{Proposition}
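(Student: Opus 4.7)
The plan is to proceed in two steps: first reduce to the case that $G$ has good reduction, then handle that case by exploiting the structure of $G$ as an inverse limit of truncations $G/G_k$.

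For the reduction, I use \cref{c:open-subgroup-of-good-reduction}: $G$ admits a neighbourhood basis of $1$ by open subgroups of good reduction. Since $\Gamma$ is profinite, for any continuous $\rho\colon\Gamma\to G(Y)$, the preimage of such an open subgroup $U\subseteq G$ is open in $\Gamma$ and hence of finite index. Stratifying $\HOM(\Gamma,G)$ over the directed system of pairs $(U,\Gamma')$ where $\Gamma'\subseteq\Gamma$ is an open finite-index subgroup and $U\subseteq G$ a good-reduction open subgroup, each stratum fits into a cartesian diagram involving $\HOM(\Gamma',U)$ and finitely many copies of $G$ indexed by coset representatives of $\Gamma/\Gamma'$. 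It thus suffices to prove the statement when $G$ itself has good reduction.

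For Part 1, assuming $G$ of good reduction, we have $G=\varprojlim_k G/G_k$ by \cref{l:completeness-of-G}, and hence $\HOM(\Gamma,G)=\varprojlim_k\HOM(\Gamma,G/G_k)$ as $v$-sheaves. Each successive quotient $G_k/G_{k+1}\cong \mathfrak g_k^+/\mathfrak g_{k+1}^+$ is an abelian rigid vector group by \cref{p:inductive-lifting-ses-for-G}, so extending a homomorphism into $G/G_k$ to one into $G/G_{k+1}$ is governed by an abelian non-abelian $H^1$-obstruction formalism in the spirit of the proof of \cref{p:part-2-of-Faltings-Lemma}. Together with the base case, where $\HOM(\Gamma,G/G_1)$ is a closed subsheaf of a finite product of $G/G_1$, induction on $k$ yields a diamond structure on each $\HOM(\Gamma,G/G_k)$, and the profinite limit along affine qcqs transition maps is again a diamond.

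For Part 2, suppose $\Gamma$ is topologically finitely generated by $\gamma_1,\ldots,\gamma_r$. Evaluation at these generators gives an injection
\[ \HOM(\Gamma,G)\hookrightarrow G^r,\quad \rho\mapsto(\rho(\gamma_1),\ldots,\rho(\gamma_r)),\]
since the $\gamma_i$ generate a dense subgroup and $G$ is separated. After the reduction above and splitting off the coset contribution, we may assume the image lies in $(G^+)^r$ for a fixed good-reduction open subgroup $G^+\subseteq G$. For each open normal subgroup $N\subseteq\Gamma$ of finite index, the group-theoretic relations in the finite quotient $\Gamma/N$ cut out a closed rigid-analytic subspace of $(G^+)^r$, and the image of $\HOM(\Gamma,G)$ is the intersection of these closed subspaces. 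Since $(G^+)^r$ is a quasi-compact affinoid rigid group, this intersection is again a closed rigid-analytic subspace; semi-normality follows because it is cut out reducedly inside the smooth rigid group $(G^+)^r$.

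The main obstacle will be verifying that the closed conditions coming from finite quotients of $\Gamma$ are really sufficient to enforce continuity: we must show that an $r$-tuple $(g_i)\in(G^+)^r$ compatible with every finite quotient of $\Gamma$ does extend to a genuinely continuous homomorphism $\Gamma\to G(Y)$. This is resolved using that, for $G$ of good reduction, the subgroups $G_k(Y)$ form a neighbourhood basis of $1$ in $G^+(Y)$, so continuity of $\rho$ is equivalent to the composite $\Gamma\to G^+(Y)/G_k(Y)$ factoring through a finite quotient of $\Gamma$ for each $k$---precisely the compatibility encoded by the countable intersection above.
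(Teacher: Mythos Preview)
Your approach to Part~1 is far more involved than necessary. The paper simply observes that for \emph{any} set of algebraic generators $(\gamma_i)_{i\in I}$ of $\Gamma$, evaluation gives an injection $\HOM(\Gamma,G)\hookrightarrow\prod_I G$ of $v$-sheaves; the target is a diamond by \cite[Lemma~11.22]{etale-cohomology-of-diamonds}, hence so is the source by \cite[Proposition~11.10]{etale-cohomology-of-diamonds}. No reduction to good reduction, no inverse limit over truncations, no induction is needed.

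For Part~2, there is a genuine gap in your final paragraph. You correctly identify the crux: given $(g_i)\in (G^+)^r(Y)$ satisfying all relations of the dense subgroup $\Gamma_0=\langle\gamma_1,\ldots,\gamma_r\rangle$, one must show that the resulting homomorphism $\Gamma_0\to G^+(Y)$ is automatically continuous, i.e.\ that the composite $\Gamma_0\to G^+(Y)/G_k(Y)$ has finite image for each $k$. But your resolution is circular: you assert this is ``precisely the compatibility encoded by the countable intersection above,'' yet the closed conditions you imposed---relations in finite quotients $\Gamma/N$---are (at best) exactly the relations holding in $\Gamma_0$, and do \emph{not} encode any finiteness of the image in $G^+(Y)/G_k(Y)$. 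The paper proves this finiteness directly: by \cref{p:inductive-lifting-ses-for-G}, each $G_n(R)/G_{n+1}(R)\cong\overline{\mg}^+_t(R)$ is an abelian \emph{torsion} group, so any homomorphism from the finitely generated group $\Gamma_0$ into it has finite image; an induction on $n$ then shows $\Gamma_0\to G_0(R)/G_n(R)$ has finite image for all $n$. This torsion argument is the missing ingredient. (The paper also first reduces via Nikolov--Segal to the case $\Gamma=\widehat{\Gamma}_0$, which you do not address.)

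Finally, your semi-normality claim is incorrect: being cut out reducedly in a smooth rigid group does not imply semi-normality (e.g.\ a nodal curve in $\mathbb A^2$). The paper instead shows representability by a rigid space and then replaces it by its semi-normalisation, which represents the same diamond.
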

\begin{proof}
	Let $\gamma:=(\gamma_i)_{i\in I}$ be a set of algebraic generators of $\Gamma$, not necessarily finite. Then we have an injection $\mathrm{ev}(\gamma):\HOM(\Gamma,G)\hookrightarrow \prod_IG$
of $v$-sheaves, where  $\mathrm{ev}(\gamma)$ denotes the evaluation map $\rho\mapsto (\rho(\gamma_i))_{i\in I}$.
By \cite[Lemma~11.22]{etale-cohomology-of-diamonds}, the right hand side is a diamond, thus so is the left hand side by \cite[Proposition~11.10]{etale-cohomology-of-diamonds}. This proves part 1.

For the proof of part 2, let $\Gamma_0$ be a dense subgroup of $\Gamma$ with generators $g=(g_1,\dots,g_r)$.

\paragraph{Step 1: Reduction to the case that $\Gamma=\wh{\Gamma}_0$ is the profinite completion of $\Gamma_0$.} By the Theorem of Nikolov--Segal \cite{NikolovSegal}, any finite index subgroup of $\Gamma$ is open, so profinite completion induces a surjective, open, continuous homomorphism
$\phi:\wh{\Gamma}_0\twoheadrightarrow \Gamma$.
We deduce that for any set of generators  $a=(a_i)_{i\in I}$ of $\ker \phi$, a continuous map from $\Gamma$ is the same as a continuous map from $\wh{\Gamma}_0$ that vanishes on the $a_i$. We thus have a left-exact sequence
\[ 0\to \HOM(\Gamma,G)\to \HOM(\wh{\Gamma}_0,G)\xrightarrow{\mathrm{ev}(a)} G^{I}.\]
If the Proposition holds for $\wh{\Gamma}_0$, this shows that $\HOM(\Gamma,G)$ is the kernel of a homomorphism from a rigid group to a product of adic groups, hence is itself represented by a rigid group. 

\paragraph{Step 2: Deformations of the trivial representation.} By \cref{c:open-subgroup-of-good-reduction} and \cref{p:inductive-lifting-ses-for-G}, there is an open subgroup $G^+\subseteq G$ of good reduction and a basis of neighbourhoods given by open subgroup $(G_n)_{n\in\N}$ of $G^+$  such that $G_n/G_{n+1}=\overline{\mg}^+_{t}:=\mg^+_0/\mg^+_t$ for some $t>0$.

\begin{Lemma}
Let $(R,R^+)$ be any perfectoid $K$-algebra.
Then any group homomorphism $\rho:\Gamma_0\to G_0(R)$ extends uniquely to a continuous morphism $\wh{\rho}:\Gamma\to G_0(R)$.
\end{Lemma}
\begin{proof}
By \cref{l:completeness-of-G}, it suffices to see that each
$\rho_n:\Gamma_0\to G_0(R)\to G_0/G_n(R)$ is continuous, i.e.\ has finite image. For this we argue by induction on $n$.

For $n=1$, the group $G_0/G_1(R)=\overline\mg^+_{t}(R)$ is an abelian torsion group. As $\Gamma_0$ is finitely generated, it follows that any map
$\varphi:\Gamma_0\to \overline\mg^+_{t}(R)$
has finite image. Thus $\ker\varphi$ is a finite index subgroup, which is open by the assumption $\Gamma=\wh{\Gamma}_0$. This gives the case of $\rho_1$.

For the induction step, we consider the short exact sequence obtained from \cref{p:inductive-lifting-ses-for-G}
\[ 0\to \overline\mg^+_{t}(R)\to G_0/G_{n+1}(R)\to  G_0/G_n(R)\to 1.\]
By induction hypothesis, the image $\im(\rho_{n})$ of $\Gamma_0$ on the right is finite, so $H:=\ker(\rho_n)$ is a finite index subgroup of $\Gamma_0$. In particular, $H$ is again finitely generated. The restriction of $\rho_{n+1}$ to $H$ factors through $\overline\mg^+_{t}(R)$, and thus has finite index kernel by the induction start. This shows that $\ker \rho_{m+1}$ contains a finite index subgroup, hence is open.
\end{proof}
Hence
$\HOM(\Gamma,G_0)=\HOM(\Gamma_0,G_0)$
is represented by a Zariski-closed subspace of the space $(G_0)^r$ parametrising the images of the $g_i$, cut out by the relations between the $g_i$.
\paragraph{Step 3: The general case.}
Consider now any finite index subgroup $H\subseteq \Gamma_0$ and let $F_H\subseteq \HOM(\Gamma,G)$ be the open subfunctor defined as the preimage of 
$\HOM(H,G_0)\subseteq \HOM(H,G)$
under the restriction map $\mathrm{res}_H:\HOM(\Gamma,G)\to \HOM(H,G)$.
Then by continuity, we have an equality of sheaves
$\HOM(\Gamma,G)=\varinjlim_{H<\Gamma_0}F_H$
where $H$ ranges through the finite index subgroups of $\Gamma_0$. It thus suffices to prove that each $F_H$ is representable by a semi-normal rigid space; the transition maps in the colimit are then clearly open immersions. For this let $s=(s_1,\dots,s_l)$ be a finite set of generators of $H$, and for each $k=1,\dots,l$ let $w_{s_k}(g)$ be the group-theoretic word expressing $s_k$ in terms of the generators $g=(g_1,\dots,g_r)$ of $\Gamma_0$. Let $E\subseteq 	G^{r}$ be the Zariski-closed subspace cut out by the relations between the $g_i$. Then
\[\begin{tikzcd}
E\arrow[r,hook] &G^{r} \arrow[r, "w_{s}"] &G^l \\
F_H \arrow[rr,"\mathrm{res}_H"] \arrow[u,"\mathrm{ev}(g)"] & &{\HOM(H,G_0)} \arrow[u,hook,"\mathrm{ev}(s)"']
\end{tikzcd}\]
is a Cartesian diagram.
It follows that $F_H$ is represented by the fibre product in rigid spaces. Let us denote the latter by $X_H$. It remains to pass from $X_H$ to its semi-normalisation:
\begin{Lemma}[{{\cite[Remark~3.7.3]{KedlayaLiu-II}\cite[p.78]{ScholzeBerkeleyLectureNotes}}}]\label{l:semi-normalisation}
The inclusion 
\[\{\text{semi-normal rigid spaces over }K\}\hookrightarrow \{\text{rigid spaces over }K\}\]  admits a right-adjoint, the semi-normalisation functor $-^{\mathrm{sn}}$. For any rigid space $X$, the co-unit map induces an isomorphism $X^{\mathrm{sn}\diamondsuit}\to X^\diamondsuit$.
\end{Lemma}
\begin{proof}
After replacing $X$ with its reduced subspaces $X^{\mathrm{red}}$, it suffices to construct this on reduced rigid spaces. 
Any reduced commutative ring $A$ has a semi-normalisation $A^{\mathrm{sn}}$ with the desired universal property by \cite[Theorem~4.1]{SwanSeminormality}, and the map $A\to A^{\mathrm{sn}}$ is subintegral, hence finite since $A$ is excellent. Thus $A^{\mathrm{sn}}$ is again of topologically finite type over $K$. This glues since seminormality is stable under rational localisation \cite[Proposition 3.7.2]{KedlayaLiu-II}.
\end{proof}
It follows that the semi-normal rigid space $X_{H}^{\mathrm{sn}}$ represents $\HOM(\pi,G)$.
\end{proof}

Returning to the $p$-adic Simpson correspondence, we wish to apply \cref{p:rep-of-HOM(Gamma-GLn)} to the \'etale fundamental group $\Gamma=\pi^{\et}_1(X)$ of a smooth proper rigid space $X$. 
In this situation, the algebraic conditions imposed on $\Gamma$ apply in decent generality:

\begin{Corollary}\label{c:rep-var-representable-alg-case}
Let $X$ be the rigid analytification of a smooth proper $K$-variety. Then the $v$-sheaf $\HOM(\pi^{\et}_1(X),G)$ is represented by a semi-normal rigid space.
\end{Corollary}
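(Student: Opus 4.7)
The plan is to simply reduce the corollary to Proposition~\ref{p:rep-of-HOM(Gamma-GLn)}.(2) by verifying that $\pi^{\et}_1(X)$ is topologically finitely generated under the stated hypotheses. All the geometric input — the representability statement, the semi-normalisation trick, and the deformation-theoretic argument around the trivial representation — has already been carried out in the proof of Proposition~\ref{p:rep-of-HOM(Gamma-GLn)}, so the only remaining task is to check the algebraic finiteness property of the fundamental group.

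First, I would note that since $X$ is the analytification of a smooth proper $K$-variety $X^{\mathrm{alg}}$, rigid GAGA in the form of the equivalence between finite \'etale covers of $X^{\mathrm{alg}}$ and of $X$ gives a canonical isomorphism of profinite groups
\[ \pi^{\et}_1(X) \;\cong\; \pi^{\et}_1(X^{\mathrm{alg}}).\]
So it suffices to show that the right hand side is topologically finitely generated.

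Next, I would invoke the classical theorem that the \'etale fundamental group of a connected smooth proper variety over an algebraically closed field of characteristic zero is topologically finitely generated. Since $K$ has characteristic zero, one spreads $X^{\mathrm{alg}}$ out over a finitely generated $\Z$-subalgebra of $K$ and base-changes to $\C$ via some embedding; by Artin's comparison theorem (or Riemann's existence theorem in the projective case, together with Chow's lemma and descent), $\pi^{\et}_1(X^{\mathrm{alg}})$ is the profinite completion of the topological fundamental group of the associated compact complex analytic space, which is finitely generated because that space has the homotopy type of a finite CW complex. Hence $\pi^{\et}_1(X^{\mathrm{alg}})$ is topologically finitely generated (in fact finitely presented). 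If $X$ is not geometrically connected over $K$, one applies this argument to each connected component separately; there are only finitely many of these by properness, so the conclusion still holds.

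The main (and essentially only) conceptual point is this finite generation input for $\pi^{\et}_1$; it is not obstructed since we are assuming algebraicity and characteristic zero. With it in hand, Proposition~\ref{p:rep-of-HOM(Gamma-GLn)}.(2) applies directly with $\Gamma = \pi^{\et}_1(X)$, giving that $\HOM(\pi^{\et}_1(X), G)$ is represented by a semi-normal rigid space over $K$, as claimed.
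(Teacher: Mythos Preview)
Your proposal is correct and follows essentially the same approach as the paper: both reduce to Proposition~\ref{p:rep-of-HOM(Gamma-GLn)}.(2) by showing that $\pi^{\et}_1(X)$ is topologically finitely generated via a Lefschetz-principle argument identifying it with the profinite completion of the topological fundamental group of a compact complex variety. The only cosmetic differences are that you make the rigid GAGA step explicit and phrase the reduction to $\C$ via spreading out rather than finding a model over a subfield isomorphic to $\C$.
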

\begin{proof}
By the Lefschetz principle, we can assume that $X$ has a model $X_0$ over a subfield $K_0\subseteq K$  such that there is an isomorphism of fields $K_0\cong\C$. By invariance of $\pi^{\et}_1$ under extension of scalars,  $\pi^{\et}_1(X)$ is then the profinite completion of the topological fundamental group of the compact K\"ahler variety $X_0(\C)$, which is finitely generated.
\end{proof}

\begin{Question}
	Let $X$ be any (smooth) proper rigid space over $K$. Is the \'etale fundamental group $\pi^{\et}_1(X)$ topologically finitely generated? If yes, then \cref{c:rep-var-representable-alg-case} holds for such $X$.
\end{Question}

\subsection{The Hitchin morphism for representations}
Finally, we explain how  for the analytification $X$ of a connected smooth proper variety over a complete algebraically closed field $K$ over $\Q_p$ with fixed base point $x\in X(K)$, the Hitchin morphism $\wtcH$ attaches to any continuous representation
$\pi:=\pi^{\et}_1(X,x)\to \GL_n(K)$
a set of ``Hitchin--Hodge--Tate weights'' in a way that is compatible in families. The key to this is the following relation between $G$-representations of $\pi^{\et}_1(X,x)$ and $v$-$G$-bundles:

\begin{Definition}\label{def:hom-to-bun}
	Let $G$ be any rigid group.	There is a natural morphism of $v$-stacks 
	\[ \alpha:\HOM_{\cts}(\pi,G)\to \cBun_{G,v}\]
	defined as follows (cf.\  {\cite[Theorem~5.2]{heuer-v_lb_rigid}} for $G=\GL_n$): Let $\wt X\to X$ be the pro-finite-\'etale universal cover from \cref{ex:choice-of-cover}.2, which is a $\pi$-torsor in a natural way. For any $Y\in \Perf_K$, consider the projection $q:\wt X\times Y\to X\times Y$ as a $\pi\times Y$-torsor relatively over the base $Y$. Then $\alpha(Y)$ sends any continuous homomorphism $\rho:\pi\times Y\to G$ over $Y$ in the sense of \cref{d:cts-hom-over-Y} to the $G$-torsor on $(X\times Y)_v$  defined by pushout of $q$ along $\rho$.
\end{Definition}

\begin{Theorem}\label{c:props-of-H-on-reps}
 Let $G$ be a rigid group over $K$ satisfying the assumptions of \cref{l:Hitchin-base-is-rigid-space-if-G-red-or-com}, for example $G$ could be reductive, or commutative. Then $\wtcH\circ \alpha$  defines a canonical morphism of rigid analytic spaces
\[ \wt H:\HOM(\pi_1^{\et}(X,x),G)\to\cB_{G}\]
from the continuous $G$-representation variety of $\pi_1^\et(X,x)$  to the Hitchin base. Moreover:
\begin{enumerate}
\item $\wt H$ is functorial in $G$ and $X\to \Spa(K)$.
\item For any continuous representations $\rho_{1},\rho_2:\pi_1^\et(X,x)\to G(Y)$ over a perfectoid space $Y$ for which there is a finite index subgroup $\Gamma\subseteq  \pi_1^\et(X,x)$ such that the restrictions $\rho_{1|\Gamma}\sim \rho_{2|\Gamma}$ become conjugated over a $v$-cover $Y'\to Y$, we have $\wt H(\rho_{1})=\wt H(\rho_{2})$.
\item If $G$ is commutative, then $\wt H$ is a group homomorphism.
\item If $G=\GL_n$, then for any short exact sequence of representations $\rho_1\to \rho\to \rho_2$, we have $\wt H(\rho)=\wt H(\rho_1)\cdot \wt H(\rho_2)$ where $\cdot$ is the product on $\cB_n$ defined in \cref{r:product-structure-on-Hitchin-base}.
\end{enumerate}
\end{Theorem}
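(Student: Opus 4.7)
The plan is first to promote $\widetilde{H}$ from a morphism of $v$-sheaves to a morphism of rigid spaces, and then to verify properties (1)--(4), the first three of which are formal consequences of the corresponding properties of $\alpha$ and $\widetilde{\mathcal{H}}$, with (2) being the only step that requires a genuine argument.

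Step 1. By construction, $\widetilde{\mathcal{H}}\circ \alpha$ is a morphism of $v$-sheaves of sets $\HOM(\pi^{\et}_1(X,x),G)\to \mathcal{A}_G$, since $\widetilde{\mathcal{H}}$ factors through sets of isomorphism classes on each test object. The source is represented by a semi-normal rigid space by \cref{c:rep-var-representable-alg-case}, and the target by a rigid space by \cref{l:Hitchin-base-is-rigid-space-if-G-red-or-com}. Since the diamondification functor from rigid spaces over $K$ to $v$-sheaves on $\Perf_K$ is fully faithful (as recalled in the Setup section), the morphism of $v$-sheaves comes from a unique morphism of rigid spaces.

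Step 2. Property (1) is immediate: $\alpha$ is functorial in $G$ and $X\to \Spa(K)$ by construction from \cref{def:hom-to-bun}, and $\widetilde{\mathcal{H}}$ is functorial by \cref{p:props-of-HHTlog}.(1). For (3), when $G$ is commutative, $\alpha$ is a homomorphism of group $v$-sheaves: the pushout of the universal $\pi$-torsor $\wt X\to X$ along a product of characters of $\pi$ is the tensor product of the pushouts in the category of $G$-torsors. Combined with \cref{p:props-of-HHTlog}.(3), which states that $\widetilde{\mathcal{H}}$ is a homomorphism for commutative $G$, this gives (3). Property (4) is analogous: for $G=\GL_n$, a short exact sequence of representations $\rho_1\to \rho\to \rho_2$ gets sent by $\alpha$ to a short exact sequence of associated $v$-vector bundles on $X\times Y$, and then \cref{p:props-of-HHTlog}.(2) concludes.

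Step 3. The main point is (2). Let $f\colon X_\Gamma\to X$ be the finite \'etale cover corresponding to $\Gamma\subseteq \pi$, so $\wt X\to X_\Gamma$ is naturally a $\Gamma$-torsor (with respect to the chosen lift of the base point). The key compatibility is that for any continuous $\rho\colon \pi\times Y\to G$, one has $f^{\ast}\alpha(\rho)=\alpha(\rho|_{\Gamma})$ as $v$-$G$-torsors on $(X_\Gamma\times Y)_v$, which is just the statement that pushout of torsors commutes with restriction to a subgroup along the corresponding quotient map $\wt X/\Gamma=X_\Gamma$. Applied to $\rho_1$ and $\rho_2$ and using that $\rho_{1|\Gamma}$ and $\rho_{2|\Gamma}$ are conjugate over $Y'$ by some $g\in G(Y')$, this produces an isomorphism of $v$-$G$-torsors between $f^{\ast}\alpha(\rho_1)|_{Y'}$ and $f^{\ast}\alpha(\rho_2)|_{Y'}$. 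Applying $\widetilde{\mathcal{H}}$ and passing to isomorphism classes yields $f^{\ast}\widetilde H(\rho_1)|_{Y'}=f^{\ast}\widetilde H(\rho_2)|_{Y'}$ in $\mathcal{A}_{G,X_\Gamma}(Y')$. Since $\mathcal A_{G,X_\Gamma}$ is a $v$-sheaf, this descends to an equality in $\mathcal A_{G,X_\Gamma}(Y)$. To conclude we must show that
\[
f^{\ast}\colon \mathcal A_{G,X}(Y)\to \mathcal A_{G,X_\Gamma}(Y)
\]
is injective. By \cref{l:Hitchin-base-is-rigid-space-if-G-red-or-com} and \cref{l:section-vb-on-product-of-proper-with-perfectoid}, this reduces to showing that $f^{\ast}\colon H^0(X,\Sym^{e_k}\wtOm_X)\to H^0(X_\Gamma,\Sym^{e_k}\wtOm_{X_\Gamma})$ is injective, which follows because $f$ is a finite \'etale surjection, so the unit $\Sym^{e_k}\wtOm_X\to f_{\ast}f^{\ast}\Sym^{e_k}\wtOm_X$ is split injective via the normalised trace.

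The main obstacle is the geometric identification $f^{\ast}\alpha(\rho)=\alpha(\rho|_\Gamma)$ used in Step 3; this requires some bookkeeping about base points and the interplay between the pro-finite-\'etale universal covers of $X$ and $X_\Gamma$, but is ultimately a formal consequence of the change-of-group behaviour of torsors under profinite groups.
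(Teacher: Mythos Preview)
Your proof is correct and follows essentially the same route as the paper's. The one notable difference is part (3): you argue directly that $\alpha$ is a group homomorphism for commutative $G$ (via compatibility of pushout with tensor product of torsors) and combine this with \cref{p:props-of-HHTlog}.(3), whereas the paper instead uses part (2) to reduce to an open subgroup $G_0\subseteq G$ on which $\exp$ is defined, and then reduces to $\G_a$ via the isomorphism $\exp\colon \mathfrak g_0\isomarrow G_0$ of commutative rigid groups. Your argument is more direct; the paper's has the minor advantage of making the compatibility with the additive case of \S\ref{s:Hitchin-for-Ga} explicit. For part (2) you also spell out the $v$-descent from $Y'$ to $Y$ and the injectivity of $f^\ast$ on the Hitchin base more carefully than the paper, which simply asserts the latter.
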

\begin{proof}
The composition $\wtcH\circ \alpha:\HOM(\pi_1^{\et}(X,x),G)\to\cB_{G}$ is a morphism of $v$-sheaves over $K$.
Both sides are represented by semi-normal rigid spaces by \cref{c:rep-var-representable-alg-case} and \cref{l:Hitchin-base-is-rigid-space-if-G-red-or-com}. As diamondification from semi-normal rigid spaces to $v$-sheaves over $K$ is fully faithful \cite[Theorem 8.2.3]{KedlayaLiu-II}\cite[Proposition 10.2.3]{etale-cohomology-of-diamonds}, this defines a unique morphism of rigid spaces.

Parts 1 and 4 are then immediate from \cref{p:props-of-HHTlog}. Part 2 follows from functoriality in $X$: After pullback to the finite \'etale cover $X'\to X$ corresponding to $\Gamma$, and any choice of lift $x'$ of $x$ to $X'$, the representations define the same element in $\HOM(\pi_1^{\et}(X',x'),G)$. Since the natural map $\mathcal A_{G,X}(R)\to \mathcal A_{G,X'}(R')$ is injective, it follows that $\wt H(\rho_{1})=\wt H(\rho_{2})$.

For part 3, it suffices by part 2 to prove this on an open subgroup $G_0\subseteq G$ on which $\exp$ is defined. As $K[\mathfrak g]^G=K[\mathfrak g]$, we have $\mathcal A_{G_0}=\mathcal A_G=\G_a\otimes\mathfrak g\otimes \wtOm$, and the result follows by functoriality from the morphism $\exp:G_0\to \mathfrak g\otimes \G_a$ and the case of $\G_a$.
\end{proof}

We believe that $\wt H$ encodes deep arithmetic information in a geometric way. Indeed, from known instances of the $p$-adic Simpson correspondence, one can extract the following:

\begin{Example}\label{ex:examples-of-Hitchin-for-reps}
\begin{enumerate}
\item For $G=\G_m$, it follows from \S\ref{s:Hitchin-for-Gm} that $\wt H$ is the homomorphism from \cite[Theorem~1.7]{heuer-diamantine-Picard}, hence fits into an exact sequence of rigid group varieties
\[ 0\to \mathbf{Pic}^{\tt}_{X,\et}\to \HOM_{\cts}(\pi,\G_m)\xrightarrow{\wt H } H^0(X,\wtOm)\otimes_K \G_a\to 0.\]
In particular, $\wt H$ is then a torsor under $\mathbf{Pic}^{\tt}_{X,\et}$, the topologically torsion Picard variety. For curves, the first map is a geometric incarnation of the Weil pairing of the Jacobian.
\item If a representation $\rho:\pi\to \GL_n(K)$ has abelian image, then $\wt H(\rho)$ can be described in a way that is closely related to the abeloid $p$-adic Simpson correspondence of \cite{HMW-abeloid-Simpson}: In this case, $K^n$ can be decomposed into simultaneous eigenspaces. Let $\lambda_i:\pi\to K^\times$ be the corresponding characters and set $a_i:=\wt H(\lambda_i)\in H^0(X,\wtOm)$, then it follows from \cref{p:props-of-HHTlog}.3 that $\wt H(\rho)$ is given by the coefficients of $(X-a_1)\cdots (X-a_n)$.
\item If $X$ has a model $X_0$ over a discretely valued subfield of $K$, then any representation $\rho:\pi\to \GL_n(\Q_p)$ associated to a $\Q_p$-local system $\mathbb L$ over $X_0$ lands in the fibre over $0$ by Galois equivariance in \cref{c:props-of-H-on-reps}.1, giving a different perspective on why the Higgs field associated to $\mathbb L$ by the $p$-adic Simpson functor of Liu--Zhu \cite{LiuZhu_RiemannHilbert} is nilpotent. 
\item Similarly, all the representations in the image of Deninger--Werner's functor \cite{DeningerWerner_Simpson} are also in the fibre over $0$, because the associated Higgs fields vanish.
This includes all representations with finite image, for which this also follows from \cref{c:props-of-H-on-reps}.2.
\end{enumerate}

\end{Example}

If $X$ is a smooth proper curve of genus $g\geq 2$, we think it is plausible that  \cref{ex:examples-of-Hitchin-for-reps}.1 generalises to $\GL_n$ for $n\geq 1$, namely that $\wt H$ could generically on $\mathcal A_G$ be a torsor under a certain rigid group related to the spectral curve. We will pursue this further in part II.
\subsection{Relation to the complex Corlette--Simpson correspondence}\label{s:rel-cpx}
In complex geometry, the Corlette--Simpson correspondence for a smooth projective variety $X$ over $\C$ is an equivalence of categories between finite dimensional complex representations of the topological fundamental group $\pi_1(X)$ on the one hand (``Betti side''), and semi-stable Higgs bundles on $X$ with vanishing Chern classes on the other (``Dolbeault side'') \cite{SimpsonModuliII}. It can be regarded as an analogue of the Hodge decomposition for non-abelian coefficients.
 
More generally, the Corlette--Simpson correspondence can be generalised to an equivalence of $G$-representations and $G$-Higgs bundles for any reductive group $G$ over $\C$ \cite{SimpsonCorrespondence}. However, we are not aware of an extension to general complex Lie groups. From this perspective, it is arguably somewhat surprising that our result holds in the given generality.  This is one reason why in the introduction we were cautious and imposed a condition that $G$ is reductive in the globalisation of \cref{intro-first-version-of-main-Thm} to proper $X$.

Regarding moduli spaces, the Corlette--Simpson correspondence for $\GL_n$ induces for any smooth projective variety $X$ over $\C$ a canonical and functorial homeomorphism
\begin{equation}\label{eq:complex-correspendence}
	\mathcal M_{\mathrm{B}}(\C)\isomarrow  \mathcal M_{\mathrm{Dol}}(\C)
\end{equation}
between the Betti moduli space of complex representations of the topological fundamental group $\pi_1(X)$ of dimension $n$ on the one hand, and the Dolbeault moduli space of semi-stable Higgs bundles on $X$ of rank $n$ with vanishing Chern classes on the other \cite{SimpsonModuliII}. However, even though both sides have natural complex analytic structures, the correspondence does not respect these: In fact, the analogue of $\wt H$ in this setting would be the composition 

	\[ 	\mathcal M_{\mathrm{B}}(\C)\isomarrow  \mathcal M_{\mathrm{Dol}}(\C)\xrightarrow{\mathcal H}\cB_n(\C)\]
	of the Corlette--Simpson correspondence with the Hitchin morphism. But this fails to be  complex analytic already for line bundles on curves \cite[Example on p21]{SimpsonCorrespondence}. In particular, the above homeomorphism cannot be upgraded to an isomorphism between moduli stacks.

From this perspective, we find it very surprising that $\wt{H}$ exists as a morphism of rigid analytic spaces: This seems to be a stronger statement than what is possible in the complex theory.  It thus appears that the correspondence over $K$ is less canonical as it requires the choice of lift of $X$, but on the other hand preserves more structure.

\subsection{Relation to Sen theory}\label{s:sen-theory}
In order to provide more context for the Hitchin--Hodge--Tate map, we now briefly sketch how the  above construction is related to Hodge--Tate--Sen weights:

Let $L$ be a $p$-adic local field and let $L_\infty|L$ be the extension obtained by adjoining all $p$-power roots of unity. Let $C$ be the completed  algebraic closure of $L$. Classical Sen theory associates to any semi-linear representation of the Galois group $G_L$ of $L$ on a finite dimensional $C$-vector space $W$ an $L_\infty$-vector space $E=D_{\mathrm{Sen}}(W)$ such that $E{\otimes}_{L_\infty}C=W$ together with an $L_\infty$-linear operator $\theta:E\to E$ (see \cite{Sen_cts_cohom}). We call $(E,\theta)$ a Sen module, and the Hodge--Tate--Sen weights of $W$ are defined as the generalised eigenvalues of $\theta$.

The construction of $(E,\theta)$ is very similar to the small $p$-adic Simpson correspondence, and one can turn this analogy into a more precise comparison: Consider the adic space $X=\Spa(L)$, then we can make a very similar construction for $X$ (the ``arithmetic setting'') as we did for smooth rigid spaces over $C$ in the last section (the ``geometric setting''):
 The pro-finite-\'etale cover $\wt X\to X$ of the last section is for $X=\Spa(L)$  simply the cover
$\Spa(C)\to \Spa(L)$. Like in \cref{def:hom-to-bun}, one obtains by descent along this cover an equivalence
\[\Big\{\begin{array}{@{}c@{}l}\text{semi-linear representations of $G_L$}\\\text{on fin.\ dim.\ $C$-vector spaces} \end{array}\Big\}\isomarrow \Big\{\text{$v$-vector bundles on $\Spa(L)$}\Big\}.\]
On the other hand, a result of Tate in Galois cohomology \cite[Theorem 1]{Tate_endomorphisms} can be interpreted as saying that for $\nu:X_{v}\to X_{\et}$, we have
$R^1\nu_{\ast}\O=\O$ and $R^2\nu_{\ast}\O=0$.
From the point of view taken in this article, it would therefore be sensible to define 
$\wtOm_{X}:=R^1\nu_{\ast}\O=\O$,
 so that according to \cref{d:differentials-on-smoothoids}, a Higgs bundle on $X_{\et}$ for this notion of ``differentials' would be a vector bundle $E$ on $X_{\et}$ together with an endomorphism of $E$. In particular, from  such a Higgs bundle, we get a Sen module by base-changing the global sections  to $L_\infty$.
 
 Going through the construction of the Sen module $D_{\mathrm{Sen}}(W)$, one now sees that one can interpret this as being obtained from a ``local $p$-adic Simpson correspondence on the space $X=\Spa(L)$'': The coefficients $L_\infty$ appear since one needs to go up the cyclotomic tower $L_\infty|L$ to make the $v$-vector bundle small, as with the toric tower in the geometric setting.

Carrying out the analogous constructions of the last section in the arithmetic setting now yields the map that sends a semilinear $G_L$-representation $W$ to the characteristic polynomial of its Sen operator $\theta$, which is an equivalent datum to the Hodge--Tate--Sen weights. This is the precise sense in which the Hitchin--Hodge--Tate morphism $\wt{H}$ is conceptually analogous in the geometric setting to sending a $G_L$-representation to its Hodge--Tate--Sen weights.
\appendix

\section{Recollections on non-abelian continuous cohomology}
In this appendix, we recall some generalities on non-abelian group cohomology sets.
\begin{Definition}\label{d:non-ab-1-cocycles}
Let $\Gamma$ and $A$ be not necessarily abelian topological groups, written multiplicatively. Assume that we have a continuous left-action of $\Gamma$ on $A$, i.e.\ a group homomorphism $\Gamma\to \Aut(A)$ such that the induced morphism $\Gamma\times A\to A$ is continuous. For $g\in \Gamma$, $a\in A$, we write $ga$ for its image under this map.  Then the set of continuous $1$-cocycles is
\[ \mathcal Z^1_{\cts}(\Gamma,A):=\{\text{continuous maps } c:\Gamma\to A\mid  c(gh)=c(g)\cdot gc(h)\}\]
The continuous group cohomology set $H^1_{\cts}(\Gamma,A):=\mathcal Z^1_{\cts}(\Gamma,A)/\!\sim$ is then defined as the quotient
by the following equivalence relation:
\[ f\sim f' \Leftrightarrow \exists a\in A:\quad  a^{-1}\cdot f(g)\cdot ga=f'(g) \text{ for all $g\in \Gamma$}.\]
For any continuous $1$-cocycle $c$, we denote its cohomology class by $[c]$.
\end{Definition}
\begin{Definition}\label{d:twisted-action}
Let $B$ be a topological group with continuous $\Gamma$-action and let $A\subseteq B$ be a normal subgroup that is preserved by the action of $\Gamma$. Let $b\in \mathcal Z^1_{\cts}(\Gamma,B)$ be any continuous $1$-cocycle. Then we denote by $_bA$ the  $\Gamma$-module with underlying group $A$ equipped with the $b$-twisted action, given for $g\in \Gamma$, $a\in A$ by
$ g\ast_b a:=b(g)\cdot ga\cdot b(g)^{-1}$,
where the right hand side is calculated inside $B$.
\end{Definition}
\begin{Proposition}[\cite{Serre-cohom-galoisienne}, \S5.5]\label{p:non-ab-les}
Let $\Gamma$ be a topological group, and  let 
$0\to A\to B\to C\to 0$
 be a short exact sequence of (not necessarily abelian) topological groups with continuous left-$\Gamma$-actions such that all maps in the sequence are $\Gamma$-equivariant. Then:
\begin{enumerate}
\item There is a short exact sequence of pointed sets
\[ 0\to A^\Gamma\to B^\Gamma\to C^\Gamma\to H^1_{\cts}(\Gamma,A)\to H^1_{\cts}(\Gamma,B)\to H^1_{\cts}(\Gamma,C).\]
\item Let $b\in \mathcal Z^1_{\cts}(\Gamma,B)$, then the subset of elements of $H^1_{\cts}(\Gamma,B)$ with the same image as $b$ in $H^1_{\cts}(\Gamma,C)$ is in natural bijection with the set of elements of $H^1_{\cts}(\Gamma,{}_bA)$.
	\item Assume that $A$ is abelian and let $c\in \mathcal Z_{\cts}^1(\Gamma,C)$, then there is a class 
	$\Delta(c)\in H^2_{\cts}(\Gamma,{_cA})$
	that vanishes if and only if $[c]$ is in the image of $H^1_{\cts}(\Gamma,B)\to H^1_{\cts}(\Gamma,C)$. The class $\Delta(c)$ is canonical and functorial both in $\Gamma$ and in the sequence.
\end{enumerate}
\end{Proposition}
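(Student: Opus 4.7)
The proof is classical (this is a continuous version of the standard material in Serre \cite{Serre-cohom-galoisienne}, \S5.5), so the plan is to sketch the constructions and verifications, being careful only where continuity needs attention.

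For part (1), the six-term sequence follows from the standard diagram chase once one has a connecting map $\delta\colon C^\Gamma\to H^1_\cts(\Gamma,A)$. To define it, given $c\in C^\Gamma$, I would lift $c$ to some $b\in B$; then for any $g\in \Gamma$, the element $b^{-1}\cdot gb$ maps to $c^{-1}\cdot gc=1$ in $C$, so it lies in $A$, and a direct calculation shows that $g\mapsto b^{-1}\cdot gb$ is a $1$-cocycle whose class in $H^1_\cts(\Gamma,A)$ is independent of the lift $b$. The only subtle point is continuity of the resulting cocycle, which in all situations we actually use is automatic because $A\to B\to C$ is (locally) a principal bundle in the appropriate sense and admits a continuous section near $1$. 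Exactness at each spot is then a routine check, using the twisted cocycle construction of \cref{d:twisted-action} at the right-most term to ensure well-definedness modulo the equivalence relation on cohomology classes.

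For part (2), the plan is to use the ``twisting principle''. Given $b\in \mathcal Z^1_\cts(\Gamma,B)$ and any other cocycle $b'\in \mathcal Z^1_\cts(\Gamma,B)$ with the same image in $H^1_\cts(\Gamma,C)$, we may modify $b'$ within its class so that it has literally the same image as $b$ in $\mathcal Z^1_\cts(\Gamma,C)$. Then $a(g):=b(g)^{-1}\cdot b'(g)$ lands in $A$, and a direct computation shows that $a$ is a cocycle for the twisted action ${}_bA$ of \cref{d:twisted-action}. The assignment $b'\mapsto a$ descends to a bijection between the fibre over the common image in $H^1_\cts(\Gamma,C)$ and $H^1_\cts(\Gamma,{}_bA)$; the verification is routine once one notes that the equivalence relation $\sim$ on $\mathcal Z^1_\cts(\Gamma,B)$ restricts to the equivalence relation for ${}_bA$ among cocycles of the form $b\cdot a$.

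For part (3), since $A$ is assumed abelian the twisted action $_cA$ on it makes sense once we choose a set-theoretic lift of $c$. Concretely, I would lift $c\colon \Gamma\to C$ to a continuous map $\tilde c\colon \Gamma\to B$ (this is where continuity of a section of $B\to C$ near $1$ enters), and define
\[\Delta(c)(g,h):=\tilde c(g)\cdot g\tilde c(h)\cdot \tilde c(gh)^{-1}\in B.\]
Because $c$ is a cocycle in $C$, this element lies in the kernel $A$, and a direct verification using the cocycle identity shows that $\Delta(c)$ is a continuous $2$-cocycle for ${_cA}$. The class of $\Delta(c)$ in $H^2_\cts(\Gamma,{_cA})$ is independent of the lift $\tilde c$ (another lift changes $\Delta(c)$ by a coboundary), and vanishes precisely when $\tilde c$ can be chosen to be a genuine cocycle, i.e.\ when $[c]$ lifts to $H^1_\cts(\Gamma,B)$.

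The main obstacle to worry about is continuity, which is the only place where our setting (continuous cohomology of topological groups) differs from Serre's setting of profinite Galois cohomology. In all applications of this proposition in the paper, however, the relevant surjections $B\to C$ come from short exact sequences of rigid groups for which $\exp$ provides explicit continuous local sections near the identity, so the continuity of $\delta$ and of $\Delta(c)$ can be verified directly in each case.
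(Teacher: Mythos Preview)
The paper does not give its own proof of this proposition; it simply records the statement and cites Serre \cite{Serre-cohom-galoisienne}, \S5.5. Your sketch is the standard argument from that reference, adapted to the continuous setting, and is essentially correct. One small point worth tightening in part (2): rather than first modifying $b'$ to have literally the same image as $b$ in $\mathcal Z^1_\cts(\Gamma,C)$ (which would require lifting an element of $C$ to $B$), it is cleaner to send a class $[a]\in H^1_\cts(\Gamma,{}_bA)$ to the class of $g\mapsto a(g)\cdot b(g)$ in $H^1_\cts(\Gamma,B)$ and check directly that this gives the desired bijection onto the fibre; this avoids any appeal to sections of $B\to C$. Your caveat about continuous sections is well taken, and your observation that in the paper's applications such sections are furnished by the exponential is exactly right.
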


One natural way that continuous group cohomology arises in practice is that it can be used to describe \cH-cohomology for Galois covers with group $\Gamma$ for non-abelian sheaves, by a non-abelian version of the Cartan--Leray spectral sequence:

\begin{Proposition}[{\cite[Proposition~2.8]{heuer-v_lb_rigid}}]\label{p:Cartan--Leray}
Let $q:X\to Y$ be a morphism of diamonds over $K$ that is Galois for the action of a locally profinite group $\Gamma$ on $X$. Let $\mathcal F$ be a sheaf of (not necessarily abelian) topological groups on $Y_{v}$ with the property that 	for $i=1,2$ we have
\[
\mathcal F(X\times \Gamma^i)=\Map_{\cts}(\Gamma^i,\mathcal F(X)),
\]
for example this holds if $\mathcal F$ is represented by a rigid group over $K$ that admits a locally closed embedding as a rigid space into some affine space $\A^n$ (e.g.\ any linear algebraic group, or rigid open subgroup thereof).
Then there is a left-exact sequence of pointed sets
\[0\to H^1_{\cts}(\Gamma,\mathcal F(X))\to H^1_v(Y,\mathcal F)\to H^1_v(X,\mathcal F)^\Gamma.\]
\end{Proposition}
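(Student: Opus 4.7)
The strategy is to identify the desired sequence with the low-degree non-abelian \v{C}ech-to-derived-functor sequence for the cover $q\colon X\to Y$, and to match the \v{C}ech cohomology with continuous group cohomology via the Galois property and the hypothesis on $\mathcal F$.

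First I would use that $q$ is a $\Gamma$-torsor on $Y_v$ to produce isomorphisms of iterated fibre products
\[ X\times_Y X\cong X\times \Gamma,\qquad X\times_Y X\times_Y X\cong X\times \Gamma^2,\]
compatible with the two, respectively three, projections in the usual way (action map versus projections). Evaluating $\mathcal F$ on the \v{C}ech nerve and invoking the hypothesis $\mathcal F(X\times \Gamma^i)=\Map_{\cts}(\Gamma^i,\mathcal F(X))$ for $i=1,2$, the low-degree part of the \v{C}ech complex of $q$ with values in $\mathcal F$ becomes exactly the continuous bar complex computing $H^{\ast}_{\cts}(\Gamma,\mathcal F(X))$. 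Unwinding definitions, the non-abelian \v{C}ech cohomology set satisfies
\[ \check H^1(X/Y,\mathcal F)= H^1_{\cts}(\Gamma,\mathcal F(X))\]
as pointed sets, the cocycle and coboundary conditions matching those in \cref{d:non-ab-1-cocycles}.

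Second, I would invoke the standard non-abelian \v{C}ech-to-sheaf comparison. For any $v$-cover $q\colon X\to Y$ and any sheaf of topological groups $\mathcal F$, an $\mathcal F$-torsor on $Y_v$ that becomes trivial after pullback to $X$ is determined up to isomorphism by its descent datum, i.e.\ a class in $\check H^1(X/Y,\mathcal F)$. This gives a canonical injection of pointed sets
\[ \check H^1(X/Y,\mathcal F)\hookrightarrow H^1_v(Y,\mathcal F)\]
whose image is precisely the kernel of the restriction $q^\ast\colon H^1_v(Y,\mathcal F)\to H^1_v(X,\mathcal F)$. Combined with the identification above, this yields the left-exact sequence
\[ 0\to H^1_{\cts}(\Gamma,\mathcal F(X))\to H^1_v(Y,\mathcal F)\to H^1_v(X,\mathcal F).\]

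Third, I would refine the target of the last arrow to the $\Gamma$-invariants. The $\Gamma$-action on $X$ over $Y$ gives a compatible $\Gamma$-action on the pullback site $X_v\to Y_v$ that preserves $q^\ast\mathcal F=\mathcal F|_X$; by functoriality of $H^1_v(-,\mathcal F)$ in the source, the pullback of any class from $Y$ is therefore fixed by $\Gamma$, so the restriction map factors through $H^1_v(X,\mathcal F)^\Gamma$.

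The main obstacle will be verifying carefully that the \v{C}ech identification in step one really produces \emph{continuous} cocycles, and that the non-abelian \v{C}ech-to-sheaf comparison in step two gives both the injection and the exactness at $H^1_v(Y,\mathcal F)$ as pointed sets. These are both standard but subtle in the non-abelian topological setting; the hypothesis $\mathcal F(X\times\Gamma^i)=\Map_{\cts}(\Gamma^i,\mathcal F(X))$ is tailored precisely to bridge the two, and the final clause of the proposition (that it suffices for $\mathcal F$ to embed as a rigid space into some $\mathbb A^n$) is a convenient criterion ensuring that products with the profinite $\Gamma$ give continuous maps rather than arbitrary set-theoretic maps.
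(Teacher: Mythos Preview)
Your argument for the left-exact sequence is correct and is the standard Cartan--Leray argument: identify the \v{C}ech nerve of the $\Gamma$-torsor $X\to Y$ with $X\times\Gamma^i$, use the hypothesis to translate \v{C}ech cocycles into continuous group cocycles, and invoke the non-abelian \v{C}ech-to-sheaf comparison. This is essentially what the cited reference does, and the paper simply defers to that reference for this part.

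However, you have not addressed what the paper actually proves here. The paper treats the main sequence as known from the reference and instead proves the ``for example'' clause: that a rigid group $\mathcal F$ admitting a locally closed embedding into some $\A^n$ automatically satisfies $\mathcal F(X\times\Gamma^i)=\Map_{\cts}(\Gamma^i,\mathcal F(X))$. Your final paragraph mentions this clause only as motivation, not as something to be verified. The paper's argument runs as follows: for affinoid perfectoid $X$, one has a commutative square
\[
\begin{tikzcd}
\Map_{\cts}(\Gamma,\mathcal F(X)) \arrow[r,hook] & \Map_{\cts}(\Gamma,\O^n(X)) \\
\Mor_K(\Gamma\times X,\mathcal F) \arrow[u,dotted] \arrow[r,hook] & \Mor_K(\Gamma\times X,\A^n) \arrow[u,"\cong"]
\end{tikzcd}
\]
where the right vertical map is the known case $\mathcal F=\O$. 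Evaluation at points of $\Gamma$ produces the dotted arrow, which lands in continuous maps because the locally closed embedding guarantees $\mathcal F(X)\subseteq\O^n(X)$ carries the subspace topology; and it is surjective because factoring through a locally closed subspace can be checked pointwise. This is the substantive content of the paper's proof, and it is missing from your proposal.
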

\begin{proof}
	The only part that is not proved in the reference is that $\mathcal F$ satisfies the displayed condition if it embeds into $\A^n$.  It suffices to prove this for $i=1$ and $X$ in the basis of affinoid perfectoid spaces. By the known case of $\mathcal F=\O$, we then have a commutative diagram
\[\begin{tikzcd}
{\Map_{\cts}(\Gamma,\mathcal F(X))} \arrow[r,hook] & {\Map_{\cts}(\Gamma,\O^n(X))} \\
{\Mor_K(\Gamma\times X,\mathcal F)} \arrow[u, dotted] \arrow[r,hook] & {\Mor_K(\Gamma\times X,\mathbb A^n).} \arrow[u,"\cong"]
\end{tikzcd}\]
By evaluating at all points $g\in \Gamma$, we get the dotted arrow in the diagram: Here the continuity holds because the assumptions ensure that $\mathcal F(X)\subseteq \O^n(X)$ inherits the subspace topology.
Also due to the locally closed assumption, we can check on points whether a map $\Gamma\times X\to \mathbb A^n$ factors through $\mathcal F$. This shows that the morphism on the left  is surjective.
\end{proof}

For our applications, the Galois group is often $\cong\Z_p^d$. In this case, one  has the following description of continuous cohomology, see the proof of \cite[Lemma~5.5]{Scholze_p-adicHodgeForRigid}:
\begin{Lemma}\label{l:cts-grp-cohom-of-Zp}
	Let $\Delta=\Z_p^d$ be topologically generated by elements $\gamma_1,\dots,\gamma_d$. Let $R$ be a $p$-adically complete $\Z_p$-algebra and let $M$ be a $p$-adically complete $R$-module with a continuous $R$-linear $\Delta$-action. Then $R\Gamma_{\cts}(\Delta,M)$ is computed by the complex
	\[\mathcal C_{\mathrm{grp}}^*:=\big[M\to M\otimes_RR^d\to  M\otimes_R\wedge^2R^d\to \dots \to M\otimes_R\wedge^dR^d\big]\]
	whose $k$-th transition map is given by
	$m\otimes e_{i_1}\wedge \dots  \wedge e_{i_k}\mapsto \sum_{j=1}^d(\gamma_j-1)m \otimes  e_{i_1}\wedge \dots  \wedge e_{i_k}\wedge e_{j}$.
\end{Lemma}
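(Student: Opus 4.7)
The plan is to proceed by induction on $d$, after first establishing the base case $d=1$ directly. For $d=1$, writing $\gamma$ for the topological generator, I would show that $R\Gamma_{\cts}(\Z_p,M)$ is computed by the two-term complex $[M\xrightarrow{\gamma-1} M]$. Concretely, a continuous cochain $c\colon \Z_p^n\to M$ is determined by its values on a dense subset, so the continuous cochain complex injects into the discrete one; using $p$-adic completeness of $M$ one shows that the natural maps from $H^i_{\cts}(\Z_p,M)$ to the two-term cohomology $\ker(\gamma-1)$ and $\mathrm{coker}(\gamma-1)$ in degrees $0,1$ are isomorphisms, and that $H^i_{\cts}(\Z_p,M)=0$ for $i\geq 2$ (for instance by passage to the limit over continuous cohomology of $\Z/p^n$, which is a two-term complex, using that $M=\varprojlim M/p^nM$ has vanishing $\varprojlim{}^1$ when applied to these finite cyclic groups).

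For the induction step, I would use the short exact sequence $0\to \Z_p^{d-1}\to \Z_p^d\to \Z_p\to 0$ coming from projection onto the last coordinate, together with the Hochschild--Serre (Lyndon) spectral sequence in continuous group cohomology. By induction, $R\Gamma_{\cts}(\Z_p^{d-1},M)$ is represented by the Koszul-type complex $\mathcal C^*_{\mathrm{grp}}$ built from $\gamma_1-1,\dots,\gamma_{d-1}-1$, which is termwise a finite direct sum of copies of $M$ and hence again $p$-adically complete. Applying $R\Gamma_{\cts}(\Z_p,-)$ to this complex, using the base case $d=1$ for each of its terms (with the residual action of $\gamma_d$), produces a double complex whose totalisation is visibly the Koszul complex for the full sequence $\gamma_1-1,\dots,\gamma_d-1$ acting on $M$. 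This is exactly $\mathcal C^*_{\mathrm{grp}}$.

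An equivalent reformulation, which makes the structure more transparent, is to identify continuous cohomology with $R\Hom_{\Lambda}(\Z_p,M)$ for the Iwasawa algebra $\Lambda=\Z_p[\![\Delta]\!]\cong \Z_p[\![T_1,\dots,T_d]\!]$ (with $T_j=\gamma_j-1$), regarding $M$ as a continuous $\Lambda$-module via its $p$-adic topology. Since $T_1,\dots,T_d$ is a regular sequence in the regular local ring $\Lambda$, the Koszul complex on $(T_1,\dots,T_d)$ is a free resolution of $\Z_p$ over $\Lambda$, and applying $\Hom_\Lambda(-,M)$ yields exactly $\mathcal C^*_{\mathrm{grp}}$.

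The main obstacle in either approach is the interplay between the topology on continuous cochains and the purely algebraic Koszul picture: one must ensure that $p$-adic completeness of $M$ is enough to (i) suppress $\varprojlim{}^1$ contributions in the passage from finite to profinite groups, and (ii) identify continuous cochains on $\Z_p^d$ valued in $M$ with honest $\Lambda$-linear homomorphisms from the bar/Koszul resolution. Once this is handled for $d=1$, the induction step is purely formal.
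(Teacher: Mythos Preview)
Your proposal is correct. The paper does not actually supply its own proof of this lemma; it simply records the statement and refers the reader to the proof of \cite[Lemma~5.5]{Scholze_p-adicHodgeForRigid}. The argument there is precisely the inductive one you outline: for $d=1$ one checks directly that $R\Gamma_{\cts}(\Z_p,M)$ is the two-term complex $[M\xrightarrow{\gamma-1}M]$ using $p$-adic completeness of $M$, and the general case follows by iterated application via Hochschild--Serre (equivalently, by tensoring the $d=1$ complexes together to obtain the Koszul complex). Your Iwasawa-algebra reformulation is a clean repackaging of the same computation and is also standard. The technical concern you flag about $\varprojlim{}^1$ and the passage from algebraic to continuous cochains is genuine but mild: $p$-adic completeness of $M$ ensures the relevant inverse systems are Mittag-Leffler, so no obstruction arises.
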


\end{document}